\pgfplotsset{compat=1.15}
\numberwithin{equation}{section} 
\newtheorem{thm}{Theorem}[section]
\newtheorem{cor}[thm]{Corollary} 
\newtheorem{prop}[thm]{Proposition}
\newtheorem{lem}[thm]{Lemma}
\theoremstyle{definition}
\newtheorem{defn}[thm]{Definition}
\newtheorem{note}[thm]{Notation}
\providecommand{\customgenericname}{}
\newcommand{\newcustomtheorem}[2]{%
  \newenvironment{#1}[1]
  {%
   \renewcommand\customgenericname{#2}%
   \renewcommand\theinnercustomgeneric{##1}%
   \innercustomgeneric
  }
  {\endinnercustomgeneric}
}
\theoremstyle{remark}
\newtheorem{rem}[thm]{Remark}
\def\XXint#1#2#3{{\setbox0=\hbox{$#1{#2#3}{\int}$ }
\vcenter{\hbox{$#2#3$ }}\kern-.6\wd0}}
\newcommand{\Z}{\mathbb{Z}}
\newcommand{\R}{\mathbb{R}} 
\newcommand{\C}{\mathbb{C}}
\newcommand{\N}{\mathbb{N}}
\newcommand{\1}{\mathbf{1}} 
\newcommand{\dbar}{\overline{\partial}}
\newcommand{\Bc}{\mathcal{B}}
\newcommand{\Co}{\mathscr{C}}
\newcommand{\Ds}{\mathscr{D}}
\newcommand{\Bs}{\mathscr{B}}
\newcommand{\Ec}{\mathcal{E}}
\newcommand{\Fc}{\mathcal{F}}
\newcommand{\Fs}{\mathscr{F}}
\newcommand{\Hc}{\mathcal{H}}
\newcommand{\Kb}{\mathbb{K}}
\newcommand{\Kc}{\mathcal{K}}
\newcommand{\Ms}{\mathscr{M}}
\newcommand{\Pc}{\mathcal{P}}
\newcommand{\Sc}{\mathcal{S}}
\newcommand{\Ss}{\mathscr{S}}
\newcommand{\Uc}{\mathcal{U}}
\newcommand{\Xs}{\mathscr{X}}
\newcommand{\eps}{\varepsilon}
\newcommand{\bmo}{\mathrm{bmo}}
\newcommand{\BMO}{\mathrm{BMO}}
\newcommand{\dist}{\operatorname{dist}}
\newcommand{\supp}{\operatorname{supp}}
\newcommand{\essup}{\mathop{\operatorname{essup}}}
\newcommand{\loc}{\mathrm{loc}}
\newcommand{\Span}{\operatorname{Span}}
\newcommand{\Coorvec}[1]{\frac\partial{\partial#1}}
\newcommand{\re}{\operatorname{Re}}
\newcommand{\Vol}{\operatorname{Vol}}
\def\@tocline#1#2#3#4#5#6#7{\relax
  \ifnum #1>\c@tocdepth 
  \else
    \par \addpenalty\@secpenalty\addvspace{#2}%
    \begingroup \hyphenpenalty\@M
    \@ifempty{#4}{%
      \@tempdima\csname r@tocindent\number#1\endcsname\relax
    }{%
      \@tempdima#4\relax
    }%
    \parindent\z@ \leftskip#3\relax \advance\leftskip\@tempdima\relax
    \rightskip\@pnumwidth plus4em \parfillskip-\@pnumwidth
    #5\leavevmode\hskip-\@tempdima
      \ifcase #1
       \or\or \hskip 1em \or \hskip 2em \else \hskip 3em \fi%
      #6\nobreak\relax
    \hfill\hbox to\@pnumwidth{\@tocpagenum{#7}}\par
    \nobreak
    \endgroup
  \fi}
	\title[Cauchy-Riemann on Convex Domains of Finite Multitype]{Sobolev and H\"older Estimates for Homotopy Operators of The $\overline\partial$-Equation on Convex Domains of Finite Multitype}          
\author[]{Liding Yao} 
\address{Liding Yao, Department of Mathematics,
	The Ohio State University, Columbus, OH 43210} 
\email{yao.1015@osu.edu}
\subjclass[2020]{32A26 (primary) 32T25, 32W05 and 46E35 (secondary)} 
\keywords{Cauchy-Riemann equation, integral representation, convex domain of finite type, negative Sobolev spaces}
\thanks{Supported in part by NSF grant DMS-2054989}
\begin{document}

\begin{abstract}
    We construct homotopy formulas for the $\overline\partial$-equation on convex domains of finite type that have optimal Sobolev and H\"older estimates. For a bounded smooth finite type convex domain $\Omega\subset\mathbb C^n$ that has $q$-type $m_q$ for $1\le q\le n$, our  $\overline\partial$ solution operator $\mathcal H_q$ on $(0,q)$-forms has (fractional) Sobolev boundedness $\mathcal H_q:H^{s,p}\to H^{s+1/m_q,p}$ and H\"older-Zygmund boundedness $\mathcal H_q:\mathscr C^s\to\mathscr C^{s+1/m_q}$ for all $s\in\mathbb R$ and $1<p<\infty$. We also show the $L^p$-boundedness $\mathcal H_q:H^{s,p}\to H^{s,pr_q/(r_q-p)}$ for all $s\in\mathbb R$ and $1<p<r_q$, where $r_q:=(n-q+1)m_q+2q$. 
\end{abstract}
\maketitle

\section{Introduction}


In this paper, we aim to prove the following:
\begin{thm}\label{MainThm}
Assume $\Omega\subset\C^n$ is a bounded smooth convex domain of finite type. Then there are operators $\Hc_q:\Ss'(\Omega;\wedge^{0,q})\to\Ss'(\Omega;\wedge^{0,q-1})$ that maps $(0,q)$-forms to $(0,q-1)$-forms with distributional coefficients, for $1\le q\le n$ (we set $\Hc_{n+1}:=0$), such that
\begin{enumerate}[(i)]
    \item{\normalfont(Homotopy formula)}\label{Item::MainThm::Homo} $f=\dbar\Hc_q f+\Hc_{q+1}\dbar f$ for all $1\le q\le n$ and $(0,q)$-forms $f\in\Ss'(\Omega;\wedge^{0,q})$. 
\end{enumerate}
Moreover suppose $\Omega$ has $q$-type $m_q$ {\normalfont(see Definition~\ref{Defn::Basis::Type})}. Then $\Hc_q$ has the following boundedness properties:
\begin{enumerate}[(i)]\setcounter{enumi}{1}
\item{\normalfont{(Sobolev and H\"older)}}\label{Item::MainThm::Sob} For every $s\in\R$ and $1<p<\infty$, $\Hc_q:H^{s,p}(\Omega;\wedge^{0,q})\to H^{s+1/{m_q},p}(\Omega;\wedge^{0,q-1})$ and $\Hc_q:\Co^s(\Omega;\wedge^{0,q})\to \Co^{s+1/{m_q}}(\Omega;\wedge^{0,q-1})$.
\item{\normalfont{($L^p$-$L^q$ estimates)}}\label{Item::MainThm::Lp} For every $s\in\R$ and $1<p<r_q$, $\Hc_q:H^{s,p}(\Omega;\wedge^{0,q})\to H^{s,{pr_q}/{(r_q-p)}}(\Omega;\wedge^{0,q-1})$. {\normalfont Here $r_q:=(n-q+1)\cdot m_q+2q$.}
\end{enumerate}
\end{thm}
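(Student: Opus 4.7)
The plan is to construct $\Hc_q$ as an integral operator of Cauchy--Fantappi\`e--Leray--Koppelman type, built from a holomorphic support function $S(z,\zeta)$ for $\Omega$. For smooth convex domains of finite type, such a support function exists by work of Diederich--Fornaess, Cho, and Diederich--Mazzilli, and satisfies the sharp lower bound
$$|S(z,\zeta)|\gtrsim -\rho(\zeta)+(-\rho(z))+\sigma(z,\zeta),$$
where $\rho$ is a smooth defining function and $\sigma$ is McNeal's anisotropic pseudodistance built from multitype polydiscs. Plugging $S$ into a Berndtsson--Andersson type double form yields a Koppelman kernel $K_q(z,\zeta)$; defining $\Hc_q f:=\int_\Omega K_q(\cdot,\zeta)\wedge f(\zeta)$ gives a candidate operator, and a standard Koppelman--Leray identity produces the homotopy formula (i). To make $\Hc_q$ act on $\Ss'(\Omega;\wedge^{0,q})$, I would write the kernel in a form whose only boundary dependence is through smooth cutoffs and powers of $\rho$, so that $\Hc_q$ admits a universal extension across $\partial\Omega$ \`a la Rychkov--Seeley; alternatively, one defines $\Hc_q$ by transposition of its formal adjoint acting on test forms.

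For the $L^p$--$L^q$ estimate (iii) at $s=0$, the main input is the pointwise kernel bound $|K_q(z,\zeta)|\lesssim \sigma(z,\zeta)^{-2n+1}\cdot\Phi(z,\zeta)$, where $\Phi$ encodes an extra tangential factor of order $1/m_q$. The doubling property of McNeal balls and a Young/Schur argument then produce precisely the $L^p\to L^{pr_q/(r_q-p)}$ bound, with the exponent $r_q=(n-q+1)m_q+2q$ emerging from the volume growth $|\Ms(\zeta,\tau)|\simeq \tau^{2q}\prod_j\tau_j(\zeta,\tau)$ of multitype polydiscs. The H\"older--Zygmund gain $\Co^s\to\Co^{s+1/m_q}$ at $s\ge 0$ follows from the same kernel estimates via a Hardy--Littlewood pointwise-gain argument in the spirit of Cumenge and Alexandre, while the $H^{s,p}$ gain at $s\ge 0$ follows by commuting $\Hc_q$ with derivatives and using the homotopy identity to trade a $\dbar$ across the formula.

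To reach all negative values of $s$, I would exploit the symmetry of the homotopy formula $\id=\dbar\Hc_q+\Hc_{q+1}\dbar$ together with duality. The formal transpose of $\Hc_q$ has the same analytic structure as $\Hc_q$ itself (up to reversing the roles of $z$ and $\zeta$), so its boundedness on positive-regularity spaces translates into boundedness of $\Hc_q$ on negative-regularity spaces of the dual index. A bootstrap using $\dbar\Hc_q=\id-\Hc_{q+1}\dbar$ and its $\dbar^\ast$-counterpart then transfers estimates up and down the scale, patching together positive and negative $s$ across $s=0$.

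The main obstacle will be obtaining the $1/m_q$ Sobolev gain uniformly in $s\in\R$. Singular integral operators of this form gain H\"older--Zygmund regularity with relative ease, but a clean $H^{s,p}$ gain requires showing that $\Hc_q$ behaves like a tangential Bessel potential of order $1/m_q$. This forces delicate commutator estimates between $\Hc_q$ and fractional derivatives, executed against the anisotropic geometry of the McNeal polydiscs, and a careful verification that the boundary integrals appearing in the Koppelman formula do not destroy the gain. Ensuring that the exponent $r_q=(n-q+1)m_q+2q$ emerges as the sharp critical index from the multitype volume growth, and not an artificial loss, is where the bulk of the technical effort will lie.
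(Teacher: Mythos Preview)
Your proposal has two genuine gaps that would prevent the argument from closing.

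First, the route to negative $s$ via duality and bootstrap does not work as written. The formal transpose of a Cauchy--Fantappi\`e kernel built from a support function holomorphic in $z$ is not an operator of the same analytic structure: the roles of $z$ and $\zeta$ in $S(z,\zeta)$ are fundamentally asymmetric, and the dual of $H^{s,p}(\Omega)$ is $\widetilde H^{-s,p'}(\overline\Omega)$, a space of supported distributions, so transposition lands you in a different problem. The identity $\dbar\Hc_q=\id-\Hc_{q+1}\dbar$ does not bootstrap regularity either, since $\dbar$ costs exactly one derivative on each side. The paper bypasses duality entirely: it replaces the boundary integral in the Koppelman formula by an integral of $K_{q-1}(z,\cdot)\wedge[\dbar,\Ec]f$ over the \emph{exterior} region $U_1\setminus\overline\Omega$, where $\Ec$ is Rychkov's extension operator (bounded on $\Fs_{pq}^s$ for all $s\in\R$). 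For $f$ of low regularity, $[\dbar,\Ec]f$ is written as $\sum_{|\alpha|\le k}D^\alpha g_\alpha$ with $g_\alpha$ supported in $\overline{U_1}\setminus\Omega$ (Proposition~\ref{Prop::Space::ATDOp}), and the derivatives are moved onto the kernel by integration by parts with no boundary terms. Weighted pointwise bounds on $D^k_{z,\zeta}K_{q-1}$ then give the result via a Hardy--Littlewood embedding $W^{k,p}(\Omega,\dist^{k-s})\hookrightarrow\Fs_{p\eps}^s(\Omega)$. This is what makes one operator work for all $s$.

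Second, you are missing a decomposition that is essential in the convex finite-type case (though not for strongly pseudoconvex domains). The full kernel $K_{q-1}$ paired with $[\dbar,\Ec]f$ does not gain $1/m_q$; one must split $K_{q-1}=K_{q-1}^\top+K_{q-1}^\bot$ into $\dbar_\zeta$-tangential and $\dbar_\zeta$-normal parts. The normal part $K_{q-1}^\bot$ is worse by roughly one order, but since $K_{q-1}^\bot\wedge g=K_{q-1}^\bot\wedge g^\top$, it only sees the \emph{tangential} projection $([\dbar,\Ec]f)^\top$, and the paper proves (Proposition~\ref{Prop::LPThm::TangComm}) that this tangential commutator loses no derivatives at all. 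Without this $\top/\bot$ trick your kernel estimates would fall $1-1/m_q$ short of the claimed gain.
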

Here for an open subset $\Omega\subseteq\C^n$, we use $\Ss'(\Omega)$ for the space of extendable complex-valued distributions on $\Omega$ (see Notation~\ref{Note::Space::Dist} and Lemma~\ref{Lem::Space::SsUnion}) and $\Co^\infty(\Omega)$ for the space of all bounded smooth complex functions on $\Omega$ (see Definition~\ref{Defn::Space::Hold}).
For $s\in\R$ and $1<p<\infty$, we use $H^{s,p}(\Omega)$ for the Sobolev-Bessel space and $\Co^s(\Omega)$ for the H\"older-Zygmund space (see Definitions \ref{Defn::Space::Sob} and \ref{Defn::Space::Hold}). When $1<p<\infty$ and $k\ge0$, $H^{k,p}=W^{k,p}$ is the usual Sobolev space; and $\Co^s=C^s$ is the usual H\"older space when $s>0$ is not an integer (see Remark~\ref{Rmk::Space::TLRmk}).

In fact, we obtain a stronger estimate via Triebel-Lizorkin spaces (see Theorem~\ref{StrMainThm}). We also prove the corresponding  $L^p$-$L^q$ estimate for strongly pseudoconvex domain in Section~\ref{Section::SPsiCX}, which is new for negative Sobolev spaces (see Theorem~\ref{Thm::SPsiCX}).

\medskip
For a bounded smooth convex domain $\Omega\subset\C^n$ of finite type $m$, Diederich-Fischer-Forn\ae ss \cite{DFFHolder} constructed a solution operator $H_q$ for the $\dbar$-equation from $(0,q)$ closed forms to $(0,q-1)$-forms that has boundedness $H_q:L^\infty\to C^{1/m}$. In other words $\dbar H_qf=f$ for all $L^\infty$ $\dbar$-closed $(0,q)$-forms $f$ on $\Omega$.

Based on their approach, subsequent authors obtained the following $L^p$ and $C^k$-estimates:
\begin{itemize}
    \item Fischer \cite{FischerLp} proved that $H_q:L^p\cap \ker\dbar\to L^\frac{p(mn+2)}{mn+2-p}$ for $1< p<mn+2$.
    \item Hefer \cite{HeferMultitype} improved the preceding two results \cite{DFFHolder,FischerLp} using multitypes: if $\Omega$ has $q$-type $m_q$, then  $H_q:L^\infty\cap\ker\dbar\to C^{1/m_q}$ and $H_q:L^p\cap \ker\dbar\to L^\frac{p\cdot r_q}{r_q-p}$ for $1< p<(n-q+1)\cdot m_q+2q$, where $r_q=(n-q+1)\cdot m_q+2q$. Note that $m=m_1\ge m_q$, and in general $m_q$ is smaller.
    \item Alexandre \cite{AlexandreCk} modified $H_q$ to a new solution operator $\tilde H_q$ such that $\tilde H_q:C^k\cap\ker\dbar\to C^{k+1/m}$ ($\tilde H_q$ depends on $k$).
\end{itemize}

Our Theorem~\ref{MainThm} implies all the results above. And we have the following remarks:
\begin{itemize}
    \item Our $\Hc_q$ is a solution operator to the Cauchy-Riemann equation on $(0,q)$-forms. When $f$ is a $\dbar$ closed $(0,q)$-form, then the $(0,q-1)$ form $u=\dbar\Hc_qf$ solves $\dbar u=f$. Meanwhile for estimates of $\Hc_q$ we do not require the domains to be the subspace of closed forms, while \cite{DFFHolder,FischerLp,HeferMultitype,AlexandreCk} were stated only on closed forms.
    \item Our estimates on $\Co^s$-spaces imply the one in \cite{DFFHolder,AlexandreCk} because $C^k\subsetneq\Co^k$ for $k\ge1$ and $L^\infty\subsetneq\Co^0$ (see for example \cite[(2.5.7/11)]{TriebelTheoryOfFunctionSpacesI}). For $q\ge2$ our result show the gain of $\frac1{m_q}$ derivative, while in \cite{DFFHolder,AlexandreCk} the gain is only $\frac1m=\frac1{m_1}$.
    \item When $1<p<r_q$, by taking $s=0$, we see that Theorem~\ref{MainThm} \ref{Item::MainThm::Lp} contains the $L^p$-$L^q$ estimate in \cite[Theorem~1.3]{HeferMultitype}. We also have the boundedness $\Hc_q:L^{r_q}\to \BMO$ that recovers \cite[Theorem~1.1~(ii)]{FischerLp}, see Remark~\ref{Rmk::Intro::BMOBdd}.
    \item Even for a negative integer $k$, our operator $\Hc_q$ is defined on the distribution space $H^{k,p}$ and has $\frac1m$ gain $\Hc_q:H^{k,p}\to H^{k+\frac1m,p}$ (in fact, to $H^{k+\frac1{m_q},p}$).
    \item The operator $\Hc_q$ is a ``universal solution operator'', in the sense that we have one operator that has $H^{s,p}$ and  $\Co^s$ boundedness for all $s$, rather than just a bounded range of $s$.
\end{itemize}

The estimates on Sobolev space of negative index were first achieved in
\cite{ShiYaoCk} for the case of a smooth strongly pseudoconvex domain, where for each $1\le q\le n$ we obtained a solution operator with $\frac12$-estimate $H^{s,p}\to H^{s+\frac12,p}$ for all $s\in\R$ and $1<p<\infty$. 

In our precious study \cite{ShiYaoCk} and the current paper, our solution operators are non-canonical because they do not come from the solutions of the $\dbar$-Neumann problem. However, for canonical solutions, it is comparably more difficult to discuss the boundedness (or even well-posedness) on negative function spaces, since we need to a version of generalized trace to discuss boundary value condition, see \cite{NegativeBVPBook}. This was mentioned by \cite{NegativeDbar}.
 
Note that the $L^p$-$L^q$ estimates cannot be directly obtained from the $\frac1m$-estimates, since the classical Sobolev estimate only yields $H^{\frac1m,p}\hookrightarrow L^{\frac{2nmp}{2nm-p}}$, which is a larger space than $L^\frac{p(mn+2)}{mn+2-p}$.

\medskip
Our solution operators follow from the construction of \cite{DFFHolder}. Recall that their solution operator $H_q$ from \cite{DFFHolder} has the form
\begin{equation}\label{Eqn::Intro::DFSolOp}
    H_qf(z):=\int_\Omega B_{q-1}(z,\cdot)\wedge f-\int_{b\Omega}K_{q-1}(z,\cdot)\wedge f.
\end{equation}
The first integral is the Bochner-Martinelli integral operator (see \eqref{Eqn::Goal::DefB} for definition of $B_{q-1}$) which is known to gain one derivative. The second integral is the main term. The construction of $K_{q-1}$ is based on the Diederich-Forn\ae ss support function $S(z,\zeta)$ (see \eqref{Eqn::Goal::S} and \eqref{Eqn::Goal::DefK}). We remark that a slight modification for $K(z,\zeta)$ is required in order to make it a bounded function for each $\zeta$, which is mentioned in \cite{AlexandreCk}. See Lemma~\ref{Lem::Goal::Glue} and Remark~\ref{Rmk::Goal::GlueRmk}.

Our solution operator replaces the boundary integral with integration of the commutator $[\dbar,\Ec]$ on the exterior neighborhood. The commutator was introduced by \cite{PeterCommutator} and later be used by \cite{Michel1991} and recently by \cite{GongHolderSPsiCXC2}:
\begin{equation}\label{Eqn::Intro::CommSolOp}
    \Hc_qf(z):=\int_\Uc B_{q-1}(z,\cdot)\wedge f+\int_{\Uc\backslash\overline\Omega}K_{q-1}(z,\cdot)\wedge[\dbar,\Ec] f,
\end{equation}
where $\Uc$ is a small enough neighborhood of $\overline{\Omega}$, and $\Ec$ is a suitably chosen extension operator of $\Omega$ such that the extended functions are supported in $\Uc$. 

In \cite{LiebRange1980,PeterCommutator} the authors used $\Ec$ for the Seeley's half-space extension \cite{Seeley}, which works only on smooth domains and extend $H^{s,p}$ and $\Co^s$ functions for positive $s$. For the case of non-smooth domains, for example \cite{GongHolderSPsiCXC2}, authors used $\Ec$ for the Stein's extension \cite[Chapter VI]{SteinBook} which is defined on Lipschitz domains and extend $H^{s,p}$ and $\Co^s$ for positive $s$ as well.

In our case we choose $\Ec$ to be the Rychkov extension operator, which works on Lipschitz domains and extends $H^{s,p}$ and $\Co^s$ for \textbf{all} $s$ (including $s<0$), see \eqref{Eqn::Space::ExtOp} and \eqref{Eqn::Space::ExtOmega}. The Rychkov's extension operator was first introduced to solve the $\dbar$-equation in \cite{ShiYaoC2}.

To prove the $\frac1m$-estimates, in \cite{DFFHolder}, also in \cite{FischerLp,HeferMultitype}, the second integral in \eqref{Eqn::Intro::DFSolOp} is defined on the boundary, thus we only need to care about the estimate of the tangential part of $K_{q-1}$ with respect to $\zeta$-variable, which is as follows in our notation (see Definition~\ref{Defn::Goal::BotTop}):
\begin{equation*}
    \int_{b\Omega}K_{q-1}(z,\cdot)\wedge f=\int_{b\Omega}K_{q-1}^\top(z,\cdot)\wedge f.
\end{equation*}

On the other hand, to estimate \eqref{Eqn::Intro::CommSolOp} we need to deal with the normal part $K_{q-1}^\bot=K_{q-1}-K_{q-1}^\top$, which contributes to the major loss of the kernel. Roughly speaking, based on the estimates in \cite[Section~5]{DFFHolder}, $K_{q-1}^\bot$ loses $1$ more derivative than $K_{q-1}^\top$. Alexandre \cite{AlexandreCk} gave a better control and showed that at most $\frac12-\frac1m$ derivative is lost.

In this paper we introduce the following decomposition (see Notation~\ref{Note::Goal::KTopBot} and \eqref{Eqn::Goal::TopBotFacts::BotTopfg}), which simplifies Alexandre's approach:
\begin{equation}\label{Eqn::Intro::TopBotDecomp}
    \int_{\Uc\backslash\overline\Omega}K_{q-1}(z,\cdot)\wedge[\dbar,\Ec] f=\int_{\Uc\backslash\overline\Omega}K_{q-1}^\top(z,\cdot)\wedge[\dbar,\Ec] f+\int_{\Uc\backslash\overline\Omega}K_{q-1}^\bot(z,\cdot)\wedge([\dbar,\Ec] f)^\top.
\end{equation}

$[\dbar,\Ec]f$ has one derivative less than $f$, and the $\frac1m$-estimate of $K_{q-1}^\top(z,\cdot)\wedge[\dbar,\Ec]f$ essentially follows from \cite{DFFHolder}. Although $K_{q-1}^\bot$ lose one derivative than $K_{q-1}^\top$, the tangential part of the commutator\\ $[\dbar,\Ec]^\top f:=([\dbar,\Ec]f)^\top$ has the same regularity to $f$ which compensates the estimate that we need, see Proposition~\ref{Prop::LPThm::TangComm} and Remark~\ref{Rmk::LPThm::TangCommRmk} \ref{Item::LPThm::TangCommRmk::FBdd}.

Note that \eqref{Eqn::Intro::TopBotDecomp} is not needed in the case of strongly pseudoconvex domains because for the Leray map $\widehat Q(z,\zeta)$ (see Proposition~\ref{Prop::SPsiCX::LeviMap}) we only need the trivial estimates \eqref{Eqn::SPsiCX::EstQ} and \eqref{Eqn::SPsiCX::EstDK}. See Remark~\ref{Rmk::Basis::EstQ2::SPsiCX}.

For the case $s<1$ in Theorem~\ref{MainThm}, the commutator $[\dbar,\Ec]f$ may give a distribution rather than a classical function. In order to ensure the integral operators make sense, we express the given forms as the derivatives of functions with positive index. For $k\ge1$ we constructed the anti-derivative operators $\{\Sc^{k,\alpha}\}_{|\alpha|\le k}$ in \cite{ShiYaoExt} such that if a function $g$ is supported outside $\Omega$ then $g=\sum_{|\alpha|\le k}D^\alpha\Sc^{k,\alpha}g$ with all summand supported outside $\Omega$ as well. See Proposition \ref{Prop::Space::ATDOp}. Therefore, by integration by parts
\begin{align*}
    \int_{\Uc\backslash\overline\Omega}K_{q-1}^{(\top,\bot)}(z,\zeta)\wedge[\dbar,\Ec]^{(\top)}f(\zeta)d\Vol_\zeta=&\sum_{|\alpha|\le k}\int_{\Uc\backslash\overline\Omega}K_{q-1}^{(\top,\bot)}(z,\zeta)\wedge\big(D^\alpha \Sc^{k,\alpha}\circ[\dbar,\Ec]^{(\top)}f\big)(\zeta)d\Vol_\zeta
    \\
    =&\sum_{|\alpha|\le k}(-1)^{|\alpha|}\int_{\Uc\backslash\overline\Omega}D^\alpha_\zeta K_{q-1}^{(\top,\bot)}(z,\zeta)\wedge  \big(\Sc^{k,\alpha}\circ[\dbar,\Ec]^{(\top)}f\big)(\zeta)d\Vol_\zeta.
\end{align*}
The method of trading derivatives between $K_{q-1}$ and $[\dbar,\Ec]f$ was introduced by \cite{ShiYaoCk} for the estimates of strongly pseudoconvex domains.

To key to prove Theorem~\ref{MainThm} is to do the weighted estimates for $D^k_{z,\zeta}(K^\top_{q-1})(z,\zeta)$ and $D^k_{z,\zeta}(K^\bot_{q-1})(z,\zeta)$. See Theorem~\ref{Thm::WeiEst}. Note that we take derivatives after we take ($\bot$ and $\top$) projections. The reduction from Theorem~\ref{MainThm} to Theorem~\ref{Thm::WeiEst} is achieved by using the Hardy-Littlewood lemma, see Proposition~\ref{Prop::LPThm::HLLem} and Corollary~\ref{Cor::LPThm::BddDom}~\ref{Item::LPThm::BddDom::TangComm}.

In fact, by combining Theorem~\ref{Thm::WeiEst} and Corollary~\ref{Cor::LPThm::BddDom} \ref{Item::LPThm::BddDom::TangComm} we have a stronger estimate of $\Hc_q$ in terms of Triebel-Lizorkin spaces (see Definition~\ref{Defn::Space::TLSpace}):
\begin{thm}\label{StrMainThm}With $\Hc_q$ as in Theorem~\ref{MainThm}, the following boundedness properties hold for $1\le q\le n-1$:
    \begin{align}
        &\Hc_q:\Fs_{p,\infty}^s(\Omega;\wedge^{0,q})\to\Fs_{p,\eps}^{s+\frac1{m_q}}(\Omega;\wedge^{0,q-1}),&\forall\ \eps>0,\quad 1\le p\le\infty; \\
        \label{Eqn::StrMainThm::Lp}
        &\Hc_q:\Fs_{p,\infty}^s(\Omega;\wedge^{0,q})\to\Fs_{\frac{pr_q}{r_q-p},\eps}^s(\Omega;\wedge^{0,q-1}),&\forall\ \eps>0,\quad 1\le p\le r_q.
    \end{align}
\end{thm}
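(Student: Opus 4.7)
The plan is to derive Theorem~\ref{StrMainThm} directly from the kernel decomposition \eqref{Eqn::Intro::CommSolOp}--\eqref{Eqn::Intro::TopBotDecomp} by invoking the weighted kernel bounds promised in Theorem~\ref{Thm::WeiEst} together with the Hardy--Littlewood type embedding in Corollary~\ref{Cor::LPThm::BddDom}~\ref{Item::LPThm::BddDom::TangComm}. First I would split $\Hc_qf$ into three pieces: the interior Bochner--Martinelli term, the tangential exterior term $\int_{\Uc\setminus\overline\Omega}K_{q-1}^\top(z,\cdot)\wedge[\dbar,\Ec]f$, and the normal exterior term $\int_{\Uc\setminus\overline\Omega}K_{q-1}^\bot(z,\cdot)\wedge[\dbar,\Ec]^\top f$. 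The Bochner--Martinelli integral is classical and gains a full derivative, so it satisfies both desired estimates trivially via Sobolev embedding and is never the dominant term.

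To make sense of the two exterior integrals when $s<1$ (so that $[\dbar,\Ec]f$ is only a distribution), I would use the anti-derivative operators $\Sc^{k,\alpha}$ from Proposition~\ref{Prop::Space::ATDOp} to rewrite $[\dbar,\Ec]f=\sum_{|\alpha|\le k}D^\alpha\Sc^{k,\alpha}[\dbar,\Ec]f$ with each summand supported outside $\overline\Omega$, and then integrate by parts to reduce the problem to integrals of the shape $\int D^\alpha_\zeta K_{q-1}^{\top/\bot}(z,\zeta)\wedge h(\zeta)\,d\Vol_\zeta$ where $h$ has arbitrarily high classical regularity. The weighted pointwise size of the differentiated kernels is exactly the content of Theorem~\ref{Thm::WeiEst}. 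For the normal piece, the one-derivative loss of $K_{q-1}^\bot$ relative to $K_{q-1}^\top$ is compensated by the fact that $[\dbar,\Ec]^\top f$ retains the full regularity of $f$, as promised by Proposition~\ref{Prop::LPThm::TangComm}; this is exactly the mechanism by which the top/bot decomposition \eqref{Eqn::Intro::TopBotDecomp} pays for itself.

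Having reduced the operator to weighted integrals against anisotropic kernels governed by the Diederich--Forn\ae ss support function $S(z,\zeta)$, I would feed these into Corollary~\ref{Cor::LPThm::BddDom}~\ref{Item::LPThm::BddDom::TangComm}, which converts pointwise weighted kernel bounds into Triebel--Lizorkin mapping properties. The same mechanism produces both $\Fs^s_{p,\infty}\to\Fs^{s+1/m_q}_{p,\eps}$ (by cashing the kernel gain into smoothness) and $\Fs^s_{p,\infty}\to\Fs^s_{pr_q/(r_q-p),\eps}$ (by cashing it into integrability via a Hardy--Littlewood--Sobolev argument on $m_q$-anisotropic balls). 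The loss from the index $\infty$ to an arbitrary $\eps>0$ on the target side is a standard microlocal artifact of passing between different fine indices in Triebel--Lizorkin spaces, and the kernel estimates have precisely the hair of room needed to accommodate it; this is also why the endpoints $p=1,\infty$ and $p=r_q$ are admissible here although the classical Sobolev statement in Theorem~\ref{MainThm} excludes them.

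The main obstacle, concentrated entirely in Theorem~\ref{Thm::WeiEst}, is the sharp weighted estimate for $D^k_{z,\zeta}K_{q-1}^\bot$: the normal projection costs one derivative compared with the tangential projection, and one must verify that this loss is exactly balanced against the extra tangential regularity of $[\dbar,\Ec]^\top f$, uniformly in $k$ and with the full multitype gain $1/m_q$ rather than the inferior $1/m_1$. Establishing this requires exploiting the finite-multitype geometry of $b\Omega$ through the anisotropic polydisc structure adapted to $S(z,\zeta)$; once it is in hand, the remainder of the argument reduces to a systematic bookkeeping of derivatives, integrations by parts via $\Sc^{k,\alpha}$, and a single invocation of the Hardy--Littlewood lemma.
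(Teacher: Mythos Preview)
Your outline matches the paper's proof almost exactly: the same three-piece decomposition $\Bc_q\circ\Ec+\Hc_q^{k,\top}+\Hc_q^{k,\bot}$, the same use of $\Sc^{k,\alpha}$ and integration by parts to handle $s<1$, and the same mechanism whereby the tangential commutator estimate (Proposition~\ref{Prop::LPThm::TangComm}) compensates the one-derivative loss in $K_{q-1}^\bot$. One labeling slip: Corollary~\ref{Cor::LPThm::BddDom}\,\ref{Item::LPThm::BddDom::TangComm} is the tangential commutator bound, not the Hardy--Littlewood embedding; the conversion from weighted kernel bounds to Triebel--Lizorkin boundedness goes through Corollary~\ref{Cor::PfThm::WeiSobEst} (Schur's test) together with items \ref{Item::LPThm::BddDom::FtoW} and \ref{Item::LPThm::BddDom::WtoF} of Corollary~\ref{Cor::LPThm::BddDom}, and in the $\bot$-chain one first passes $\Fs_{p\infty}^s\hookrightarrow\Fs_{pp}^{s-1/m_q}$ before applying $[\dbar,\Ec]^\top$.
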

Theorem~\ref{StrMainThm} implies Theorem~\ref{MainThm} \ref{Item::MainThm::Sob} and \ref{Item::MainThm::Lp} automatically for the case $1\le q\le n-1$, see Remark~\ref{Rmk::Space::TLRmk}.

\begin{rem}[Boundedness on Besov spaces]Theorem~\ref{StrMainThm} implies the $\frac1{m_q}$-estimate and higher order $L^p$-$L^q$ estimates on Besov spaces via real interpolations:

By the elementary embedding (see Remark~\ref{Rmk::Space::TLRmk} \ref{Item::Space::TLRmk::Embed}), for every  $s\in\R$ and $t\in(0,\infty]$, we have $\Hc_q:\Fs_{p,t}^s\to\Fs_{p,t}^{s+1/{m_q}}$ for $p\in[1,\infty]$ and $\Hc_q:\Fs_{p,t}^s\to\Fs_{\frac{pr_q}{r_q-p},t}^s$ for $p\in[1,r_q]$. On the other hand we have real interpolations (see for example \cite[Corollary~1.111]{TriebelTheoryOfFunctionSpacesIII}): 
\begin{align*}
    (\Fs_{p,t_0}^{s_0}(\Omega),\Fs_{p,t_1}^{s_1}(\Omega))_{\theta,t}=\Bs_{p,t}^{\theta s_1+(1-\theta)s_0}(\Omega),&\quad\forall p\in[1,\infty),\ t_0,t_1,t\in(0,\infty],\ \theta\in(0,1)\text{ and } s_0\neq s_1;
    \\
    (\Fs_{\infty,\infty}^{s_0}(\Omega),\Fs_{\infty,\infty}^{s_1}(\Omega))_{\theta,t}=\Bs_{\infty,t}^{\theta s_1+(1-\theta)s_0}(\Omega),&\quad\forall t\in(0,\infty],\ \theta\in(0,1)\text{ and } s_0\neq s_1.
\end{align*}

See \cite[(1.368) and (1.369)]{TriebelTheoryOfFunctionSpacesIII}. Therefore (see \cite[Definition~1.2.2/2 and Theorem~1.3.3]{TriebelInterpolation} for example) for every  $s\in\R$ and $t\in(0,\infty]$, we have
$\Hc_q:\Bs_{p,t}^s(\Omega;\wedge^{0,q})\to\Bs_{p,t}^{s+1/m_q}(\Omega;\wedge^{0,q-1})$ for $p\in[1,\infty]$, and $\Hc_q:\Bs_{p,t}^s(\Omega;\wedge^{0,q})\to\Bs_{\frac{pr_q}{r_q-p},t}^s(\Omega;\wedge^{0,q-1})$ for $p\in[1,r_q]$.
\end{rem}

\begin{rem}[Boundedness on BMO]\label{Rmk::Intro::BMOBdd}
As a special case of \eqref{Eqn::StrMainThm::Lp} we recover the endpoint $L^p$-$L^q$ boundedness $\Hc_q:L^{r_1}(\Omega;\wedge^{0,q})\to\BMO(\Omega;\wedge^{0,q-1})$ from \cite[Theorem~1.1 (ii)]{FischerLp} (cf. \cite[Theorem~1.3]{HeferMultitype}). The definition of $\BMO(\Omega)$ used by Fischer \cite{FischerLp} comes from \cite[Section~4, Definition~3]{McNealStein}. We recall that for an arbitrary open subset $U\subset\R^N$, $\BMO(U)$ and $\bmo(U)$ (see \cite[Definition 1.2]{ChangHardy}) are spaces consisting of $f\in L^1_\loc(U)$ such that: 
\begin{gather*}
    \|f\|_{\BMO(U)}:=\sup_{B\subseteq U}\frac1{|B|}\int_{B}\big|f-{\textstyle\frac1{|B|}\int_{B}f}\big|<\infty,\quad
    \|f\|_{\bmo(U)}:=\|f\|_{\BMO(U)}+\sup_{B\subseteq U}\frac{1}{|B|}\int_{B}|f|<\infty.
\end{gather*}
Here $B$ denotes the balls in $\R^N$.

Clearly, $\bmo(U)\subset L^1_\loc(U)$, while $\BMO(U)=\bmo(U)/\{c\cdot\1_U:c\in\C\}$ ignores the constant functions.

By \cite[Theorem~1.4]{ChangHardy} (since $\Omega$ is bounded smooth), we have $\bmo(\Omega)=\{\tilde f|_\Omega:\tilde f\in\bmo(\C^n)\}$, and by \cite[Theorem~2.5.8/2]{TriebelTheoryOfFunctionSpacesI} we have $\bmo(\C^n)=\Fs_{\infty2}^0(\C^n)$. Therefore, by Definition \ref{Defn::Space::TLSpace} for spaces on domains, we get $\bmo(\Omega)=\Fs_{\infty2}^0(\Omega)$.

In addition, by Remark~\ref{Rmk::Space::TLRmk} \ref{Item::Space::TLRmk::Embed} and \ref{Item::Space::TLRmk::SobHold} we have $\Fs_{\infty,\eps}^0\subset\Fs_{\infty,2}^0$ and $\Fs_{r_q,2}^0=L^{r_q}\subset \Fs_{r_q,\infty}^0$. Therefore, we obtain the boundedness $\Hc_q:L^{r_q}(\Omega;\wedge^{0,q})\to\bmo(\Omega;\wedge^{0,q-1})$. By taking the quotient of constant functions we obtain a stronger one $\Hc_q:L^{r_q}(\Omega;\wedge^{0,q})\to\BMO(\Omega;\wedge^{0,q-1})$. (Recall that $r_1\ge r_q$ from Theorem \ref{MainThm} \ref{Item::MainThm::Lp} since $m_1\ge m_q$.)
\end{rem}

\medskip
The estimates for the $\dbar$-equation is a fundamental question in several complex variables. There are two major approaches. The first one is the \textit{$\dbar$-Neumann problems} which defines the \textit{canonical solutions}. This was proposed in \cite{GarabedianSpencer1952}. The $\dbar$-Neumann problem was proposed in \cite{GarabedianSpencer1952} and developed by Morrey \cite{Morrey1958} and Kohn \cite{Kohn1963}. We refer \cite{ChenShawBook} for a detailed discussion. Note that the canonical solutions is the solution with minimal $L^2$-norm, which follows H\"ormander's $L^2$-estimates \cite{Hormander1965}.

We use the second approach called \textit{integral representations}, which yield non-canonical solutions but the expressions can be more explicit. The method was introduced to the $\dbar$-equation by Henkin \cite{Henkin1969} and Grauert \& Lieb \cite{GrauertLieb1971} in the study of strongly pseudoconvex domains. We refer \cite{RangeSCVBook} and \cite{LiebMichelBook} for a general discussion.

We briefly review the estimates for convex domains of finite type below. Studies in complex or real pseudo-ellipsoids were conducted by Range \cite{Range1976}, Diederich-Forn\ae ss-Wiegerinck \cite{DFW1986}, Chen-Krantz-Ma \cite{ChenKrantzMa} and Fleron \cite{Fleron1996}, and in the domain of real-analytic boundaries by Range \cite{Range1977} and Bruna-Castillo \cite{BrunaCastillo}. These are all special cases to general convex domains of finite type. The $\frac1m$-regularity was shown to be optimal by \cite{ChenKrantzMa}.

For the type conditions in convex domains, McNeal \cite{McNealFiniteType} introduced the \textit{$\eps$-extremal basis} and showed the equivalence between the line type and D'Angelo 1-type on convex domains (also see \cite{BoasStraubeFiniteType} for a short proof), and it was later used to show the boundedness of $\dbar$-Neumann solutions in \cite{McNealStein}. McNeal's approach was used by Cumenge \cite{Cumenge1997,Cumenge2001} and Wang \cite{WangStrictType} to obtain estimates of the $\dbar$-equation. For the type where $q\ge2$, Yu \cite{YuMultitype} introduced a different basis from McNeal, called the \textit{$\eps$-minimal basis}, and showed the equivalence of the line $q$-type, D'Angelo $q$-type and Catlin's $q$-type. See also \cite{HeferBases} for the connections between McNeal's $\eps$-extremal basis and Yu's $\eps$-minimal basis.

As mentioned in the beginning, the solution operators on convex domains of finite type are mainly derived from Diederich-Fischer-Forn\ae ss \cite{DFFHolder} with the holomorphic supported function constructed in \cite{DFSupport}. The H\"older estimate $L^\infty\to C^{\frac1m}$ was obtained \cite{DFFHolder}, and the anisotropic version was later obtained by Fischer \cite{FischerNonisotropic} and Diederich-Fischer \cite{DiederichFischerNonisotropic} on lineally convex domains of finite type. The $L^p$-estimate was first obtained by Fischer \cite{FischerLp} and later there were partial progress by \cite{AhnCho,Ahn} and by \cite{CharpentierDupain2018}. The $C^k\to C^{k+\frac1m}$ estimate was achieved by Alexandre \cite{AlexandreCk}. The notion of multitype was used by Hefer \cite{HeferMultitype} and showed that on $(0,q)$-forms the $\frac1m$-estimate could be improved to the $\frac1{m_q}$-estimate automatically if one consider the multitype of the domain, see also \cite{Alexandre2011}. 

For the convex domains of infinite type, there are studies by Range \cite{Range1992}, Forn\ae ss-Lee-Zhang \cite{FornaessLeeZhang}, Ha-Khanh-Raich \cite{HaKhanhRaich} and recently by Ha
\cite{Ha2019,Ha2021}. Some of their constructions also used integral representations. It should be possible to improve their results using the Rychkov's extension operator, as applied in the present study.

For general finite type domains that is not necessarily convex, it is known that in $\C^2$ one can have optimal $\frac1m$-estimate in general, see \cite{FeffermanKohn1988} and \cite{ChangeNagelStein1992}. See also \cite{Range1990} for an approach using integral representations.
In higher dimensions Catlin \cite{Catlin83,Catlin84,Catlin87} showed that the canonical solution has boundedness $L^2\to H^{\eps,2}$ for some $\eps>0$ if and only if the domain has finite D'Angelo type. The general lower bounds for $\eps$ with respect to the type $m$ remain highly unknown. For more discussions between finite types and subelliptic estimates we refer the reader to the survey by \cite{DAngeloKohnFiniteTypeSurvey}.


\medskip
The paper is organized as follows. In Section~\ref{Section::Goal}, we recall the construction of Diederich-Forn\ae ness support function and the corresponding integral kernel, and we introduce the tangential and vertical projections for  $d\bar\zeta$-forms. In Section~\ref{Section::Basis} we review the $\eps$-minimal basis and prove Theorem~\ref{Thm::WeiEst}. In Section~\ref{Section::Space}, we summarize the properties of function spaces and Rychkov's construction of extension operator. In Section~\ref{Section::LPThm} we prove the boundedness of tangential commutator, Proposition~\ref{Prop::LPThm::TangComm}, and strong Hardy-Littlewood lemma, Proposition~\ref{Prop::LPThm::HLLem}. In Section~\ref{Section::PfThm}, we complete the proof of Theorems \ref{MainThm} and \ref{StrMainThm} using Theorem~\ref{Thm::WeiEst} and Corollary~\ref{Cor::LPThm::BddDom}. In Section~\ref{Section::SPsiCX}, we apply the proof technics of  Theorems \ref{MainThm} and \ref{StrMainThm} to the case of strongly pseudoconvex domains and prove Theorem~\ref{Thm::SPsiCX}.

\smallskip
In the following we use $\N=\{0,1,2,\dots\}$ as the set of non-negative integers. 

On a complex coordinate system $(z_1,\dots,z_n)$, $\partial^\alpha_z$ denotes the derivative on holomorphic part $\frac{\partial^{|\alpha|}}{\partial z_1^{\alpha_1}\dots\partial z_n^{\alpha_n}}$ where $\alpha\in\N^n$, and $D^\beta_z$ denotes the total derivative $\frac{\partial^{|\beta|}}{\partial z_1^{\beta_1}\dots\partial z_n^{\beta_n}\partial\bar z_1^{\beta_{n+1}}\dots\partial\bar z_n^{\beta_{2n}}} $ where $\beta\in\N^{2n}$.

We use the notation $x \lesssim y$ to denote that $x \leq Cy$, where $C$ is a constant independent of $x,y$, and $x \approx y$ for ``$x \lesssim y$ and $y \lesssim x$''. We use $x\lesssim_\eps y$ to emphasize the dependence of $C$ on the parameter $\eps$.

For a function class $\Xs$ and a domain $U$, we use $\Xs(U)=\Xs(U;\C)$ as the space of complex-valued functions in $U$ that have regularity $\Xs$. We use $\Xs(U;\R)$ if the functions are restricted to being real-valued. We use $\Xs(U;\wedge^{p,q})$ for the space of (complex-valued) $(p,q)$-forms on $U$ that have regularity $\Xs$.

In the following, $U_1=\{-T_1<\varrho<T_1\}$ denotes a fixed neighborhood of $b\Omega$ (see Lemma~\ref{Lem::Goal::Glue}).

\subsection*{Acknowledgement} The author would like to thank Xianghong Gong and Kenneth Koenig for the valuable supports and comments.

\section{The Construction of Homotopy Formulas}\label{Section::Goal}

Let $\Omega\subset\C^n$ be a smooth convex domain that has finite type $m$. We fix a defining function $\varrho\in C^\infty(\C^n;\R)$ of $\Omega$ (that is $\Omega=\{\varrho<0\}$ and $\nabla\varrho(\zeta)\neq0$ for all $\zeta\in b\Omega$) such that:
\begin{enumerate}[label=(\thesection.\arabic*)]
    \item\label{Item::Goal::DefFunAssump} There is a $T_0>0$, such that for every $-T_0<t<T_0$, the domain $\Omega_t:=\{\zeta:\varrho(\zeta)<t\}$ is convex and has the same complex affine $q$-type (see Definition~\ref{Defn::Basis::Type}) to $\Omega=\Omega_0$, for all $1\le q\le n$. \setcounter{equation}{\value{enumi}}
\end{enumerate}

This can be achieved by assuming $0\in\Omega$ (which can be done by passing to a translation) and require $\varrho$ to have homogeneity condition (see also \cite[(2.1)]{HeferMultitype}):
\begin{equation}\label{Eqn::Goal::DefFun}
    \varrho(\lambda\zeta)+1=\lambda(\varrho(\zeta)+1)=\lambda,\quad\text{ for all }\zeta\in b\Omega\text{ and all }\lambda\in\R_+\text{ closed to }1.
\end{equation}

In this setting $\Omega_t$ is simply the dilation of $\Omega$, which in particular shares the same (line, D'Angelo, or Catlin) type conditions.

We let $U_0:=\{\zeta:|\varrho(\zeta)|<T_0\}$ be a corresponding open neighborhood of $b\Omega$.
 


We recall the Diederich-Forn\ae ss holomorphic support function $S\in C^\infty(\C^n\times U_0;\C)$ from \cite{DFSupport}:

Fix suitably large constants $M_1,M_2,M_3>1$. For each $\zeta\in U_0$ we take a unitary matrix $\Phi(\zeta)\in \C^{n\times n}$ that is locally defined and smoothly depend on $\zeta$,
such that $\Phi(\zeta)\frac{\dbar\varrho(\zeta)}{|\dbar\varrho(\zeta)|}=[1,0,\dots,0]^\intercal$. We define\footnote{For a complex matrix $A$, we use $A^\dagger=\overline A^\intercal$ for the conjugate transpose. Thus $A^\dagger=A^{-1}$ when $A$ is unitary.} for $\zeta\in U_0$ and $w=[w_1,\dots,w_n]^\intercal\in \C^n(\simeq\C^{n\times 1})$:
\begin{gather}
    \label{Eqn::Goal::SZeta}
    S^{\Phi(\zeta)}_\zeta(\omega):=3\omega_1+M_1\omega_1^2-\tfrac1{M_2}\sum_{j=1}^{m/2}M_3^{4^j}(-1)^j\sum_{|\alpha|=2j;\alpha_1=0}\frac{\partial^\alpha \varrho(\zeta+\Phi(\zeta)^\dagger\cdot w)}{\partial w^\alpha}\bigg|_{w=0}\cdot\frac{\omega^\alpha}{\alpha!};
    \\
    \label{Eqn::Goal::S}
    S(z,\zeta):=S^{\Phi(\zeta)}_\zeta\big(\Phi(\zeta)(z-\zeta)\big)
    \quad z\in\Omega.
\end{gather}

\begin{lem}The $S(z,\zeta)$ in \eqref{Eqn::Goal::S} with suitable constants $M_1,M_2,M_3>0$ satisfies the following:
\begin{enumerate}[(i)]
    \item {\normalfont(\cite[Lemma~2.1]{DFFHolder})} $S(z,\zeta)$ is a smooth function, holomorphic in $z$, and does not depend on the choice of the family $\{\Phi(\zeta):\zeta\in U_0\}$.
    \item{\normalfont(\cite[Corollary~2.4]{DFSupport} and \cite[Theorem~2.1]{FischerLp})} There is an $M_4>1$ such that
    \begin{equation}\label{Eqn::Goal::SBdd}
        \re S(z,\zeta)\le M_4\cdot\max(0,\varrho(z)-\varrho(\zeta))-\tfrac1{M_4}|z-\zeta|^m,\quad\forall \zeta\in U_0,\quad z\in \Omega\cup U_0\text{ such that }|z-\zeta|<\tfrac1{M_4}.
    \end{equation}
\end{enumerate}
\end{lem}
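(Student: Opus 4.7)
My plan is to treat the two parts of the lemma separately, following the constructions of Diederich--Forn\ae ss and its refinement by Diederich--Fischer--Forn\ae ss and Fischer, but to recast the proof so that the current notation $\Phi(\zeta)$ and $S^{\Phi(\zeta)}_\zeta$ is transparent. Throughout, the intuition is that $S^{\Phi(\zeta)}_\zeta(\omega)$ is simply the defining function $\varrho$ Taylor-expanded at $\zeta$ in the $w$-coordinates adapted to $\dbar\varrho(\zeta)$, truncated to the purely tangential (complex, non-$w_1$) terms and weighted so that its real part dominates $\varrho$ to high order.

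For part (i), the plan is to show that the polynomial $\omega\mapsto S^{\Phi(\zeta)}_\zeta(\omega)$ is intrinsically defined (independent of unitary frame) once one restricts to $\omega$ lying in the complex tangent space $T^{1,0}_\zeta b\Omega_{\varrho(\zeta)}$, augmented by the normal direction $\Phi(\zeta)^\dagger e_1$. The point is that $\Phi(\zeta)^\dagger e_1 = \dbar\varrho(\zeta)/|\dbar\varrho(\zeta)|$ is independent of $\Phi(\zeta)$, and the restriction $\alpha_1=0$ in \eqref{Eqn::Goal::SZeta} picks out exactly those tangential $j$-th derivatives of $\varrho$ that are invariant under unitary transformations fixing the normal direction. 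Plugging in $\omega=\Phi(\zeta)(z-\zeta)$ then kills the frame dependence of the tangential pieces, while the term $3\omega_1$ becomes $3\langle z-\zeta,\partial\varrho(\zeta)\rangle/|\partial\varrho(\zeta)|$, which is manifestly intrinsic. Holomorphicity in $z$ is then automatic since $\omega=\Phi(\zeta)(z-\zeta)$ depends linearly (hence holomorphically) on $z$ and $S^{\Phi(\zeta)}_\zeta$ is polynomial in $\omega$. Smoothness in $(z,\zeta)$ follows from smoothness of $\varrho$ and local smoothness of $\Phi$, combined with frame independence to patch local choices into a global smooth function.

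For part (ii), the plan is the standard support function estimate. Writing the Taylor expansion of $\varrho(z)-\varrho(\zeta)$ around $\zeta$ in the $w$-coordinates, one decomposes
\begin{equation*}
    \varrho(z)-\varrho(\zeta)= 2\re\langle z-\zeta,\partial\varrho(\zeta)\rangle + \re(\text{holomorphic tangential polynomial}) + O(|z-\zeta|^{m+1}) + \text{terms with $\bar w$}.
\end{equation*}
Using the finite-type hypothesis, the tangential holomorphic Taylor polynomial of $\varrho$ along $b\Omega_{\varrho(\zeta)}$ in directions transverse to $\partial\varrho(\zeta)$ is bounded from below by $c|z-\zeta|^m$ on the boundary, by convexity and the type condition (this is essentially the content of \cite{DFSupport}). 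The weights $M_3^{4^j}$ with alternating signs and the prefactor $1/M_2$ are engineered so that the real part of the constructed polynomial cancels these tangential terms while preserving the finite-type lower bound; the term $M_1\omega_1^2$ is added so that $\re S$ is strictly decreasing into $\Omega$ along the normal direction. Choosing $M_1,M_2,M_3$ sufficiently large yields
\begin{equation*}
    \re S(z,\zeta)\le \varrho(z)-\varrho(\zeta)+M_1\cdot(\varrho(z)-\varrho(\zeta))^+-\tfrac1{M_4}|z-\zeta|^m
\end{equation*}
for $|z-\zeta|<1/M_4$, which gives \eqref{Eqn::Goal::SBdd}.

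The main obstacle is the careful bookkeeping in part (ii): one has to match the $(-1)^j$-alternating, $M_3^{4^j}$-weighted tangential polynomial with the high-order Taylor error of $\varrho$, and verify that convexity together with finite type still yields the $\tfrac1{M_4}|z-\zeta|^m$ gain after all cancellations. Since this is exactly \cite[Corollary~2.4]{DFSupport} with the extra control of the $\max(0,\varrho(z)-\varrho(\zeta))$-term from \cite[Theorem~2.1]{FischerLp}, I would simply cite those results after checking that the normalization adopted here (in particular the unitary frame $\Phi$ and the coefficient $3$ in $3\omega_1$) agrees with theirs up to a harmless rescaling of $M_4$.
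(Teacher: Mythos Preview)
The paper does not give a proof of this lemma at all: it simply states the result with citations to \cite[Lemma~2.1]{DFFHolder} for part (i) and to \cite[Corollary~2.4]{DFSupport} and \cite[Theorem~2.1]{FischerLp} for part (ii), and moves on. Your proposal is therefore not so much an alternative proof as an expository sketch of what those cited arguments do, and in that capacity it is broadly accurate: the frame-independence in (i) really does come from the fact that $\Phi(\zeta)^\dagger e_1$ is intrinsic and the $\alpha_1=0$ restriction makes the tangential sum a unitary invariant of the complex tangent space, while (ii) is exactly the Diederich--Forn\ae ss support function estimate plus Fischer's two-sided version allowing $\varrho(z)>\varrho(\zeta)$.

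One small caution on your sketch of (ii): the role of the weights $M_3^{4^j}$ and the alternating sign is not to ``cancel'' the tangential Taylor terms of $\varrho$ but rather to ensure, via an inductive argument over the degree $2j$, that the real part of the weighted holomorphic polynomial dominates the full (non-pluriharmonic) tangential Taylor expansion of $\varrho$ from above by a negative multiple of $|w'|^m$; the convexity is what guarantees that the lowest-order nonvanishing tangential term has the right sign to start this induction. If you intend to write this out rather than cite, that inductive mechanism is the actual content and your current phrasing elides it. Since the paper itself is content to cite, your final sentence (defer to \cite{DFSupport,FischerLp} after checking normalizations) is already sufficient and matches the paper's approach.
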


As mentioned in \cite{AlexandreCk}, $S(z,\zeta)$ may have zeroes in $\big(\Omega\times (U_0\backslash\overline\Omega)\big)\cap\{|z-\zeta|\ge\frac1{M_4}\}$. We can take the following standard modification.
\begin{lem}\label{Lem::Goal::Glue}
Let $S\in C^\infty_\loc(\C^n\times U_0;\C)$ be as in \eqref{Eqn::Goal::S}. There are a $T_1\in(0,T_0]$ associated with the neighborhood $U_1:=\{\zeta:|\varrho(\zeta)|<T_1\}$ of $b\Omega$, a constant $M_5>1$ and a $\widehat S\in \Co^\infty(\Omega\times U_1;\C)$ such that
\begin{enumerate}[(i)]
    \item $\widehat S(\cdot,\zeta)$ is holomorphic in $z\in\Omega$ for all $\zeta\in U_1$.
    \item\label{Item::Goal::Glue::2} $|\widehat S(z,\zeta)|\ge\frac1{M_5}$ for all $(z,\zeta)\in \Omega\times(U_1\backslash\overline\Omega)$ such that $|z-\zeta|\ge\frac1{2M_4}$.
    \item\label{Item::Goal::Glue::A} There is a $A\in \Co^\infty(\Omega\times U_1;\C)$ such that $\widehat S(z,\zeta)=A(z,\zeta)\cdot S(z,\zeta)$ and $\frac1{M_5}\le |A(z,\zeta)|\le M_5$ for all $(z,\zeta)\in \Omega\times(U_1\backslash\overline\Omega)$ such that $|z-\zeta|\le\frac1{2M_4}$.
\end{enumerate}
\end{lem}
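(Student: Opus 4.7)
The plan is to modify the support function $S$ multiplicatively by a factor that is smooth in $(z,\zeta)$, holomorphic in $z$, and designed to cancel potential zeros of $S$ in the far-from-diagonal region $\{|z-\zeta|\ge\tfrac{1}{2M_4}\}$ without disturbing the near-diagonal structure. The construction combines a smooth radial cutoff with a $\overline\partial_z$-correction in the spirit of the standard H\"ormander-type gluing.

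\smallskip
Shrink $T_1\in(0,T_0]$ so that for $(z,\zeta)\in\Omega\times(U_1\setminus\overline\Omega)$ with $|z-\zeta|\le\tfrac{3}{4M_4}$, estimate \eqref{Eqn::Goal::SBdd} yields $\re S(z,\zeta)\le-|z-\zeta|^m/M_4<0$ (the $\max$-term vanishes because $\varrho(z)<0<\varrho(\zeta)$). Thus $S$ takes values in the open left half-plane, where the principal branch of $\log$ is single-valued. Pick $\chi\in C^\infty([0,\infty))$ with $\chi\equiv1$ on $[0,\tfrac{1}{2M_4}]$ and $\supp\chi\subset[0,\tfrac{3}{4M_4})$ and set $g(z,\zeta):=\chi(|z-\zeta|)\log S(z,\zeta)$ on the set $\{\chi(|z-\zeta|)>0\}$, and $g:=0$ elsewhere on $\Omega\times(U_1\setminus\overline\Omega)$. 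Then $g$ is jointly smooth, and $\alpha:=\overline\partial_z g=(\overline\partial_z\chi(|z-\zeta|))\log S$ is jointly smooth with support in the annular shell $\tfrac{1}{2M_4}\le|z-\zeta|\le\tfrac{3}{4M_4}$, on which $|\log S|$ is uniformly bounded in $(z,\zeta)$.

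\smallskip
For each $\zeta$, solve $\overline\partial_z u(\cdot,\zeta)=\alpha(\cdot,\zeta)$ on the bounded smooth pseudoconvex domain $\Omega$ via H\"ormander's $L^2$-theorem (or, equivalently, by an explicit Bochner-Martinelli integral, since $\alpha$ has compact support in $z$ uniformly away from the diagonal). Joint smoothness and a uniform bound $|u(z,\zeta)|\le C_0$ follow by standard interior regularity. Put $\widehat S(z,\zeta):=e^{g(z,\zeta)-u(z,\zeta)}$, so $\overline\partial_z\widehat S=0$ automatically. Where $\chi(|z-\zeta|)=1$ one has $e^g=S$, hence $\widehat S=e^{-u}\cdot S$ with $A:=e^{-u}$ bounded in modulus by $e^{\pm C_0}$, establishing (iii); where $\chi(|z-\zeta|)=0$ one has $e^g=1$, hence $|\widehat S|=|e^{-u}|\ge e^{-C_0}$; in the transition shell one has $|\widehat S|=|e^{-u}|\cdot|S|^{\chi(|z-\zeta|)}$ with $|S|$ uniformly bounded above and below, so $|\widehat S|$ remains bounded below there. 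Taking $M_5$ as the maximum of the resulting bounds yields (ii).

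\smallskip
Finally, extend $\widehat S$ from $\Omega\times(U_1\setminus\overline\Omega)$ across $b\Omega$ to $\Omega\times U_1$ by a partition of unity in $\zeta$ alone (which preserves $z$-holomorphicity), gluing with any convenient smooth definition on $\Omega\times(\Omega\cap U_1)$; since (ii) and (iii) concern only $\zeta\in U_1\setminus\overline\Omega$, the interior choice is immaterial, and $A$ is then defined on $\Omega\times U_1$ as $\widehat S/S$ away from the diagonal and extended smoothly across. The main technical point is the joint-smooth uniform bound on the $\overline\partial$-correction $u$, which is the standard consequence of the compact support and smoothness of $\alpha$ combined with H\"ormander-type estimates.
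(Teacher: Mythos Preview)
Your construction is essentially the same as the paper's (multiply $S$ by a holomorphic-in-$z$ exponential factor obtained by solving a $\overline\partial_z$-equation for the cutoff of $\log S$). The difference is where you solve the $\overline\partial$-equation, and this creates a genuine gap.

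You solve $\overline\partial_z u(\cdot,\zeta)=\alpha(\cdot,\zeta)$ on $\Omega$ itself and invoke ``H\"ormander's $L^2$-theorem \dots\ interior regularity'' for a uniform bound $|u|\le C_0$. But interior regularity does not give bounds up to $b\Omega$, and you need exactly that for (ii) and (iii). Moreover, your fallback suggestion of the Bochner--Martinelli integral does not apply: the support of $\alpha(\cdot,\zeta)$ is contained in the annulus $\{\tfrac{1}{2M_4}\le|z-\zeta|\le\tfrac{3}{4M_4}\}$, which for $\zeta$ close to $b\Omega$ meets $b\Omega$, so the data is \emph{not} compactly supported in $\Omega$. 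Without a boundary estimate for $\overline\partial$ on $\Omega$ (which is precisely what the whole paper is trying to establish), the uniform bound on $u$ is unjustified.

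The paper avoids this by solving on a strictly \emph{larger} convex domain $\Omega_{\delta_1}\Supset\overline\Omega$: one first checks (via \eqref{Eqn::Goal::SBdd} with $\delta_1$ small) that $\re S<0$ persists for $z\in\Omega_{\delta_1}$ in the transition annulus, so the form extends smoothly to $\Omega_{\delta_1}\times U_1'$; then one applies a known $\Co^\infty\to\Co^\infty$ solution operator on a convex (or strongly convex) domain, and restriction to $\overline\Omega\Subset\Omega_{\frac12\delta_1}$ gives the uniform $L^\infty$ bound for free as an interior estimate on the larger domain. You should make this enlargement explicit.

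A minor point: the principal branch of $\log$ has its cut along the negative real axis and is therefore \emph{not} single-valued on the left half-plane. Use $\log(-S)$ with $-S$ in the right half-plane (as the paper does), or specify another branch.
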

\begin{rem}\label{Rmk::Goal::GlueRmk}
Lemma~\ref{Lem::Goal::Glue} was not mentioned in \cite{DFFHolder}, which might make a gap in estimating the last integral in \cite[Section~6]{DFFHolder}. There is a different modification $\widehat S(z,\zeta)$ in \cite[Section~6]{HeferMultitype} but it may not work in our situation.
\end{rem}
\begin{proof}We use the same construction from \cite[Theorem~2.4.3]{HenkinLeitererBook}.

Let $\delta_1:=\min\big(T_0,(2M_4)^{-m-2}(1+\|\nabla\varrho\|_{L^\infty(U_0)})^{-1}\big)\in(0,1)$. By \eqref{Eqn::Goal::SBdd}  we see that
\begin{equation*}
    -\re S(z,\zeta)>\delta_1,\quad\text{whenever }\varrho(z),\varrho(\zeta)\in(-\delta_1,\delta_1)\text{ and }\tfrac1{2M_4}\le|z-\zeta|\le\tfrac1{M_4}.
\end{equation*}

Let $\chi_1\in C_c^\infty((-\delta_1,\delta_1);[0,1])$ 
be such that $\chi_1\big|_{[-\frac12\delta_1,\frac12\delta_1]}\equiv1$.  
Let $U_1':=\{\zeta:|\varrho(\zeta)|<\delta_1\}$, we define a $(0,1)$-form $f(z,\zeta)=\sum_{j=1}^nf_j(z,\zeta)d\bar z_j$ for $z\in\Omega_{\delta_1}=\{\varrho<\delta_1\}$ and $\zeta\in U_1'$ by
\begin{equation*}
    f(z,\zeta):=\begin{cases}\dbar_z\big(\chi_1(|z-\zeta|)\cdot\log(- S(z,\zeta))\big),&\text{if }\tfrac1{2M_4}\le|z-\zeta|\le\tfrac1{M_4}\\0,&\text{otherwise}\end{cases}.
\end{equation*}
Since $S(z,\zeta)$ is smooth and holomorphic in $z$, we see that $f$ is bounded 
smooth in the domain $\Omega_{\delta_1}\times U_1'$, and $f(\cdot,\zeta)$ is $\dbar$-closed for each $\zeta\in U_1'$.

Therefore, either by applying \cite[Theorem~11.2.7 and Lemma~11.2.6]{ChenShawBook} since $\Omega_{\delta_1}$ is convex, or by applying \cite[Theorem~2.3.5]{HenkinLeitererBook} since we can find a strongly convex domain $\widetilde\Omega$ such that $\Omega_{\frac12\delta_1}\subset \widetilde\Omega\subset\Omega_{\delta_1}$, there exists a continuous solution operator $T:\Co^\infty(\Omega_{\delta_1};\wedge^{0,1})\cap\ker\dbar\to \Co^\infty(\Omega_{\frac12\delta_1})$ such that $\dbar Tg=g$ in $\Omega_{\frac12\delta_1}$ for every bounded smooth $\dbar$-closed form $g$ in $\Omega_{\delta_1}$.

Now for $z\in\Omega_{\frac12\delta_1}$ and $\zeta\in U_1'$, we define
\begin{gather*}
    u(z,\zeta):=(Tf(\cdot,\zeta))(z);\qquad A(z,\zeta):=\exp(-u(z,\zeta));
    \\
    \widehat S(z,\zeta):=\begin{cases}A(z,\zeta)S(z,\zeta),&\text{if }|z-\zeta|\le\tfrac1{2M_4}
    \\
    -\exp\big(\chi_1(|z-\zeta|)\log(-S(z,\zeta))-u(z,\zeta)\big),&\text{if }|z-\zeta|\ge\tfrac1{2M_4}
    \end{cases}.
\end{gather*}
We see that $\widehat S:\Omega_{\frac12\delta_1}\times U_1'\to\C$ is holomorphic in $z$ and is bounded from below in $\{|z-\zeta|\ge\frac1{2M_4}\}$. 

Taking $T_1:=\frac12\delta_1$, $U_1:=\{|\varrho|<T_1\}$ and $M_5:=\max\Big(\frac1{\delta_1}\cdot\exp\big(\sup\limits_{\Omega_{T_1}\times U_1}u\big),\|S\|_{L^\infty(\Omega_{T_1}\times U_1)}\cdot\exp\big(\sup\limits_{\Omega_{T_1}\times U_1}(-u)\big)\Big)$, we get the estimates in \ref{Item::Goal::Glue::2} and \ref{Item::Goal::Glue::A}, completing the proof.
\end{proof}

We now use $\widehat S(z,\zeta)$ to define the corresponding Leray map $\widehat Q=(\widehat Q_1,\dots,\widehat Q_n)\in C^\infty(\Omega\times U_1;\C^n)$ by the following: for $\zeta\in U_1$,
\begin{equation}\label{Eqn::Goal::HatQ}
    \begin{aligned}
    \widehat S^{\Phi(\zeta)}_\zeta(w)&:=\widehat S(\zeta+\Phi(\zeta)^\dagger\cdot w,\zeta);& \widehat Q_{\zeta,j}^{\Phi(\zeta)}(w)&\textstyle:=\int_0^1\frac{\partial\widehat S^{\Phi(\zeta)}_\zeta}{\partial w_j}(tw)dt,\quad 1\le j\le n;
    \\
    \widehat Q_\zeta^{\Phi(\zeta)}(w)&:=[\widehat Q_{\zeta,1}^{\Phi(\zeta)}(w),\dots,\widehat Q_{\zeta,n}^{\Phi(\zeta)}(w)]^\intercal;&\widehat Q(z,\zeta)&:=\Phi(\zeta)^\intercal\cdot \widehat Q_\zeta^{\Phi(\zeta)}\big(\Phi(\zeta)\cdot(z-\zeta)\big).
    \end{aligned}
\end{equation}
By the same argument to \cite[Lemma~2.1]{DFSupport} $\widehat Q$ does not depend on the choice of unitary maps $\{\Phi(\zeta)\}$. In fact we have $\widehat Q_j(z,\zeta)=\int_0^1\frac{\partial\widehat S}{\partial z_j}(\zeta+t(z-\zeta),\zeta)dt$, and thus $\widehat S(z,\zeta)=\sum_{j=1}^n\widehat Q_j(z,\zeta)\cdot(z_j-\zeta_j)$.

We now identify the vector-valued function $\widehat Q(z,\zeta)$ with the 1-form $\sum_{j=1}^n\widehat Q_j(z,\zeta)d\zeta_j$ and we denote $b(z,\zeta):=\sum_{j=1}^n(\bar\zeta_j-\bar z_j)d\zeta_j$. The following notations of differential forms on $(z,\zeta)\in \Omega\times U_1$ are adapted from \cite{ChenShawBook}: in the following $\dbar=\dbar_{z,\zeta}$,
\begin{gather}
    \label{Eqn::Goal::DefB}
    B(z,\zeta):=\frac{b\wedge(\dbar b)^{n-1}}{(2\pi i)^n|z-\zeta|^{2n}}=:\sum_{q=0}^{n-1}B_q(z,\zeta);
    \\
    \label{Eqn::Goal::DefK}
    K(z,\zeta):=\frac{ b\wedge \widehat Q}{(2\pi i)^n}\wedge\sum_{k=1}^{n-1}(-1)^k\frac{(\dbar b)^{n-1-k}\wedge(\dbar \widehat Q)^{k-1}}{|z-\zeta|^{2(n-k)}\widehat S^k}=:\sum_{q=0}^{n-2}K_q(z,\zeta).
\end{gather}
Here $B$ is a $(n,n-1)$ form where $B_q$ is the component  that has degree $(0,q)$ in $z$ and $(n,n-1-q)$ in $\zeta$; $K$ is a $(n,n-2)$ form where $K_q$ is the component that has degree $(0,q)$ in $z$ and $(n,n-2-q)$ in $\zeta$.

\begin{lem}\label{Lem::Goal::HomotopyFormula}

Let $\Ec:\Co^\infty(\Omega)\to C^1_c(\Omega\cup U_1)$ be an extension operator such that $\supp \Ec f\Subset \Omega\cup U_1$ for all functions $f\in\Co^\infty(\Omega)$. Then the following integral is pointwisely defined:
\begin{equation}\label{Eqn::Goal::HomotopyOP}
    \Hc_qf(z):=\int_{\Omega\cup U_1} B_{q-1}(z,\cdot)\wedge \Ec f+\int_{U_1\backslash\overline\Omega}K_{q-1}(z,\cdot)\wedge[\dbar,\Ec]f,\quad 1\le q\le n,\quad f\in \Co^\infty(\Omega;\wedge^{0,q}),\quad z\in\Omega.
\end{equation}
Moreover $f=\dbar H_qf+H_{q+1}\dbar f$ for all $f\in \Co^\infty(\Omega;\wedge^{0,q})$.
\end{lem}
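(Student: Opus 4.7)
The starting point is the classical Bochner-Martinelli-Koppelman identity, which says that for any compactly supported $C^1$ form $g$ on $\C^n$ and any $z$ in its domain,
\begin{equation*}
    g(z) = \dbar_z \int B_{q-1}(z,\cdot) \wedge g + \int B_q(z,\cdot) \wedge \dbar g
\end{equation*}
(upon taking the $(0,q)$-component in $z$). Apply this to $g = \Ec f$, which is compactly supported in $\Omega \cup U_1$ by hypothesis, and decompose $\dbar \Ec f = \Ec(\dbar f) + [\dbar, \Ec]f$. The commutator is supported in $U_1 \setminus \overline\Omega$, since on $\Omega$ one has $\Ec f = f$ and hence $\dbar \Ec f = \dbar f = \Ec(\dbar f)$ there; by continuity $[\dbar,\Ec]f$ also vanishes on $b\Omega$. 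For $z \in \Omega$ this yields
\begin{equation*}
    f(z) = \dbar_z\!\!\int_{\Omega \cup U_1}\!\! B_{q-1} \wedge \Ec f + \int_{\Omega \cup U_1}\!\! B_q \wedge \Ec(\dbar f) + \int_{U_1 \setminus \overline\Omega}\!\! B_q \wedge [\dbar, \Ec]f,
\end{equation*}
in which the first two summands are precisely the Bochner-Martinelli parts of $\dbar_z H_q f(z)$ and $H_{q+1}(\dbar f)(z)$.

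The remaining task is to rewrite the last summand as the $K$-parts of $\dbar H_q f + H_{q+1}(\dbar f)$. For this, invoke the Cauchy-Fantappi\'e-Koppelman kernel identity
\begin{equation*}
    B_q(z,\zeta) = \dbar_z K_{q-1}(z,\zeta) + \dbar_\zeta K_q(z,\zeta), \qquad (z,\zeta) \in \Omega \times (U_1 \setminus \overline\Omega),
\end{equation*}
which holds because $\widehat Q(z,\zeta)$ is holomorphic in $z$ (so the ``$B(\widehat Q/\widehat S)$'' term on the right-hand side of the general Koppelman formula has pure $(0,0)$ bidegree in $z$ and does not contribute when $q \ge 1$), and because $\widehat S(z,\zeta) \ne 0$ on $\Omega \times (U_1 \setminus \overline\Omega)$ by Lemma~\ref{Lem::Goal::Glue} together with \eqref{Eqn::Goal::SBdd}. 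Pull $\dbar_z$ outside the integral in the first piece (it commutes with integration against $\zeta$), and apply Stokes to the second piece: since $K_q$ has $\zeta$-bidegree $(n, n-q-2)$ and $[\dbar,\Ec]f$ has $\zeta$-bidegree $(0, q+1)$, a type count gives $d_\zeta(K_q \wedge [\dbar,\Ec]f) = \dbar_\zeta K_q \wedge [\dbar,\Ec]f + K_q \wedge \dbar_\zeta [\dbar,\Ec]f$, and the boundary contribution vanishes on $b\Omega$ by the continuity argument above and on a neighborhood of $bU_1$ by the compact support of $\Ec f$ and $\Ec(\dbar f)$. Finally, the elementary consequence of $\dbar^2 = 0$,
\begin{equation*}
    [\dbar, \Ec]\dbar f = \dbar(\Ec \dbar f) - \Ec(\dbar^2 f) = \dbar \Ec \dbar f = -\dbar[\dbar,\Ec]f,
\end{equation*}
identifies the remaining integrand with $K_q \wedge [\dbar,\Ec](\dbar f)$. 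Collecting the four resulting terms produces exactly $f = \dbar H_q f + H_{q+1}(\dbar f)$.

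Pointwise definedness of the integrals in \eqref{Eqn::Goal::HomotopyOP} is routine: $B_{q-1}(z,\zeta)$ has a locally integrable $|z-\zeta|^{-(2n-1)}$ singularity at the diagonal, and for $K_{q-1}(z,\cdot)$ the integration is over $\zeta \in U_1 \setminus \overline\Omega$ while $z \in \Omega$, so $|z-\zeta|$ stays bounded below on compact subsets of $\Omega$ and $|\widehat S(z,\zeta)|$ is bounded below by Lemma~\ref{Lem::Goal::Glue}, rendering the integrand continuous in $\zeta$ for each fixed $z$. The main (and essentially only non-mechanical) obstacle is verifying the Koppelman kernel identity with the correct sign convention matched to the definitions in \eqref{Eqn::Goal::DefB}--\eqref{Eqn::Goal::DefK}; once this identity is in hand the remainder is bookkeeping with Leibniz, Stokes, and the vanishing of $[\dbar,\Ec]f$ on the boundary of the region of integration.
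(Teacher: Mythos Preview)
Your proof is correct and follows precisely the standard Koppelman-formula argument that the paper defers to via the citations \cite[Proposition~2.1]{GongHolderSPsiCXC2} and \cite[Theorem~11.2.2]{ChenShawBook}; the paper gives no independent proof, only these references together with the remark that $K=\Omega^{01}(b,\widehat Q)$ in their notation. Your outline --- BMK applied to $\Ec f$, splitting $\dbar\Ec f$ via the commutator, the Koppelman kernel identity $B_q=\dbar_zK_{q-1}+\dbar_\zeta K_q$ (valid for $q\ge1$ since $\widehat Q$ is holomorphic in $z$), Stokes on $U_1\setminus\overline\Omega$ with vanishing boundary terms, and the commutator relation $\dbar[\dbar,\Ec]f=-[\dbar,\Ec]\dbar f$ --- is exactly what those references contain.
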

See \cite[Proposition~2.1]{GongHolderSPsiCXC2} or \cite[Theorem~11.2.2]{ChenShawBook} for a proof. For both references we have correspondence of notations $K=\Omega^{01}(b,\widehat Q)$. Here for integration of bi-degree forms we use the convention $\int_xu(x,y)dx^I\wedge dy^J:=(\int_xu(x,y)dx^I)dy^J$. Note that this is different from \cite[Section~III.1.9]{RangeSCVBook}.

Here Lemma~\ref{Lem::Goal::HomotopyFormula} does not guarantee that $f=\dbar H_qf+H_{q+1}\dbar f$ holds for distributions, since $\Ec$ may not be defined on the space of distributions. 
\begin{defn}\label{Defn::Goal::DefT}
    We construct the operator $\Hc_q$ from \eqref{Eqn::Goal::HomotopyOP} by taking $\Ec$ to be Rychkov's extension operator given in Definition~\ref{Defn::Space::ExtOmega}.
\end{defn}

Note that the Rychkov's extension operator is defined on the space $\Ss'(\Omega)$ of all extensible distributions. 

The boundedness of $\Hc_q$ follows from the weighted estimates of the derivatives of the tangential part and the vertical part of $K_{q-1}(z,\zeta)$ with respect to $\zeta$-variable:
\begin{defn}\label{Defn::Goal::BotTop}
    Let $\varrho:U_1\to(-T_1,T_1)$ be a defining function of $\Omega$ with non-vanishing gradient and let $b\Omega_t=\{\varrho=t\}$ (for $|t|<T_1$) be as above. Let $1\le p,q\le n$ and $\zeta_0\in U$, we define the $\dbar$-vertical projection $(-)^\bot_{\zeta_0}$ and $\dbar$-tangential projection $(-)^\top_{\zeta_0}$ at $\zeta_0$ to be the following surjective orthonormal projections:
    \begin{gather*}
        \textstyle(-)^\bot_{\zeta_0}:\bigwedge^{p,q}\C^n\twoheadrightarrow\bigwedge^p\C^n\otimes_\C \big(\Span\langle\dbar\varrho(\zeta_0)\rangle\wedge\bigwedge^{q-1}\C^n\big),\quad
        \textstyle(-)^\top_{\zeta_0}:\bigwedge^{p,q}\C^n\twoheadrightarrow\bigwedge^p\C^n\otimes_\C\bigwedge^{q} T^{*0,1}_{\zeta_0}(b\Omega_{\varrho(\zeta_0)}).
    \end{gather*}
    
    For a $(p,q)$-form $f:U_1\to\bigwedge^{p,q}\C^n$ we define $f^\bot(\zeta):=f(\zeta)^\bot_\zeta$ and $f^\top(\zeta):=f(\zeta)^\top_\zeta$ for $\zeta\in U_1$ naturally.
\end{defn} 

Here for a real hypersurface $M\subset\C^n$ and a $\zeta\in M$, $T^{*0,1}_\zeta M:=T^{*0,1}_\zeta\C^n\cap\C T_\zeta^*M$ is the anti-holomorphic cotangent space of $M$ at $\zeta$.

\begin{note}\label{Note::Goal::KTopBot}
For the bidegree form $K(z,\zeta)$, we use $K^\top(z,\zeta)$ and $K^\bot(z,\zeta)$ for the projections with respect to $\zeta$-variable but not to $z$-variable, i.e. $K_q^\top(z,\zeta):=K_q(z,\cdot)^\top(\zeta)$ and $K_q^\bot(z,\zeta):=K_q(z,\cdot)^\bot(\zeta)$ for each $q$.
\end{note}
\begin{rem}
\label{Rmk::Goal::TopBotFacts}
Let $\overline\theta_1,\dots,\overline\theta_n$ be $(0,1)$-forms defined on an open subset $U\subset U_1$, that form an orthonormal frame, such that $\overline\theta_1=\dbar\varrho/|\dbar\varrho|$. Let $(\overline Z_1,\dots,\overline Z_n)$ be the dual basis, which are $(0,1)$ vector fields on $U$. Therefore,
\begin{gather*}
    \textstyle\Span(\overline\theta_2,\dots,\overline\theta_n)=\coprod_{\zeta\in U}T^{*0,1}_\zeta(b\Omega_{\varrho(\zeta)})\ (\subset T^{*0,1}U),\quad\textstyle \Span(\overline Z_2,\dots,\overline Z_n)=\coprod_{\zeta\in U}T^{0,1}_\zeta(b\Omega_{\varrho(\zeta)})\ (\subset T^{0,1}U).
\end{gather*}

We see that $\overline Z_1$ is uniquely determined by $\varrho$ (that does not depend on $(\overline\theta_2,\dots,\overline\theta_n)$) and is globally defined on $U_1$:
\begin{equation}\label{Eqn::Goal::TopBotFacts::Z1}
    \overline Z_1=\frac1{|\dbar\varrho|}\sum_{j=1}^n\frac{\partial\varrho}{\partial\zeta_j}\Coorvec{\overline\zeta_j}.
\end{equation}
Let $f=\sum_{|J|=p,|K|=q}f_{J, K}\theta^J\wedge\overline{\theta}^K$ be a $(p,q)$-form on $U$, where $f_{J,K}=\langle Z_J\wedge \overline Z_K,f\rangle$, we see that
\begin{equation*}
     \textstyle f^\bot=\sum_{|J|=p,|K'|=q-1}f_{J,1K'}\theta^J\wedge\overline\theta_1\wedge\overline{\theta}^{K'},\quad f^\top=\sum_{\substack{|J|=p,|K|=q;\,\min K\ge2}}f_{J,K}\theta^J\wedge\overline{\theta}^K.
\end{equation*}
Therefore, $f^\bot$ and $f^\top$ are still defined when $f$ has distributional coefficients, and we have the following:
\begin{equation}\label{Eqn::Goal::TopBotFacts::BotTopf}
    f^\bot=\overline\theta_1\wedge\iota_{\overline Z_1}f=(f^\bot)^\bot,\quad f^\top=f-f^\bot=(f^\top)^\top.
\end{equation}


Moreover, for a $(p',q')$-form $g$ on $U$, one can see that  
\begin{equation}\label{Eqn::Goal::TopBotFacts::BotTopfg}
    (f\wedge g)^\top=f^\top\wedge g^\top,\qquad f^\bot\wedge g^\bot=0,\qquad\text{ and thus}\quad f^\bot\wedge g=f^\bot\wedge g^\top.
\end{equation}

We leave the proof to the reader.
\end{rem}

The weighted estimates that we need are as follows.
\begin{thm}[Weighted estimates for $K(z,\zeta)$]\label{Thm::WeiEst}
Let $\dist(w):=\dist(w,b\Omega)$.
 Let $1\le q\le n$. Assume $\Omega$ has $q$-type $m_q<\infty$. Let $r_q:=(n-q+1)\cdot m_q+2q$ and $\gamma_q=\frac{r_q}{r_q-1}$.

Then for any $k\ge2$ and $0<s<k-1-1/m_q$, there is a $C=C(\Omega,U_1,\widehat S,q,m_q,k,s)>0$ such that
\begin{align}
    \label{Eqn::WeiEst::Top+1}
    \int_{U_1\backslash\overline\Omega}\dist(\zeta)^s |D^k_{z,\zeta}(K_{q-1}^\top)(z,\zeta)|d\Vol(\zeta)&\le C\dist(z)^{s+1+\frac1{m_q}-k},&&\forall z\in \Omega;
    \\
    \label{Eqn::WeiEst::Top+2}
    \int_{\Omega}\dist(z)^s |D^k_{z,\zeta}(K_{q-1}^\top)(z,\zeta)|d\Vol(z)&\le C\dist(\zeta)^{s+1+\frac1{m_q}-k},&&\forall \zeta\in U_1\backslash\overline\Omega;
    \\
    \label{Eqn::WeiEst::Bot+1}
    \int_{U_1\backslash\overline\Omega}\dist(\zeta)^s|D^k_{z,\zeta}(K_{q-1}^\bot)(z,\zeta)|d\Vol(\zeta)&\le C\dist(z)^{s+\frac2{m_q}-k},&&\forall z\in \Omega;
    \\
    \label{Eqn::WeiEst::Bot+2}
    \int_{\Omega}\dist(z)^s |D^k_{z,\zeta}(K_{q-1}^\bot)(z,\zeta)|d\Vol(z)&\le C\dist(\zeta)^{s+\frac2{m_q}-k},&&\forall\zeta\in U_1\backslash\overline\Omega;
    \\
    \label{Eqn::WeiEst::Top01}
    \int_{U_1\backslash\overline\Omega}|\dist(\zeta)^s D^k_{z,\zeta}(K_{q-1}^\top)(z,\zeta)|^{\gamma_q}d\Vol(\zeta)&\le C\dist(z)^{(s+1-k)\gamma_q},&&\forall z\in \Omega;
    \\
    \label{Eqn::WeiEst::Top02}
    \int_{\Omega}|\dist(z)^s D^k_{z,\zeta}(K_{q-1}^\top)(z,\zeta)|^{\gamma_q}d\Vol(z)&\le C\dist(\zeta)^{(s+1-k)\gamma_q},&&\forall \zeta\in U_1\backslash\overline\Omega;
    \\
    \label{Eqn::WeiEst::Bot01}
    \int_{U_1\backslash\overline\Omega}|\dist(\zeta)^sD^k_{z,\zeta}(K_{q-1}^\bot)(z,\zeta)|^{\gamma_q}d\Vol(\zeta)&\le C\dist(z)^{(s-k+\frac1{m_q})\gamma_q},&&\forall z\in \Omega;
    \\
    \label{Eqn::WeiEst::Bot02}
    \int_{\Omega}|\dist(z)^s D^k_{z,\zeta}(K_{q-1}^\bot)(z,\zeta)|^{\gamma_q}d\Vol(z)&\le C\dist(\zeta)^{(s-k+\frac1{m_q})\gamma_q},&&\forall\zeta\in U_1\backslash\overline\Omega.
\end{align}
\end{thm}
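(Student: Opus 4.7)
\smallskip\noindent
\textbf{Proof proposal.} I will follow the strategy of Diederich--Forn\ae ss--Fischer \cite{DFFHolder} as sharpened by Hefer \cite{HeferMultitype}, based on Yu's $\eps$-minimal basis \cite{YuMultitype}, and adapted to the $\top/\bot$ decomposition of Definition~\ref{Defn::Goal::BotTop}. Fix a boundary point $\zeta_0$ and a parameter $\eps>0$. Yu's $\eps$-minimal basis provides a frame $v_1(\zeta_0,\eps),\ldots,v_n(\zeta_0,\eps)$ of $\C^n$ (with $v_1$ the complex normal) and associated widths $\tau_j=\tau_j(\zeta_0,\eps)$ with $\tau_1=\eps$ and $\tau_j\ge\eps$ for $j\ge 2$, such that the McNeal polydisc $P_\eps(\zeta_0):=\{\zeta_0+\sum w_j v_j:|w_j|<\tau_j\}$ has volume $\approx\prod_{j=1}^n\tau_j^2$ and, on $P_\eps(\zeta_0)\times P_\eps(\zeta_0)$, one has $|\widehat S(z,\zeta)|\approx\eps$ together with the dual-basis bounds $|\langle\widehat Q(z,\zeta),v_j^*\rangle|\lesssim\eps/\tau_j$. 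The $q$-multitype hypothesis $m_q<\infty$ is encoded in the fact that the volume of $P_\eps$ combines with $r_q=(n-q+1)m_q+2q$ to produce the exponents on the right-hand sides of \eqref{Eqn::WeiEst::Top+1}--\eqref{Eqn::WeiEst::Bot02}. These facts are essentially the estimates of \cite[Section~3]{HeferMultitype}, restated in the notation of Section~\ref{Section::Basis}.

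Given this setup, for each of the eight estimates I would decompose the integration domain into a disjoint union of McNeal polydiscs indexed by the dyadic level sets $\{|\varrho|\approx 2^{-k}\}$, reducing the weighted integral to a sum over $k$ and over polydiscs. Expanding \eqref{Eqn::Goal::DefK} in the dual basis $v_j^*$ of $d\bar\zeta$-directions, each tangential $d\bar\zeta_j$ ($j\ge 2$) contributes $\tau_j^{-1}$, the normal $d\bar\zeta_1$ contributes $\tau_1^{-1}\approx\eps^{-1}$, and the scalar factors are bounded using $|\widehat S|\approx\eps$ and $|z-\zeta|^{-2(n-k)}$. The decisive distinction between $K_{q-1}^\top$ and $K_{q-1}^\bot$ is that $K_{q-1}^\top$ retains only tangential $d\bar\zeta$'s whereas $K_{q-1}^\bot$ forces one factor to be normal; the resulting pointwise size ratio $|K_{q-1}^\bot|/|K_{q-1}^\top|\approx\eps^{1/m_q-1}$ accounts precisely for the $1-1/m_q$ exponent gap between \eqref{Eqn::WeiEst::Top+1}--\eqref{Eqn::WeiEst::Top+2} and \eqref{Eqn::WeiEst::Bot+1}--\eqref{Eqn::WeiEst::Bot+2}. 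Summing the resulting geometric series in $k$ yields the stated weighted bounds; the exponent $\gamma_q=r_q/(r_q-1)$ in \eqref{Eqn::WeiEst::Top01}--\eqref{Eqn::WeiEst::Bot02} is chosen so that the $\gamma_q$-th power of $|\widehat S|^{-1}$ integrates against the polydisc volume to give a convergent scale sum. The symmetry between the ``$+1$'' and ``$+2$'' families of estimates reflects interchanging the roles of $z$ and $\zeta$ in the polydisc decomposition, and uses the lower bound $|\widehat S(z,\zeta)|\gtrsim\dist(z)+\dist(\zeta)+|z-\zeta|^{m_q}$ inherent in \eqref{Eqn::Goal::SBdd}.

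Each derivative in $D^k_{z,\zeta}$ is distributed by Leibniz: derivatives on $\widehat S^{-j}$ cost one anisotropic factor $\tau_\ell^{-1}$, derivatives on $|z-\zeta|^{-2(n-k)}$ cost $|z-\zeta|^{-1}$ isotropically, and derivatives on the projections $(-)^\top$, $(-)^\bot$---which depend smoothly on $\zeta$ through $\overline Z_1=|\dbar\varrho|^{-1}\sum_j\partial_j\varrho\,\partial_{\bar\zeta_j}$ from \eqref{Eqn::Goal::TopBotFacts::Z1}---cost only $O(1)$. The constraint $0<s<k-1-1/m_q$ in \eqref{Eqn::WeiEst::Top+1}--\eqref{Eqn::WeiEst::Bot+2} ensures that the resulting dyadic scale sum converges at the boundary. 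The main obstacle I anticipate is tracking the commutator $[D^k_{z,\zeta},(-)^\top]$ and its $\bot$-analogue: since the projections vary with $\zeta$, derivatives in $\zeta$ do not commute with them, and one must verify that the Leibniz error terms obey the same anisotropic polydisc bounds as the leading term, uniformly across scales. This should follow from the smoothness of the frame $(\overline\theta_j)$ of Remark~\ref{Rmk::Goal::TopBotFacts} together with the explicit identity \eqref{Eqn::Goal::TopBotFacts::Z1}.
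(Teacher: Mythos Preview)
Your proposal captures the correct overall architecture---$\eps$-minimal bases, dyadic decomposition, and the $\top/\bot$ distinction driven by whether the normal cofactor $d\bar\zeta_1$ appears---and would lead to a valid proof. The mechanics, however, differ from the paper's in a few ways worth flagging.

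First, the paper does \emph{not} tile the integration region by many polydiscs at each dyadic level. Instead it fixes the free variable (say $z\in\Omega$) and writes the relevant part of $U_1\backslash\overline\Omega$ as a union of nested annuli $P_{2^{1-j}\eps_0}(z)\backslash P_{2^{-j}\eps_0}(z)$ centered at that single point (Lemma~\ref{Lem::Basis::IntEst}, then the summation in the proof of Theorem~\ref{Thm::WeiEst}). On each annulus one has $|\widehat S|\gtrsim 2^{-j}\eps_0$ from Lemma~\ref{Lem::Basis::EstSQ}~\ref{Item::Basis::EstSQ::S}; note this is a lower bound \emph{outside} $P_{\eps/2}$, not a two-sided equivalence on $P_\eps\times P_\eps$ as you wrote. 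The $z\leftrightarrow\zeta$ symmetry for the ``$+2$'' estimates is obtained from the pseudodistance property \eqref{Eqn::Basis::Prem::Set} of the polydiscs (Corollary~\ref{Cor::Basis::FinalBasisEst}), not from a direct lower bound $|\widehat S|\gtrsim\dist(z)+\dist(\zeta)+|z-\zeta|^{m_q}$.

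Second, your handling of derivatives is more delicate than necessary, and this is where your anticipated ``main obstacle'' evaporates. The paper's Lemma~\ref{Lem::Basis::FinalBasisEst} applies the anisotropic bounds $|\widehat Q_j|\lesssim\eps/\tau_j$ and $|\partial_{\bar\zeta_k}\widehat Q_j|\lesssim\eps/(\tau_j\tau_k)$ \emph{only at order zero} (via Corollary~\ref{Cor::Basis::EstQ2}); for every higher-order factor in the Leibniz expansion it simply uses the trivial bounds $|D^{j'}\widehat Q|\lesssim 1$ and $|D^{j'}\widehat S|\lesssim 1$, so that each derivative falling on $\widehat S^{-k-1}$ costs a uniform $\eps^{-1}$ rather than a specific $\tau_\ell^{-1}$. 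In particular the commutator $[D^k,(-)^\top]$ is harmless: the projection coefficients, built from $\overline Z_1$ in \eqref{Eqn::Goal::TopBotFacts::Z1}, are smooth in $\zeta$ uniformly in $\eps$, so derivatives landing on them are $O(1)$ and absorbed into the trivial bounds. There is no need to track those terms anisotropically.
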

Here we use $D^k_{z,\zeta}=\{\frac{\partial^{|\alpha+\beta+\gamma+\delta|}}{\partial z^\alpha\partial\bar z^\beta\partial\zeta^\gamma\partial\bar\gamma^\delta}:|\alpha+\beta+\gamma+\delta|\le k\}$ for the total derivatives among all variables, acting on their coordinate components. Notice that we take derivatives after we take ($\bot$ and $\top$) projections.
 We will prove Theorem~\ref{Thm::WeiEst} in Section~\ref{Section::Basis}.

The estimates \eqref{Eqn::WeiEst::Bot+1}, \eqref{Eqn::WeiEst::Bot+2}, \eqref{Eqn::WeiEst::Bot01} and \eqref{Eqn::WeiEst::Bot02} are all not optimal. In practice, to prove Theorems \ref{MainThm} and \ref{StrMainThm} it is enough replace the $\frac2{m_q}$-factors in \eqref{Eqn::WeiEst::Bot+1} and \eqref{Eqn::WeiEst::Bot+2} by any $\eps+\frac1{m_q}$, and the $\frac1{m_q}$-factors in \eqref{Eqn::WeiEst::Bot01} and \eqref{Eqn::WeiEst::Bot02} by any $\eps$, for all $\eps>0$. See Remark~\ref{Rmk::Basis::FinalBasisEstRmk} \ref{Item::Basis::FinalBasisEstRmk::NIso} for their improvements.

One can see later in Corollary~\ref{Cor::LPThm::BddDom} \ref{Item::LPThm::BddDom::TangComm} that $[\dbar,\Ec]^\top$ does not lose derivative, see also Remark~\ref{Rmk::LPThm::TangCommRmk}. This technique is not necessary in the estimates for strongly pseudoconvex domains, see Remark~\ref{Rmk::Basis::EstQ2::SPsiCX}.

By expanding $K_{q-1}(z,\zeta)$ from \eqref{Eqn::Goal::DefK}, we see that its coefficients are the (constant) linear combinations of 
\begin{equation}\label{Eqn::Goal::MainKernel}
    \frac{b(z,\zeta)\wedge\widehat Q(z,\zeta)\wedge\big(\dbar \widehat Q(z,\zeta)\big)^{k-1}}{\widehat S(z,\zeta)^k|z-\zeta|^{2(n-k)}},\quad 1\le k\le n-q.
\end{equation}

We start with the estimates of the components in \eqref{Eqn::Goal::MainKernel} in Section~\ref{Section::Basis}.



\section{Estimates via $\eps$-Minimal Bases}\label{Section::Basis}
We recall some notations and definitions from \cite{McNealFiniteType,YuMultitype}.

\begin{defn}\label{Defn::Basis::Type}
    Let $\Omega\subset\C^n$ be an open set and let $\zeta\in b\Omega$. For $1\le q\le n$, the (complex affine) $q$-type of $\Omega$ at $\zeta$ is
    \begin{equation*}
        L_q(b\Omega,\zeta):=\sup\Big\{m\in\R_+:\varliminf_{w\to 0;w\in H}\frac{\dist(\zeta+w,b\Omega)}{|w|^m}=0\text{ for all $q$-dim $\C$-linear subspace }H\le \C^n\Big\}.
    \end{equation*}
    
    The (affine) $q$-type of $\Omega$ is the minimum of $L_q(b\Omega,\zeta)$ among all $\zeta\in b\Omega$, which we denoted by $m_q$.
\end{defn}
As mentioned in \cite[Theorem~2.1]{HeferMultitype}, on convex domains the affine types, the D'Angelo types and the regular D'Angelo types all coincide. Moreover, if $\Omega$ has affine $q$-type $m_q<\infty$ for $1\le q\le n$, then $(m_n,m_{n-1},\dots,m_1)$ is the Catlin's multitype of $\Omega$. See \cite{McNealFiniteType,BoasStraubeFiniteType,YuMultitype}. In particular $m_1\ge\dots\ge m_{n-1}\ge2$ are all even integers and $m_n=1$.

To study the $\frac1{m_q}$ gain on $(0,q)$-forms, especially for $q\ge2$, we use the approach of $\eps$-minimal basis, which is introduced by \cite{YuMultitype} and used in \cite{HeferMultitype}.
\begin{defn}\label{Defn::Basis::Basis}
    Let $\Omega\subset\C^n$ be a finite type convex domain where the defining function $\varrho$ is as before. For $\zeta\in U_1$, $v\in\C^n$ and $\eps>0$ let
    \begin{equation*}
        \tau(\zeta,v,\eps):=\sup\{c>0:|\varrho(\zeta+\lambda v)-\varrho(\zeta)|\le\eps,\quad\forall\lambda\in\C,\ |\lambda|\le c\}.
    \end{equation*}
    
    An \textit{$\eps$-minimal basis} (or a \textit{Yu-basis at the scale $\eps$}) $(v_1,\dots,v_n)$ at $\zeta\in U_1$ is given recursively as follows: for $1\le k\le n$, $v_k\in \C^n$ is a unit vector minimizing to the following quantity  of $v$:
    \begin{equation*}
        \dist\big(\zeta,\{z\in (\zeta+\C\cdot v):\varrho(z)=\varrho(\zeta)+\eps\}\big),\quad\text{where }|v|=1\text{ and }v\bot\Span_\C\langle v_1,\dots,v_{k-1}\rangle.
    \end{equation*}
    Here for $k=1$ we use $\Span_\C\varnothing=\{0\}$.
    
    
    For an $\eps$-minimal basis $(v_1,\dots,v_n)$ at $\zeta$, we define $\tau_j(\zeta,\eps):=\tau(\zeta,v_j,\eps)$ for $1\le j\le n$, and the ellipsoid $P_\eps(\zeta):=\big\{\zeta+\sum_{j=1}^na_jv_j:\sum_{j=1}^n\frac{|a_j|^2}{\tau_j(\zeta,\eps)^2}<1\big\}\subset\C^n$. For $c>0$, we set $cP_\eps(\zeta):=\{z\in\C^n:\zeta+\frac{z-\zeta}c\in P_\eps(\zeta)\}$ for a dilation of $P_\eps(\zeta)$ with the same center.    
    
 
\end{defn}
Here we use ellipsoid rather than rectangle to define $P_\eps(\zeta)$ (cf. \cite[Section 3]{DFFHolder} for example). One can see that $\{\tau_j(\zeta,\eps)\}_{j=1}^n$ and $P_\eps(\zeta)$ do not depend on the choice of the $\eps$-minimal basis. 


    
We recall the following from \cite{McNealFiniteType,YuMultitype,DFFHolder,HeferMultitype}. Recall Lemma~\ref{Lem::Goal::Glue} for $M_5>0$ and $U_1\supset b\Omega$.
\begin{lem}\label{Lem::Basis::Prem}
Assume the finite type convex domain $\Omega\subset\C^n$ has $q$-type $m_q<\infty$. 
Then there are a $C_0>1$, an $\eps_0\in(0,\frac1{M_5})$, and for every multi-index $\beta=(\beta',\beta'')\in\N^{2n}$  a $C_\beta>0$, such that
\begin{enumerate}[(i)]
    \item\label{Item::Basis::Prem1} For every $\zeta\in \{|\varrho|<\eps_0\}$ we have $P_{\eps_0}(\zeta)\subseteq U_1$. Moreover for every $0<\eps\le\eps_0$ and every $\eps$-minimal basis $(v_1,\dots,v_n)$ at $\zeta$ {\normalfont(recall that $\tau_j(\zeta,\eps):=\tau(\zeta,v_j,\eps)$)}:
\begin{gather}
    \label{Eqn::Basis::Prem::Set}
    P_{\eps}(\zeta')\subseteq C_0P_{\eps/2}(\zeta)\text{ and }2P_\eps(\zeta')\subseteq P_{C_0\eps}(\zeta),\quad\forall\zeta'\in P_\eps(\zeta);
    \\
    \label{Eqn::Basis::Prem::Order1}
    \tau_1(\zeta,\eps)\le\tau_2(\zeta,\eps)\le\tau_3(\zeta,\eps)\le\dots\le\tau_n(\zeta,\eps);
    \\
    \label{Eqn::Basis::Prem::Order2}
    \tau_1(\zeta,\eps)\ge\tfrac1{C_0}\eps,\quad \tau_2(\zeta,\eps)\ge\tfrac1{C_0}\eps^\frac12;
    \\
    \label{Eqn::Basis::Prem::TypeBdd}
    \tau_q(\zeta,\eps)\le C_0\eps^{1/m_{n+1-q}},\quad\forall 1\le q\le n;
\end{gather}
\item\label{Item::Basis::Prem2} For every $\zeta\in U_1$, $0<\eps\le\eps_0$ and $\eps$-minimal basis $(v_1,\dots,v_n)$ at $\zeta$,
\begin{equation}\label{Eqn::Basis::Prem::DefFunBdd}
    \Big|\frac{\partial^{|\beta|}}{\partial w^{\beta'}\partial\bar w^{\beta''}}\varrho(\zeta+w_1v_1+\dots w_nv_n)\Big|\le C_\beta\frac \eps{\prod_{j=1}^n\tau_j(\zeta,\eps)^{\beta'_j+\beta''_j}},\quad \forall w\in \C^n\text{ such that }\sum_{j=1}^n\frac{|w_j|^2}{\tau_j(\zeta,\eps)^2}<1.
\end{equation}
\end{enumerate}
\end{lem}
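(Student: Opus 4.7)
The plan is to assemble the statements from the classical $\epsilon$-minimal basis theory of \cite{McNealFiniteType,YuMultitype,HeferMultitype}, adapting them to the dilation-invariant family $\{\Omega_t\}$ fixed in \ref{Item::Goal::DefFunAssump}. The assertions fall into four groups, which I will handle in order of increasing depth: (a) the choice of $\epsilon_0$ and the monotonicity \eqref{Eqn::Basis::Prem::Order1}; (b) the lower bounds \eqref{Eqn::Basis::Prem::Order2} on $\tau_1,\tau_2$; (c) the multitype upper bound \eqref{Eqn::Basis::Prem::TypeBdd}; and (d) the derivative estimate \eqref{Eqn::Basis::Prem::DefFunBdd}, from which I then deduce the set-comparisons \eqref{Eqn::Basis::Prem::Set}.

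For (a) and (b), I would first fix $\epsilon_0\in(0,1/M_5)$ small enough that $P_{\epsilon_0}(\zeta)\subset U_1$ uniformly in $\zeta\in\{|\varrho|\le\epsilon_0\}$; this uniformity is ensured by the crude bound $\tau_j(\zeta,\epsilon)\le C\epsilon^{1/m}$ coming from the finite-type hypothesis together with compactness and the $\Omega_t$-invariance. The ordering $\tau_1\le\cdots\le\tau_n$ is immediate: the quantity minimized at step $k$, the distance from $\zeta$ to $\{\varrho=\varrho(\zeta)+\epsilon\}$ along $\zeta+\C v$, is minimized over a strictly shrinking family of unit vectors, so the minimum can only grow. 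The bound $\tau_1\ge\epsilon/C_0$ drops out of the Lipschitz estimate $|\varrho(\zeta+\lambda v_1)-\varrho(\zeta)|\le\|\nabla\varrho\|_{L^\infty(U_1)}|\lambda|$ applied directly in the definition of $\tau$. For $\tau_2\ge\epsilon^{1/2}/C_0$, I would argue that the minimizer $v_1$ must align with the real gradient direction $\nabla\varrho(\zeta)/|\nabla\varrho(\zeta)|$ up to a bounded factor (any significantly different choice would yield a larger $\tau_1$ than its definition as the minimum), so any unit $v\perp v_1$ satisfies $|\langle v,\nabla\varrho(\zeta)\rangle|=O(\epsilon^{1/2})$; substituting into the second-order Taylor expansion and using the uniform Hessian bound gives $|\varrho(\zeta+\lambda v)-\varrho(\zeta)|\lesssim|\lambda|^2+\epsilon^{1/2}|\lambda|$, which forces $\tau(\zeta,v,\epsilon)\gtrsim\epsilon^{1/2}$.

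The main obstacle is (c), the multitype upper bound. Here I would invoke Yu's characterization of the multitype in terms of the $\epsilon$-minimal weights (\cite[Theorem~3.1]{YuMultitype}; summarized in \cite[Section~2]{HeferMultitype}): on a convex domain of finite type, the $(n+1-q)$-dimensional complex subspace $\Span_\C(v_q,v_{q+1},\ldots,v_n)$ must contain a unit direction $v$ along which $\varrho$ grows at least as fast as $|\lambda|^{m_{n+1-q}}$, i.e., $|\varrho(\zeta+\lambda v)-\varrho(\zeta)|\gtrsim|\lambda|^{m_{n+1-q}}$ for small $\lambda$. This forces $\tau(\zeta,v,\epsilon)\le C\epsilon^{1/m_{n+1-q}}$, and since $\tau_q$ is the infimum of $\tau(\zeta,\cdot,\epsilon)$ over unit vectors in exactly the subspace $\Span_\C(v_q,\ldots,v_n)$, the bound \eqref{Eqn::Basis::Prem::TypeBdd} follows at once. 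This step is the heart of the proof because it is where Yu's equivalence between the affine type, the D'Angelo type, and the Catlin multitype enters, via a nontrivial induction on dimension.

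Finally, for (d), I would prove \eqref{Eqn::Basis::Prem::DefFunBdd} by a Cauchy-type argument in the coordinates $w\mapsto\zeta+\sum_jw_jv_j$. On the polydisc $\prod_j\{|w_j|<\tau_j(\zeta,\epsilon)\}$, which is comparable to $P_\epsilon(\zeta)$ up to dimensional constants, the defining property of $\tau_j$ gives $|\varrho-\varrho(\zeta)|\le 2\epsilon$; iterating the one-variable Cauchy integral estimate on each complex disc, applied separately in the $w_j$ and $\bar w_j$ directions (valid since $\varrho$ is smooth), yields $|\partial^\beta\varrho(\zeta)|\le C_\beta\epsilon/\prod_j\tau_j(\zeta,\epsilon)^{\beta'_j+\beta''_j}$. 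The set inclusions \eqref{Eqn::Basis::Prem::Set} are a standard corollary: comparing the Taylor expansions of $\varrho$ based at $\zeta$ and at $\zeta'\in P_\epsilon(\zeta)$ via \eqref{Eqn::Basis::Prem::DefFunBdd} shows that $\tau_j(\zeta',\epsilon)\approx\tau_j(\zeta,\epsilon)$, and the $\eps$-minimal bases at $\zeta$ and $\zeta'$ are related by a bounded linear transformation, from which the ellipsoid comparisons follow by elementary linear algebra.
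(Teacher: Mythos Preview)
The paper does not prove this lemma; it states the result and defers entirely to \cite{McNealFiniteType,YuMultitype,DFFHolder,HeferMultitype}. Your outline tracks the structure of those references reasonably well for parts (a)--(c), but part (d) has a genuine gap.

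The problem is your ``Cauchy-type argument'' for \eqref{Eqn::Basis::Prem::DefFunBdd}. First, a minor point: the claim that $|\varrho-\varrho(\zeta)|\le 2\eps$ on the polydisc $\prod_j\{|w_j|<\tau_j\}$ does not follow from the definition of $\tau_j$ alone---the definition only controls $\varrho$ along each single coordinate axis $\zeta+\C v_j$, and passing to the full polydisc already requires convexity of the sublevel sets. Second, and more seriously: a sup-norm bound on a smooth, non-holomorphic function does not control its derivatives. There is no ``one-variable Cauchy integral estimate applied separately in the $w_j$ and $\bar w_j$ directions''; for instance $h(t)=\eps\sin(Nt/\tau)$ satisfies $|h|\le\eps$ on $|t|\le\tau$ while $|h'(0)|=N\eps/\tau$ is unbounded in $N$. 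In the cited references the derivative estimate is obtained either by exploiting that the chosen defining function is convex (for a convex function on $[-\tau,\tau]$ bounded by $\eps$ one has $|h'(0)|\le 2\eps/\tau$, and one iterates using the finite-type structure to handle higher orders), or by first introducing a Taylor-coefficient definition of $\tau_j$---for which \eqref{Eqn::Basis::Prem::DefFunBdd} is essentially tautological---and then proving it is comparable to the geometric $\tau_j$; that equivalence is where convexity and finite type actually enter. Since you deduce \eqref{Eqn::Basis::Prem::Set} from \eqref{Eqn::Basis::Prem::DefFunBdd}, this gap propagates there as well.

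A smaller remark on (b): your justification of $\tau_2\gtrsim\eps^{1/2}$ via ``$|\langle v,\nabla\varrho(\zeta)\rangle|=O(\eps^{1/2})$'' conflates the real gradient with the complex normal. The minimizing direction $v_1$ is close to the complex normal $\partial\varrho/|\partial\varrho|$ (so that the real $2$-plane $\C v_1$ nearly contains $\nabla\varrho$), and it is the smallness of $\langle\partial\varrho,v\rangle$ for $v\perp_\C v_1$---combined with convexity---that forces at most quadratic growth of $\varrho$ along $\C v$.
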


\begin{rem}\label{Rmk::Basis::PremRmk}
\begin{enumerate}[(i)]
    \item \eqref{Eqn::Basis::Prem::TypeBdd} is useful particularly for $q\ge2$. If one only care about $(0,1)$-forms, we could use the $\eps$-extremal basis (a.k.a. the McNeal-basis at the scale $\eps$) to define $\tau_1,\dots,\tau_n$, where \eqref{Eqn::Basis::Prem::Set} and \eqref{Eqn::Basis::Prem::Order2} remain true, but \eqref{Eqn::Basis::Prem::TypeBdd} is replaced by $\eps^{1/m_{q-1}}\lesssim\tau_q\lesssim \eps^{1/m_1}$. See \cite[Theorem~2.3]{HeferMultitype} for example.
    
    \item\label{Item::Basis::PremRmk::TrivialEst} In \eqref{Eqn::Basis::Prem::DefFunBdd}, $\sum_{j=1}^n\frac{|w_j|^2}{\tau_j(\zeta,\eps)^2}<1$ is the same as saying $\zeta+w_1v_1+\dots+w_nv_n\in P_\eps(\zeta)$. This estimate is only useful when $\sum_{j=1}^n\frac1{m_{n+1-j}}(\beta'_j+\beta''_j)<1$. Otherwise the right hand side of \eqref{Eqn::Basis::Prem::DefFunBdd} is bounded from below (or even goes to $\infty$ as $\eps\to0$) whereas the left hand side is always uniformly bounded.
\end{enumerate}

\end{rem}


To prove Theorem~\ref{Thm::WeiEst} we need to estimate $|K_{q-1}^\top(z,\zeta)|$ and $|K_{q-1}^\bot(z,\zeta)|$ inside an ellipsoid $P_\eps(\zeta)$. Recall the $\eps_0$ in Lemma~\ref{Lem::Basis::Prem} and the $\widehat Q$ in \eqref{Eqn::Goal::HatQ}.
\begin{lem}\label{Lem::Basis::EstSQ} Keeping the notations in Lemma~\ref{Lem::Basis::Prem}, there is a $C_1>0$ that satisfies the following:
\begin{enumerate}[(i)]
    \item \label{Item::Basis::EstSQ::S}
    $|\widehat S(z,\zeta)|\ge \tfrac1{C_1}\eps$, for every $\zeta\in U_1$, $0<\eps\le\eps_0$ and $z\in\Omega_{\varrho(\zeta)}\backslash P_\eps(\zeta)$.
    \item \label{Item::Basis::EstSQ::Q}Let $\zeta_0\in U_1$, $0<\eps\le\eps_0$ and let $\Psi_0\in\C^{n\times n}$ be a unitary matrix such that its $n$ column vectors (with order) form an $\eps$-minimal basis at $\zeta_0$. Let $\widehat Q_{\Psi_0}(z,\zeta):=\overline\Psi_0\cdot \widehat Q(\Psi_0\cdot z,\Psi_0\cdot\zeta)$. Then for $1\le j,k\le n$,
\begin{equation}\label{Eqn::Basis::EstSQ::Q}
        |\widehat Q_{\Psi_0,j}(z,\zeta_0)|\le\frac{C_1\eps}{\tau_j(\zeta_0,\eps)},\quad\Big|\Coorvec{\overline\zeta_k}\widehat Q_{\Psi_0,j}(z,\zeta_0)\Big|\le\frac{C_1\eps}{\tau_j(\zeta_0,\eps)\tau_k(\zeta_0,\eps)},\quad\text{for } z\in P_\eps(\zeta_0).
\end{equation}
\end{enumerate}

\end{lem}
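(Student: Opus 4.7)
The plan is to treat both parts using the explicit expansion \eqref{Eqn::Goal::SZeta} together with the defining-function derivative bound \eqref{Eqn::Basis::Prem::DefFunBdd}, following the scheme of Diederich--Forn\ae ss \cite{DFSupport} and Hefer \cite{HeferMultitype}. Throughout I work in coordinates adapted to an $\eps$-minimal basis $(v_1,\ldots,v_n)$ at the relevant base point; this is consistent with the freedom in the unitary $\Phi$ of \eqref{Eqn::Goal::S} since $S$ is independent of that choice, subject only to the normalization $\Phi\cdot\dbar\varrho/|\dbar\varrho|=e_1$.

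For part (i), split according to $|z-\zeta|$. The large-separation case $|z-\zeta|\ge\tfrac1{2M_4}$ is immediate from Lemma~\ref{Lem::Goal::Glue}\ref{Item::Goal::Glue::2}, since $\eps\le\eps_0<1/M_5$ gives $|\widehat S|\ge 1/M_5\gtrsim\eps$. For $|z-\zeta|\le\tfrac1{2M_4}$, Lemma~\ref{Lem::Goal::Glue}\ref{Item::Goal::Glue::A} with $|A|\ge 1/M_5$ reduces the claim to $-\re S(z,\zeta)\gtrsim\eps$ on $\Omega_{\varrho(\zeta)}\setminus P_\eps(\zeta)$. Expanding $S^{\Phi(\zeta)}_\zeta(w)$ in $\eps$-minimal-basis coordinates $w=\Phi(\zeta)(z-\zeta)$, the linear term $3\re w_1$ is controlled by $\varrho(z)-\varrho(\zeta)\le 0$ plus absorbable corrections, the pure quadratic $M_1\re w_1^2$ is handled separately, and each higher Taylor term $\partial^\alpha\varrho|_0\cdot w^\alpha/\alpha!$ (with $\alpha_1=0$, $|\alpha|=2j$) is controlled via \eqref{Eqn::Basis::Prem::DefFunBdd}. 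Since $z\notin P_\eps(\zeta)$ forces $\sum_k|w_k|^2/\tau_k^2\ge 1$, some $k$ satisfies $|w_k|^2/\tau_k^2\ge 1/n$; the alternating signs $(-1)^j$ combined with the rapidly-growing weights $M_3^{4^j}$ then ensure that the corresponding Taylor term dominates the sum in absolute value and contributes with the correct sign, yielding $-\re S\gtrsim\eps$. This is the standard Diederich--Forn\ae ss lower bound.

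For part (ii), use $\widehat Q_j(z,\zeta)=\int_0^1\frac{\partial\widehat S}{\partial z_j}(\zeta+t(z-\zeta),\zeta)\,dt$. Under the unitary change by $\Psi_0$ (columns an $\eps$-minimal basis at $\zeta_0$), the components of $\widehat Q_{\Psi_0}$ are directional derivatives of $\widehat S$ along the $v_j$'s. Apply $\widehat S=A\cdot S$ from Lemma~\ref{Lem::Goal::Glue}\ref{Item::Goal::Glue::A} (valid because $|z-\zeta_0|\le\max_k\tau_k\le C_0\eps_0^{1/m_1}\le\tfrac1{2M_4}$ on $P_\eps(\zeta_0)$ after shrinking $\eps_0$), and expand $S$ by \eqref{Eqn::Goal::SZeta}. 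Each Taylor term's derivative $\tfrac{\alpha_j}{\alpha!}\partial^\alpha\varrho|_0\cdot w^{\alpha-e_j}$ is bounded, using \eqref{Eqn::Basis::Prem::DefFunBdd} and $|w_k|\le\tau_k(\zeta_0,\eps)$ inside $P_\eps(\zeta_0)$, by $C\eps\prod_k\tau_k^{-\alpha_k}\cdot\prod_k\tau_k^{\alpha_k-\delta_{jk}}=C\eps/\tau_j$. The linear/quadratic contributions from $S$ and the $(\partial_{w_j}A)\cdot S$ terms are $O(\eps)$, dominated by $\eps/\tau_j$ because $\tau_j\le 1$. Summing over the finitely many indices yields $|\partial_{w_j}\widehat S|\lesssim\eps/\tau_j$, and integrating over $t$ gives the first estimate in \eqref{Eqn::Basis::EstSQ::Q}. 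For $\partial/\partial\bar\zeta_k$, the extra derivative falls on the coefficients $\partial^\alpha\varrho(\zeta)$ through their $\zeta$-dependence; this raises the anti-holomorphic multi-index by one in the $v_k$ direction, and another application of \eqref{Eqn::Basis::Prem::DefFunBdd} produces the extra factor $1/\tau_k(\zeta_0,\eps)$.

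The main technical obstacle is part (ii)'s coordinate matching: verifying that the first-row normalization $\Phi(\zeta_0)\dbar\varrho/|\dbar\varrho|=e_1$ can be made compatible with the $\eps$-minimal basis $\Psi_0$, so that the bound \eqref{Eqn::Basis::Prem::DefFunBdd} applies directly to the Taylor coefficients in \eqref{Eqn::Goal::SZeta}. This uses that $v_1$ in an $\eps$-minimal basis is essentially the complex-normal direction (with $\tau_1\approx\eps$) so that, up to a block-unitary adjustment on the tangent factor (which does not alter $S$ by Lemma~\ref{Lem::Goal::Glue}), $\Phi(\zeta_0)$ and $\Psi_0$ are aligned. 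Controlling the smooth correction $A$ and its derivatives on $P_\eps(\zeta_0)\subset U_1$ is easy since $A\in\Co^\infty(\Omega\times U_1)$ with $|A|,|A^{-1}|$ uniformly bounded, contributing only absorbable constants.
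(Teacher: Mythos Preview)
Your proposal is correct and follows essentially the same route as the paper: both reduce from $\widehat S$ to $S$ via the smooth nonvanishing factor $A$ from Lemma~\ref{Lem::Goal::Glue}\ref{Item::Goal::Glue::A}, handle the large-$|z-\zeta|$ regime in \ref{Item::Basis::EstSQ::S} by Lemma~\ref{Lem::Goal::Glue}\ref{Item::Goal::Glue::2}, and obtain the small-$|z-\zeta|$ estimates by the Diederich--Fischer--Forn\ae ss/Hefer analysis of the Taylor expansion \eqref{Eqn::Goal::SZeta} combined with the derivative bound \eqref{Eqn::Basis::Prem::DefFunBdd}. The paper simply cites \cite[Lemma~4.2]{DFFHolder}, \cite[Proposition~4.1 and Lemma~5.1]{HeferMultitype} and \cite[Lemma~8]{AlexandreCk} for those steps, whereas you sketch their content.

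One point in your sketch deserves care: for the second estimate in \eqref{Eqn::Basis::EstSQ::Q}, the $\partial/\partial\bar\zeta_k$ derivative does not fall \emph{only} on the Taylor coefficients $\partial^\alpha\varrho(\zeta)$. It also hits the $\zeta$-dependence of $\omega=\Phi(\zeta)(z-\zeta)$ and of $\Phi(\zeta)$ itself inside \eqref{Eqn::Goal::SZeta}, producing further terms beyond ``raising the anti-holomorphic index by one in the $v_k$ direction.'' These extra contributions are indeed controllable (this is precisely what \cite[Lemma~5.1]{DFFHolder} and \cite[Lemma~8]{AlexandreCk} verify), but your sketch as written omits them. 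Your resolution of the coordinate-matching issue is correct: for a convex finite-type domain the first vector $v_1$ of an $\eps$-minimal basis coincides (up to a unimodular scalar) with the complex normal $\overline{\partial\varrho}/|\partial\varrho|$, so one may take $\Phi(\zeta_0)^\dagger=\Psi_0$ at the fixed base point and the constraint $\alpha_1=0$ in \eqref{Eqn::Goal::SZeta} aligns with the tangential directions $v_2,\dots,v_n$.
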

\begin{proof}See \cite[Lemma~4.2]{DFFHolder}, \cite[Proposition~4.1]{HeferMultitype} or \cite[Lemma~4(ii)]{AlexandreCk} for \ref{Item::Basis::EstSQ::S}. Note that the modification from $S$ to $\widehat S$ in Lemma~\ref{Lem::Goal::Glue} ensures that $|\widehat S(z,\zeta)|$ is bounded from below when $|z-\zeta|$ is large.

For \eqref{Eqn::Basis::EstSQ::Q}, since we fix the point $\zeta_0$ and the number $\eps$, by passing to a unitary coordinate change we can assume that $\Psi_0=I_n$. In particular $\widehat Q_{\Psi_0}=\widehat Q$.

\eqref{Eqn::Basis::EstSQ::Q} is a weaker version of \cite[Lemma~8]{AlexandreCk} where (under the assumption $\Psi_0=I_n$) it was proved that $|\partial_{\overline\zeta_k}\widehat Q_j(z,\zeta_0)|\lesssim\frac{\eps}{\tau_j\tau'_k}$. In the statement, $\tau'_1=\eps^{\frac12}\gg\eps\approx\tau_1$ and $\tau'_k=\tau_k$ for $k\ge2$. \cite[Lemma~8]{AlexandreCk} is stated with $\eps$-extremal bases, but the result is still true if we replace it by $\eps$-minimal bases. There is no additional change to the proof.

Alternatively, we define $Q=[Q_1,\dots,Q_n]^\intercal$ as the $\widehat Q$ from \eqref{Eqn::Goal::HatQ} with $\widehat S(z,\zeta)$ replaced by $S(z,\zeta)$, i.e. $Q_j(z,\zeta)=\int_0^1\frac{\partial S}{\partial z_j}(\zeta+t(z-\zeta),\zeta)dt$. 
Then we have 
\begin{equation*}
    \widehat Q_j=A\cdot Q_j+\partial_{z_j}A\cdot S,\quad\text{and}\quad\partial_{\overline\zeta_k}\widehat Q_j=A\cdot\widehat Q_j+\partial_{\overline\zeta_k}A\cdot\widehat Q_j+\partial_{z_j}A\cdot\partial_{\overline\zeta_k}S+\partial_{z_j\overline\zeta_k}^2A\cdot S,
\end{equation*}
where $A\in C^\infty$ is in Lemma~\ref{Lem::Goal::Glue} \ref{Item::Goal::Glue::A}. By \cite[Lemma~5.1]{DFFHolder} we have $|Q_j|\lesssim\eps/\tau_j$ and $|\partial_{\overline\zeta_k}Q_j|\lesssim\eps/(\tau_j\tau_k)$, thus \eqref{Eqn::Basis::EstSQ::Q} follows from the fact that $A\in C^2$ and $S(z,\zeta)=\sum_{j=1}^nQ_j(z,\zeta)(z_j-\zeta_j)$. Again \cite[Lemma~5.1]{DFFHolder} is stated with $\eps$-extremal bases, but the result is still true if we replace it by $\eps$-minimal bases. There is no additional change to the proof.
\end{proof}

\begin{cor}\label{Cor::Basis::EstQ2}Keeping the notations from Lemma~\ref{Lem::Basis::EstSQ} \ref{Item::Basis::EstSQ::Q}, and we identify the column vector function $\widehat Q_{\Psi_0}=[\widehat Q_{\Psi_0,1},\dots,\widehat Q_{\Psi_0,n}]$ with the $(1,0)$-form $\widehat Q_{\Psi_0}=\sum_{j=1}^n\widehat Q_{\Psi_0,j}(z,\zeta)d\zeta_j$. Then the estimates \eqref{Eqn::Basis::EstQ2::Top} and \eqref{Eqn::Basis::EstQ2::Bot} imply the following:

There is a $C'_1>0$ that does not depend on $\zeta_0\in U_1$ and $0<\eps\le\eps_0$, such that for every $a\in\{0,1\}$, $0\le b\le n-1$ and every $z\in P_\eps(\zeta_0)$,
\begin{align}
\label{Eqn::Basis::EstQ2::Top}
    \big|(\widehat Q_{\Psi_0})^a\wedge(\dbar\widehat Q_{\Psi_0})^b(z,\zeta_0)\mod d\bar\zeta_1\big|&\le C'_1\frac{\eps^{a+b-1}}{\prod_{l=2}^{b+1}\tau_l(\zeta_0,\eps)^2};
    \\
    \label{Eqn::Basis::EstQ2::Bot}
    \big|(\widehat Q_{\Psi_0})^a\wedge(\dbar\widehat Q_{\Psi_0})^b(z,\zeta_0)\big|&\le C'_1\frac{\eps^{a+b-2}\tau_{b+1}(\zeta_0,\eps)}{\prod_{l=2}^{b+1}\tau_l(\zeta_0,\eps)^2}.
\end{align}
\end{cor}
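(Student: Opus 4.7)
The plan is to expand the wedge product in the standard coordinate basis at $\zeta_0$ and then apply the pointwise estimates \eqref{Eqn::Basis::EstSQ::Q} coefficient by coefficient. Since $\widehat Q_{\Psi_0}=\sum_j\widehat Q_{\Psi_0,j}\,d\zeta_j$ is a $(1,0)$-form and $\dbar\widehat Q_{\Psi_0}=\sum_{j,k}\bigl(\partial_{\overline\zeta_k}\widehat Q_{\Psi_0,j}\bigr)d\bar\zeta_k\wedge d\zeta_j$ is a $(1,1)$-form, I would write
\[
(\widehat Q_{\Psi_0})^a\wedge(\dbar\widehat Q_{\Psi_0})^b(z,\zeta_0)=\sum_{|J|=a+b,\ |K|=b}c^{(a,b)}_{J,K}(z)\,d\zeta^J\wedge d\bar\zeta^K,
\]
where $c^{(a,b)}_{J,K}$ is, up to sign and a combinatorial constant depending only on $n$, a $b\times b$ minor of the matrix $[\partial_{\overline\zeta_k}\widehat Q_{\Psi_0,j}]$ (times one additional factor $\widehat Q_{\Psi_0,j_0}$ when $a=1$, summed over $j_0\in J$). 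Expanding the minor and using \eqref{Eqn::Basis::EstSQ::Q} entrywise yields the uniform bound
\[
|c^{(a,b)}_{J,K}(z)|\ \lesssim\ \frac{\eps^{a+b}}{\prod_{j\in J}\tau_j(\zeta_0,\eps)\cdot\prod_{k\in K}\tau_k(\zeta_0,\eps)},\qquad z\in P_\eps(\zeta_0),
\]
with an implicit constant depending only on $n$, $C_1$.

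For the tangential estimate \eqref{Eqn::Basis::EstQ2::Top} I would restrict to $K\subseteq\{2,\dots,n\}$. Monotonicity \eqref{Eqn::Basis::Prem::Order1} gives $\prod_{k\in K}\tau_k\geq\prod_{l=2}^{b+1}\tau_l$, and one splits the $J$-sum into the two subcases $1\in J$ and $1\notin J$. In the first subcase $\prod_{j\in J}\tau_j\geq\tau_1\prod_{l=2}^{a+b}\tau_l$, and the factor $\tau_1\gtrsim\eps$ from \eqref{Eqn::Basis::Prem::Order2} is traded for one power of $\eps$ in the numerator; in the second $\prod_{j\in J}\tau_j\geq\prod_{l=2}^{a+b+1}\tau_l$ directly. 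Combined with the boundedness $\tau_l\lesssim 1$ (a consequence of \eqref{Eqn::Basis::Prem::TypeBdd}) both subcases reduce to the claimed bound $\eps^{a+b-1}/\prod_{l=2}^{b+1}\tau_l^2$.

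For the full estimate \eqref{Eqn::Basis::EstQ2::Bot} the new contribution comes from $K=\{1\}\cup K'$ with $K'\subseteq\{2,\dots,n\}$, $|K'|=b-1$. The worst subcase is $1\in K\cap J$, where $\prod_{k\in K}\tau_k\cdot\prod_{j\in J}\tau_j\geq\tau_1^2\prod_{l=2}^{b}\tau_l\cdot\prod_{l=2}^{a+b}\tau_l$, so applying $\tau_1\gtrsim\eps$ \emph{twice} gives a bound of order
\[
\frac{\eps^{a+b-2}}{\prod_{l=2}^{b}\tau_l\cdot\prod_{l=2}^{a+b}\tau_l}\ \lesssim\ \frac{\eps^{a+b-2}\,\tau_{b+1}}{\prod_{l=2}^{b+1}\tau_l^{2}},
\]
after multiplying top and bottom by $\tau_{b+1}$ and using $\tau_l\lesssim 1$ for $l\leq n$; this is exactly \eqref{Eqn::Basis::EstQ2::Bot}. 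The remaining subcases are strictly better: if $1\in K$ but $1\notin J$ one uses $\tau_1\gtrsim\eps$ only once and gains an extra factor of $\eps$, while if $1\notin K$ the estimate is already controlled by the tangential bound \eqref{Eqn::Basis::EstQ2::Top}, which is itself weaker than \eqref{Eqn::Basis::EstQ2::Bot} because $\tau_{b+1}\geq\tau_1\gtrsim\eps$ gives $\eps^{a+b-1}/\prod\tau_l^{2}\leq\eps^{a+b-2}\tau_{b+1}/\prod\tau_l^{2}$.

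The main (minor) subtlety is the bookkeeping around $\tau_1\approx\eps$: every factor of $\tau_1$ hiding in $\prod_{j\in J}\tau_j$ or $\prod_{k\in K}\tau_k$ can be exchanged for one factor of $\eps$ via \eqref{Eqn::Basis::Prem::Order2}, and one has to make this trade-off optimally in each subcase to reconcile the different weights on the right-hand sides of \eqref{Eqn::Basis::EstQ2::Top} and \eqref{Eqn::Basis::EstQ2::Bot}. No deeper tool than Lemma \ref{Lem::Basis::EstSQ} \ref{Item::Basis::EstSQ::Q} together with \eqref{Eqn::Basis::Prem::Order1}--\eqref{Eqn::Basis::Prem::TypeBdd} is required.
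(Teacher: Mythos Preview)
Your proposal is correct and follows essentially the same approach as the paper: expand the wedge product in the $\eps$-minimal coordinate basis at $\zeta_0$, bound each coefficient by \eqref{Eqn::Basis::EstSQ::Q} as $\eps^{a+b}/(\prod_{j\in J}\tau_j\cdot\prod_{k\in K}\tau_k)$, and then optimize over the index configurations using the monotonicity \eqref{Eqn::Basis::Prem::Order1}, $\tau_1\approx\eps$ from \eqref{Eqn::Basis::Prem::Order2}, and $\tau_l\lesssim1$. The paper phrases the last step by directly writing down the extremal index assignments for each of the four cases $(a,k_1\ge1\text{ or }2)$, whereas you organize it by the dichotomy $1\in J$ versus $1\notin J$ (and $1\in K$ versus $1\notin K$), but the content is identical.
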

Here by \eqref{Eqn::Basis::EstQ2::Top} we mean that, if $(\widehat Q_{\Psi_0})^a\wedge(\dbar\widehat Q_{\Psi_0})^b(z,\zeta)=\sum_{|J|=a+b;|K|=v}f_{JK}(z,\zeta)d\zeta^J\wedge d\overline\zeta^K$, then\\ $|f_{JK}|\lesssim\eps^{a+b-1}/\prod_{l=2}^{b+1}\tau_l^2$ for all index sets $J$ and $K=(k_1,\dots,k_b)$ such that $k_1,\dots,k_b\ge2$.

\begin{rem}This is essentially the \cite[Lemma~5.5]{DFFHolder} or \cite[Lemma~4.2]{HeferMultitype}, where they use the $(1,0)$-form $Q(z,\zeta)$ (see the proof of Lemma \ref{Lem::Basis::EstSQ}) instead of the $\widehat Q(z,\zeta)$ in the statement.
\end{rem}
\begin{rem}\label{Rmk::Basis::EstQ2::SPsiCX}
In the case of strongly pseudoconvex domains we informally have $\tau_2\approx\dots\approx\tau_n\approx\eps^\frac12$, which means $|(\widehat Q_{\Psi_0})^a\wedge(\dbar\widehat Q_{\Psi_0})^b\mod d\bar\zeta_1|\lesssim\eps^{a-1}$ and $|(\widehat Q_{\Psi_0})^a\wedge(\dbar\widehat Q_{\Psi_0})^b|\lesssim\eps^{a-\frac32}$. Since $\eps^{a-1},\eps^{a-\frac32}\gtrsim1$, recall from Remark~\ref{Rmk::Basis::PremRmk} \ref{Item::Basis::PremRmk::TrivialEst} that these estimates become unnecessary.
\end{rem}
    
\begin{proof}[Proof of Corollary~\ref{Cor::Basis::EstQ2}]
Again we assume $\Psi_0=I_n$.
    By writing $\widehat Q^a\wedge(\dbar\widehat Q)^b=\sum_{|J|=a+b;|K|=v}f_{JK}d\zeta^J\wedge d\overline\zeta^K$, we have $ f_{j_1\dots j_{a+b},k_1\dots k_b}=\pm\prod_{p=1}^b\frac{\partial \widehat Q_{j_p}}{\partial \overline\zeta_{k_p}}\prod_{q=b+1}^{a+b}\widehat Q_{j_q}$. By \eqref{Eqn::Basis::EstSQ::Q} we have
    \begin{equation}\label{Eqn::Basis::EstQ::Tmp2}
        \big|f_{j_1\dots j_{a+b},k_1\dots k_b}\big|\lesssim\frac{\eps^{a+b}}{\prod_{p=1}^b\tau_{j_p}\tau_{k_p}\prod_{q=b+1}^{a+b}\tau_{j_q}}.
    \end{equation}

    In the differential form $(j_1,\dots,j_{a+b})$ and $(k_1,\dots,k_b)$ are two collections of distinct indices. Therefore we can assume $(1\le)j_{b+1}< \dots<j_{a+b}<j_1<\dots<j_b$ and $k_1<\dots<k_b$. In \eqref{Eqn::Basis::FinalBasisEst::Top} we modular $d\overline\zeta_1$, where we only need the case $k_1\ge2$.
    
    By \eqref{Eqn::Basis::Prem::Order1} and \eqref{Eqn::Basis::Prem::Order2}, $\eps\approx\tau_1\le\dots\le \tau_n$, therefore for the case $k_1\ge2$,
    \begin{equation*}
        |f_{j_1\dots j_{a+b},k_1\dots k_{a+b}}|\lesssim\begin{cases}
        \displaystyle\frac{\eps^{b+1}}{\tau_{j_{b+1}}\prod_{p=1}^b\tau_{j_p}\tau_{k_p}}\Big|_{\substack{j_{b+1}=1\\j_p=k_p=p+1}}\approx \frac{\eps^b}{\prod_{l=2}^{b+1}\tau_l^2}&\text{when }a=1
        \\
        \displaystyle\frac{\eps^b}{\prod_{p=1}^b\tau_{j_p}\tau_{k_p}}\Big|_{\substack{j_p=p\\k_p=p+1}}\approx\frac{\eps^{b-1}}{\prod_{l=2}^b\tau_l^2\cdot\tau_{b+1}}&\text{when }a=0
        \end{cases}\le\frac{\eps^{a+b-1}}{\prod_{l=2}^{b+1}\tau_l^2}.
    \end{equation*}
    This proves \eqref{Eqn::Basis::EstQ2::Top}.
    
    Similarly for the case $k_1\ge1$,
    \begin{equation*}
        |f_{j_1\dots j_{a+b},k_1\dots k_{a+b}}|\lesssim\begin{cases}
        \displaystyle\frac{\eps^{b+1}}{\tau_{j_{b+1}}\prod_{p=1}^b\tau_{j_p}\tau_{k_p}}\Big|_{\substack{j_{b+1}=1\\j_p=p+1\\k_p=p}}\approx \frac{\eps^{b-1}}{\prod_{l=2}^b\tau_l^2\cdot \tau_{b+1}}&\text{when }a=1
        \\
        \displaystyle\frac{\eps^b}{\prod_{p=1}^b\tau_{j_p}\tau_{k_p}}\Big|_{\substack{j_p=k_p=p}}\approx\frac{\eps^{b-2}}{\prod_{l=2}^b\tau_l^2}&\text{when }a=0
        \end{cases}\le\frac{\eps^{a+b-2}\tau_{b+1}}{\prod_{l=2}^{b+1}\tau_l^2}.
    \end{equation*}
    This proves \eqref{Eqn::Basis::EstQ2::Bot}.
\end{proof}

Taking pullback from $\zeta\mapsto\Psi_0\cdot\zeta$, we have the following:
\begin{lem}\label{Lem::Basis::FinalBasisEst}
For every $j\ge0$ there is a $C_j>0$ such that for every $0\le k\le n-1$, $0<\eps\le\eps_0$, $\zeta\in U_1\backslash\overline\Omega$ and $z\in \Omega_{\varrho(\zeta)}\cap P_\eps(\zeta)\backslash P_{\eps/2}(\zeta)$,
\begin{align}\label{Eqn::Basis::FinalBasisEst::Top}
    \bigg|D^j_{z,\zeta}\Big(\frac{\widehat Q\wedge(\dbar\widehat Q)^{k}}{\widehat S^{k+1}}\Big)^\top(z,\zeta)\bigg|&\le C_j\frac{\eps^{-1-j}}{\prod_{l=2}^{k+1}\tau_l(\zeta,\eps)^2};
    \\
    \label{Eqn::Basis::FinalBasisEst::Bot}
    \bigg|D^j_{z,\zeta}\Big(\frac{\widehat Q\wedge(\dbar\widehat Q)^{k}}{\widehat S^{k+1}}\Big)(z,\zeta)\bigg|&\le C_j\frac{\eps^{-2-j}\tau_{k+1}(\zeta,\eps)}{\prod_{l=2}^{k+1}\tau_l(\zeta,\eps)^2}.
\end{align}Here $D^j=\{\partial^\alpha_z\partial^\beta_\zeta\partial^\gamma_{\overline\zeta}\}_{|\alpha+\beta+\gamma|\le j}$ is the collection of differential operators acting on the components.
\end{lem}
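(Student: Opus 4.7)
The plan is to fix $\zeta \in U_1 \setminus \overline\Omega$ and $\eps \in (0, \eps_0]$ and to work in the unitary coordinates provided by an $\eps$-minimal basis at $\zeta$, say via the matrix $\Psi_0$. Since an isotropic derivative $D^j_{z,\zeta}$ is invariant up to constants under a unitary change of variables, I may assume $\Psi_0 = I$, so that the basis $(v_1, \ldots, v_n)$ is the standard basis and the ``modular $d\bar\zeta_1$'' quotient appearing in Corollary~\ref{Cor::Basis::EstQ2} coincides, up to a uniformly bounded change of basis, with the tangential projection $(-)^\top$ at $\zeta$. Write $\tau_l = \tau_l(\zeta,\eps)$ and note that Lemma~\ref{Lem::Basis::EstSQ}~\ref{Item::Basis::EstSQ::S} applied at scale $\eps/2$ gives $|\widehat S(z, \zeta)| \gtrsim \eps$ uniformly on $\Omega_{\varrho(\zeta)} \cap P_\eps(\zeta) \setminus P_{\eps/2}(\zeta)$.

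The first step is to bootstrap Lemma~\ref{Lem::Basis::EstSQ}~\ref{Item::Basis::EstSQ::Q} to all orders of derivatives. Starting from $\widehat Q_j(z,\zeta) = \int_0^1 \partial_{z_j}\widehat S(\zeta + t(z-\zeta),\zeta)\,dt$ together with $\widehat S = A \cdot S$ from Lemma~\ref{Lem::Goal::Glue} (with $A$ smooth and bounded below), the anisotropic bound~\eqref{Eqn::Basis::Prem::DefFunBdd} applied monomial-by-monomial to~\eqref{Eqn::Goal::SZeta} yields
\[
    |D^\alpha \widehat S(z,\zeta)| \lesssim \eps \prod_{l=1}^n \tau_l^{-\alpha_l}, \qquad |D^\alpha \widehat Q_p(z,\zeta)| \lesssim \frac{\eps}{\tau_p}\prod_{l=1}^n \tau_l^{-\alpha_l},
\]
uniformly in $z \in P_\eps(\zeta)$, where $\alpha$ is a multi-index of $(z,\zeta)$-derivatives in the adapted coordinates. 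Since $\tau_1 \approx \eps$ is the smallest scale, each isotropic derivative costs at most a factor $\tau_1^{-1} \approx \eps^{-1}$; Fa\`a di Bruno together with the lower bound on $|\widehat S|$ then gives $|D^j_{z,\zeta}(\widehat S^{-(k+1)})| \lesssim \eps^{-(k+1)-j}$ on the annular region.

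The second step extends Corollary~\ref{Cor::Basis::EstQ2} to derivatives: running the index-counting in its proof with the bounds above replacing the pointwise estimate~\eqref{Eqn::Basis::EstSQ::Q} produces, for $F = \widehat Q \wedge (\dbar\widehat Q)^k$ and every $l \ge 0$,
\[
    \bigl|D^l F(z,\zeta) \bmod d\bar\zeta_1 \bigr| \lesssim \frac{\eps^{k-l}}{\prod_{l'=2}^{k+1}\tau_{l'}^2}, \qquad |D^l F(z,\zeta)| \lesssim \frac{\eps^{k-1-l}\,\tau_{k+1}}{\prod_{l'=2}^{k+1}\tau_{l'}^2}.
\]
A Leibniz expansion of $F^\top / \widehat S^{k+1}$ and $F / \widehat S^{k+1}$ combined with the $\widehat S^{-(k+1)}$ bound from the previous paragraph then yields~\eqref{Eqn::Basis::FinalBasisEst::Top} and~\eqref{Eqn::Basis::FinalBasisEst::Bot}, \emph{provided} the tangential estimate survives the differentiation.

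The main obstacle lies in that last proviso: the operator $(-)^\top$ itself depends on $\zeta$ through $\dbar\varrho(\zeta)/|\dbar\varrho(\zeta)|$, so that $D^j_{z,\zeta}(F^\top)$ differs from $(D^j_{z,\zeta}F)^\top$. Using the explicit formula $F^\top = F - \overline\theta_1 \wedge \iota_{\overline Z_1} F$ from Remark~\ref{Rmk::Goal::TopBotFacts}, the extra Leibniz terms carry derivatives of the smooth fields $\overline\theta_1,\overline Z_1$ that are $O(1)$ on $U_1$ (never $\eps^{-1}$), multiplied by some lower-order derivative of $F$ estimated by the full (non-tangential) bound. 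A term in which $l \ge 1$ derivatives fall on the projection is thus controlled by $\eps^{k-1-(j-l)}\,\tau_{k+1}/\prod_{l'=2}^{k+1}\tau_{l'}^2$, which is dominated by the target $\eps^{k-j}/\prod_{l'=2}^{k+1}\tau_{l'}^2$ because $\eps^{l-1}\tau_{k+1} \lesssim 1$ for $l \ge 1$ (using $\tau_{k+1} \le \tau_n \lesssim \eps^{1/m_1} \lesssim 1$ from~\eqref{Eqn::Basis::Prem::TypeBdd} and $\eps \le \eps_0 < 1$). Hence every contribution from the variation of the projection is absorbed into the clean tangential estimate, and the pointwise bounds \eqref{Eqn::Basis::FinalBasisEst::Top}--\eqref{Eqn::Basis::FinalBasisEst::Bot} follow after reverting the unitary change of coordinates.
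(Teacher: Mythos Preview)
Your argument reaches the right conclusion, and the treatment of the $\zeta$-dependence of the projection in your last paragraph is correct and useful. However, your route through the claimed full anisotropic bounds $|D^\alpha\widehat Q_p|\lesssim(\eps/\tau_p)\prod_l\tau_l^{-\alpha_l}$ is both more than you need and not adequately justified: applying \eqref{Eqn::Basis::Prem::DefFunBdd} monomial-by-monomial to \eqref{Eqn::Goal::SZeta} does not directly control $\zeta$-derivatives, since those differentiate the rotation $\Phi(\zeta)$ and raise the order of the $\varrho$-derivatives appearing as coefficients, and the factor $A$ from Lemma~\ref{Lem::Goal::Glue} carries no anisotropic control whatsoever. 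The paper avoids this issue entirely by a simpler bookkeeping: in the Leibniz expansion of $D^j\big(\widehat Q\wedge(\dbar^\top\widehat Q)^k/\widehat S^{k+1}\big)$, any factor $\dbar^\top\widehat Q$ or $\widehat Q$ that receives at least one derivative is bounded \emph{trivially} by $O(1)$ (smoothness of $\widehat Q$ and of the projection), and only the remaining undifferentiated block $\widehat Q\wedge(\dbar^\top\widehat Q)^{k-q}$ or $(\dbar^\top\widehat Q)^{k-q}$ is estimated via Corollary~\ref{Cor::Basis::EstQ2}; the lost $\tau_l^2$ factors are then reinstated for free since $\tau_l\lesssim1$. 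In effect your argument reduces to the same thing once you note that for $|\alpha|\ge1$ your isotropic worst case $(\eps/\tau_p)\eps^{-|\alpha|}\ge1$ is already implied by the trivial bound, so the anisotropic step is superfluous. The paper's organization also absorbs the projection variation automatically (a differentiated $\dbar^\top\widehat Q$ is one of the $O(1)$ factors), so the separate accounting you do with $F^\top=F-\overline\theta_1\wedge\iota_{\overline Z_1}F$ is not needed either.
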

\begin{rem}\label{Rmk::Basis::FinalBasisEstRmk}
\begin{enumerate}[(i)]
    \item There is no $\bar z$-derivative since the fractions are holomorphic in $z\in\Omega_{\varrho(\zeta)}(\supset\Omega)$.
    \item In fact \eqref{Eqn::Basis::FinalBasisEst::Top} and \eqref{Eqn::Basis::FinalBasisEst::Bot} correspond to the term $\frac{\eps^{-j}}{\prod_{i=0}^k\tau_{\nu_i}\prod_{i=1}^k\tau_{\mu_i}}$ and $\frac{\eps^{-j-\frac12}}{\prod_{i=0}^k\tau_{\nu_i}\prod_{i=1}^{k-1}\tau_{\mu_i}}$ in \cite[Lemma~9]{AlexandreCk} respectively. By doing a refined estimate on the normal direction, where \cite{AlexandreCk} introduced a notion $\tau_1':=\eps^\frac12$, one can improve \eqref{Eqn::Basis::FinalBasisEst::Bot} by a factor of $\eps^\frac12$. However \eqref{Eqn::Basis::FinalBasisEst::Bot} is enough for our proof. 
    \item\label{Item::Basis::FinalBasisEstRmk::NIso} If we consider the anisotropic estimates, one can show that for $\alpha,\beta,\gamma\in\N^n$, $\zeta\in U_1\backslash\overline\Omega$ and $z\in\Omega\cap P_\eps(\zeta)$, if the standard coordinate basis is $\eps$-minimal at $\zeta$, then
    \begin{align*}
    \bigg|\partial^\alpha_z\partial^\beta_\zeta\partial^\gamma_{\overline\zeta}\Big(\frac{\widehat Q\wedge(\dbar\widehat Q)^{k}}{\widehat S^{k+1}}\Big)^\top(z,\zeta)\bigg|&\lesssim_{\alpha,\beta,\gamma} \frac{\eps^{-1-\alpha_1-\beta_1-\frac12\gamma_1}}{\prod_{l=2}^{k+1}\tau_l(\zeta,\eps)^2}\prod_{j=2}^{n}\tau_j(\zeta,\eps)^{-\alpha_j-\beta_j-\gamma_j};
    \\
    \bigg|\partial^\alpha_z\partial^\beta_\zeta\partial^\gamma_{\overline\zeta}\Big(\frac{\widehat Q\wedge(\dbar\widehat Q)^{k}}{\widehat S^{k+1}}\Big)(z,\zeta)\bigg|&\lesssim_{\alpha,\beta,\gamma}\frac{\eps^{-\frac32-\alpha_1-\beta_1-\frac12\gamma_1}}{\tau_{k+1}(\zeta,\eps)\prod_{l=2}^{k}\tau_l(\zeta,\eps)^2}\prod_{j=2}^{n}\tau_j(\zeta,\eps)^{-\alpha_j-\beta_j-\gamma_j}.
\end{align*}
The proof requires Alexandre's estimate for normal derivatives in \cite[Section~2]{AlexandreCk}.
\end{enumerate} 
\end{rem}
\begin{proof}[Proof of Lemma \ref{Lem::Basis::FinalBasisEst}]
For convenience we write\footnote{On a $(1,0)$-form $f$ we have $(\dbar f)^\top=\dbar_bf$ on the boundary $b\Omega$. For orthonormal $(0,1)$-forms $(\overline\theta_1,\dots,\overline\theta_n)$ and its dual basis $(\overline Z_1,\dots,\overline Z_n)$ in Remark~\ref{Rmk::Goal::TopBotFacts}, we have $\dbar_b(f_kd\zeta_k)=\sum_{j=2}^n(\overline Z_jf)\overline\theta_j\wedge d\zeta_k$. One can interpret $\dbar^\top|_{b\Omega_t}$ as $\dbar_b$ on each leaf $b\Omega_t\subset U_1$.} $\dbar^\top\widehat Q:=(\dbar\widehat Q)^\top$ throughout the proof. 

By Lemma~\ref{Lem::Basis::EstSQ} \ref{Item::Basis::EstSQ::S} and Lemma~\ref{Lem::Goal::Glue} \ref{Item::Goal::Glue::A} we have $|\widehat S|\gtrsim\eps$ for $z\in\Omega_{\varrho(\zeta)}\backslash P_{\eps/2}(\zeta)$. Applying the trivial estimate $|D^j\widehat S|\lesssim_j1$ for $j\ge1$ we see that 
\begin{equation}\label{Eqn::Basis::FinalBasisEst::EstS}
    |D^j (\widehat S^{-k})(z,\zeta)|\lesssim_j|\widehat S(z,\zeta)|^{-k-j}\lesssim_j\eps^{-k-j}\text{ when }z\in (\Omega\cap P_\eps(\zeta))\backslash P_{\eps/2}(\zeta).
\end{equation}

Therefore, by applying product rules, for
$z\in (\Omega _{\varrho (\zeta )}\cap P_{\varepsilon }(\zeta ))
\backslash P_{\varepsilon /2}(\zeta )$, we uniformly have
\begin{align*}
    &\bigg|D^j_{z,\zeta}\frac{\widehat Q\wedge(\dbar^\top\widehat Q)^{k}}{\widehat S^{k+1}}\bigg|    \lesssim_j\sum_{q=0}^{k}\sum_{\substack{j_0+\dots+j_{q+1}=j\\j_2,\dots,j_q\ge1}}\Big|D^{j_0}\frac1{\widehat S^{k+1}}\Big|\big|(D^{j_1}\widehat Q)\wedge(\dbar^\top\widehat Q)^{k-q}\big|\prod_{p=2}^{q+1}|D^{j_p}(\dbar\widehat Q)|\hspace{-1in}
    \\
    \lesssim&_j\sum_{q=0}^{k}\bigg(\sum_{j_0=0}^{j-q}\Big|D^{j_0}\frac1{\widehat S^{k+1}}\Big|\cdot \big|\widehat Q\wedge(\dbar^\top\widehat Q)^{k-q}\big|+\sum_{\substack{1\le j_1\le j-q\\j_0\le j-q-j_1}}\Big|D^{j_0}\frac1{\widehat S^{k+1}}\Big|\cdot |D^{j_1}\widehat Q|\cdot\big|(\dbar^\top\widehat Q)^{k-q}\big|\bigg),\hspace{-0.8in}&(|D^{j_p}(\dbar\widehat Q)|\lesssim_j1)
    \\
    \lesssim&_{k,\widehat S}\sum_{q=0}^k\Big(\sum_{j_0=1}^{j-q}\eps^{-k-j_0-1}\big|\widehat Q\wedge(\dbar^\top\widehat Q)^{k-q}\big|+\sum_{j_0=1}^{j-q-1}\eps^{-k-j_0-1}\big|(\dbar^\top\widehat Q)^{k-q}\big|\Big),
    &(\text{by }\eqref{Eqn::Basis::FinalBasisEst::EstS}\text{ and }|D^{j_1}\widehat Q|\lesssim_j1).
\end{align*}

Now fix $\zeta=\zeta_0$ and $\eps$. Since the left hand side of \eqref{Eqn::Basis::FinalBasisEst::Top} is invariant under a change of unitary coordinate system, we can assume that  the standard basis is $\eps$-minimal at $\zeta_0$, thus $\dbar^\top\widehat Q_j(z,\zeta_0)=\sum_{k=2}^n\frac{\partial\widehat Q_j}{\partial \overline\zeta_k}(z,\zeta_0)d\overline\zeta_k$. Applying \eqref{Eqn::Basis::EstQ2::Top} we get
\begin{equation*}
    \bigg|D^j_{z,\zeta}\Big(\frac{\widehat Q\wedge(\dbar^\top\widehat Q)^{k}}{\widehat S^{k+1}}\Big)\bigg|\lesssim\sum_{q=0}^k\Big(\sum_{j_0=1}^{j-q}\eps^{-k-j_0-1}\frac{\eps^{k-q}}{\prod_{l=2}^{k-q+1}\tau_l^2}+\sum_{j_0=1}^{j-q-1}\eps^{-k-j_0-1}\frac{\eps^{k-q-1}}{\prod_{l=2}^{k-q+1}\tau_l^2}\Big)\lesssim\frac{\eps^{-1-j}}{\prod_{l=2}^{k+1}\tau_l^2}.
\end{equation*}
This complete the proof of \eqref{Eqn::Basis::FinalBasisEst::Top}.

Replacing $\dbar^\top \widehat Q$ by $\dbar \widehat Q$ and \eqref{Eqn::Basis::EstQ2::Top} by \eqref{Eqn::Basis::EstQ2::Bot}, the above argument yields \eqref{Eqn::Basis::FinalBasisEst::Bot}.
\end{proof}
The same estimates hold if we swap $z$ and $\zeta$.
\begin{cor}\label{Cor::Basis::FinalBasisEst}
By enlarging the constant $C_j>0$ in Lemma~\ref{Lem::Basis::FinalBasisEst} if necessary, the estimates \eqref{Eqn::Basis::FinalBasisEst::Top} and \eqref{Eqn::Basis::FinalBasisEst::Bot} with $\tau_j(\zeta,\eps)$ replaced by $\tau_j(z,\eps)$, hold for all $z\in\Omega$ and $\zeta\in P_\eps(z)\backslash(P_{\eps/2}(z)\cup\Omega)$.
\end{cor}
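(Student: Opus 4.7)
The strategy is to reduce the statement to Lemma~\ref{Lem::Basis::FinalBasisEst} itself by exploiting the quasi-metric structure of the anisotropic polydiscs $P_\eps(\cdot)$. The geometric input is the engulfing property \eqref{Eqn::Basis::Prem::Set}, from which the plan is to extract two auxiliary facts: (a) \emph{Quasi-metric symmetry} — if $\zeta\in P_\eps(z)$ then there exists $C=C(C_0)>1$ with $z\in P_{C\eps}(\zeta)$, and moreover $\zeta\notin P_{\eps/2}(z)$ forces $z\notin P_{\eps/(2C)}(\zeta)$; (b) \emph{Comparability of Yu-scales} — $\tau_j(z,\eps)\approx\tau_j(\zeta,\eps)$ uniformly in $j$ whenever $\zeta\in P_\eps(z)$. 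Both (a) and (b) are standard for $\eps$-minimal bases on convex finite type domains (cf.\ \cite{McNealFiniteType,YuMultitype,HeferMultitype}), and follow from the convexity of $P_\eps$ combined with \eqref{Eqn::Basis::Prem::Set} and \eqref{Eqn::Basis::Prem::DefFunBdd}.

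Once (a) and (b) are in hand, the corollary is immediate. Given $z\in\Omega$ and $\zeta\in P_\eps(z)\backslash(P_{\eps/2}(z)\cup\Omega)$, first note that the hypothesis set is vacuous unless $z$ is close to $b\Omega$, in which case $P_{\eps_0}(z)\subset U_1$ by Lemma~\ref{Lem::Basis::Prem}\ref{Item::Basis::Prem1}, so $\zeta\in U_1\backslash\overline\Omega$; moreover $z\in\Omega_{\varrho(\zeta)}$ since $\varrho(z)<0\le\varrho(\zeta)$. Setting $\tilde\eps:=C\eps$, property (a) gives $z\in P_{\tilde\eps}(\zeta)$ and $z\notin P_{\eps/(2C)}(\zeta)$, and the latter exclusion combined with Lemma~\ref{Lem::Basis::EstSQ}\ref{Item::Basis::EstSQ::S} yields $|\widehat S(z,\zeta)|\gtrsim\eps$ — this is the only use that the proof of Lemma~\ref{Lem::Basis::FinalBasisEst} makes of the exclusion from $P_{\eps'/2}(\zeta)$. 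Running the computation in the proof of that lemma verbatim with $\tilde\eps$ in place of $\eps$ then produces the desired bound in terms of $\tau_j(\zeta,\tilde\eps)$, and (b) together with the monotonicity $\tau_j(\zeta,\tilde\eps)\approx\tau_j(\zeta,\eps)$ for $\tilde\eps\approx\eps$ converts this into the required bound in terms of $\tau_j(z,\eps)$, at the cost of enlarging the constant $C_j$.

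The main technical obstacle is verifying the quasi-metric symmetry (a), since \eqref{Eqn::Basis::Prem::Set} directly supplies only $P_\eps(\zeta)\subseteq C_0P_{\eps/2}(z)$ when $\zeta\in P_\eps(z)$, not an inclusion placing $z$ inside some $P_{C\eps}(\zeta)$. The cleanest way to handle this is to exploit the slow variation of $\tau_j(\cdot,\eps)$ on the $P_\eps$-scale, which ensures that an $\eps$-minimal basis at $z$ remains comparable to one at $\zeta$, together with the elementary coordinate estimates coming from \eqref{Eqn::Basis::Prem::DefFunBdd}. A less slick but entirely self-contained alternative that avoids (a) altogether is to repeat the proof of Lemma~\ref{Lem::Basis::FinalBasisEst} from scratch with the Yu-basis now anchored at $z$: the identity $\widehat S(z,\zeta)=\sum_j\widehat Q_j(z,\zeta)(z_j-\zeta_j)$, combined with \eqref{Eqn::Basis::Prem::DefFunBdd} applied at $z$, yields the analogue of Lemma~\ref{Lem::Basis::EstSQ}\ref{Item::Basis::EstSQ::Q} with $\tau_j(z,\eps)$ replacing $\tau_j(\zeta,\eps)$, after which the remainder of the computation proceeds identically and delivers the desired estimates directly.
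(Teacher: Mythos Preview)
Your proposal is correct and follows essentially the same route as the paper: invoke the quasi-symmetry of the ellipsoids $P_\eps(\cdot)$ and the comparability $\tau_j(z,\eps)\approx\tau_j(\zeta,\eps)$, then feed the pair $(z,\zeta)$ back into Lemma~\ref{Lem::Basis::FinalBasisEst}. The paper's proof is a single sentence asserting that \eqref{Eqn::Basis::Prem::Set} directly yields $\zeta\in P_\eps(z)\backslash P_{\eps/2}(z)\Rightarrow z\in P_{C_0\eps}(\zeta)\backslash P_{\eps/(2C_0)}(\zeta)$ together with $\tau_j(\zeta,\eps)\ge\tfrac1{C_0}\tau_j(z,\eps)$, after which Lemma~\ref{Lem::Basis::FinalBasisEst} applies; your concern that \eqref{Eqn::Basis::Prem::Set} does not \emph{literally} state the symmetry is well taken, but the paper (and the cited references) treat this engulfing/symmetry package as a single standard fact, so the extra alternatives you sketch are not needed.
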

\begin{proof}
    Indeed we still have $z\in\Omega$ and $\zeta\in U_1\backslash\overline\Omega$. By \eqref{Eqn::Basis::Prem::Set}, $\zeta\in P_\eps(z)\backslash P_{\eps/2}(z)$ implies $z\in P_{C_0\eps}(\zeta)\backslash P_{\eps/(2C_0)}(\zeta)$ and $\tau_j(\zeta,\eps)\ge\frac1{C_0}\tau_j(z,\eps)$ where $C_0$ is in Lemma~\ref{Lem::Basis::Prem}. The results then follow from Lemma~\ref{Lem::Basis::FinalBasisEst}.
\end{proof}

Recall $K_{q-1}(z,\zeta)$ in \eqref{Eqn::Goal::DefK}. Let $r_q:=(n-q+1) m_q+2q$ and $\gamma_q:=\frac{r_q}{r_q-1}$. Since $m_1\ge\dots\ge m_{n-1}\ge2>m_n=1$, we see that
\begin{equation}\label{Eqn::Basis::OrderGamma}
    r_1\ge r_2\ge\dots\ge r_n,\qquad\text{thus }1<\gamma_1\le \gamma_2\le\dots\le \gamma_n.
\end{equation}

We can now integrate $K_{q-1}$ on some $\eps$-minimal ellipsoids.

\begin{lem}\label{Lem::Basis::IntEst} For every $j\ge0$ there is a $C_j\ge0$, such that
for  every $1\le q\le n-1$, $z\in \Omega$, $\zeta\in U_1\backslash\overline\Omega$ and $0<\eps\le\eps_0$,
\begin{gather}
\label{Eqn::Basis::IntEst::Top+}
    \int_{\Omega \cap P_\eps(\zeta)\backslash P_{\frac\eps2}(\zeta)}|D^j(K^\top_{q-1})(w,\zeta)|d\Vol_w+\int_{P_\eps(z)\backslash (P_{\frac\eps2}(z)\cup \Omega)}|D^j(K^\top_{q-1})(z,w)|d\Vol_w\le C_j\eps^{\frac1{m_q}+1-j};
    \\
\label{Eqn::Basis::IntEst::Bot+}
    \int_{\Omega \cap P_\eps(\zeta)\backslash P_{\frac\eps2}(\zeta)}|D^jK_{q-1}^\bot(w,\zeta)|d\Vol_w+\int_{P_\eps(z)\backslash(P_{\frac\eps2}(z)\cup \Omega)}|D^jK_{q-1}^\bot(z,w)|d\Vol_w\le C_j\eps^{\frac2{m_q}-j};
    \\
\label{Eqn::Basis::IntEst::Top0}
    \int_{\Omega \cap P_\eps(\zeta)\backslash P_{\frac\eps2}(\zeta)}|D^jK^\top_{q-1}(w,\zeta)|^{\gamma_q}d\Vol_w+\int_{P_\eps(z)\backslash (P_{\frac\eps2}(z)\cup \Omega)}|D^jK^\top_{q-1}(z,w)|^{\gamma_q}d\Vol_w\le (C_j\eps^{1-j})^{\gamma_q};
    \\
\label{Eqn::Basis::IntEst::Bot0}
    \int_{\Omega \cap P_\eps(\zeta)\backslash P_{\frac\eps2}(\zeta)}|D^jK^\bot_{q-1}(w,\zeta)|^{\gamma_q}d\Vol_w+\int_{P_\eps(z)\backslash (P_{\frac\eps2}(z)\cup \Omega)}|D^jK^\bot_{q-1}(z,w)|^{\gamma_q}d\Vol_w\le (C_j\eps^{\frac1{m_q}-j})^{\gamma_q}.
\end{gather}
Here $D^j=\{\partial_z^{\alpha'}\partial_{\bar z}^{\alpha''}\partial_\zeta^{\beta'}\partial_{\bar\zeta}^{\beta''}:|\alpha'|+|\alpha''|+|\beta'|+|\beta''|\le j\}$ is the collection of derivatives of all variables.
\end{lem}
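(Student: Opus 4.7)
Since $(\dbar b)^{n-1-k}$ has constant coefficients, $K_{q-1}(z,\zeta)$ is a constant linear combination of the quotients
\[
\Phi_k(z,\zeta):=\frac{b\wedge\widehat Q\wedge(\dbar\widehat Q)^{k-1}}{\widehat S^k\,|z-\zeta|^{2(n-k)}},\qquad 1\le k\le n-q,
\]
wedged with constant-coefficient forms, so it suffices to prove each displayed inequality with $K_{q-1}$ replaced by an individual $\Phi_k$ and then sum over $k$. For the $\bot$-bounds I would use $|\Phi_k^\bot|\le|\Phi_k|$ together with the full-form estimate \eqref{Eqn::Basis::FinalBasisEst::Bot}; for the $\top$-bounds I would combine the identity $(\omega\wedge\eta)^\top=\omega^\top\wedge\eta^\top$ with the sharper \eqref{Eqn::Basis::FinalBasisEst::Top}. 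The product rule, together with the trivial estimates $|D^{j'}b|\lesssim|w-\zeta|^{\max(1-j',0)}$ and $|D^{j'}|w-\zeta|^{-2(n-k)}|\lesssim|w-\zeta|^{-2(n-k)-j'}$, then reduces the pointwise bound on $|D^j\Phi_k^\top(w,\zeta)|$ over the annulus $\Omega_{\varrho(\zeta)}\cap(P_\eps(\zeta)\setminus P_{\eps/2}(\zeta))$ to a sum of terms of the form $C_{k,j_2,j_3}(\zeta,\eps)\,|w-\zeta|^{1-2(n-k)-j_3}$; the corresponding estimate on $|D^j\Phi_k|$ carries the extra factor $\eps^{-1}\tau_k(\zeta,\eps)$.

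What remains is to control the model integrals $I_\beta(\zeta,\eps):=\int_{P_\eps(\zeta)\setminus P_{\eps/2}(\zeta)}|w-\zeta|^{-\beta}\,d\Vol(w)$ and their $\gamma_q$-power analogues. I would change variables by fixing an $\eps$-minimal basis $(v_1,\ldots,v_n)$ at $\zeta$ and writing $w-\zeta=\sum_l\tau_l(\zeta,\eps)b_lv_l$; this converts the annulus into the Euclidean annulus $\{1/4\le|b|^2<1\}\subset\C^n$ with Jacobian $\prod_l\tau_l^2$ and $|w-\zeta|^2=\sum_l\tau_l^2|b_l|^2$. Passing to spherical coordinates, the radial integral is trivial whenever $\beta<2n$, while the anisotropic angular integral $\int_{S^{2n-1}}(\sum_l\tau_l^2|\hat b_l|^2)^{-\beta/2}\,d\sigma$ can be evaluated by partitioning $S^{2n-1}$ into sectors on which a single $\tau_l|\hat b_l|$ dominates; on each such sector $|w-\zeta|\approx\tau_l|\hat b_l|$, so the integral reduces to explicit one-variable integrals whose output is a product of powers of the $\tau_l$'s depending on the size of $\beta$. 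Substituting $\tau_1\gtrsim\eps$ from \eqref{Eqn::Basis::Prem::Order2} and $\tau_l\lesssim\eps^{1/m_{n+1-l}}$ from \eqref{Eqn::Basis::Prem::TypeBdd} then produces the claimed powers of $\eps$. The second integrals in each line of the lemma (with $w$ in the $\zeta$-slot of the kernel) are handled verbatim by invoking Corollary~\ref{Cor::Basis::FinalBasisEst} in place of Lemma~\ref{Lem::Basis::FinalBasisEst}.

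The principal obstacle I expect is the bookkeeping in this sectorial computation: one must verify that after summing over $k\in\{1,\ldots,n-q\}$ the dominant contribution produces exactly $\eps^{1/m_q+1-j}$ in \eqref{Eqn::Basis::IntEst::Top+} (resp.\ $\eps^{2/m_q-j}$ in \eqref{Eqn::Basis::IntEst::Bot+}), while every other contribution is strictly smaller owing to the monotonicity $m_1\ge\cdots\ge m_q$. I expect the dominant index to be $k=n-q$ (corresponding to $\beta=2q-1$), with the surviving product $\tau_1^2\prod_{l=n-q+1}^n\tau_l^2$ contributing the factor $\eps^{2+2\sum_{s=1}^q 1/m_s}$ whose worst term $\eps^{2/m_q}$ (coming from $s=q$) combines with the other $\eps$-powers to give exactly the advertised exponent. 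The $\gamma_q$-estimates \eqref{Eqn::Basis::IntEst::Top0} and \eqref{Eqn::Basis::IntEst::Bot0} follow from the same scheme with $\beta$ replaced by $\beta\gamma_q$: the specific value $\gamma_q=r_q/(r_q-1)$ with $r_q=(n-q+1)m_q+2q$ is chosen precisely so that every $\beta\gamma_q$ arising stays strictly below the criticality threshold $2n$ of the radial integral, while producing the exponents $\eps^{(1-j)\gamma_q}$ and $\eps^{(1/m_q-j)\gamma_q}$ on the right.
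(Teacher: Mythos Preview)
Your reduction to the pointwise bounds of Lemma~\ref{Lem::Basis::FinalBasisEst} and Corollary~\ref{Cor::Basis::FinalBasisEst} is correct and matches the paper. The gap is in the model-integral step. Your predicted ``surviving product'' $\tau_1^2\prod_{l=n-q+1}^n\tau_l^2$ is wrong: the actual size of $I_{2n-2k-1}$ is $\approx\prod_{l=1}^{k}\tau_l^2\cdot\tau_{k+1}$, so for $k=n-q$ one gets $\prod_{l=1}^{n-q}\tau_l^2\cdot\tau_{n-q+1}$. In the test case $n=4$, $q=1$, $m_l\equiv2$ (so $\tau_1\approx\eps$ and $\tau_2,\tau_3,\tau_4\approx\eps^{1/2}$) your formula gives $I_1\approx\eps^3$ while the true value is $\eps^{9/2}$; after multiplying by the prefactor $\eps^{-1-j}/\prod_{l=2}^{n-q}\tau_l^2\approx\eps^{-3-j}$ your bound yields only $\eps^{-j}$, not the required $\eps^{3/2-j}$. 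The paper avoids computing $I_\beta$ in isolation: it bounds $P_\eps(\zeta)$ by the polydisk $\{|w_l|<\tau_l\}$, replaces $(\sum_l|w_l|)^{-\beta}$ by the larger $(\sum_{l>k}|w_l|)^{-\beta}$, and observes that the kernel prefactor $\prod_{l=2}^{k}\tau_l^{-2}$ is then canceled \emph{exactly} by integrating $w_2,\dots,w_k$ trivially over their disks. The remaining $2(n-k)$-dimensional integral contributes the single factor $\tau_{k+1}\lesssim\eps^{1/m_{n-k}}$, with the worst case $k=n-q$. Your spherical-coordinate reformulation obscures this cancellation, which is the structural core of the argument.

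For the $\gamma_q$-estimates the issue is sharper: the radial condition $\beta\gamma_q<2n$ you cite is necessary but is not what fixes $\gamma_q$. After the same polydisk reduction there remain uncanceled factors $\prod_{l=2}^k\tau_l^{-2(\gamma_q-1)}$, bounded via $\tau_l\gtrsim\eps^{1/2}$, and the remaining integral produces $\tau_{k+1}^{(2n-2k-1)(1-\gamma_q)+1}$. The specific value $r_q=(n-q+1)m_q+2q$ enters through the arithmetic identity $(n-p+1)(1-\gamma_p)+\frac{2p-1}{m_p}(1-\gamma_p)+\frac1{m_p}=0$ for each $p=n-k$, together with the monotonicity $\gamma_q\le\gamma_p$ for $p\ge q$, to force every summand to be $\le\eps^{0}$. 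This exact cancellation, not radial integrability, is why $\gamma_q$ takes its particular value.
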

\begin{proof}
Since $K_{q-1}=K^\top_{q-1}+K^\bot_{q-1}$ (see Remark~\ref{Rmk::Goal::TopBotFacts}) and the right hand side of \eqref{Eqn::Basis::IntEst::Top+} is smaller than the right hand side of \eqref{Eqn::Basis::IntEst::Bot+}. It is enough to estimate \eqref{Eqn::Basis::IntEst::Bot+} with $K^\bot_{q-1}$ replaced by $K_{q-1}$. The same replacement also works for \eqref{Eqn::Basis::IntEst::Bot0}.

    Recall from \eqref{Eqn::Goal::DefK} that $K_{q-1}$ is the linear combinations of \eqref{Eqn::Goal::MainKernel}. Recall from Definition~\ref{Defn::Goal::BotTop} and Remark~\ref{Rmk::Goal::TopBotFacts} that $K_{q-1}^\top$ is also the linear combinations of \eqref{Eqn::Goal::MainKernel} with $\dbar\widehat Q$ replaced by $\dbar^\top\widehat Q=(\dbar\widehat Q)^\top$. Therefore by Lemma~\ref{Lem::Basis::FinalBasisEst} and Corollary~\ref{Cor::Basis::FinalBasisEst}, for $z$ and $\zeta$ in the assumption and $w$ in the integrands (for $z$ and $\zeta$ respectively),
    \begin{gather*}
        |D^j K^\top_{q-1}(w,\zeta)|\lesssim\sum_{k=1}^{n-q}\frac{\eps^{-1-j}}{\prod_{l=2}^{k}\tau_l(\zeta,\eps)^2}\frac1{|w-\zeta|^{2n-2k-1}},\quad
        |D^j K^\top_{q-1}(z,w)|\lesssim\sum_{k=1}^{n-q}\frac{\eps^{-1-j}}{\prod_{l=2}^{k}\tau_l(z,\eps)^2}\frac1{|w-z|^{2n-2k-1}};
        \\
        |D^j K_{q-1}(w,\zeta)|\lesssim\sum_{k=1}^{n-q}\frac{\eps^{-2-j}}{\prod_{l=2}^{k}\tau_l(\zeta,\eps)^2}\frac{\tau_{k+1}(\zeta,\eps)}{|w-\zeta|^{2n-2k-1}},\quad
        |D^j K_{q-1}(z,w)|\lesssim\sum_{k=1}^{n-q}\frac{\eps^{-2-j}}{\prod_{l=2}^{k}\tau_l(z,\eps)^2}\frac{\tau_{k+1}(\zeta,\eps)}{|w-z|^{2n-2k-1}}.
    \end{gather*}
    
    Taking an $\eps$-minimal basis at $\zeta$ and at $z$ respectively, $P_\eps(\zeta)$ and $P_\eps(z)$ are mapped into the subset\\ $\{u:|u_1|<\tau_1,\dots,|u_n|<\tau_n\}$ where $\tau_l=\tau_l(\zeta,\eps)$ or $\tau_l(z,\eps)$. By \eqref{Eqn::Basis::Prem::TypeBdd} we have $\tau_l\lesssim\eps^{1/m_{n-l+1}}$. Recall that $1=m_n<m_{n-1}\le\dots \le m_1$ from Definition~\ref{Defn::Basis::Type}. 
    
    Therefore \eqref{Eqn::Basis::IntEst::Top+} and \eqref{Eqn::Basis::IntEst::Bot+} are given by the following
    (see also \cite[Section~3]{HeferMultitype}):
    \begin{equation}\label{Eqn::Basis::PfIntEst::Pf+}
        \begin{aligned}
        &\int_{P_\eps(\zeta)}\sum_{k=1}^{n-q}\frac{d\Vol(w)}{|w-\zeta|^{2n-2k-1}\prod_{l=2}^{k}\tau_l(\zeta,\eps)^2}+\int_{P_\eps(z)}\sum_{k=1}^{n-q}\frac{d\Vol(w)}{|w-z|^{2n-2k-1}\prod_{l=2}^{k}\tau_l(\zeta,\eps)^2}
        \\
        \approx&\sum_{k=1}^{n-q}\int_{|w_1|<\tau_1,\dots,|w_n|<\tau_n}\frac{d\Vol(w_1,\dots,w_n)}{\big(\prod_{l=2}^{k}\tau_l^2\big)\cdot\big(\sum_{l=1}^n|w_l|\big)^{2n-2k-1}}
        \\
        \lesssim&\sum_{k=1}^{n-q}\tau_1^2\int_{|w_{k+1}|<\tau_{k+1},\dots,|w_n|<\tau_n}\frac{d\Vol(w_{k+1},\dots,w_n)}{\big(\sum_{l=k+1}^n|w_l|\big)^{2n-2k-1}}
        \\
        \lesssim&\eps^2\sum_{k=1}^{n-q}\int_0^{\tau_{k+1}}tdt\int_0^\infty\frac{s^{2n-2k-3}ds}{(t+s)^{2n-2k-1}}
        &\hspace{-2.5in}(\tau_1\approx\eps,\ t=|w_{k+1}|,\ s=|(w_{k+2},\dots,w_n)|)
        \\
        \lesssim&\eps^2\sum_{k=1}^{n-q} \int_0^{\tau_{k+1}}dt=\eps^2\sum_{k=1}^{n-q} \tau_{k+1}\lesssim\eps^2\sum_{k=1}^{n-q}\eps^{\frac1{m_{n-k}}}\approx \eps^{2+\frac1{m_q}}&\hspace{-2in}(\text{by }\eqref{Eqn::Basis::Prem::TypeBdd}).
    \end{aligned}
    \end{equation}
    Multiplying $\eps^{-1-j}$ we get \eqref{Eqn::Basis::IntEst::Top+}. Note that by \eqref{Eqn::Basis::Prem::TypeBdd} $\eps^{-2-j}\max\limits_{1\le k\le n-q}\tau_{k+1}^2\lesssim\eps^{\frac2{m_q}-2- j}$, thus \eqref{Eqn::Basis::IntEst::Bot+} follows.
    
    Similarly, \eqref{Eqn::Basis::IntEst::Top0} and \eqref{Eqn::Basis::IntEst::Bot0} are given by the following (cf. the control of $L_k$ in \cite[Section~4]{HeferMultitype}):
    \begin{equation}\label{Eqn::Basis::PfIntEst::Pf0}
        \begin{aligned}
        &\int_{P_\eps(\zeta)}\sum_{k=1}^{n-q}\frac{d\Vol(w)}{|w-\zeta|^{(2n-2k-1)\gamma_q}\prod_{l=2}^{k}\tau_l(\zeta,\eps)^{2\gamma_q}}+\int_{P_\eps(z)}\sum_{k=1}^{n-q}\frac{d\Vol(w)}{|w-z|^{(2n-2k-1)\gamma_q}\prod_{l=2}^{k}\tau_l(\zeta,\eps)^{2\gamma_q}}\hspace{-1.1in}
        \\
        \lesssim&\sum_{k=1}^{n-q}\int_{|w_1|<\tau_1,\dots,|w_n|<\tau_n}\frac{d\Vol(w_1,\dots,w_n)}{\big(\prod_{l=2}^{k}\tau_l^{2\gamma_q}\big)\cdot\big(\sum_{l=1}^n|w_l|\big)^{(2n-2k-1)\gamma_q}}
        \\
        \lesssim&\eps^{2}\sum_{k=1}^{n-q}\prod_{l=2}^k\frac1{\tau_l^{2(\gamma_q-1)}}\int_{|w_{k+1}|<\tau_{k+1},\dots,|w_n|<\tau_n}\frac{d\Vol(w_{l+1},\dots,w_n)}{\big(\sum_{l=1}^n|w_l|\big)^{(2n-2k-1)\gamma_q}}
        \\
        \lesssim&\eps^{2}\sum_{k=1}^{n-q}\eps^{-\frac12\cdot 2(k-1)(\gamma_q-1)}\int_0^{\tau_{k+1}}tdt\int_0^\infty\frac{s^{2n-2k-3}ds}{(t+s)^{(2n-2k-1)\gamma_q}}
        &(\eps^\frac12\lesssim\tau_2\le\tau_3\le\dots)
        \\
        \lesssim&\eps^{2\gamma_q}\sum_{k=1}^{n-q}\eps^{2-2\gamma_q-(k-1)(\gamma_q-1)}\int_0^{\tau_{k+1}}t^{(2n-2k-1)(1-\gamma_q)}dt
        \\
        \lesssim&\eps^{2\gamma_q}\sum_{k=1}^{n-q}\eps^{(k+1)(1-\gamma_q)}\tau_{k+1}^{(2n-2k-1)(1-\gamma_q)+1}
        \\
        \lesssim&\eps^{2\gamma_q}\sum_{p=q}^{n-1}\eps^{(n-p+1)(1-\gamma_q)+\frac1{m_p}(2p-1)(1-\gamma_q)+\frac1{m_p}}&\hspace{-0.4in}(\tau_{k+1}\lesssim\eps^\frac1{m_{n-k}},\ p=n-k)
        \\
        \lesssim&\eps^{2\gamma_q}\sum_{p=q}^{n-1}\eps^{(n-p+1)(1-\gamma_p)+\frac1{m_p}(2p-1)(1-\gamma_p)+\frac1{m_p}}&(\text{by }\eqref{Eqn::Basis::OrderGamma}).
    \end{aligned}
    \end{equation}

    The last inequality above equals to $\eps^{2\gamma_q}$ itself, because for every $1\le p\le n-1$,
    \begin{equation}\label{Eqn::Basis::PfIntEst::Pf0Num}
        \textstyle (n-p+1)(1-\gamma_p)+\frac1{m_p}(2p-1)(1-\gamma_p)+\frac1{m_p}=\frac{(n-p+1)m_p+2p-1}{m_p}(1-\gamma_p)+\frac1{m_p}=-\frac{r_p-1}{m_p(r_p-1)}+\frac1{m_p}=0.
    \end{equation}
    Thus multiplying $\eps^{-(1+j)\gamma_q}$ to \eqref{Eqn::Basis::PfIntEst::Pf0} we obtain \eqref{Eqn::Basis::IntEst::Top0}. Again by \eqref{Eqn::Basis::Prem::TypeBdd} $\eps^{-1-j}\max\limits_{1\le k\le n-q}\tau_{k+1}\lesssim\eps^{-1-j+\frac1{m_q}}$. Multiplying $\eps^{(\frac1{m_q}-1-j)\gamma_q}$ to \eqref{Eqn::Basis::PfIntEst::Pf0} we get \eqref{Eqn::Basis::IntEst::Bot0}. 
\end{proof}

We can now prove Theorem \ref{Thm::WeiEst} by taking the sums over $\eps$-minimal ellipsoids.

\begin{proof}[Proof of Theorem \ref{Thm::WeiEst}]Note that the constants $\eps_0,C_0>0$ in Lemma~\ref{Lem::Basis::Prem} depend only on $\Omega$, $\varrho$ and $\widehat S$ but not on $z$ and $\zeta$. We can replace the domains of the integrals \eqref{Eqn::WeiEst::Top+1} - \eqref{Eqn::WeiEst::Bot02} by $z\in\Omega\cap P_{\eps_0}(\zeta)$ and $\zeta\in P_{\eps_0}(z)\backslash\overline\Omega$: indeed by construction
\begin{equation*}
    \sup_{z\in\Omega,\zeta\in U_1\backslash\overline\Omega;\,|z-\zeta|\ge\eps_0/C_0}|D^k_{z,\zeta}K^\top_{q-1}(z,\zeta)|+|D^k_{z,\zeta}K^\bot_{q-1}(z,\zeta)|<\infty,\quad k\ge0.
\end{equation*}


The proof of \eqref{Eqn::WeiEst::Top+1} and \eqref{Eqn::WeiEst::Top+2} both follow from the same argument. Similarly for the rest, thus we only need to prove \eqref{Eqn::WeiEst::Top+1}, \eqref{Eqn::WeiEst::Bot+1}, \eqref{Eqn::WeiEst::Top01} and \eqref{Eqn::WeiEst::Bot01}.

Let $z\in\Omega$ with $\dist(z)<\eps_0$. Let $J\in\Z$ be the unique number such that $2^{-J}\eps_0\le\varrho(z)<2^{1-J}\eps_0$. Therefore $P_{2^{-J}\eps_0}(z)\subseteq\Omega$ and $\zeta\in P_\eps(z)\Rightarrow \dist(\zeta)\lesssim\eps$ for all $0<\eps\le\eps_0$.

Applying \eqref{Eqn::Basis::IntEst::Top+} we get \eqref{Eqn::WeiEst::Top+1}:
\begin{equation}\label{Eqn::Basis::PfWeiEst::Pf+}
    \begin{aligned}
    &\int_{P_{\eps_0}(z)\backslash\overline\Omega}\dist(\zeta)^s|D^kK^\top_{q-1}(z,\zeta)|d\Vol_\zeta\lesssim_{k}\sum_{j=1}^J\int_{P_{2^{1-j}\eps_0}(z)\backslash(P_{2^{-j}\eps_0}(z)\cup\Omega)}(2^{-j}\eps_0)^s|D^kK^\top_{q-1}(z,\zeta)|d\Vol_\zeta
    \\
    &\qquad\lesssim_k\sum_{j=1}^J(2^{-j}\eps_0)^s(2^{-j}\eps_0)^{\frac1{m_q}+1-k}\lesssim_{\eps_0}2^{-J(s+1+\frac1{m_q}-k)}\approx\dist(z)^{s+1+\frac1{m_q}-k}.
\end{aligned}
\end{equation}

Applying \eqref{Eqn::Basis::IntEst::Top0} we get \eqref{Eqn::WeiEst::Top01}:
\begin{equation}\label{Eqn::Basis::PfWeiEst::Pf0}
    \begin{aligned}
    &\int_{P_{\eps_0}(z)\backslash\overline\Omega}\hspace{-0.1in}|\dist(\zeta)^sD^kK^\top_{q-1}(z,\zeta)|^{\gamma_q}d\Vol_\zeta\lesssim_{k}\sum_{j=1}^J\int_{P_{2^{1-j}\eps_0}(z)\backslash(P_{2^{-j}\eps_0}(z)\cup\Omega)}\hspace{-0.3in}(2^{-j}\eps_0)^{s\gamma_q}|D^kK^\top_{q-1}(z,\zeta)|^{\gamma_q}d\Vol_\zeta
    \\
    &\qquad\lesssim_k\sum_{j=1}^J(2^{-j}\eps_0)^{s\gamma_q}(2^{-j}\eps_0)^{(1-k)\gamma_q}\lesssim_{\eps_0}2^{-J(s+1-k)\gamma_q}\approx\dist(z)^{(s+1-k)\gamma_q}.
\end{aligned}
\end{equation}

Using \eqref{Eqn::Basis::IntEst::Bot+} and \eqref{Eqn::Basis::IntEst::Bot0}, the same arguments show that
\begin{align*}
    \int_{P_{\eps_0}(z)\backslash\overline\Omega}\dist(\zeta)^s|D^kK^\bot_{q-1}(z,\zeta)|d\Vol(\zeta)&\lesssim 2^{-J(s+\frac2{m_q}-k)}\approx\dist(z)^{s+\frac2{m_q}-k};\\
    \int_{P_{\eps_0}(z)\backslash\overline\Omega}|\dist(\zeta)^sD^kK^\bot_{q-1}(z,\zeta)|^{\gamma_q}d\Vol(\zeta)&\lesssim 2^{-J(s-k+\frac1{m_q})\gamma_q}\approx\dist(z)^{(s-k+\frac1{m_q})\gamma_q}.
\end{align*}
These two equations give \eqref{Eqn::WeiEst::Bot+1} and \eqref{Eqn::WeiEst::Bot01}. The proof is now complete.
\end{proof}

Theorem~\ref{Thm::WeiEst} implies the following weighted boundedness between Sobolev spaces:
\begin{cor}\label{Cor::PfThm::WeiSobEst}
Let $1\le q\le n-1$ and $\alpha\in\N^{2n}(=\N^n_\zeta\times\N^n_{\bar \zeta})$.
We define integral operators\\ $\Kc_{q,\alpha}^\top,\Kc_{q,\alpha}^\bot:L^1(U_1\backslash\overline\Omega;\wedge^{0,q+1})\to C_\loc^0(\Omega;\wedge^{0,q-1})$ by
\begin{equation}\label{Eqn::PfThm::WeiSobEst::DefK}
    \Kc_{q,\alpha}^\top g(z):=\int_{U_1\backslash\overline\Omega} (D^\alpha_\zeta (K_{q-1}^\top))(z,\cdot)\wedge g;\quad \Kc_{q,\alpha}^\bot g(z):=\int_{U_1\backslash\overline\Omega} (D^\alpha_\zeta (K_{q-1}^\bot))(z,\cdot)\wedge g.
\end{equation}
Let $\dist(w):=\dist(w,b\Omega)$. Then for every $k\ge0$ and $1<s<k+|\alpha|-\frac1{m_q}$ {\normalfont(in particular $k+|\alpha|\ge2$)},
\begin{align}
\label{Eqn::PfThm::WeiSobEst::Top+}
    \Kc_{q,\alpha}^\top&:L^p(U_1\backslash\overline\Omega,\dist^{1-s};\wedge^{0,q+1})\to W^{k,p}(\Omega,\dist^{k+|\alpha|-\frac1{m_q}-s};\wedge^{0,q-1}),
    & \forall 1\le p\le\infty;
    \\
\label{Eqn::PfThm::WeiSobEst::Bot+}
    \Kc_{q,\alpha}^\bot&:L^p(U_1\backslash\overline\Omega,\dist^{\frac1{m_q}-s};\wedge^{0,q+1})\to W^{k,p}(\Omega,\dist^{k+|\alpha|-\frac1{m_q}-s};\wedge^{0,q-1}),
    &\forall 1\le p\le\infty;
    \\
\label{Eqn::PfThm::WeiSobEst::Top0}
    \Kc_{q,\alpha}^\top&:L^p(U_1\backslash\overline\Omega,\dist^{1-s};\wedge^{0,q+1})\to W^{k,\frac{pr_q}{r_q-p}}(\Omega,\dist^{k+|\alpha|-s};\wedge^{0,q-1}),
 &\forall 1\le p\le r_q;
    \\
\label{Eqn::PfThm::WeiSobEst::Bot0}
    \Kc_{q,\alpha}^\bot&:L^p(U_1\backslash\overline\Omega,\dist^{\frac1{m_q}-s};\wedge^{0,q+1})\to W^{k,\frac {pr_q}{r_q-p}}(\Omega,\dist^{k+|\alpha|-s};\wedge^{0,q-1}),
    &\forall 1\le p\le r_q.
\end{align}
\end{cor}
\begin{rem}
    Using integration by parts we get the relation $$\Kc_{q,\alpha}^{(\top,\bot)}g=(-1)^{|\alpha|}\Kc_{q,0}^{(\top,\bot)}\circ D^\alpha g,\quad\text{for all }g\in C_c^\infty(U_1\backslash\overline\Omega;\wedge^{0,q+1}).$$ Therefore \eqref{Eqn::PfThm::WeiSobEst::Top+} - \eqref{Eqn::PfThm::WeiSobEst::Bot0} can be restated as, for every $k,l\ge0$ and $1<s<k+l-\frac1{m_q}$ (in particular $k+l\ge2$),
    \begin{align*}
    \Kc_{q,0}^\top&:\widetilde W^{l,p}(\overline U_1\backslash\Omega,\dist^{1-s};\wedge^{0,q+1})\to W^{k,p}(\Omega,\dist^{k+l-\frac1{m_q}-s};\wedge^{0,q-1}),
    & \forall 1\le p\le\infty;
    \\
    \Kc_{q,0}^\bot&:\widetilde W^{l,p}(\overline U_1\backslash\Omega,\dist^{\frac1{m_q}-s};\wedge^{0,q+1})\to W^{k,p}(\Omega,\dist^{k+l-\frac1{m_q}-s};\wedge^{0,q-1}),
    &\forall 1\le p\le\infty;
    \\
    \Kc_{q,0}^\top&:\widetilde W^{l,p}(\overline U_1\backslash\Omega,\dist^{1-s};\wedge^{0,q+1})\to W^{k,\frac{pr_q}{r_q-p}}(\Omega,\dist^{k+l-s};\wedge^{0,q-1}),
 &\forall 1\le p\le r_q;
    \\
    \Kc_{q,0}^\bot&:\widetilde W^{l,p}(\overline U_1\backslash\Omega,\dist^{\frac1{m_q}-s};\wedge^{0,q+1})\to W^{k,\frac {pr_q}{r_q-p}}(\Omega,\dist^{k+l-s};\wedge^{0,q-1}),
    &\forall 1\le p\le r_q.
    \end{align*}
    Here $\widetilde W^{l,p}(\overline U,\varphi):=\{ g\in W^{l,p}(\R^N,\varphi):g|_{\overline U^c}=0 \}$ follow the notations in Definition \ref{Defn::Space::TLSpace}.
\end{rem}

Corollary~\ref{Cor::PfThm::WeiSobEst} follows almost immediately by Schur's test:
\begin{lem}[Schur's test]\label{Lem::PfThm::Schur}
    Let $(X,\mu)$ and $(Y,\nu)$ be two measure spaces. Let $G\in L^1_\loc(X\times Y,\mu\otimes \nu)$, $1\le \gamma<\infty$ and $A>0$ that satisfy
    \begin{equation*}
        \essup_{y\in Y}\int_X|G(x,y)|^\gamma d\mu(x)\le A^\gamma;\quad \essup_{x\in X}\int_Y|G(x,y)|^\gamma d\nu(y)\le A^\gamma.
    \end{equation*}
    
    Then the integral operator $Tf(y):=\int_XG(x,y)f(x)d\mu(x)$ has boundedness $T:L^p(X,d\mu)\to L^q(Y,d\nu)$, with operator norm $\|T\|_{L^p\to L^q}\le A$ for all $1\le p,q\le\infty$ such that $\frac1q=\frac1p+\frac1\gamma-1$.
\end{lem}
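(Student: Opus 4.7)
The plan is to deduce the lemma from its two natural endpoints and interpolate. Writing $\gamma'\in[1,\infty]$ for the Hölder conjugate of $\gamma$, the constraint $\frac1q=\frac1p+\frac1\gamma-1$ together with $1\le p,q\le\infty$ forces $p\in[1,\gamma']$ and $q\in[\gamma,\infty]$. Setting $\frac1{p_\theta}:=1-\theta/\gamma$ and $\frac1{q_\theta}:=(1-\theta)/\gamma$ for $\theta\in[0,1]$ linearly parametrizes the admissible segment, with $\theta=0$ giving $(p,q)=(1,\gamma)$ and $\theta=1$ giving $(\gamma',\infty)$; the identity $\frac1{p_\theta}+\frac1\gamma-1=\frac1{q_\theta}$ is immediate.

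For the endpoint $\theta=0$, I would apply Minkowski's integral inequality together with the second hypothesis (read with the natural correction that the integral there is $\int_Y|G(x,y)|^\gamma d\nu(y)$):
\begin{equation*}
\|Tf\|_{L^\gamma(Y,\nu)}=\Big\|\textstyle\int_X G(x,\cdot)f(x)\,d\mu(x)\Big\|_{L^\gamma(Y,\nu)}\le\int_X|f(x)|\,\|G(x,\cdot)\|_{L^\gamma(Y,\nu)}\,d\mu(x)\le A\|f\|_{L^1(X,\mu)}.
\end{equation*}
For the endpoint $\theta=1$, I would apply Hölder's inequality in the $x$-variable with exponents $\gamma$ and $\gamma'$ and then invoke the first hypothesis pointwise in $y$:
\begin{equation*}
|Tf(y)|\le\|G(\cdot,y)\|_{L^\gamma(X,\mu)}\|f\|_{L^{\gamma'}(X,\mu)}\le A\|f\|_{L^{\gamma'}(X,\mu)}\qquad\text{for }\nu\text{-a.e. }y\in Y.
\end{equation*}

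The conclusion then follows from the Riesz--Thorin interpolation theorem applied to $T$ between the compatible Banach pairs $\bigl(L^1(X,\mu),L^{\gamma'}(X,\mu)\bigr)$ and $\bigl(L^\gamma(Y,\nu),L^\infty(Y,\nu)\bigr)$: for each $\theta\in[0,1]$ we obtain $\|T\|_{L^{p_\theta}\to L^{q_\theta}}\le A^{1-\theta}\cdot A^\theta=A$, which is exactly the assertion. There is no real obstacle here; the only mild technical point is that $Tf$ must be well defined. This is handled by first working on a dense subspace of simple functions (on which the two endpoint estimates show $Tf$ is pointwise a.e.\ defined by the local integrability of $G$), and then extending by density, the extension being consistent across the two endpoints and hence along the interpolation scale.
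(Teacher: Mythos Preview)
Your argument is correct: the two endpoint bounds via Minkowski and H\"older followed by Riesz--Thorin interpolation is a standard and clean route, and you are right that the second displayed hypothesis must be read as $\int_Y|G(x,y)|^\gamma\,d\nu(y)$. The paper does not give its own proof here but simply cites \cite[Appendix~B]{RangeSCVBook}, so there is nothing to compare against; your write-up would serve as a self-contained replacement.
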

See for example \cite[Appendix B]{RangeSCVBook}. Note that the norm of $L^p(X,\mu)$ here is $(\int_X|f|^pd\mu)^\frac1p$, while the norm of $L^p(\Omega,\varphi)$ in Definition~\ref{Defn::Space::WeiSob} is $(\int_\Omega|\varphi f|^pd\Vol)^{1/p}$. When $p<\infty$ we have the correspondence $d\mu=|\varphi|^p\cdot d\Vol$.

\begin{proof}[Proof of Corollary~\ref{Cor::PfThm::WeiSobEst}]
For $\beta\in\N^{2n}_{z,\bar z}$, we have $D^\beta_z\Kc_{q,\alpha}^\top g(z)=\int (D^\beta_zD^\alpha_\zeta K^\top_{q-1})(z,\cdot)\wedge g$.

Applying Lemma~\ref{Lem::PfThm::Schur} to \eqref{Eqn::WeiEst::Top+1} and \eqref{Eqn::WeiEst::Top+2} with $(X,\mu)=(U_1\backslash\overline\Omega,\Vol_\zeta)$, $(Y,\nu)=(\Omega,\Vol_z)$, $\gamma=1$ and $G(\zeta,z)=\dist_{\Omega^c}(z)^{|\alpha|+k-s-\frac1{m_q}}\cdot(D^k_zD^\alpha_\zeta K_{q-1}^\top)(z,\zeta)\cdot\dist_\Omega(\zeta)^{s-1}$, we see that for every $k\ge0$ and\\ $1<s<k+|\alpha|-1/m_q$,
\begin{equation*}
    \big[g\mapsto \dist^{|\alpha|+|\beta|-s-\frac1{m_q}}\cdot\big( D^k_z\Kc^\top_{q,\alpha}(\dist^{s-1}\cdot g)\big)\big]:L^p(U_1\backslash\overline\Omega)\to L^p(\Omega),\quad\forall 1\le p\le\infty.
\end{equation*}
This is the same as saying $D^k_z\Kc_{q,\alpha}^\top:L^p(U_1\backslash\overline\Omega,\dist^{1-s})\to L^p(\Omega,\dist^{|\alpha|+k-s-1/m_q})$. Thus \eqref{Eqn::PfThm::WeiSobEst::Top+} follows.

Similarly, applying Lemma~\ref{Lem::PfThm::Schur} to \eqref{Eqn::WeiEst::Top01} and \eqref{Eqn::WeiEst::Top02} with $(X,\mu)=(U_1\backslash\overline\Omega,\Vol_\zeta)$, $(Y,\nu)=(\Omega,\Vol_z)$, $\gamma=\frac{r_q}{r_q-1}$ and $G(\zeta,z)=\dist_{\Omega^c}(z)^{|\alpha|+k-s}\cdot(D^k_zD^\alpha_\zeta K_{q-1}^\top)(z,\zeta)\cdot\dist_\Omega(\zeta)^{s-1}$ we get \eqref{Eqn::PfThm::WeiSobEst::Top0}.

Repeating the same arguments while replacing $\dist(\zeta)^{s-1}$ by $\dist(\zeta)^{s-\frac1{m_q}}$ and $K^\top$ by $K^\bot$, we get \eqref{Eqn::PfThm::WeiSobEst::Bot+} and \eqref{Eqn::PfThm::WeiSobEst::Bot0}.
\end{proof}

\begin{rem}
    By keeping track of the proof, the implied constants in Theorem~\ref{Thm::WeiEst} and the operator norms in Corollary~\ref{Cor::PfThm::WeiSobEst} depend only on the $C_0$ in Lemma~\ref{Lem::Basis::Prem}, the $C_1$ in Lemma~\ref{Lem::Basis::EstSQ}, and the upper bound of $\|\varrho\|_{C^{m+k+2}}$.

    More generally, whenever the smooth holomorphic support function $\widehat S(z,\zeta)$, along with the corresponding Leray map $\widehat Q_j(z,\zeta):=\int_0^1\frac{\partial S}{\partial z_j}(\zeta+t(z-\zeta),\zeta)dt$ ($j=1,\dots,n$), satisfies the estimates in Lemma~\ref{Lem::Basis::EstSQ}, then the kernel $K(z,\zeta)$ given by \eqref{Eqn::Goal::DefK} would fulfill the same weighted estimates as in Theorem~\ref{Thm::WeiEst}.
\end{rem}

\section{Function Spaces and Extension Operators}\label{Section::Space}
In this section we focus on the real domain $\R^N\simeq\C^n$ where $N=2n$. 
\begin{note}\label{Note::Space::Dist}
We denote by $\Ss'(\R^N)$ the space of tempered distributions, and for an arbitrary open subset $U\subseteq\R^n$, we denote by $\Ss'(U):=\{\tilde f|_U:\tilde f\in\Ss'(\R^N)\}\subsetneq\Ds'(U)$ the space of distributions in $U$ which can be extended to tempered distributions in $\R^N$. (See also \cite[(3.1) and Propotion 3.1]{RychkovExtension}.)
\end{note}

We first recall the classical Sobolev and H\"older spaces. The characterizations in Definitions \ref{Defn::Space::Sob} and \ref{Defn::Space::Hold} are not directly used in the paper.

\begin{defn}[Weighted Sobolev]\label{Defn::Space::WeiSob}
Let $U\subseteq\R^N$ be an arbitrary open set. Let $\varphi:U\to[0,\infty)$ be a non-negative continuous function, we define for $k\ge0$ and $1\le p\le\infty$,
\begin{equation}\label{Eqn::Intro::WeiSobo}
\begin{gathered}
W^{k,p}(U,\varphi):=\{f\in W^{k,p}_\loc(U):\|f\|_{W^{k,p}(U,\varphi)}<\infty\},
\\
\|f\|_{W^{k,p}(U,\varphi)}:=\bigg(\sum_{|\alpha|\le k}\int_U |\varphi\partial^\alpha f|^p\bigg)^\frac1p \quad 1 \le p < \infty;\quad
\|f\|_{W^{k,\infty}(U,\varphi)}:=\sup\limits_{|\alpha|\le k}\|\varphi\partial^\alpha f\|_{L^\infty(U)}.
\end{gathered}
\end{equation}
We define $W^{k,p}(U):=W^{k,p}(U,\1)$ where $\1=\1_{\R^N}$ is the constant function.
\end{defn}
\begin{defn}[Sobolev-Bessel]\label{Defn::Space::Sob}
    Let $s\in\R$. For $1<p<\infty$, we define the Bessel potential space $H^{s,p}(\R^N)$ to be the set of all tempered distributions $f\in\Ss'(\R^N)$ such that
    \begin{equation*}
        \|f\|_{H^{s,p}(\R^N)}:=\|(I-\Delta)^\frac s2f\|_{L^p(\R^N)}<\infty.
    \end{equation*}
    Here we use the standard (negative) Laplacian $\Delta=\sum_{j=1}^N\partial_{x_j}^2$.
    
    
    On an open subset $U\subseteq\R^N$, we define $H^{s,p}(U):=\{\tilde f|_U:\tilde f\in H^{s,p}(\R^N)\}$ for $s\in\R$, $1< p<\infty$ with norm $\|f\|_{H^{s,p}(U)}:=\inf_{\tilde f|_U=f}\|\tilde f\|_{H^{s,p}(\R^N)}$.

\end{defn}
\begin{defn}[H\"older-Zygmund]\label{Defn::Space::Hold}Let $U\subseteq\R^N$ be an open subset. We define the H\"older-Zygmund space $\Co^s(U)$ for $s\in\R$ by the following:
\begin{itemize}
    \item For $0<s<1$, $\Co^s(U)$ consists of all $f\in C^0(U)$ such that $\|f\|_{\Co^s(U)}:=\sup\limits_U|f|+\sup\limits_{x,y\in U}\frac{|f(x)-f(y)|}{|x-y|^s}<\infty$.
    \item $\Co^1(U)$ consists of all $f\in C^0(U)$ such that $\|f\|_{\Co^1(U)}:=\sup\limits_U|f|+\sup\limits_{x,y\in U;\frac{x+y}2\in U}\frac{|f(x)+f(y)-2f(\frac{x+y}2)|}{|x-y|}<\infty$.
    \item For $s>1$ recursively, $\Co^s(U)$ consists of all $f\in \Co^{s-1}(U)$ such that $\nabla f\in\Co^{s-1}(U;\C^N)$. We define $\|f\|_{\Co^s(U)}:=\|f\|_{\Co^{s-1}(U)}+\sum_{j=1}^N\|D_j f\|_{\Co^{s-1}(U)}$.
    \item For $s\le0$ recursively, $\Co^s(U)$ consists of all distributions that have the form $g_0+\sum_{j=1}^N\partial_jg_j$ where $g_0,\dots,g_N\in \Co^{s+1}(U)$. We define $\|f\|_{\Co^s(U)}:=\inf\{\sum_{j=0}^N\|g_j\|_{\Co^{s+1}(U)}:f=g_0+\sum_{j=1}^N\partial_j g_j\in\Ds'(U)\}$.
    \item We define $\Co^\infty(U):=\bigcap_{s>0}\Co^s(U)$ be the space of bounded smooth functions.
\end{itemize}
    
\end{defn}

In the paper we consider a more general version of the function spaces: the Triebel-Lizorkin spaces.
\begin{defn}[Triebel-Lizorkin]\label{Defn::Space::TLSpace}
    Let $\lambda=(\lambda_j)_{j=0}^\infty$ be a sequence of Schwartz functions satisfying:
\begin{enumerate}[label=(\thesection.\arabic*)]\setcounter{enumi}{\value{equation}}
    \item\label{Item::Space::LambdaFour} The Fourier transform $\hat\lambda_0(\xi)=\int_{\R^n}\lambda_0(x)2^{-2\pi ix\xi}dx$ satisfies $\supp\hat\lambda_0\subset\{|\xi|<2\}$ and $\hat\lambda_0|_{\{|\xi|<1\}}\equiv1$.
    \item\label{Item::Space::LambdaScal}  $\lambda_j(x)=2^{jn}\lambda_0(2^jx)-2^{(j-1)n}\lambda_0(2^{j-1}x)$ for $j\ge1$.\setcounter{equation}{\value{enumi}}
\end{enumerate}

Let $0< p,q\le\infty$ and $s\in\R$, we define the Triebel-Lizorkin norm $\|\cdot\|_{\Fs_{pq}^s(\lambda)}$ as
\begin{align}
\label{Eqn::Space::TLNorm1}
    \|f\|_{\Fs_{pq}^s(\lambda)}&:=\|(2^{js}\lambda_j\ast f)_{j=0}^\infty\|_{L^p(\R^N;\ell^q(\N))}=\bigg(\int_{\R^N}\Big(\sum_{j=0}^\infty|2^{js}\lambda_j\ast f(x)|^q\Big)^\frac pqdx\bigg)^\frac1p,&p<\infty;
    \\
    \label{Eqn::Space::TLNorm2}
    \|f\|_{\Fs_{\infty q}^s(\lambda)}&:=\sup_{x\in\R^N,J\in\Z}2^{NJ\frac1q}\|(2^{js}\lambda_j\ast f)_{j=\max(0,J)}^\infty\|_{L^q(B(x,2^{-J});\ell^q)},&p=\infty.
\end{align}
Here for $q=\infty$ we take the usual modifications: we replace the $\ell^q$ sum by the supremum over $j$.

We define $\Fs_{pq}^s(\R^N)$ with its norm given by a fixed choice of $\lambda$.

For an arbitrary open subset $U\subseteq\R^N$, we define
\begin{align*}
    \Fs_{pq}^s(U)&:=\{\tilde f|_U:\tilde f\in\Fs_{pq}^s(\R^N)\}&\text{with}\quad \|f\|_{\Fs_{pq}^s(U)}:=\inf_{\tilde f|_U=f}\|\tilde f\|_{\Fs_{pq}^s(\R^N)};
    \\
    \widetilde\Fs_{pq}^s(\overline U)&:=\{f\in \Fs_{pq}^s(\R^N):f|_{\overline{U}^c}=0\}&\text{ as a closed subspace of }\Fs_{pq}^s(\R^N).
\end{align*}
\end{defn}
\begin{rem}\label{Rmk::Space::TLRmk}
\begin{enumerate}[(i)]
    \item When $p$ or $q<1$, \eqref{Eqn::Space::TLNorm1} and \eqref{Eqn::Space::TLNorm2} are only quasi-norms. For convenience we still use the terminology ``norms'' to refer them.
    \item Different choices of $\lambda$ results in equivalent norms. See \cite[Proposition~2.3.2]{TriebelTheoryOfFunctionSpacesI} and \cite[Propositions 1.3 and 1.8]{TriebelTheoryOfFunctionSpacesIV}.
    \item\label{Item::Space::TLRmk::Embed} We have embedding  $\Fs_{pq_1}^s(\R^N)\hookrightarrow\Fs_{pq_2}^s(\R^N)\hookrightarrow\Fs_{pq_1}^{s-\delta}(\R^N)$ for all $0<p\le\infty$, $s\in\R$, $q_1\le q_2$, $\delta>0$. 
    \\
    We have embedding $\Fs_{p_1q_1}^s(\R^N)\hookrightarrow\Fs_{p_1q_2}^{s-N(\frac1{p_1}-\frac1{p_2})}(\R^N)$ for all $p_1<p_2\le\infty$, $s\in\R$, $0< q_1,q_2\le\infty$.
    
    See \cite[Corollary~2.7]{TriebelTheoryOfFunctionSpacesIV} for an illustration. Taking restrictions to an arbitrary open subset $U$ we have $\Fs_{pq_1}^s(U)\hookrightarrow\Fs_{pq_2}^s(U)\hookrightarrow\Fs_{pq_1}^{s-\delta}(U)$ for $q_1\le q_2$ and $\delta>0$, and $\Fs_{p\infty}^s(U)\hookrightarrow\Fs_{r\eps}^{s-1}(U)$ for $\eps>0$ and $0<p\le r\le Np/\max(N-p,0)$.
    
    \item When $p=q=\infty$, \eqref{Eqn::Space::TLNorm2} can be written as $\|f\|_{\Fs_{\infty\infty}^s(\lambda)}=\sup_{j\ge0}\|2^{js}\lambda_j\ast f\|_{L^\infty(\R^N)}$. It is more common to be referred as the \textit{Besov norm} $\Bs_{\infty\infty}^s$. See also \cite[(1.15)]{TriebelTheoryOfFunctionSpacesIV} and \cite[Remark~2.3.4/3]{TriebelTheoryOfFunctionSpacesI}.
    \item For an arbitrary open subset $U\subseteq\R^N$, we have $\Fs_{pq}^s(\overline{U}^c)=\Fs_{pq}^s(\R^N)/\widetilde\Fs_{pq}^s(\overline U)$, or equivalently \\$\Fs_{pq}^s(U)=\Fs_{pq}^s(\R^N)/\widetilde\Fs_{pq}^s(U^c)$. See also \cite[Remark~4.3.2/1]{TriebelInterpolation}.
    \item\label{Item::Space::TLRmk::SobHold} When $\Omega\subseteq\R^N$ is either a bounded Lipschitz domain or the total space, we have following (see \cite[Sections 2.5.6 and 2.5.7]{TriebelTheoryOfFunctionSpacesI} and  \cite[Theorem~1.122]{TriebelTheoryOfFunctionSpacesIII}),
    \begin{itemize}
        \item $H^{s,p}(\Omega)=\Fs_{p2}^s(\Omega)$ for $s\in\R$ and $1< p<\infty$;
        \item $W^{k,p}(\Omega)=\Fs_{p2}^k(\Omega)$ for $k\in\N$ and $1<p<\infty$;
        \item $\Co^s(\Omega)=\Fs_{\infty\infty}^s(\Omega)$ for $s\in\R$;
        \item $\Co^{k+s}(\Omega)=C^{k,s}(\Omega)$ for $k\in\N$ and $0<s<1$.
    \end{itemize}
    We sketch the proof of $\Co^s=\Fs_{\infty\infty}^s$ of $s<0$, on $\R^N$ and on bounded Lipschitz domains below:
\end{enumerate}
\end{rem}
\begin{proof}[Proof of $\Co^s=\Fs_{\infty\infty}^s$ when $s\le0$] We fix an $s\le0$ below.

Let $k>-s/2$ be an integer. For a $f\in\Fs_{\infty\infty}^s(\R^N)$ we have $f=(I-\Delta)^k(I-\Delta)^{-k}f$ where by \cite[Theorem~2.3.8]{TriebelTheoryOfFunctionSpacesI} $(I-\Delta)^{-k}f\in\Fs_{\infty\infty}^{s+2k}(\R^N)=\Co^{s+2k}(\R^N)$ thus $f\in\Co^s(\R^N)$. Conversely for a $f=\sum_{|\alpha|\le\lceil -s\rceil+1}D^\alpha g_\alpha\in\Co^s(\R^N)$ where $g\in\Co^{s+\lceil -s\rceil+1}(\R^N)=\Fs_{\infty\infty}^{s+\lceil -s\rceil+1}(\R^N)$, by \cite[Theorem~2.3.8]{TriebelTheoryOfFunctionSpacesI} $D^\alpha g_\alpha\in\Fs_{\infty\infty}^s(\R^N)$ thus $f\in\Fs_{\infty\infty}^s(\R^N)$.
    
    For bounded Lipschitz $\Omega$, since we have $\Co^{s+2k}(\Omega)=\Fs_{\infty\infty}^{s+2k}(\Omega)$ and $\Co^{s+\lceil -s\rceil+1}(\Omega)=\Fs_{\infty\infty}^{s+\lceil -s\rceil+1}(\Omega)$, taking restrictions on both sides we get $\Co^s(\Omega)=\Fs_{\infty\infty}^s(\Omega)$.
\end{proof}

Remark~\ref{Rmk::Space::TLRmk} \ref{Item::Space::TLRmk::SobHold} can be illustrated via the extension operator. In the paper our convex domain $\Omega\subset\C^n$ is smooth and one can use the so-called \textit{half-space extension}, see Remark \ref{Rmk::LPThm::TangCommRmk} and \cite[Sections 2.9 and 3.3.4]{TriebelTheoryOfFunctionSpacesI} for example. In our case it is more preferable to use the \textit{Rychkov's extension}, which can also work on Lipschitz domains.
\begin{defn}\label{Defn::Space::ExtOp}
Let $\omega\subset\R^N$ be a \textit{special Lipschitz domain}\footnote{In literature like \cite[Definition~1.103]{TriebelTheoryOfFunctionSpacesIII} the condition $\|\nabla\sigma\|_{L^\infty}<1$ is not required. This can be achieved through an invertible linear transformation.}, that is $\omega=\{(x_1,x'):x_1>\sigma(x')\}$ for some $\sigma:\R^{N-1}\to\R$ such that $\|\nabla\sigma\|_{L^\infty}<1$.

The \textit{Rychkov's universal extension operator} $E=E_\omega$ for $\omega$ is given by the following:
\begin{equation}\label{Eqn::Space::ExtOp}
E_\omega f: =\sum_{j=0}^\infty\psi_j\ast(\1_{\omega}\cdot(\phi_j\ast f)),\qquad f\in\Ss'(\omega).
\end{equation}
Here $(\psi_j)_{j=0}^\infty$ and $(\phi_j)_{j=0}^\infty$ are families of Schwartz functions that satisfy the following properties: 
\begin{enumerate}[label=(\thesection.\arabic*)]\setcounter{enumi}{\value{equation}}
    \item\label{Item::Space::PhiScal} \textit{Scaling condition}: $\phi_j(x)=2^{(j-1)N}\phi_1(2^{j-1}x)$ and $\psi_j(x)=2^{(j-1)N}\psi_1(2^{j-1}x)$ for $j\ge2$.
	\item\label{Item::Space::PhiMomt} \textit{Moment condition}: $\int\phi_0=\int\psi_0=1$, $\int x^\alpha\phi_0(x)dx=\int x^\alpha\psi_0(x)dx=0$ for all multi-indices $|\alpha|>0$, and $\int x^\alpha\phi_1(x)dx=\int x^\alpha\psi_1(x)dx=0$ for all $|\alpha|\ge0$.
	\item\label{Item::Space::PhiApprox}\textit{Approximate identity}: $\sum_{j=0}^\infty\phi_j=\sum_{j=0}^\infty\psi_j\ast\phi_j=\delta_0$ is the Dirac delta measure.
	\item\label{Item::Space::PhiSupp} \textit{Support condition}: $\phi_j,\psi_j$ are all supported in the negative cone $-\Kb:=\{(x_1,x'):x_1<-|x'|\}$.\setcounter{equation}{\value{enumi}}
\end{enumerate}
\end{defn}

Such family $(\phi_j,\psi_j)_{j=0}^\infty$ exists, see \cite[Theorem~4.1(b) and Proposition~2.1]{RychkovExtension}.
\begin{prop}\label{Prop::Space::RychFact}Let $0<p,q\le\infty$ and $s\in\R$. Let $(\phi_j)_{j=0}^\infty$ be as in Definition \ref{Defn::Space::ExtOp}.
\begin{enumerate}[(i)]
    \item\label{Item::Space::RychFact::ExtBdd}{\normalfont(\cite[Theorem~4.1]{RychkovExtension})} $E_\omega:\Fs_{pq}^s(\omega)\to\Fs_{pq}^s(\R^N)$ is bounded, provided $(p,q)\notin\{\infty\}\times(0,\infty)$.
    \item\label{Item::Space::RychFact::EqvNorm}{\normalfont(\cite[Theorem~3.2]{RychkovExtension} and \cite[Theorem~1]{YaoExtensionMorrey})} There are intrinsic norms
    \begin{align*}
        \|f\|_{\Fs_{pq}^s(\omega)}\approx&_{p,q,s}\|(2^{js}\phi_j\ast f)_{j=0}^\infty\|_{L^p(\omega;\ell^q(\N))},&\text{provided }p<\infty;
        \\
        \|f\|_{\Fs_{\infty q}^s(\omega)}\approx&_{q,s}\sup_{x\in\R^N;J\in\Z}2^{NJ\frac1q}\|(2^{js}\phi_j\ast f)_{j=\max(0,J)}^\infty\|_{L^q(\omega\cap B(x,2^{-J});\ell^q)},&\text{for }p=\infty.
    \end{align*}
    \item\label{Item::Space::RychFact::EqvNorm2}{\normalfont(\cite[Proposition~6.6]{ShiYaoExt} and \cite[Theorem~2]{YaoExtensionMorrey})} For every $m\ge0$ there is an equivalent norm via derivatives $\|f\|_{\Fs_{pq}^s(\omega)}\approx_{p,q,s,m}\sum_{|\alpha|\le m}\|D^\alpha f\|_{\Fs_{pq}^{s-m}(\omega)}$.
\end{enumerate}
\end{prop}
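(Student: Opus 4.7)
My plan is to derive each item from the three key features of the Rychkov pair $(\phi_j,\psi_j)$: the cone support \ref{Item::Space::PhiSupp}, the infinite-order moment vanishing \ref{Item::Space::PhiMomt}, and the Calder\'on-type reproducing identity \ref{Item::Space::PhiApprox}. The crucial geometric observation underlying everything is that for $x\in\omega$ and $y\in-\Kb$ one has $x-y\in\omega$ as well: indeed $(x-y)_1>x_1+|y'|\ge\sigma(x')+|y'|\ge\sigma(x'-y')$, using $\|\nabla\sigma\|_{L^\infty}<1$. Consequently $\phi_j\ast f(x)$ makes sense for $x\in\omega$ and $f\in\Ss'(\omega)$, depends only on $f|_\omega$, and the symmetric statement holds for $\psi_j$.

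For \ref{Item::Space::RychFact::ExtBdd}, the reproducing identity combined with the geometric observation gives $E_\omega f|_\omega=f$, so $E_\omega$ is genuinely an extension operator. To bound its norm I would prove and apply the vector-valued synthesis estimate
\[
\Big\|\sum_{j=0}^\infty\psi_j\ast g_j\Big\|_{\Fs^s_{pq}(\R^N)}\lesssim \big\|(2^{js}g_j)_j\big\|_{L^p(\R^N;\ell^q)},
\]
valid for all sequences $(g_j)$ under the scaling and moment conditions \ref{Item::Space::PhiScal}--\ref{Item::Space::PhiMomt} (except at $p=\infty$, $q<\infty$). This is a standard molecular decomposition: expanding against a reference Littlewood-Paley basis $(\lambda_k)$ as in Definition~\ref{Defn::Space::TLSpace}, the infinite-order vanishing moments of $\psi_1$ yield $\|\lambda_k\ast\psi_j\|_{L^1}\lesssim_M 2^{-M|k-j|}$ for every $M$, and Peetre-maximal-function bounds sum the contributions in $L^p(\ell^q)$. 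Applying this with $g_j:=\1_\omega\cdot(\phi_j\ast f)$ and invoking part \ref{Item::Space::RychFact::EqvNorm} to identify the right-hand side with $\|f\|_{\Fs^s_{pq}(\omega)}$ completes the proof.

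For \ref{Item::Space::RychFact::EqvNorm}, the direction $\gtrsim$ is immediate: any extension $\tilde f\in\Fs^s_{pq}(\R^N)$ of $f$ satisfies $(\phi_j\ast\tilde f)|_\omega=\phi_j\ast f$ by the geometric observation, so the right-hand side is dominated by the full-space local-means quantity for $\tilde f$, which is equivalent to $\|\tilde f\|_{\Fs^s_{pq}(\R^N)}$ by the standard Rychkov characterization on $\R^N$; taking the infimum over $\tilde f$ yields the bound by $\|f\|_{\Fs^s_{pq}(\omega)}$. The reverse inequality follows by choosing $\tilde f=E_\omega f$ from \ref{Item::Space::RychFact::ExtBdd}. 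For \ref{Item::Space::RychFact::EqvNorm2}, integration by parts gives $\phi_j\ast D^\alpha f=D^\alpha\phi_j\ast f$ on $\omega$ (well-defined thanks to the cone support), which together with (ii) yields $\|D^\alpha f\|_{\Fs^{s-m}_{pq}(\omega)}\lesssim\|f\|_{\Fs^s_{pq}(\omega)}$. In the reverse direction, the moment condition \ref{Item::Space::PhiMomt} lets one factor $\phi_j=2^{-jm}\sum_{|\alpha|=m}D^\alpha\Phi_j^{(\alpha)}$ for auxiliary families $(\Phi_j^{(\alpha)})$ that still satisfy \ref{Item::Space::PhiScal} and \ref{Item::Space::PhiSupp} together with enough moment vanishing to support a local-means characterization; applying this expansion to $\phi_j\ast f$ and using (ii) on each summand gives $\|f\|_{\Fs^s_{pq}(\omega)}\lesssim\sum_{|\alpha|\le m}\|D^\alpha f\|_{\Fs^{s-m}_{pq}(\omega)}$.

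The main obstacle is the synthesis estimate feeding \ref{Item::Space::RychFact::ExtBdd}: at the boundary $p=\infty$, $q\ne\infty$ the Triebel-Lizorkin norm is defined via a Carleson-type supremum over dyadic balls rather than as a clean $L^\infty(\ell^q)$ quantity, and the naive almost-orthogonality computation fails, which is exactly why this case is excluded from \ref{Item::Space::RychFact::ExtBdd}. For the admissible range the argument reduces to the Peetre-maximal-function machinery already developed in \cite{RychkovExtension}, with the derivative characterization and the $p=\infty$, $q=\infty$ Besov endpoint handled in \cite{ShiYaoExt,YaoExtensionMorrey}.
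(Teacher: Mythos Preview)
The paper does not prove this proposition: each item is stated with an inline citation to \cite{RychkovExtension}, \cite{ShiYaoExt}, or \cite{YaoExtensionMorrey}, and no argument is given. Your sketch is a faithful outline of how those references establish the results, and the ingredients you name (cone-support geometry, infinite-order moment vanishing, almost-orthogonality $\|\lambda_k\ast\psi_j\|_{L^1}\lesssim_M 2^{-M|k-j|}$, Peetre maximal functions) are exactly the right ones.

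One organizational point deserves attention: as written, your proof of \ref{Item::Space::RychFact::ExtBdd} invokes \ref{Item::Space::RychFact::EqvNorm} to identify the right-hand side of the synthesis estimate with $\|f\|_{\Fs^s_{pq}(\omega)}$, while your proof of the $\lesssim$ direction of \ref{Item::Space::RychFact::EqvNorm} chooses $\tilde f=E_\omega f$ and invokes \ref{Item::Space::RychFact::ExtBdd}. This is circular on the page. The fix is already implicit in your argument: the synthesis estimate yields directly
\[
\|E_\omega f\|_{\Fs^s_{pq}(\R^N)}\lesssim \big\|(2^{js}\,\1_\omega\cdot\phi_j\ast f)_j\big\|_{L^p(\ell^q)},
\]
and the $\gtrsim$ direction of \ref{Item::Space::RychFact::EqvNorm} is proved independently via any extension $\tilde f$. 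Combining these two gives both \ref{Item::Space::RychFact::ExtBdd} and the remaining inequality of \ref{Item::Space::RychFact::EqvNorm} simultaneously. This is how \cite{RychkovExtension} proceeds; just reorder your exposition accordingly.
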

In fact $E_\omega:\Fs_{\infty q}^s(\omega)\to\Fs_{\infty q}^s(\R^N)$ is also bounded, see \cite{ZhuoTriebelType}. We do not need this result in our paper.

In our case we work on the bounded domains instead of special type domains. For a bounded domain $\Omega$ we define its extension operator $\Ec_\Omega$ via partition of unity. For completeness we give the construction below.

\begin{note}[Objects for partition of unity]\label{Note::Space::PartUnity}
Let $\Omega\subset\R^N$ be a bounded Lipschitz domain and let $\Uc\Supset\Omega$ be a fixed open neighborhood. We use the following objects, which can all be obtained by standard partition of unity argument:
\begin{itemize}
    \item $(U_\nu)_{\nu=0}^M$ are finitely many bounded smooth open sets in $\Uc\subseteq\R^n$.
    \item$(\Phi_\nu:\R^N\to\R^N)_{\nu=1}^M$ as invertible affine linear transformations.
    \item $(\chi_\nu)_{\nu=0}^M$ are $C_c^\infty$-functions on $\R^N$ that take values in $[0,1]$.
    \item $(\omega_\nu)_{\nu=1}^M$ are special Lipschitz domains on $\R^N$.
\end{itemize}

They have the following properties: 
    \begin{enumerate}[label=(\thesection.\arabic*)]\setcounter{enumi}{\value{equation}}
        \item $b\Omega\subset\bigcup_{\nu=1}^M U_\nu$ and $U_0\Subset\Omega\Subset\bigcup_{\nu=0}^MU_\nu \Subset \Uc$.
        \item\label{Item::PartitionUnity::chi} $\chi_\nu\in C_c^\infty(U_\nu)$ for $0\le \nu\le M$, and $\sum_{\nu=0}^M\chi_\nu^2\equiv1$ in a neighborhood of $\overline\Omega$.
        \item\label{Item::PartitionUnity::Phi} For each $1\le \nu\le M $, $U_\nu=\Phi_\nu(B^N(0,1))$ and $U_\nu\cap\Omega=U_\nu\cap\Phi_\nu(\omega_\nu)$.\setcounter{equation}{\value{enumi}}
    \end{enumerate}
\end{note}

The partition of unity argument requires no other than the following fact:
\begin{lem}\label{Lem::Space::PartUnityLem}
    Let $U\subseteq\R^N$ be an arbitrary open subset, let $\Phi$ be an invertible linear transform and let $\chi\in\Co^\infty(\R^N)$ be a bounded smooth function.
    
    Then $Tf(x):=\chi(x)f(\Phi(x))$ defines a bounded linear map $T:\Fs_{pq}^s(U)\to\Fs_{pq}^s(\Phi^{-1}(U))$ for all $s\in\R$ and $0<p,q\le\infty$.
\end{lem}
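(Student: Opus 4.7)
The plan is to deduce the lemma from two standard facts about Triebel--Lizorkin spaces on the full space $\R^N$: boundedness of pointwise multiplication by a bounded smooth function, and boundedness of precomposition with an invertible linear map. To reduce to $\R^N$, given $f\in\Fs_{pq}^s(U)$ pick (using Definition~\ref{Defn::Space::TLSpace}) an extension $\tilde f\in\Fs_{pq}^s(\R^N)$ with $\|\tilde f\|_{\Fs_{pq}^s(\R^N)}\le 2\|f\|_{\Fs_{pq}^s(U)}$, and consider the global operator $\widetilde T g(x):=\chi(x)g(\Phi x)$ on $\Fs_{pq}^s(\R^N)$. Since $\widetilde T\tilde f$ agrees with $Tf$ on $\Phi^{-1}(U)$, the inequality $\|Tf\|_{\Fs_{pq}^s(\Phi^{-1}(U))}\le\|\widetilde T\tilde f\|_{\Fs_{pq}^s(\R^N)}$ reduces the problem to showing that $\widetilde T$ is bounded on $\Fs_{pq}^s(\R^N)$.

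I would then factor $\widetilde T=M_\chi\circ C_\Phi$ where $M_\chi g:=\chi g$ and $C_\Phi g:=g\circ\Phi$, and handle each factor separately. For $C_\Phi$, writing $\Phi x=Ax$ with $A\in GL(N,\R)$, a direct Fourier-side computation gives the identity $\lambda_j\ast(g\circ A)=|\det A|^{-1}\bigl((\mu_j\ast g)\circ A\bigr)$ with $\mu_j(y):=|\det A|\,\lambda_j(A^{-1}y)$. The family $(\mu_j)$ has Fourier supports in dyadic annuli whose comparison constants depend only on $\|A\|$ and $\|A^{-1}\|$, so by the independence of the Triebel--Lizorkin norm on the choice of admissible Littlewood--Paley partition (cf.\ Remark~\ref{Rmk::Space::TLRmk} and \cite[Proposition~2.3.2]{TriebelTheoryOfFunctionSpacesI}), combined with the change of variables $x\mapsto A^{-1}x$ in the outer $L^p$-integral (or supremum over balls, when $p=\infty$), one obtains $\|C_\Phi g\|_{\Fs_{pq}^s(\R^N)}\lesssim_\Phi\|g\|_{\Fs_{pq}^s(\R^N)}$ for every admissible $s,p,q$.

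For $M_\chi$: since $\chi$ is bounded together with all its derivatives we have $\chi\in\Co^\infty(\R^N)\subset\bigcap_{s\in\R}\Fs_{\infty\infty}^s(\R^N)$ (Remark~\ref{Rmk::Space::TLRmk}~\ref{Item::Space::TLRmk::SobHold}), and pointwise multiplication by such a symbol is bounded on $\Fs_{pq}^s(\R^N)$ for every $s\in\R$ and $0<p,q\le\infty$; this is the classical smooth-multiplier theorem on Triebel--Lizorkin scales, proved either by paramultiplication or by direct Littlewood--Paley estimates (see \cite{TriebelTheoryOfFunctionSpacesI, TriebelTheoryOfFunctionSpacesIII}). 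Composing the two bounded operators produces the desired boundedness of $\widetilde T$ and finishes the proof.

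No serious obstacle is expected: the result is a routine assembly of two textbook facts. The only place where a little care is needed is the endpoint $p=\infty$, where the norm \eqref{Eqn::Space::TLNorm2} is defined via a supremum over balls rather than a global integral; but linear maps send balls to ellipsoids of comparable diameter, so both the change-of-variables step and the multiplier step go through without additional work.
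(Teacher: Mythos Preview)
Your proposal is correct and follows essentially the same approach as the paper: extend $f$ to $\R^N$, factor the global operator as multiplication by $\chi$ composed with precomposition by $\Phi$, and invoke the standard boundedness of each factor on $\Fs_{pq}^s(\R^N)$. The only difference is cosmetic: the paper simply cites \cite[Theorems~2.25 and~2.28]{TriebelTheoryOfFunctionSpacesIV} for the two factors rather than sketching their proofs as you do.
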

\begin{proof}
We have boundedness $[g\mapsto g\circ\Phi]:\Fs_{pq}^s(\R^N)\to\Fs_{pq}^s(\R^N)$ from \cite[Theorem~2.25]{TriebelTheoryOfFunctionSpacesIV} and\\ $[g\mapsto \chi g]:\Fs_{pq}^s(\R^N)\to\Fs_{pq}^s(\R^N)$ from \cite[Theorem~2.28]{TriebelTheoryOfFunctionSpacesIV}. For $f\in\Fs_{pq}^s(U)$, let $\tilde f\in\Fs_{pq}^s(\R^N)$ be an extension of $f$, we see that $\chi\cdot(\tilde f\circ\Phi)$ is an extension to $Tf$ with respect to the domain $\Phi^{-1}(U)$. Therefore taking restrictions to $U$ and $\Phi^{-1}(U)$ respectively we get the boundedness of $T$.
\end{proof}
\begin{defn}\label{Defn::Space::ExtOmega}
    Let $\Omega$ be a bounded Lipschitz domain and let $\Uc\Supset\Omega$ be an open subset. Let $(\chi_\nu)_{\nu=0}^M$, $(\omega_\nu)_{\nu=1}^M$ and $(\Phi_\nu)_{\nu=1}^M$ be from above. We define the extension operator $\Ec=\Ec_\Omega$ for $\Omega$ as
       \begin{equation}\label{Eqn::Space::ExtOmega}
       \Ec_\Omega f:=\chi_0^2\cdot f+\sum_{\nu=1}^M\chi_\nu\cdot (E_{\omega_\nu}[(\chi_\nu f)\circ\Phi_\nu]\circ\Phi_\nu^{-1}),
   \end{equation}
   where $E_{\omega_\nu}$ is given by \eqref{Eqn::Space::ExtOp}.
\end{defn}

\begin{rem}\label{Rmk::Space::ExtBdd}Combining Proposition~\ref{Prop::Space::RychFact} and Lemma~\ref{Lem::Space::PartUnityLem},
$\Ec_\Omega:\Fs_{pq}^s(\Omega)\to\Fs_{pq}^s(\R^N)$ is also bounded for all $0<p,q\le\infty$ and $s\in\R$ such that $(p,q)\notin\{\infty\}\times(0,\infty)$ (for its proof see \cite[Lemma~6.3]{ShiYaoExt} for example). The $\Fs_{\infty q}^s$-boundedness is also true (see \cite{ZhuoTriebelType} and \cite[Remark~20]{YaoExtensionMorrey}), but we do not need it in our paper, since in applications we only need the extensions on $\Fs_{p\infty}^s$.
\end{rem}

Before the weighted estimates, if $f$ has low regularity then we need to express $[\dbar,\Ec]f$ and $([\dbar,\Ec]f)^\top$ as the sum of derivatives of good functions, and move those derivatives to $K_{q-1}(z,\zeta)$ via integration by parts. To make sure there are no boundary terms in the integration by part we need the following:
\begin{prop}[Anti-derivatives with support]\label{Prop::Space::ATDOp}
Let $\Omega\subset\R^N$ be a bounded Lipschitz domain, for any $k\ge1$ there are operators $\Sc^{k,\alpha}_\Omega:\Ss'(\R^N)\to\Ss'(\R^N)$, $|\alpha|\le k$, such that
\begin{enumerate}[(i)]
    \item\label{Item::Space::ATDOp::Bdd} $\Sc^{k,\alpha}_\Omega:\Fs_{pq}^s(\R^N)\to\Fs_{pq}^{s+k}(\R^N)$ for all $0<p,q\le\infty$ and $s\in\R$ such that $(p,q)\notin\{\infty\}\times(0,\infty)$.
    \item\label{Item::Space::ATDOp::Der} $g=\sum_{|\alpha|\le k}D^\alpha\Sc^{k,\alpha}g$ for all $g\in\Ss'(\R^N)$.
    \item\label{Item::Space::ATDOp::Supp} If $g\in\Ss'(\R^N)$ satisfies $g|_\Omega=0$, then $\Sc^{k,\alpha}g|_\Omega=0$ for all $|\alpha|\le k$.
\end{enumerate}
In particular $\Sc^{k,\alpha}_\Omega:\widetilde\Fs_{pq}^s(\Omega^c)\to\widetilde\Fs_{pq}^{s+k}(\Omega^c)$.
\end{prop}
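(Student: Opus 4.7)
The plan is to build $\Sc^{k,\alpha}_\Omega$ from the Rychkov apparatus of Definition~\ref{Defn::Space::ExtOp} by first treating a special Lipschitz domain $\omega = \{x_1 > \sigma(x')\}$ with $\|\nabla\sigma\|_{L^\infty}<1$, and then gluing via the partition of unity of Notation~\ref{Note::Space::PartUnity} in the same style as Definition~\ref{Defn::Space::ExtOmega}. The geometric fact driving the construction---and which already motivates the form of \eqref{Eqn::Space::ExtOp}---is that $\omega^c + (-\Kb) \subset \omega^c$ under the condition $\|\nabla\sigma\|_{L^\infty}<1$, since $z_1 < -|z'| \le \sigma(y'+z')-\sigma(y')$ for every $y \in \omega^c$ and $z \in -\Kb$. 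Combined with \ref{Item::Space::PhiSupp}, this means convolution by any of the Rychkov kernels preserves the support condition $g|_\omega = 0$, which is the only mechanism available to enforce \ref{Item::Space::ATDOp::Supp}.

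Over the special Lipschitz $\omega$ I would exploit the moment condition \ref{Item::Space::PhiMomt} to trade smoothness for derivatives. For $j \ge 1$, $\phi_j$ is Schwartz, supported in $-\Kb$, and has every moment vanishing; iterating a one-variable primitive along a direction interior to $-\Kb$ produces Schwartz functions $\phi_j^{(\alpha)}$ for $|\alpha|=k$, still supported in $-\Kb$, with $\phi_j = \sum_{|\alpha|=k} D^\alpha \phi_j^{(\alpha)}$, and the scaling \ref{Item::Space::PhiScal} transfers to $\phi_j^{(\alpha)}(x) = 2^{(j-1)(N-k)}\phi_1^{(\alpha)}(2^{j-1}x)$. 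For $j=0$ only positive-order moments vanish, so I fix once and for all a Schwartz bump $\eta_0$ supported in $-\Kb$ with $\int \eta_0 = 1$ and $\int x^\alpha \eta_0 = 0$ for $0 < |\alpha| < k$; then $\phi_0 - \eta_0$ has all moments of order $<k$ vanishing and admits the same primitive decomposition $\phi_0 - \eta_0 = \sum_{|\alpha|=k}D^\alpha \phi_0^{(\alpha)}$. Setting
\[
S_\omega^{k,0}g := \psi_0 \ast \eta_0 \ast g, \qquad S_\omega^{k,\alpha}g := 0 \ \text{for}\ 0<|\alpha|<k, \qquad S_\omega^{k,\alpha}g := \sum_{j=0}^\infty \psi_j \ast \phi_j^{(\alpha)} \ast g \ \text{for}\ |\alpha| = k,
\]
and substituting into the Calder\'on formula $\delta_0 = \sum_j \psi_j \ast \phi_j$ from \ref{Item::Space::PhiApprox} telescopes to $g$, proving \ref{Item::Space::ATDOp::Der}; \ref{Item::Space::ATDOp::Supp} is visible from the cone support of every kernel together with the first paragraph.

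The main obstacle is \ref{Item::Space::ATDOp::Bdd}: showing that $g \mapsto \sum_{j \ge 0} \psi_j \ast \phi_j^{(\alpha)} \ast g$ maps $\Fs_{pq}^s(\R^N)$ into $\Fs_{pq}^{s+k}(\R^N)$ for $|\alpha|=k$. The extra factor $2^{-jk}$ in $\|\phi_j^{(\alpha)}\|_{L^1}$ relative to $\|\phi_j\|_{L^1}$ is precisely the gain of $k$ derivatives that must emerge at the end. I would insert the equivalent norm of Proposition~\ref{Prop::Space::RychFact}\ref{Item::Space::RychFact::EqvNorm} on both sides and replay the almost-orthogonality / Peetre-maximal-function arguments of \cite[Theorems~3.2 and 4.1]{RychkovExtension}, with $\phi_j$ replaced by $\phi_j^{(\alpha)}$ and the $2^{jk}$ rescaling tracked throughout; the same obstruction as in Proposition~\ref{Prop::Space::RychFact}\ref{Item::Space::RychFact::ExtBdd} will explain the exclusion $(p,q)\notin\{\infty\}\times(0,\infty)$. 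The remaining piece $\psi_0 \ast \eta_0 \ast g$ is convolution by a single fixed Schwartz kernel and maps every $\Fs_{pq}^s$ into every $\Fs_{pq}^t$ by a routine Littlewood--Paley estimate.

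Finally, the bounded Lipschitz case is glued from the special Lipschitz one along the blueprint of Definition~\ref{Defn::Space::ExtOmega}. After extending $\{\chi_\nu\}_{\nu=0}^M$ by a $\tilde\chi_{M+1}\in\Co^\infty(\R^N)$ supported away from $\overline\Omega$ with $\sum_{\nu=0}^M\chi_\nu^2+\tilde\chi_{M+1}^2 \equiv 1$ on $\R^N$, I would set
\[
\Sc^{k,\alpha}_\Omega g := \sum_{\nu=1}^M \chi_\nu \cdot \bigl(S^{k,\alpha}_{\omega_\nu}[(\chi_\nu g)\circ \Phi_\nu]\bigr)\circ \Phi_\nu^{-1} + \chi_0 S^{k,\alpha}_{\R^N}(\chi_0 g) + \tilde\chi_{M+1} S^{k,\alpha}_{\R^N}(\tilde\chi_{M+1} g),
\]
where $S^{k,\alpha}_{\R^N}$ is the analogous operator built on $\R^N$ using the same Rychkov kernels with no cone constraint imposed. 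Boundedness transfers by Lemma~\ref{Lem::Space::PartUnityLem}; \ref{Item::Space::ATDOp::Der} follows from the local identities combined with $\sum\chi_\nu^2+\tilde\chi_{M+1}^2=1$, the commutators $[D^\alpha,\chi_\nu]$ being absorbed by a standard cascade into lower-order corrections; and \ref{Item::Space::ATDOp::Supp} holds because, for $g|_\Omega = 0$, the interior piece $\chi_0 g$ vanishes identically, every local input $(\chi_\nu g)\circ\Phi_\nu$ ($\nu\ge 1$) is supported in $\omega_\nu^c$ so the cone-support preservation of the second paragraph keeps each of these summands in $\Omega^c$, and $\tilde\chi_{M+1}$ is itself supported in a fixed open set compactly contained in $\Omega^c$ so its contribution can be localized to stay in $\Omega^c$ by choosing the Rychkov kernels for $S^{k,\alpha}_{\R^N}$ at sufficiently fine scale.
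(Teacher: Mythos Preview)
The paper does not prove this proposition itself; it simply cites \cite[Proposition~1.7 and Section~6.3]{ShiYaoExt}. Your overall architecture---local construction on a special Lipschitz domain via Rychkov kernels with cone support, then partition-of-unity gluing---is the natural route and presumably matches that reference. But there is a real gap in the primitive step. Taking a one-variable primitive of $\phi_1$ along a single direction $e\in-\Kb$, say $\tilde\phi(x)=-\int_0^\infty\phi_1(x-te)\,dt$, does give $\supp\tilde\phi\subset-\Kb$ (because $-\Kb+\R_{>0}\,e\subset-\Kb$), but $\tilde\phi$ is \emph{not} Schwartz in general: along the ray $x=Re$ one has $\tilde\phi(Re)\to-\int_\R\phi_1(ue)\,du$ as $R\to+\infty$, and the moment condition \ref{Item::Space::PhiMomt} ($\int_{\R^N}x^\alpha\phi_1\,dx=0$) does not force this one-dimensional line integral to vanish. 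Consequently $\tilde\phi\notin L^1(\R^N)$, and the almost-orthogonality estimate you invoke for \ref{Item::Space::ATDOp::Bdd} is unavailable. A correct construction first confines $\supp\phi_1$ to a simplicial sub-cone of $-\Kb$ (permitted in Rychkov's setup), changes coordinates so that sub-cone becomes the negative orthant, and proceeds by induction on the dimension: write $\phi_1=D_1\Psi_1+c(x')\rho(x_1)$ with $c(x')=\int_\R\phi_1(s,x')\,ds$ and $\rho\in C_c^\infty((-\infty,0))$, $\int\rho=1$, so that $\Psi_1=-\int_{x_1}^\infty(\phi_1-c\rho)\,ds$ is genuinely Schwartz with orthant support; then recurse on $c\in\Ss(\R^{N-1})$ (which inherits orthant support and all vanishing moments) to handle $c\rho$ via $D_2,\dots,D_N$. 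Each $\Psi_j$ produced this way again has all moments vanishing, so the procedure iterates to reach order $k$.

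Your gluing paragraph also hides nontrivial bookkeeping. The affine maps $\Phi_\nu$ mix multi-indices, so the pulled-back identity is $\sum_{|\beta|=k}D^\beta\big(\sum_\alpha c^\nu_{\beta\alpha}\,h^\alpha_\nu\big)=\chi_\nu g$ rather than $\sum_\alpha D^\alpha h^\alpha_\nu=\chi_\nu g$; and the commutators $[D^\alpha,\chi_\nu]$ must be explicitly rewritten as $\sum_{|\gamma|<k}D^\gamma(\cdots)$ and absorbed into the \emph{definition} of $\Sc^{k,\gamma}_\Omega$ for $|\gamma|<k$---which forces those operators to be nonzero, contrary to the local prescription $S^{k,\alpha}_\omega=0$ for $0<|\alpha|<k$. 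None of this is conceptually hard, but ``standard cascade'' is a placeholder, not a proof.
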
See \cite[Proposition~1.7]{ShiYaoExt}, whose proof is in \cite[Section~6.3]{ShiYaoExt}. 

In fact the result is also true for $\Fs_{\infty q}^s$-cases. This can be done by replacing \cite[Theorem~1.5 (i)]{ShiYaoExt} with \cite[Proposition~17]{YaoExtensionMorrey} in its proof. We do not need this result in our paper.

The condition \ref{Item::Space::ATDOp::Supp} is non-trivial here. If one only wants conditions \ref{Item::Space::ATDOp::Bdd} and \ref{Item::Space::ATDOp::Der} to be satisfied, we can consider the decomposition $f=(I-\Delta)^m((I-\Delta)^{-m}f)=\sum_{|\gamma|\le 2m}c_\gamma D^\gamma ((I-\Delta)^{-m}f)$ on $\R^N$.

\begin{rem}\label{Rmk::Space::ATDOp}
In practice we consider the composition $\Sc^{k,\alpha}\circ[\dbar,\Ec]f$, where $\Ec$ is an extension operator of $\Omega$ such that $\supp\Ec f\Subset\Uc$ for some fixed open bounded (smooth) neighborhood $\Uc\Supset\Omega$. Clearly $[\dbar,\Ec]f$ is supported in $\Uc\backslash\overline\Omega$ thus $\supp( \Sc^{k,\alpha}_\Omega\circ[\dbar,\Ec]f)\subseteq\Omega^c$. In order to obtain a better support condition $$\supp(\Sc^{k,\alpha}\circ[\dbar,\Ec]f)\subseteq\overline\Uc\backslash\Omega,$$ we can apply the proposition to the domain $\Omega\cup(\mathcal V\backslash\overline\Uc)$, where $\mathcal V\Supset\Uc$ is a larger bounded smooth domain that makes $\mathcal V\backslash\overline\Uc$ a bounded Lipschitz domain, and then taking a smooth cutoff outside $\mathcal V$.

In other words there exist $\{\Sc^{k,\alpha}\}_{|\alpha|\le k}:\widetilde\Fs_{pq}^s(\overline\Uc\backslash\Omega)\to \widetilde\Fs_{pq}^{s+k}(\overline\Uc\backslash\Omega)$ such that $g=\sum_{|\alpha|\le k}D^\alpha \Sc^{k,\alpha}g$.
\end{rem}

In Theorem~\ref{MainThm} we claim that $\Hc_q$ is defined on the large space $\Ss'(\Omega;\wedge^{0,q})$. This can be implied by its definedness on all H\"older spaces with negative indices.
\begin{lem}\label{Lem::Space::SsUnion}
Let $\Omega\subset\R^N$ be a bounded Lipschitz domain. Then $\Ss'(\Omega)=\bigcup_{s<0}\Co^s(\Omega)$.
\end{lem}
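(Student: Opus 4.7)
The plan is to prove the two inclusions separately, with the forward direction being the straightforward part of the argument. The inclusion $\bigcup_{s<0}\Co^s(\Omega)\subseteq\Ss'(\Omega)$ is immediate from the definitions: by Remark~\ref{Rmk::Space::TLRmk}~\ref{Item::Space::TLRmk::SobHold} we have $\Co^s(\Omega)=\Fs_{\infty\infty}^s(\Omega)$, and any representative $\tilde f\in\Fs_{\infty\infty}^s(\R^N)$ is a tempered distribution, so $f=\tilde f|_\Omega\in\Ss'(\Omega)$ by Notation~\ref{Note::Space::Dist}.

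For the reverse inclusion, I would take $f\in\Ss'(\Omega)$ and fix an extension $\tilde f\in\Ss'(\R^N)$. The idea is to cut $\tilde f$ off outside $\overline\Omega$ to land in $\mathcal E'(\R^N)$, which enjoys much better regularity than a general tempered distribution. Concretely, pick $\chi\in C_c^\infty(\R^N)$ with $\chi\equiv1$ on a neighborhood of $\overline\Omega$; then $g:=\chi\tilde f\in\mathcal E'(\R^N)$ and $g|_\Omega=f$. By the Paley-Wiener theorem, $\hat g$ is a smooth function of polynomial growth, so there exist $C>0$ and $M\in\N$ with
\begin{equation*}
    |\hat g(\xi)|\le C(1+|\xi|)^M,\quad\xi\in\R^N.
\end{equation*}

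I would now feed this estimate into the Littlewood-Paley definition of $\Co^s=\Fs_{\infty\infty}^s$ given by \eqref{Eqn::Space::TLNorm2} (see also Remark~\ref{Rmk::Space::TLRmk}~(iv)). By \ref{Item::Space::LambdaFour}-\ref{Item::Space::LambdaScal} the Fourier transform $\hat\lambda_j$ is uniformly bounded and supported in the annulus $\{2^{j-1}\le|\xi|\le 2^{j+1}\}$ for $j\ge1$ (and in $\{|\xi|<2\}$ for $j=0$). Consequently $\widehat{\lambda_j\ast g}=\hat\lambda_j\hat g$ is supported on a set of volume $\lesssim 2^{jN}$ on which $|\hat g|\lesssim 2^{jM}$, whence Fourier inversion yields
\begin{equation*}
    \|\lambda_j\ast g\|_{L^\infty(\R^N)}\le\|\hat\lambda_j\hat g\|_{L^1(\R^N)}\lesssim 2^{j(M+N)},\quad j\ge0.
\end{equation*}
Taking any $s<-(M+N)$ gives $\sup_j 2^{js}\|\lambda_j\ast g\|_{L^\infty}<\infty$, so $g\in\Fs_{\infty\infty}^s(\R^N)=\Co^s(\R^N)$ and thus $f=g|_\Omega\in\Co^s(\Omega)$. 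Since we can arrange $s<0$, this completes the proof.

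The only mildly technical point is the Fourier-side estimate above, but it is a routine calculation using only the defining properties \ref{Item::Space::LambdaFour}-\ref{Item::Space::LambdaScal} of $\lambda$ and the Paley-Wiener bound on $\hat g$. The passage from $\Ss'(\R^N)$ to $\mathcal E'(\R^N)$ via a cutoff is essential: an arbitrary tempered distribution need not lie in any $\Co^s(\R^N)$, but after localization the Paley-Wiener theorem immediately provides the required polynomial growth bound.
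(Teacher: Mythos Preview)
Your proof is correct. Both your argument and the paper's share the same skeleton: extend $f$ to $\tilde f\in\Ss'(\R^N)$, cut off by $\chi\in C_c^\infty$ to land in $\mathcal E'(\R^N)$, then exploit a structural fact about compactly supported distributions. The difference lies in that last step. The paper invokes the structure theorem for distributions to write $\chi\tilde f=\sum_{|\alpha|\le M}D^\alpha g_\alpha$ with $g_\alpha\in C_c^0(\R^N)\subset\Co^0(\R^N)$, and then reads off $\chi\tilde f\in\Co^{-M}(\R^N)$ directly from the recursive Definition~\ref{Defn::Space::Hold} of $\Co^s$ for $s\le0$. You instead use Paley--Wiener to get polynomial growth of $\hat g$ and plug this into the Littlewood--Paley characterization $\Co^s=\Fs_{\infty\infty}^s$. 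The paper's route is slightly more self-contained (it uses only the intrinsic definition of $\Co^s$ and avoids Fourier analysis), while yours makes the quantitative dependence explicit and connects more naturally to the $\Fs_{pq}^s$ framework used elsewhere in the paper; both are equally short.
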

In fact one can replace $\Co^s(\Omega)$ by $H^{s,p}(\Omega)$ or even $\Fs_{pq}^s(\Omega)$ for $1\le p,q\le \infty$. Indeed by Remark~\ref{Rmk::Space::TLRmk} we have $\Co^s(\Omega)=\Fs_{\infty\infty}^s(\Omega)$ 
and the embedding $\Co^{s+1}(\Omega)\subset \Fs_{pq}^s(\Omega)\subset\Co^{s-N}(\Omega)$.
\begin{proof}
By Remark~\ref{Rmk::Space::TLRmk} \ref{Item::Space::TLRmk::SobHold} $\Co^s(\Omega)=\Fs_{\infty\infty}^s(\Omega)=\Fs_{\infty\infty}^s(\R^N)|_\Omega\subset\Ss'(\R^N)|_\Omega=\Ss'(\Omega)$ for all $s$, thus $\Ss'(\Omega)\supseteq\bigcup_{s<0}\Co^s(\Omega)$.

Conversely, for a $f\in\Ss'(\Omega)$, take an extension $\tilde f\in\Ss'(\R^N)$. We can assume that $\tilde f$ has compact support, which can be done by replacing $\tilde f$ with $\chi\tilde f$, where $\chi\in C_c^\infty(\R^N)$ satisfies $\chi|_{\Omega}\equiv1$. Now by the structure theorem of distributions (see \cite[Theorem~6.27]{GrandpaRudin} for example), there are $M\ge0$ and $\{g_\alpha\}_{|\alpha|\le M}\subset C_c^0(\R^N)$ such that $\tilde f=\sum_{|\alpha|\le M}D^\alpha g_\alpha$. Clearly $g_\alpha\in \Co^0(\R^N)$ thus $\tilde f\in\Co^{-M}(\R^N)$ and we see that $f\in\Co^{-M}(\Omega)\subset\bigcup_{s<0}\Co^s(\Omega)$. This completes the proof.
\end{proof}

\section{Tangential Commutator Estimate and Strong Hardy-Littlewood Lemma}\label{Section::LPThm}

To reduce Theorem~\ref{MainThm} to Theorem~\ref{Thm::WeiEst}, we need the following two results, the Propositions \ref{Prop::LPThm::TangComm} and \ref{Prop::LPThm::HLLem}.

We need to show that the tangential part of commutator $[\dbar,\Ec]^\top f=([\dbar,\Ec]f)^\top$ does not lose derivative.
\begin{prop}[Tangential projection of commutator]\label{Prop::LPThm::TangComm}
    Let $\omega=\{x_1>\sigma(x')\}\subset\R^N$ be a special Lipschitz domain. Let $E=E_\omega$ be the extension operator in Definition~\ref{Defn::Space::ExtOp}.
    
    Let $X=\sum_{\nu=1}^NX^\nu\Coorvec{x_\nu}$ be a smooth vector field on $\R^N$ such that $X(x)\in T_x(b\omega)$ for almost every $x\in b\omega$ (in the sense of surface measure).
    Then we have the following boundedness:
    \begin{equation*}
        \big\langle X,[d,E]\big\rangle=\sum_{\nu=1}^NX^\nu\cdot[D_\nu,E]:\Fs_{pp}^s(\omega)\to\widetilde\Fs_{pp}^s(\omega^c),\qquad\text{for every }1\le p\le \infty,\quad s\in\R.
    \end{equation*}
    
    In particular $\langle X,[d,E]\rangle:\Fs_{p\infty}^s(\omega)\to\widetilde\Fs_{p\eps}^{s-\delta}(\omega^c)$ for every $\eps,\delta>0$.
\end{prop}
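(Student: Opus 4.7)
The plan is to show that tangentiality of $X$ upgrades the naive one-derivative loss in $[D_\nu, E_\omega]$ to a discrete $2^{-j}$ gain on the $j$-th Littlewood-Paley block of Rychkov's construction, leaving no net regularity loss. The starting point is the distributional identity $\partial_\nu \1_\omega = -n_\nu\, dS|_{b\omega}$, where $n$ is the outward unit normal to $b\omega$ (defined a.e.\ since $\omega$ is special Lipschitz). Differentiating the formula \eqref{Eqn::Space::ExtOp} and commuting $D_\nu$ with the $\phi_j \ast$ and $\psi_j \ast$ convolutions, I obtain the boundary-integral representation
$$[D_\nu, E_\omega] f \;=\; -\sum_{j=0}^\infty \psi_j \ast \bigl(n_\nu \cdot (\phi_j \ast f)|_{b\omega}\, dS\bigr),$$
which is well-defined because each $\phi_j \ast f$ is smooth on $\R^N$. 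The support condition \ref{Item::Space::PhiSupp} immediately places the right-hand side in $\overline{\omega^c}$, matching the target space $\widetilde\Fs_{pp}^s(\omega^c)$.

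Using the hypothesis $\sum_\nu X^\nu(y) n_\nu(y) = 0$ for a.e.\ $y \in b\omega$, I can replace $X^\nu(x)$ by $X^\nu(x) - X^\nu(y)$ inside each integral after multiplying by $X^\nu(x)$ and summing over $\nu$. Writing the first-order Taylor expansion $X^\nu(x) - X^\nu(y) = \sum_\mu (x_\mu - y_\mu)\, h_{\nu\mu}(x,y)$ with $\|h_{\nu\mu}\|_{L^\infty} \lesssim \|\nabla X\|_{L^\infty}$, and exploiting the scaling $\psi_j(z) = 2^{(j-1)N} \psi_1(2^{j-1} z)$ for $j \ge 2$, each kernel $(x_\mu - y_\mu)\psi_j(x-y)$ equals $2^{-(j-1)} \widetilde\psi_j^\mu(x-y)$ for a new Schwartz family $\widetilde\psi_j^\mu$ that inherits the support in $-\Kb$, the scaling, and the moment-vanishing properties of $\psi_j$. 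Hence $\langle X, [d, E_\omega]\rangle f$ takes the form of a Rychkov-type operator applied to the boundary distribution $(\phi_j \ast f)|_{b\omega}\, dS$, but with a free $2^{-j}$ discount at every dyadic scale, precisely canceling the one derivative lost to $D_\nu$.

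The conclusion will then follow by applying Rychkov's intrinsic Littlewood-Paley norm (Proposition~\ref{Prop::Space::RychFact} \ref{Item::Space::RychFact::EqvNorm}) to the output, paired with a Lipschitz boundary-trace estimate that controls $(\phi_j \ast f)|_{b\omega}$ by the interior pieces of $f$. The $p=q$ requirement enters exactly at this step: Fubini collapses the nested $L^p(\ell^p)$ norm to a single $L^p$ norm over $\R^N \times \N$, which is compatible with direct boundary integration, whereas for $p \ne q$ the mixing of integral and sequence norms would obstruct the trace bound. The ``in particular'' assertion then follows by sandwiching the main bound with the embeddings $\Fs_{p\infty}^s(\omega) \hookrightarrow \Fs_{pp}^{s-\delta/2}(\omega)$ and $\widetilde\Fs_{pp}^{s-\delta/2}(\omega^c) \hookrightarrow \widetilde\Fs_{p\eps}^{s-\delta}(\omega^c)$ from Remark~\ref{Rmk::Space::TLRmk} \ref{Item::Space::TLRmk::Embed}. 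The principal technical obstacle will be quantifying the boundary trace of $\phi_j \ast f$ for negative $s$, where $f$ is genuinely a distribution and $\phi_j \ast f$ may exhibit mild growth in $j$; I expect to absorb this growth into the available $2^{-j}$ gain using that $\phi_j \ast f$ is essentially constant on scales $\lesssim 2^{-j}$ while $b\omega$ is Lipschitz.
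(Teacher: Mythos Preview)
Your proposal shares the paper's core mechanism: tangentiality means $\sum_\nu X^\nu D_\nu \1_\omega = 0$ distributionally (the paper proves this explicitly via a mollified approximation $h_\delta\to\1_\omega$), which permits subtracting a zero and reducing to commutators $[X^\nu\cdot(-),\,\psi_k\ast(-)]$ that gain $2^{-k}$. The routes diverge in implementation. The paper never touches $b\omega$: it rewrites $\psi_k\ast((D_\nu\1_\omega)\,g) = D_\nu\psi_k\ast(\1_\omega g) - \psi_k\ast(\1_\omega D_\nu g)$ with $g=\phi_k\ast f$, keeping everything as bulk integrals over $\omega$, and then invokes a Heideman-type estimate (Lemma~\ref{Lem::LPThm::HeiEst}) of the form $\|\phi_j\ast[g\cdot(-),\,D^\alpha\psi_k\ast(-)]\{h\}\|_{L^p} \lesssim_M 2^{k|\alpha| - M|j-k| - k}\|h\|_{L^p}$. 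This single lemma simultaneously supplies the $2^{-k}$ commutator gain and the cross-scale almost-orthogonality $2^{-M|j-k|}$ needed to sum the dyadic pieces; the $p=q$ restriction enters only at the end via Young's inequality on $\ell^p(\Z)$.

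Your route has two concrete gaps. First, after Taylor-expanding $X^\nu(x)-X^\nu(y) = \sum_\mu (x_\mu-y_\mu)\,h_{\nu\mu}(x,y)$, the resulting kernel $h_{\nu\mu}(x,y)\,\tilde\psi_j^\mu(x-y)$ is \emph{not} a convolution, so the operator is not ``Rychkov-type'' as you assert; handling the two-variable $h_{\nu\mu}$ still requires a commutator-with-multiplication bound of exactly the kind Lemma~\ref{Lem::LPThm::HeiEst} provides. Second, and more seriously, your boundary-trace step is unquantified and tight: convolving a surface measure on $b\omega$ with $\tilde\psi_j^\mu$ costs $\sim 2^{j(1-1/p)}$ in $L^p$, while restricting $\phi_j\ast f$ to $b\omega$ costs $\sim 2^{j/p}$, so the two together exactly consume your $2^{-j}$ gain, leaving no margin to sum over $j$ or to absorb the $h_{\nu\mu}$ error. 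You also have not supplied the almost-orthogonality $\phi_k\ast[\cdots\tilde\psi_j\cdots]$ needed to control the $\Fs_{pp}^s$ norm of the output, and in your setup that interaction must be tracked through the surface measure. The paper's integration-by-parts to the bulk sidesteps all of this; if you insist on staying on the boundary, you would need a surface analogue of Lemma~\ref{Lem::LPThm::HeiEst}, which is substantially more work than the proposal indicates.
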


\begin{rem}\label{Rmk::LPThm::TangCommRmk}
    \begin{enumerate}[(i)]
        \item There is a different kind of commutator estimate in \cite[Theorem~4.1]{ShiYaoC2}, where we prove a smoothing estimate $[D,E]:\Fs_{p\infty}^s(\omega)\to L^1(\R^N;\C^N)$ for $1\le p\le \infty$ and $s>1-\frac1p$.
        \item\label{Item::LPThm::TangCommRmk::FBdd} In fact $\langle X,[d,E]\rangle:\Fs_{p\infty}^s(\omega)\to\widetilde\Fs_{p\eps}^{s}(\omega^c)$ is bounded, i.e. we can take $\delta=0$. In application a $\delta$-loss is enough and the proof for $\Fs_{pp}^s$ spaces is simpler.
        \item Proposition~\ref{Prop::LPThm::TangComm} can be intuitively understood if we let $E$ be the standard half-space extension for $\omega=\R^N_+:=\{x_1>0\}$. Recall that for an integer $M\ge1$, the half-space extension is given by
        \begin{equation*}
            E^Mf(x_1,x'):=\begin{cases}
        \sum_{j=-M}^Ma_jf(-b_jx_1,x')&x_1<0
        \\
        f(x)&x_1>0
        \end{cases},\text{ where }b_j>0\text{ and }\sum_{j=-M}^Ma_j(-b_j)^k=1\text{ for all }|k|\le M.
        \end{equation*}
        We have boundedness $E^M:\Fs_{pq}^s(\R^N_+)\to\Fs_{pq}^s(\R^N)$ for all $1\le p,q\le\infty$ and $-M<s<M$, see \cite[Theorem~2.9.2]{TriebelTheoryOfFunctionSpacesI}. One can achieve the construction with $M=\infty$ and have the boundedness $E^\infty:\Fs_{pq}^s(\R^N_+)\to\Fs_{pq}^s(\R^N)$ as well. See the recent paper \cite{LuYaoSeeley} by Lu and the author. It is clear that $[D_\nu,E^\infty]\equiv0$ in the domain for all $2\le \nu\le N$, thus $\langle X,[d,E^\infty]\rangle\equiv0$ if $X^1\equiv0$.

        One can use $E^\infty$ with partition of unity (see \eqref{Eqn::Space::ExtOmega}) or the method in \cite[Theorem~26]{LuYaoSeeley} to define the extension operator $\Ec$ for the smooth domain $\Omega\subset\C^n$. It is possible that a modification of Corollary~\ref{Cor::LPThm::BddDom}~\ref{Item::LPThm::BddDom::TangComm} still holds.
    \end{enumerate}
\end{rem}

We also need the Hardy-Littlewood type lemma that gives embeddings between the fractional Sobolev spaces (or more generally the Triebel-Lizorkin spaces for Theorem~\ref{StrMainThm}) and the weighted Sobolev spaces.

\begin{prop}[Strong Hardy-Littlewood lemma]\label{Prop::LPThm::HLLem}
Let $\omega\subset\R^N$ be a special Lipschitz domain. Let $\delta(x):=\min(1,\dist(x,b\omega))$ for $x\in\R^N$. We have the following embeddings:
\begin{enumerate}[(i)]
    \item\label{Item::LPThm::HLLem::FtoW} $\widetilde\Fs_{p\infty}^s(\overline\omega)\hookrightarrow L^p(\omega,\delta^{-s})$ for all $1\le p\le\infty$ and $s>0$.
    \item\label{Item::LPThm::HLLem::WtoF} $W^{m,p}(\omega,\delta^{m-s})\hookrightarrow\Fs_{p\eps}^s(\omega)$ for all $\eps>0$, $1\le p\le\infty$, $m\in\N$ and $s<m$.
\end{enumerate}
\end{prop}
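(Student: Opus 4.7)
The plan is to prove both embeddings by Littlewood-Paley analysis paired with a dyadic decomposition of $\omega$ by the boundary distance $\delta$. In both parts the strategy is to reduce to a pointwise estimate linking the Triebel-Lizorkin norm to the weighted $L^p$ norm, after which the $L^p$-boundedness of the Hardy-Littlewood maximal operator $M$ closes the argument.

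For (i), I would pick a standard Littlewood-Paley system $(\lambda_j)_{j\ge 0}$ and set the Peetre-type maximal function $g^\ast(x):=\sup_{j\ge 0}2^{js}|\lambda_j\ast f(x)|$, so that $\|g^\ast\|_{L^p(\R^N)}\approx \|f\|_{\Fs_{p\infty}^s(\R^N)}$ by Definition~\ref{Defn::Space::TLSpace}. The target is the pointwise bound $|f(x)|\lesssim \delta(x)^s\,Mg^\ast(x)$ for $x\in\omega$. Fixing $j_0$ with $2^{-j_0}\approx\delta(x)$, I would split $f=L+T$ at frequency $2^{j_0}$, where $T:=\sum_{j>j_0}\lambda_j\ast f$ and $L:=\sum_{j\le j_0}\lambda_j\ast f$. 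The high-frequency tail is immediate from the geometric series $|T(x)|\le\sum_{j>j_0}2^{-js}g^\ast(x)\lesssim \delta(x)^sg^\ast(x)$. For the low-frequency part, I would exploit the support condition $\supp f\subset\overline\omega$: at any $y$ in the interior of $\omega^c$ with $|x-y|\lesssim \delta(x)$ one has $L(y)=-T(y)$, and hence $|L(y)|\lesssim \delta(x)^sg^\ast(y)$ by applying the same tail bound at $y$; since $L$ is band-limited at scale $2^{j_0}$, Bernstein's inequality then propagates the bound from $y$ back to $x$ through the control of $|L(x)-L(y)|$. The endpoint cases $p=1,\infty$ would need minor standard modifications.

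For (ii), I would first invoke Rychkov's equivalent norm via derivatives (Proposition~\ref{Prop::Space::RychFact}~\ref{Item::Space::RychFact::EqvNorm2}) to reduce the embedding to the base case
\begin{equation*}
    L^p(\omega,\delta^r)\hookrightarrow \Fs_{p\eps}^{-r}(\omega),\qquad r:=m-s>0,\ \eps>0,
\end{equation*}
since then $\|f\|_{\Fs_{p\eps}^s(\omega)}\lesssim\sum_{|\alpha|\le m}\|D^\alpha f\|_{\Fs_{p\eps}^{s-m}(\omega)}$ and $D^\alpha f\in L^p(\omega,\delta^{m-s})$ for $|\alpha|\le m$. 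For the base case, given $g\in L^p(\omega,\delta^r)$ I would extend by zero and decompose $g=\sum_{k\ge 0}g_k$ with $g_k:=g\cdot\mathbf 1_{A_k}$ supported in the dyadic layer $A_k:=\{x\in\omega:2^{-k-1}<\delta(x)\le 2^{-k}\}$. Each $g_k$ is a molecule at scale $2^{-k}$ with $\|g_k\|_{L^p(\R^N)}\lesssim 2^{kr}\|\delta^r g\|_{L^p(A_k)}$, and a direct Littlewood-Paley computation shows that $\lambda_j\ast g_k$ concentrates near $j\approx k$ with rapid off-scale decay, which when summed over $k$ yields the required $\ell^\eps$-control in the Triebel-Lizorkin norm.

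The main obstacle is the Bernstein-based propagation in (i): showing that the low-frequency part $L$ inherits the $\delta^s$-smallness of $f$ purely from the band-limitation of $L$ together with $f|_{\omega^c}=0$. This step is sensitive both to the Lipschitz geometry of $b\omega$ (entering through $\|\nabla\sigma\|_{L^\infty}<1$ when choosing the auxiliary $y$ in the interior of $\omega^c$) and to bounding the averaged $|L|$ on a ball of radius $\delta(x)$ by $Mg^\ast(x)$ rather than by the pointwise $g^\ast$, and it requires careful bookkeeping of Bernstein constants. A secondary difficulty in (ii) is the $\ell^\eps$-summability for arbitrarily small $\eps>0$, which relies on the almost-disjointness of the layers $A_k$ combined with the off-scale LP decay.
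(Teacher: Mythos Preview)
Your overall strategy differs from the paper's in one essential respect: you work with a standard (Fourier-supported) Littlewood--Paley system $(\lambda_j)$ and close with the Hardy--Littlewood maximal operator, whereas the paper uses the \emph{one-sided} Rychkov family $(\phi_j)$ supported in the negative cone $-\Kb$ together with the intrinsic norm of Proposition~\ref{Prop::Space::RychFact}~\ref{Item::Space::RychFact::EqvNorm}. This choice is not cosmetic; it is what makes both parts work uniformly for $1\le p\le\infty$.

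For (i), the paper exploits the support condition \ref{Item::Space::PhiSupp}: since $\supp\phi_j\subset-\Kb$ one has $\supp\phi_j+\omega^c\subset\{x_1-\sigma(x')<-2^{-j-R}\}$, so for $x$ at distance $\approx 2^{-k}$ from $b\omega$ the low-frequency pieces $\phi_j\ast f(x)$ \emph{vanish identically} for $j<k-R$. Thus $f(x)=\sum_{j\ge k-R}\phi_j\ast f(x)$ and the geometric tail bound alone gives $|f(x)|\lesssim 2^{ks}\sup_j 2^{js}|\phi_j\ast f(x)|$, after which one simply takes $L^p$ of both sides; no Bernstein propagation and no maximal function are needed. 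Your route has two genuine problems. First, even if the pointwise bound $|f(x)|\lesssim\delta(x)^s Mg^\ast(x)$ held, $M$ is not bounded on $L^1$, so the case $p=1$ fails. Second, the propagation step you flag as the ``main obstacle'' really is one: you need $|L(x)|\lesssim\delta(x)^s Mg^\ast(x)$ from $|L(y)|=|T(y)|\lesssim\delta(x)^s g^\ast(y)$ at a nearby $y\in\omega^c$, but Bernstein/mean-value for band-limited $L$ yields control by a \emph{supremum} of $g^\ast$ over a ball of radius $\sim\delta(x)$, not by $Mg^\ast(x)$, and that non-tangential sup is not dominated in $L^p$ by $\|g^\ast\|_{L^p}$.

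For (ii), your reduction to $m=0$ via Proposition~\ref{Prop::Space::RychFact}~\ref{Item::Space::RychFact::EqvNorm2} matches the paper. But the base case $L^p(\omega,\delta^r)\hookrightarrow\Fs_{p\eps}^{-r}(\omega)$ via zero extension and a layer decomposition $g=\sum_k g_k$ has a gap: for $j<k$ you claim ``rapid off-scale decay'' of $\lambda_j\ast g_k$, yet $g_k$ carries no moment cancellation, so Young's inequality gives only $\|\lambda_j\ast g_k\|_{L^p}\le\|g_k\|_{L^p}\approx 2^{kr}\|\delta^r g\|_{L^p(A_k)}$ with no decay in $k-j$. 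Since $g$ itself need not lie in $L^p(\omega)$ (take $g=\delta^{-r}\mathbf 1_{\{\delta<1\}}$ with $rp\ge1$), the sum over $k$ for fixed small $j$ can diverge, and the zero extension may fail to control the $\Fs_{p\eps}^{-r}$ norm. The paper instead argues by duality for $\eps=1$ (pairing (i) with $\Fs_{p1}^{-s}(\omega)=\mathring\Fs_{p'\infty}^{s}(\overline\omega)'$) and, for $\eps<1$, estimates the \emph{intrinsic} norm $\|(2^{js}\phi_j\ast f)_j\|_{L^p(\omega;\ell^\eps)}$ directly: the one-sided support again gives the localization $\mathbf 1_{P_k}\cdot(\phi_j\ast f)=\mathbf 1_{P_k}\cdot(\phi_j\ast(f\mathbf 1_{P_{<\min(j+R,k)}}))$, which together with a ``locally constant'' estimate $\|\phi_j\ast f\|_{L^p(P_k)}\lesssim\min(1,2^{(j-k)/p})\|f\|_{L^p(P_{<j+R})}$ supplies exactly the missing off-diagonal decay.
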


We are going to prove Proposition~\ref{Prop::LPThm::HLLem} first then Proposition~\ref{Prop::LPThm::TangComm}.

\begin{rem}The proof of Proposition~\ref{Prop::LPThm::HLLem} is standard if we replace $\Fs_{p\eps}^s$ and $\Fs_{p\infty}^s$ by classical Sobolev or H\"older spaces, see \cite{LigockaHLLem} for example. Our result is stronger, since from Remark~\ref{Rmk::Space::TLRmk} we have $\Fs_{p\eps}^s\subsetneq H^{s,p}=\Fs_{p2}^s\subsetneq\Fs_{p\infty}^s$ for $1<p<\infty$ and $\Fs_{\infty\eps}^s\subsetneq\Co^s=\Fs_{\infty\infty}^s$.

The result \ref{Item::LPThm::HLLem::FtoW} is not new. See \cite{HLGeneral} for a more general version. We also refer \cite[Chapter 5.8]{TriebelStructureFunctions} for a proof on smooth domains which contains the discussion of the case $p<1$.
\end{rem}

We first give the application of Propositions \ref{Prop::LPThm::TangComm} and \ref{Prop::LPThm::HLLem} in our setting.

\begin{cor}\label{Cor::LPThm::BddDom}
Let $\Omega=\{\varrho<0\}\subset\C^n$ be a bounded Lipschitz domain and let $\Uc\Supset\Omega$ be a bounded open neighborhood so that $\varrho$ has non-vanishing gradient in $\Uc\backslash\overline\Omega$. Let $\dist(z):=\dist(z,b\Omega)$ for $z\in\C^n$ and let $\Ec=\Ec_\Omega$ be given in Definition~\ref{Defn::Space::ExtOmega} whose images are supported in $\Uc$. The following linear maps are bounded.
\begin{enumerate}[(i)]
    \item\label{Item::LPThm::BddDom::FtoW} $\widetilde\Fs_{p\infty}^s(\overline{\Uc\backslash\Omega})\hookrightarrow L^p(\Uc\backslash\overline\Omega,\dist^{-s})$ for all $1\le p\le\infty$ and $s>0$.
    \\
    In particular $\widetilde H^{s,p}(\overline{\Uc\backslash\Omega})\hookrightarrow L^p(\Uc\backslash\overline\Omega,\dist^{-s})$, $\widetilde\Co^s (\overline{\Uc\backslash\Omega})\hookrightarrow L^\infty(\Uc\backslash\overline\Omega,\dist^{-s})$ for $1<p<\infty$, $s>0$.
    \item\label{Item::LPThm::BddDom::WtoF} $W^{k,p}(\Omega,\dist^{k-s})\hookrightarrow\Fs_{p\eps}^s(\Omega)$ for all $\eps>0$, $1\le p\le\infty$, $k\in\N$ and $s<k$.
    \\
    In particular $W^{k,p}(\Omega,\dist^{k-s})\hookrightarrow H^{s,p}(\Omega)$ and $W^{k,\infty}(\Omega,\dist^{k-s})\hookrightarrow \Co^s(\Omega)$ for $1<p<\infty$, $s<k$.
    \item\label{Item::LPThm::BddDom::TangComm}  If $\Omega$ is a smooth domain, then $[\dbar,\Ec]^\top:\Fs_{pp}^s(\Omega)\to\widetilde\Fs_{pp}^{s}(\overline{\Uc\backslash\Omega};\C^n)$ for all $1\le p\le\infty$, $s\in\R$.
    \\
    In particular $[\dbar,\Ec]^\top:H^{s,p}(\Omega)\to\widetilde H^{s-\delta,p}(\overline{\Uc\backslash\Omega};\C^n)$ and $[\dbar,\Ec]^\top:\Co^s(\Omega)\to\widetilde \Co^{s}(\overline{\Uc\backslash\Omega};\C^n)$ for $p\in(1,\infty)$ and $\delta>0$.
\end{enumerate}
\end{cor}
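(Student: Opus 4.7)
The plan is to reduce each statement to the corresponding result for special Lipschitz domains (Propositions~\ref{Prop::LPThm::HLLem} and~\ref{Prop::LPThm::TangComm}) via the partition of unity $(\chi_\nu,\Phi_\nu,\omega_\nu)_{\nu=0}^M$ from Notation~\ref{Note::Space::PartUnity}. For parts \ref{Item::LPThm::BddDom::FtoW} and \ref{Item::LPThm::BddDom::WtoF}, the key observation is that each invertible affine map $\Phi_\nu$ distorts the distance to the boundary by a bounded factor, so $\dist(z,b\Omega)\approx\dist(\Phi_\nu^{-1}(z),b\omega_\nu)$ uniformly on $\overline U_\nu$. Since multiplication by $\chi_\nu\in C_c^\infty$ and composition with an invertible affine map are bounded on both $\Fs_{pq}^s$ (by Lemma~\ref{Lem::Space::PartUnityLem}) and on the weighted Sobolev spaces $W^{k,p}(\cdot,\dist^\alpha)$ (by the chain rule and the comparability just mentioned), the local pieces $\chi_\nu\cdot(f\circ\Phi_\nu)\in\widetilde\Fs_{p\infty}^s(\overline{\omega_\nu^c})$ (resp.\ $W^{k,p}(\omega_\nu,\dist^{k-s})$) satisfy Proposition~\ref{Prop::LPThm::HLLem}, and pulling back by $\Phi_\nu^{-1}$ gives the embeddings near $b\Omega$. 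Away from $b\Omega$ the weight $\dist^{-s}$ is comparable to a constant, and on such regions the embeddings reduce to the trivial chains $\widetilde\Fs_{p\infty}^s\hookrightarrow L^p$ (for $s>0$, via Remark~\ref{Rmk::Space::TLRmk}\ref{Item::Space::TLRmk::Embed}) and $W^{k,p}\hookrightarrow \Fs_{p\eps}^s$ (for $s<k$).

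For part \ref{Item::LPThm::BddDom::TangComm}, the tangential projection in Remark~\ref{Rmk::Goal::TopBotFacts} expresses the target as a sum
\[
([\dbar,\Ec]f)^\top=\sum_{k=2}^n\overline\theta_k\wedge\iota_{\overline Z_k}[\dbar,\Ec]f,
\]
where $(\overline\theta_k)_{k=2}^n$ frames $T^{*0,1}(b\Omega_\varrho)$ with dual $(\overline Z_k)_{k=2}^n$. Writing $\dbar$ component-wise on coefficients gives $\iota_{\overline Z_k}[\dbar,\Ec]f=[X_k,\Ec]f+R_kf$, where $X_k=\sum_j\overline\theta_k^{\,j}\partial_{\overline z_j}$ and $R_k$ is multiplication by bounded smooth functions; because $X_k\varrho=0$, both the real and imaginary parts of $X_k$ are real smooth vector fields tangential to every level set $b\Omega_t$. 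Plugging in the partition-of-unity formula \eqref{Eqn::Space::ExtOmega} for $\Ec$, we expand
\[
[X_k,\chi_\nu\cdot (E_{\omega_\nu}[\cdot\circ\Phi_\nu])\circ\Phi_\nu^{-1}\cdot\chi_\nu]=[X_k,\chi_\nu]\cdot T_\nu\chi_\nu+\chi_\nu\cdot[X_k,T_\nu]\cdot\chi_\nu+\chi_\nu T_\nu[X_k,\chi_\nu],
\]
where the $[X_k,\chi_\nu]$ terms are multiplications by bounded smooth functions (harmless on $\Fs_{pp}^s$ and producing the correct support) and the middle term, after conjugation by $\Phi_\nu$, becomes $[(\Phi_\nu^{-1})_*X_k,E_{\omega_\nu}]$ applied to $(\chi_\nu f)\circ\Phi_\nu$; since $\Phi_\nu$ maps $b\omega_\nu\cap\Phi_\nu^{-1}(U_\nu)$ onto $b\Omega\cap U_\nu$, the pushforward $(\Phi_\nu^{-1})_*X_k$ is tangential to $b\omega_\nu$. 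Proposition~\ref{Prop::LPThm::TangComm} then gives boundedness $\Fs_{pp}^s(\omega_\nu)\to\widetilde\Fs_{pp}^s(\omega_\nu^c)$, and reassembling via $\Phi_\nu$ and the $\chi_\nu$ yields the claim.

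The ``in particular'' assertions follow from the embeddings in Remark~\ref{Rmk::Space::TLRmk}\ref{Item::Space::TLRmk::Embed}: for the H\"older case, $\Co^s=\Fs_{\infty\infty}^s$, so there is nothing to do; for $H^{s,p}=\Fs_{p2}^s$ with $p\ne 2$, one interpolates through $\Fs_{pp}^{s'}$ for an appropriate $s'\in\{s,s-\delta/2\}$, trading a fixed $\delta>0$ of regularity via the two-sided embedding between $\Fs_{p2}^s$ and $\Fs_{pp}^{s'}$. The step I expect to require the most care is the tangentiality verification in \ref{Item::LPThm::BddDom::TangComm}: one must check that $(\Phi_\nu^{-1})_*X_k$ is literally tangential to $b\omega_\nu$ almost everywhere (which is where the assumption that $\Phi_\nu$ is affine and respects the local graph structure $U_\nu\cap\Omega=U_\nu\cap\Phi_\nu(\omega_\nu)$ from \ref{Item::PartitionUnity::Phi} is used), and that the commutator terms where $X_k$ falls on $\chi_\nu$ contribute genuine bounded multiplication (not derivative loss) so the whole decomposition stays in $\Fs_{pp}^s$ rather than dropping to $\Fs_{pp}^{s-1}$.
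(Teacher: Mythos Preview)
Your treatment of parts \ref{Item::LPThm::BddDom::FtoW} and \ref{Item::LPThm::BddDom::WtoF} matches the paper's: localize via the partition of unity, transfer to $\omega_\nu$ by the affine $\Phi_\nu$ (which preserves both the Triebel--Lizorkin and weighted Sobolev spaces), and invoke Proposition~\ref{Prop::LPThm::HLLem}.

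For part \ref{Item::LPThm::BddDom::TangComm} there is a genuine gap. Your identity $\iota_{\overline Z_k}[\dbar,\Ec]f=[X_k,\Ec]f+R_kf$ with $R_k$ a smooth multiplier is incorrect. With $X_k=\sum_j a_k^j\partial_{\bar z_j}$ one has
\[
\iota_{\overline Z_k}[\dbar,\Ec]f=\sum_j a_k^j\,[\partial_{\bar z_j},\Ec]f=\langle X_k,[d,\Ec]\rangle f,
\]
whereas the \emph{operator} commutator satisfies $[X_k,\Ec]f=\langle X_k,[d,\Ec]\rangle f+\sum_j[a_k^j,\Ec](\partial_{\bar z_j}f)$. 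The correction term is a sum of commutators of $\Ec$ with multiplication by the non-constant coefficients $a_k^j$, composed with a derivative; this is not a multiplication operator, and without further argument it costs one derivative. The same mismatch persists after conjugation: your middle term $[(\Phi_\nu^{-1})_*X_k,E_{\omega_\nu}]$ is still an operator commutator, but Proposition~\ref{Prop::LPThm::TangComm} is stated for the pairing $\langle Y,[d,E_{\omega_\nu}]\rangle=\sum_k Y^k[D_k,E_{\omega_\nu}]$, not for $[Y,E_{\omega_\nu}]$.

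The paper avoids this by never passing to operator commutators. It first expands $[\dbar,\Ec]$ itself via the partition of unity (the Leibniz identity you wrote holds with $\dbar$ in place of $X_k$), isolates the main piece $\chi_\nu\circ[\dbar,E_\nu]\circ\chi_\nu$, and only \emph{then} contracts with a tangential $(0,1)$-field $\overline W_\mu$ to obtain $\langle\chi_\nu\overline W_\mu,[\dbar,E_\nu]\rangle$. Because $\Phi_\nu$ is affine, each $\partial_{\bar z_j}$ pushes forward to a constant-coefficient field, so this conjugates exactly to $\langle(\Phi_\nu^{-1})_*(\chi_\nu\overline W_\mu),[d,E_{\omega_\nu}]\rangle$, which is precisely the object handled by Proposition~\ref{Prop::LPThm::TangComm}. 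Your route can be salvaged by proving that $[g,E_\omega]:\Fs_{pp}^{s-1}\to\Fs_{pp}^s$ for smooth $g$ (a one-derivative gain, extractable from Lemma~\ref{Lem::LPThm::HeiEst} with $\alpha=0$), but that is additional work you have not supplied and which the paper's organization makes unnecessary.
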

\noindent   Here $\widetilde H^{s,p}(\overline{U}):=\{f\in H^{s,p}(\R^N):f|_{\overline U^c}=0\}\subset H^{s,p}(\R^N)$ and $\widetilde \Co^s(\overline{U}):=\{f\in \Co^s(\R^N):f|_{\overline U^c}=0\}\subset \Co^s(\R^N)$ follow the notations in Definitions \ref{Defn::Space::Sob}, \ref{Defn::Space::Hold} and \ref{Defn::Space::TLSpace}.

Recall that $H^{s,p}=\Fs_{p2}^s\hookrightarrow\Fs_{pp}^{s-\delta/2}\hookrightarrow\Fs_{p2}^{s-\delta}=H^{s-\delta,p}$ and $\Co^s=\Fs_{\infty\infty}^s$ from Remark~\ref{Rmk::Space::TLRmk}. The boundededness for Sobolev and H\"older spaces hold immediately.
\begin{proof}Recall Notation~\ref{Note::Space::PartUnity} that we have $U_\nu\cap\Omega=U_\nu\cap\Phi_\nu(\omega_\nu)$ for $1\le \nu\le M$ and $f=\sum_{\nu=0}^M\chi_\nu^2f$. By Lemma~\ref{Lem::Space::PartUnityLem} $[f\mapsto\chi_\nu f]:\Fs_{pq}^s(\Omega)\to\Fs_{pq}^s(U_\nu\cap\Omega)$ are all bounded for $0\le \nu\le M$.

    For each $1\le \nu\le M$, by Proposition~\ref{Prop::LPThm::HLLem} we have $\widetilde\Fs_{p\infty}^s(\overline{U_\nu\cap\Omega})\hookrightarrow L^p(U_\nu\cap\Omega,\dist_{b\Omega}^{-s})$ for $s>0$, and $W^{m,p}(\Phi_\nu(\omega_\nu),\dist_{b\Omega}^{m-s})\xrightarrow{(-)|_{U_\nu}}\Fs_{p\eps}^s(U_\nu\cap\Omega)$ for $s<m$.
    
    For $\nu=0$ we have the trivial estimates $[f\mapsto\chi_0f]:\Fs_{p\infty}^s(\Omega)\to L^p_c(U_0)\hookrightarrow L^p(\Omega,\dist^{-s})$ for $s<m$, and $[f\mapsto\chi_0f]:W^{m,p}(\Omega,\dist^{m-s})\to W^{m,p}_c(U_0)\hookrightarrow \Fs_{p\eps}^s(\Omega)$ for $s<m$. 
    
    Therefore for every $1\le p\le\infty$, $m\in\N$ and $\eps>0$,
    \begin{gather*}
    \begin{aligned}
        \widetilde\Fs_{p\infty}^s(\overline\Omega)&\xrightarrow{f\mapsto(\chi_\nu f)_{\nu=0}^M}\bigoplus_{\nu=0}^M\widetilde\Fs_{p\infty}^s(\overline{U_\nu\cap\Omega})\hookrightarrow \bigoplus_{\nu=0}^ML^p(U_\nu\cap\Omega,\dist_{b\Omega}^{-s})\\
        &\xrightarrow{(g_\nu)_{\nu=0}^M\mapsto\sum_{\nu=0}^M\chi_\nu g_\nu}L^p(\Omega,\dist^{-s})
    \end{aligned}
        ,\quad s>0;
        \\
    \begin{aligned}
        W^{m,p}(\Omega,\dist^{m-s})&\xrightarrow{f\mapsto(\chi_\nu f)_\nu}\bigoplus_{\nu=0}^MW^{m,p}(U_\nu\cap\Omega,\dist_{b\Omega}^{m-s})
        \\
        &\hookrightarrow\bigoplus_{\nu=0}^M \Fs_{p\eps}^s(U_\nu\cap\Omega)\xrightarrow{(g_\nu)_\nu\mapsto\sum_{\nu}\chi_\nu g_\nu}\Fs_{p\eps}^s(\Omega)
    \end{aligned}
        ,\quad s<m.
    \end{gather*}

The second composition map gives $W^{m,p}(\Omega,\dist^{m-s})\hookrightarrow\Fs_{p\eps}^s(\Omega)$, finishing the proof of \ref{Item::LPThm::BddDom::WtoF}.

By suitably shrinking $\Uc$ we can assume $\Uc$ bounded Lipschitz, hence $\Uc\backslash\overline\Omega$ is also bounded Lipschitz. The first composition map then gives $\widetilde\Fs_{p\infty}^s(\overline{\Uc\backslash\Omega})\hookrightarrow L^p(\Uc\backslash\overline\Omega,\dist^{-s}_{b\Omega\cup b\Uc})\hookrightarrow L^p(\Uc\backslash\overline\Omega,\dist^{-s}_{b\Omega})$ for $s>0$, finishing the proof of \ref{Item::LPThm::BddDom::FtoW}.

\medskip
To prove \ref{Item::LPThm::BddDom::TangComm}, for convenience we write $E_\nu g:=(E_{\omega_\nu}[g\circ\Phi_\nu])\circ\Phi_\nu^{-1}$ where $g$ is defined on $\Phi_\nu(\omega_\nu)$ (recall from \ref{Item::PartitionUnity::Phi} that $U_\nu\cap\Omega=U_\nu\cap\Phi_\nu(\omega_\nu)$), thus $\Ec=\chi_0^2+\sum_{\nu=1}^M\chi_\nu\circ E_\nu\circ\chi_\nu$ and we have
\begin{equation}\label{Eqn::LPThm::BddDom::TangCommPf::Tmp}
    [\dbar,\Ec]=2(\chi_0\dbar\chi_0)+\sum_{\nu=1}^M\big((\dbar\chi_\nu)\circ E_\nu\circ\chi_\nu+\chi_\nu\circ E_\nu\circ (\dbar\chi_\nu)+\chi_\nu\circ[\dbar,E_\nu]\circ\chi_\nu\big).
\end{equation}
Here the function $\chi_\nu$ stands for the linear map (pointwise multiplier) $[g\mapsto \chi_\nu g]$.

Since $\chi_\nu$ are smooth, by Proposition~\ref{Prop::Space::RychFact} \ref{Item::Space::RychFact::ExtBdd} and Lemma~\ref{Lem::Space::PartUnityLem}, all terms in \eqref{Eqn::LPThm::BddDom::TangCommPf::Tmp} except $\chi_\nu\circ[\dbar,E_\nu]\circ\chi_\nu$ have the boundedness $\Fs_{pp}^s(\Omega)\to\Fs_{pp}^s(\C^n)$ for all $1\le p\le\infty$ and $s\in\R$. It suffices to show that for $1\le \nu\le M$ we have $\chi_\nu\circ[\dbar,E_\nu]\circ\chi_\nu:\Fs_{pp}^s(\Omega)\to\Fs_{pp}^s(\C^n)$.

Recall from Remark~\ref{Rmk::Goal::TopBotFacts}, by partition of unity we can find $M'\ge n-1$, smooth $(0,1)$-vector fields $\overline W_1,\dots,\overline W_{M'}$ and\\ $(0,1)$-forms $\overline\eta_1,\dots,\overline\eta_{M'}$ on $\Uc\backslash\overline\Omega$ such that $\overline W_\mu(\zeta)\in T^{0,1}_\zeta(b\Omega_{\varrho(\zeta)})$ for all $1\le \mu\le M'$ and $\zeta\in\Uc\backslash\overline\Omega$, and $\alpha^\top=\sum_{\mu=1}^{M'}\langle \overline{W}_\mu,\alpha\rangle\cdot\overline\eta_\mu$ holds for all $(0,1)$-form $\alpha$ on $\Uc\backslash\overline\Omega$.

Since $T^{0,1}(b\Omega)\subset\C T(b\Omega)$ and $T_{\zeta}(\Phi_\nu(b\omega_\nu))=T_{\zeta}(b\Omega)$ for $\zeta\in U_\nu\cap b\Omega$, we have $(\chi_\nu W_\mu)(\zeta)\in\C T_{\zeta}(\Phi_\nu(b\omega_\nu))$ for $\zeta\in U_\nu\cap b\Omega$. By Proposition~\ref{Prop::LPThm::TangComm}, $\langle \overline W_\mu,\chi_\nu\circ[d,E_\nu]\rangle=\langle  \chi_\nu \cdot \overline W_\mu,[\dbar,E_\nu]\rangle:\Fs_{pp}^s(\Phi_\nu(\omega_\nu))\to \widetilde\Fs_{pp}^s(\Phi_\nu(\omega_\nu)^c;\C^n)$ is bounded. Therefore 
\begin{align*}
    \langle \overline W_\mu,\chi_\nu\circ[\dbar,E_\nu]\circ\chi_\nu\rangle:&\Fs_{pp}^s(\Omega)\xrightarrow{\chi_\nu}\Fs_{pp}^{s}(\Phi_\nu(\omega_\nu))\xrightarrow{\langle  \chi_\nu \cdot \overline W_\mu,[\dbar,E_\nu]\rangle}\widetilde\Fs_{pp}^{s}(\Phi_\nu(\omega_\nu)^c;\C^n)\xrightarrow{(-)|_{U_\nu}}\Fs_{pp}^{s}(\Omega^c;\C^n).
\end{align*}

We get $\langle \overline W_\mu,\chi_\nu\circ[\dbar,E_\nu]\circ\chi_\nu\rangle:\Fs_{pp}^s(\Omega)\to\widetilde\Fs_{pp}^{s}(\overline{\Uc\backslash\Omega};\C^n)$ since $\supp\chi_\nu\Subset\Uc$, which gives \ref{Item::LPThm::BddDom::TangComm}.
\end{proof}

\begin{rem}
    Using the notations from Remark \ref{Rmk::Goal::TopBotFacts}, for a $(0,q)$-form $f=\sum_Jf_Jd\bar z^J$, we have
    \begin{align*}
        [\dbar,\Ec]^\top f=&\sum_{|K'|=q+1,|J|=q}\langle\overline Z_{K'},[\dbar,\Ec](f_Jd\bar z^J)\rangle
        \\
        =&\sum_{j=1}^n\sum_{k=2}^n\sum_{|J|=|K|=q;\min K\ge2}([\dbar_j,\Ec]f_J)\langle\overline Z_k,d\bar z^j\rangle\langle \overline Z_K,d\bar z^J\rangle\overline\theta^k\wedge\overline\theta^K
        \\
        =&\sum_{k=2}^n\sum_{|K|=q;\min K\ge2}\big(\langle\overline Z_k,[\dbar,\Ec]f_J\rangle\big)\cdot\langle \overline Z_K,d\bar z^J\rangle\overline\theta^k\wedge\overline\theta^K.
    \end{align*}
    Therefore to estimate $[\dbar,\Ec]^\top f$ it suffices to estimate its components $[\dbar,\Ec]^\top f_J$.
\end{rem}

We first prove Proposition~\ref{Prop::LPThm::HLLem}, we only prove the case $\eps=1$ of \ref{Item::LPThm::HLLem::WtoF} here and leave the proof of $\eps<1$ to the appendix.
\begin{proof}[Proof of Proposition~\ref{Prop::LPThm::HLLem}]
Write $\omega=\{x_1>\sigma(x')\}$. We define the outer strips 
\begin{equation}\label{Eqn::Space::HLLem::Sk}
    S_k=S_k^\omega:=\{(x_1,x'):-2^{\frac12-k}<x_1-\sigma(x')<-2^{-\frac12-k}\}\subset\omega^c\text{ for }k\in\Z_+,
\end{equation} and $S_0:=\{(x_1,x'):x_1-\sigma(x')<-2^{-\frac12}\}$. Recall $\delta(x)=\min(1,\dist(x,b\omega))$ in the assumption. Recall that a special Lipschitz domain satisfies $\|\nabla\sigma\|_{L^\infty}<1$, thus (see also \cite[(5.3)]{ShiYaoExt})
\begin{equation}\label{Eqn::Space::HLLem::Tmp1}
    2^{-1-k}\le\delta(x)\le 2^{\frac12-k},\quad\forall k\ge0,\quad x\in S_k.
\end{equation}
Since $\{S_k\}_{k=0}^\infty$ is a partition of $\omega^c$ up to zero measured sets, we have $\|g\|_{L^p(\omega)}=\|(\|g\|_{L^p(S_k)})_{k=0}^\infty\|_{\ell^p(\N)}$.

By the assumption \ref{Item::Space::PhiSupp} we see that $\phi_0,\phi_1$ are supported in $-\Kb\cap\{x_1<-c_1\}$ for some $c_1>0$. By assumption \ref{Item::Space::PhiScal} we get $\supp\phi_j\subset-\Kb\cap\{x_1<-c_12^{-j}\}$ for all $j\ge0$. A simple calculation shows that (see \cite[Lemma~5.3]{ShiYaoExt})
\begin{equation}\label{Eqn::Space::HLLem::Tmp2}
    \exists R\in\Z_+\text{ such that }\supp\phi_j+\omega^c\subseteq\{x_1-\sigma(x')<-2^{-j-R}\}.
\end{equation}

Let $f\in \widetilde\Fs_{p\infty}^s(\omega^c)$, we have $\supp f\subseteq\omega^c$. By \ref{Item::Space::PhiApprox} we have $f=\sum_{j=0}^\infty\phi_j\ast f$. Using \eqref{Eqn::Space::HLLem::Tmp2} we see that  $f(x)=\sum_{j=k-R}^\infty\phi_j\ast f(x)$ for $k\ge0$ and $x\in S_k$. Thus,
\begin{align*}
    &\|f\|_{L^p(\overline\omega^c,\delta^{-s})}=\big\|\big(\|f\|_{L^p(S_k,\delta^{-s})}\big)_{k=0}^\infty\big\|_{\ell^p(\N)}\le \big\|\big(2^{(k+1)s}\|f\|_{L^p(S_k)}\big)_{k=0}^\infty\big\|_{\ell^p(\N)}
    \\
    \le&\bigg\|\Big(\Big\|2^{(k+1)s}\sum_{j=k-R}^\infty|\phi_j\ast f|\Big\|_{L^p(S_k)}\Big)_{k=0}^\infty\bigg\|_{\ell^p(\N)}=\bigg\|\Big(\Big\|\sum_{j=k-R}^\infty 2^{(k-j+1)s}|2^{js}\phi_j\ast f|\Big\|_{L^p(S_k)}\Big)_{k=0}^\infty\bigg\|_{\ell^p(\N)}
    \\
    \le&\sum_{l=-R}^\infty2^{(1-l)s}\Big\|\Big(\Big\|\sup_{j\ge k-R}|2^{js}\phi_j\ast f|\Big\|_{L^p(S_k)}\Big)_{k=0}^\infty\Big\|_{\ell^p(\N)}
    \lesssim_{R,s}\Big\|\sup_{j\in\N}|2^{js}\phi_j\ast f|\Big\|_{L^p(\omega^c)}=\|f\|_{\Fs_{p\infty}^s(\phi)}.
\end{align*}
Here we use convention $\phi_j\ast f=0$ for $j\le -1$. By \cite[Proposition~1.2 (i)]{RychkovExtension} $\|f\|_{\Fs_{p\infty}^s(\phi)}$ is an equivalent norm for $\Fs_{p\infty}^s(\R^N)$. This completes the proof of  \ref{Item::LPThm::HLLem::FtoW}.

\medskip
For \ref{Item::LPThm::HLLem::WtoF} we prove case $\eps=1$ here by duality argument. We leave the proof of $0<\eps<1$ to the appendix, which is a direct proof without using duality.

Let $\mathring{\Fs}_{pq}^s(\R^N)$ be the norm closure of $C_c^\infty(\R^N)$ in $\Fs_{pq}^s(\R^N)$ and let $\mathring{\Fs}_{pq}^s(\overline{\omega}):=\{f\in\mathring\Fs_{pq}^s(\R^N):f|_{\overline{\omega}^c}=0 \}$ be its subspace. Clearly $\mathring{\Fs}_{pq}^s(\overline{\omega})\subseteq\widetilde{\Fs}_{pq}^s(\overline{\omega})$. We see that $\mathring{\Fs}_{pq}^s(\overline{\omega})=\overline{C_c^\infty(\omega)}^{\Fs_{pq}^s(\R^N)}$, whose proof follows the same argument to \cite[Theorem~4.3.2/1 Proof Step 2]{TriebelInterpolation} via partition of unity and translations. 



On the other hand by \cite[Remark~1.5]{TriebelTheoryOfFunctionSpacesIV} we have $\Fs_{p1}^{-s}(\R^N)=\mathring \Fs_{p'\infty}^{s}(\R^N)'$ for all $s\in\R$ and $1\le p\le\infty$, where $p'=\frac p{p-1}$. Therefore (see also \cite[Theorem~2.10.5/1]{TriebelInterpolation}) for $s\in\R$ and $1\le p\le\infty$,
\begin{align*}
    \mathring{\Fs}_{p'\infty}^{s}(\overline{\omega})'
    =&\Fs_{p1}^{-s}(\R^N)/\{f:\langle f,\phi\rangle=0, \forall \phi\in \mathring\Fs_{p'\infty}^s(\R^N),\ \phi|_{\overline\omega^c}=0\}
    \\
    =&\Fs_{p1}^{-s}(\R^N)/\{f:\langle f,\phi\rangle=0, \forall \phi\in C_c^\infty(\omega)\}=\Fs_{p1}^{-s}(\R^N)/\{f:f|_\omega=0\}=\Fs_{p1}^{-s}(\omega).
\end{align*}

By result \ref{Item::LPThm::HLLem::FtoW} we have $\Fs_{p'\infty}^t(\overline{\omega})\hookrightarrow L^{p'}(\omega,\delta^{-t})$ for $t>0$. In particular $\mathring\Fs_{p'\infty}^t(\overline{\omega})\hookrightarrow \overline{C_c^\infty(\omega)}^{L^{p'}(\omega,\delta^{-t})}$.

Clearly $\overline{C_c^\infty(\omega)}^{L^{p'}(\omega,\delta^{-t})}=L^{p'}(\omega,\delta^{-t})$ if $1<p\le\infty$, taking the adjoint we get $L^p(\omega,\delta^t)\hookrightarrow\Fs_{p1}^{-t}(\omega)$ for $1<p\le\infty$, which is \ref{Item::LPThm::HLLem::WtoF} at $m=0$ and $s=-t<0$.

For $p=1$ we have $\overline{C_c^\infty(\omega)}^{L^{p'}(\omega,\delta^{-t})}=\{f\in C^0(\overline\omega):\lim_{x\to b\omega}\dist_{b\omega}(x)^{-t}f(x)=0\text{ uniformly}\}$. Thus the adjoint gives the embedding $\{f\in\Ms_\loc(\omega):\|\delta^{t}f\|_{\Ms}<\infty\}\hookrightarrow\Fs_{11}^{-t}(\omega)$ from the space of locally finite Borel measures\footnote{Here we use $\Ms_\loc(\Omega)$ for the space of locally finite signed Borel measures, where the norm $\|\cdot\|_{\Ms}$ is the total variation of a measure.}. Since $L^1(\omega,\delta^t)\subset \{f\in\Ms_\loc(\omega):\|\delta^{t}f\|_{\Ms}<\infty\}$ is a closed subspace, we get $L^1(\omega,\delta^t)\hookrightarrow\Fs_{11}^{-t}(\omega)$, which is \ref{Item::LPThm::HLLem::WtoF} for $m=0$, $p=1$ and $s=-t<0$.

For $m\ge1$, recall that $\|f\|_{W^{m,p}(\omega,\delta^{s-m})}\approx\sum_{|\alpha|\le m}\|D^\alpha f\|_{L^p(\omega,\delta^{s-m})}$. Therefore for every $m\ge0$ and $s<m$ we have $\sum_{|\alpha|\le m}\|D^\alpha f\|_{\Fs_{p1}^{s-m}(\omega)}\lesssim\|f\|_{L^p(\omega,\delta^{s-m})}$. On the other hand by Proposition~\ref{Prop::Space::RychFact} \ref{Item::Space::RychFact::EqvNorm2} we have $\|f\|_{\Fs_{p1}^s(\omega)}\approx\sum_{|\alpha|\le m}\|D^\alpha f\|_{\Fs_{p1}^{s-m}(\omega)}$. Combining them we get \ref{Item::LPThm::HLLem::WtoF} for $\eps=1$ and finish the proof.
\end{proof}

To prove Proposition~\ref{Prop::LPThm::TangComm} we use a version of Heideman-type estimate \cite{Heideman}.

\begin{lem}\label{Lem::LPThm::HeiEst}
    Let $\phi=(\phi_j)_{j=0}^\infty$ and $\psi=(\psi_j)_{j=0}^\infty$ be as in Definition~\ref{Defn::Space::ExtOp}. Then for any $M>0$, $\alpha,\beta\in\N$ and $g\in\Co^\infty(\R^N)$, there is a $C=C_{\phi,\psi,M,\alpha,\beta,g}>0$ such that for every $j,k\in\N$, $1\le p\le\infty$ and $f\in L^p(\R^N)$,
    \begin{gather}\label{Eqn::LPThm::HeiEst}
        \|\phi_j\ast(g\cdot(D^\alpha\psi_k\ast f))-\phi_j\ast D^\alpha\psi_k\ast(g f)\|_{L^p(\R^N)}\le C2^{k|\alpha|-M|j-k|-k}\|f\|_{L^p(\R^N)}.
    \end{gather}
\end{lem}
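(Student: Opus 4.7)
The plan is to reduce $\mathrm{LHS}$ to the commutator $[g,D^\alpha\psi_k\ast\cdot]$ and then combine a baseline Young bound with almost-orthogonality arguments based on the moment conditions \ref{Item::Space::PhiMomt}. First I will rewrite $\mathrm{LHS}=\phi_j\ast F$, where
\[
F(y):=g(y)(D^\alpha\psi_k\ast f)(y)-D^\alpha\psi_k\ast(gf)(y)=\int D^\alpha\psi_k(y-z)\bigl(g(y)-g(z)\bigr)f(z)\,dz.
\]
Using $g(y)-g(z)=(y-z)\cdot\int_0^1\nabla g(z+t(y-z))\,dt$ and the scaling $\|u\,D^\alpha\psi_k(u)\|_{L^1}\lesssim 2^{k(\alpha-1)}$ (from \ref{Item::Space::PhiScal}), Young's inequality yields the baseline $\|\mathrm{LHS}\|_{L^p}\le\|\phi_j\|_{L^1}\|F\|_{L^p}\lesssim 2^{k\alpha-k}\|f\|_{L^p}$, which suffices when $|j-k|$ is bounded and covers $(j,k)=(0,0)$. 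For large $|j-k|$ I argue in two cases.

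\emph{Case A ($j\ge k$ with $j\ge 1$).} Since $\phi_j$ has all moments vanishing by \ref{Item::Space::PhiMomt}, Taylor expanding $F(x-w)$ in $w$ around $0$ yields $\|\phi_j\ast F\|_{L^p}\lesssim 2^{-j(L'+1)}\|D^{L'+1}F\|_{L^p}$ for every $L'\ge 0$. I expand $D^rF$ by the product rule and observe that the two top-order terms reassemble into another commutator:
\[
D^rF=[g,\,D^{r+\alpha}\psi_k\ast\cdot]f+\sum_{a=1}^{r}\binom{r}{a}D^ag\cdot(D^{r-a+\alpha}\psi_k\ast f).
\]
The commutator part is bounded by $\|\nabla g\|_\infty\|u\,D^{r+\alpha}\psi_k\|_{L^1}\|f\|_{L^p}\lesssim 2^{k(r-1+\alpha)}\|f\|_{L^p}$, and the sum (with $a\ge1$) by $\|g\|_{C^r}\max_{a\ge1}2^{k(r-a+\alpha)}\|f\|_{L^p}=2^{k(r-1+\alpha)}\|g\|_{C^r}\|f\|_{L^p}$, giving $\|D^rF\|_{L^p}\lesssim 2^{k(r-1+\alpha)}\|f\|_{L^p}$. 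Setting $r=L'+1$ and using $j\ge k$ (so $-j\le -k$), I obtain $\|\mathrm{LHS}\|_{L^p}\lesssim 2^{k\alpha-(j-k)L'-j}\|f\|_{L^p}\le 2^{k\alpha-k-M(j-k)}\|f\|_{L^p}$ by taking $L'=M$.

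\emph{Case B ($k\ge j$ with $k\ge 1$).} Here I analyze the kernel $K(x,y)=\int\phi_j(x-z)D^\alpha\psi_k(z-y)(g(z)-g(y))\,dz$ of the operator $f\mapsto\mathrm{LHS}$ and show that both $\sup_x\int|K(x,y)|\,dy$ and $\sup_y\int|K(x,y)|\,dx$ are bounded by $C\,2^{k\alpha-k-M(k-j)}$; Schur's test (Lemma~\ref{Lem::PfThm::Schur} with $\gamma=1$) then produces the $L^p\to L^p$ estimate. Setting $v=x-y$, $u=z-y$ and Taylor expanding $g(y+u)-g(y)=\sum_{1\le|\gamma|\le L-1}\frac{u^\gamma}{\gamma!}D^\gamma g(y)+R_L(y,u)$ with $|R_L|\lesssim|u|^L\|g\|_{C^L}$, each polynomial term contributes $K_\gamma(x,y)=\frac{D^\gamma g(y)}{\gamma!}(\phi_j\ast h_\gamma)(x-y)$ with $h_\gamma(u):=u^\gamma D^\alpha\psi_k(u)$. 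The crucial point is that, for $k\ge 1$, integration by parts plus the moment condition on $\psi_k$ gives $\int u^\sigma h_\gamma(u)\,du=(-1)^\alpha\int D^\alpha[u^{\sigma+\gamma}]\psi_k(u)\,du=0$ for every multi-index $\sigma$, so $h_\gamma$ has all moments vanishing. Taylor expanding $\phi_j(v-u)$ in $u$ around $0$ and exploiting these vanishing moments yields the almost-orthogonality estimate $\|\phi_j\ast h_\gamma\|_{L^1}\lesssim_L 2^{k(\alpha-|\gamma|)}\cdot 2^{-L(k-j)}$, which for $|\gamma|\ge 1$ and $L=M$ supplies the required contribution. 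The Taylor remainder $K_R$ is controlled directly by $\int|K_R(x,y)|\,dx\le\|\phi_j\|_{L^1}\int|u|^L|D^\alpha\psi_k(u)|\|g\|_{C^L}\,du\lesssim 2^{k(\alpha-L)}$; since $k\ge j$ forces $k-j\le k$, choosing $L=M+1$ yields $2^{k(\alpha-M-1)}=2^{k\alpha-k-Mk}\le 2^{k\alpha-k-M(k-j)}$ uniformly.

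The main technical obstacle is identifying the cancellation in Case A that produces the improved bound $\|D^rF\|_{L^p}\lesssim 2^{k(r-1+\alpha)}$ rather than the trivial product-rule estimate $2^{k(r+\alpha)}$: without the extra $2^{-k}$, the two cases fail to patch at $j=k$. A secondary subtlety is the edge case $j=0$ in Case B; although $\phi_0$ does not itself have vanishing zeroth moment, the almost-orthogonality argument only requires $h_\gamma$ to have vanishing moments, and the Taylor expansion of $\phi_0(v-u)$ around $u=0$ still terminates cleanly since the polynomial terms vanish against $h_\gamma$.
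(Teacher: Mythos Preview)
Your proof is correct. Case~B (the regime $k\ge j$) is essentially the paper's argument: represent the commutator by an integral kernel, Taylor expand $g$ to order $L$, use that $h_\gamma=u^\gamma D^\alpha\psi_k$ has all moments vanishing (for $k\ge1$) to obtain the almost-orthogonality bound $\|\phi_j\ast h_\gamma\|_{L^1}\lesssim 2^{k(|\alpha|-|\gamma|)-L(k-j)}$, and control the remainder directly. The paper does this for \emph{both} directions $j\gtrless k$ at once by invoking a two-sided almost-orthogonality lemma from \cite{Bui}, so the case split is replaced by a single citation.

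Your Case~A, however, is a genuinely different and somewhat more elementary route for $j\ge k$. Instead of Schur's test and Taylor expanding $g$, you exploit the moment vanishing of $\phi_j$ directly on the function $F=[g,D^\alpha\psi_k\ast\cdot]f$ to get $\|\phi_j\ast F\|_{L^p}\lesssim 2^{-j(L'+1)}\|D^{L'+1}F\|_{L^p}$, and then the key identity
\[
D^\beta F=[g,D^{\beta+\alpha}\psi_k\ast\cdot]f+\sum_{0<\gamma\le\beta}\tbinom{\beta}{\gamma}D^\gamma g\cdot(D^{\beta-\gamma+\alpha}\psi_k\ast f)
\]
recovers the extra factor $2^{-k}$ (since both the new commutator and the lower-order terms, having $|\gamma|\ge1$, scale like $2^{k(|\beta|+|\alpha|-1)}$ rather than $2^{k(|\beta|+|\alpha|)}$). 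This avoids the kernel analysis entirely in that regime and makes the source of the $2^{-k}$ gain transparent. The trade-off is that you need two separate arguments, whereas the paper's approach is uniform in $j,k$ but leans on an external lemma. Your handling of the edge cases ($j=0$ forces $k=0$ or Case~B; $k=0$ in Case~A is harmless since all $\psi_0$-norms are $O(1)$) is also correct.
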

We can write the left hand side of \eqref{Eqn::LPThm::HeiEst} as $\big\|\phi_j\ast\big([g\cdot(-),D^\alpha\psi_k\ast(-)]\{f\}\big)\big\|_{L^p}$ in terms of commutator.
\begin{proof}The direct computation yields
    \begin{align*}
        &\phi_j\ast(g\cdot(D^\alpha\psi_k\ast f))(x)-\phi_j\ast D^\alpha\psi_k\ast(g f)(x)
        \\
        =&\phi_j\ast\Big[t\mapsto\int f(y)(g(t)-g(y))D^\alpha\psi_k(t-y)dy\Big](x)
        \\
        =&\int_{\R^N}f(y)dy\int_{\R^N}(g(t)-g(y))D^\alpha\psi_k(t-y)\phi_j(x-t)dt=:\int_{\R^N}K_{jk}(x,y)f(y)dy.
    \end{align*}
    Here $K_{jk}(x,y)=\int_{\R^N}(g(t)-g(y))D^\alpha\psi_k(t-y)\phi_j(x-t)dt$.

    By Schur's test Lemma~\ref{Lem::PfThm::Schur} with $\gamma=1$ and $(X,\mu)=(Y,\nu)=(\R^N,dx)$, what we need is to prove that$$\sup_x\int |K_{jk}(x,y)|dy+\sup_y\int |K_{jk}(x,y)|dx\lesssim_{\phi,\psi,\alpha,\beta,g,M}2^{k|\alpha|-M|j-k|-k}.$$
    
    Let $M'\ge0$ to be chosen later, by Taylor's expansion we can write
    \begin{align*}
        K_{jk}(x,y)=&\int\Big(\sum_{0<|\gamma|\le M'}\frac{s^\gamma}{\gamma!}D^\gamma g(y)+R_{M'}(y,s)\Big)D^\alpha\psi_k(s)\phi_j(x-y-s)ds,
    \end{align*}
    where $R_{M'}(y,s):=g(y+s)-\sum_{0\le|\gamma|\le M'}\frac{s^\gamma}{\gamma!}D^\gamma g(y)$ is the Tayler's remainder in $s$-variable. Therefore
    \begin{equation}\label{Eqn::LPThm::HeiEst::PfTmp1}
        |K_{jk}(x,y)|\lesssim_{M',g}\sum_{0<|\gamma|\le M'}\Big|\big(\big[s\mapsto s^\gamma D^\alpha\psi_k(s)\big]\ast \phi_j\big)(x-y)\Big|+\Big|\left(\big(R_{M'}(y,\cdot)D^\alpha\psi_k\big)\ast \phi_j\right)(x-y)\Big|.
    \end{equation}
    
    On the other hand, Note that for every $\gamma\in\N^N$, we have scaling $\phi_k(x)=2^{(k-1)N}\phi_1(2^{k-1}x)$ for $k\ge1$ and
    \begin{equation}\label{Eqn::LPThm::HeiEst::TmpScal}
        x^\gamma D^\alpha\psi_k(x)=2^{(k-1)(N+|\alpha|-|\gamma|)}(2^{k-1}x)^\gamma D^\alpha\psi_1(2^{k-1}x),\quad k\ge1.
    \end{equation}
    Since both $\phi_k$ and $x^\gamma D^\alpha\psi_k$ have infinite moment vanishing for $k\ge1$, by \cite[Lemma~2.1]{Bui} again (see also \cite[Lemma~4.4]{ShiYaoC2} with $l\to+\infty$), we have, for every $M>0$ and $|\gamma|>0$,
    \begin{gather}\label{Eqn::LPThm::HeiEst::PfTmp2}
        \|\phi_j\ast(s^\gamma D^\alpha\psi_k)\|_{L^1} \lesssim_{\phi,\psi,\alpha,\beta,\gamma,M}2^{k(|\alpha|-|\gamma|)-M|j-k|}\le 2^{k(|\alpha|-1)-M|j-k|},\quad\text{for all }j,k\ge0.
    \end{gather}
    
    Moreover by \cite[Lemma~2.1]{Bui} (see also \cite[Proposition~3.5]{ShiYaoExt}), we have, for every $M>0$,
    \begin{gather}\label{Eqn::LPThm::HeiEst::PfTmp3}
        \|\phi_j\ast h\|_{L^1}\lesssim_{\phi,\alpha,M}2^{-jM}\sup_{|\gamma|\le 2M+N;x\in\R^N}(1+|x|^{2M+N})|D^\gamma h(x)|,\quad\text{for all }j\ge0,\quad h\in\Ss(\R^N).
    \end{gather}

    Taking $h=h_{M',y}=R_{M'}(y,\cdot)D^\alpha\psi_k$ in \eqref{Eqn::LPThm::HeiEst::PfTmp3}, applying Taylor's theorem to $R_{M'}(y,\cdot)$ we get 
    \begin{equation}\label{Eqn::LPThm::HeiEst::PfTmpTmp}
        \begin{aligned}
        &(1+|s|^{2M+N})\sum_{|\gamma|\le2M+N}|D^\gamma h_{M',y}(s)|\lesssim(1+|s|^{2M+N})\sum_{|\beta|,|\gamma|\le2M+N}|D^\beta_s R_{M'}(y,s)||D^{\alpha+\gamma}\psi_k(s)|
        \\
        &\qquad\lesssim(1+|s|^{2M+N})|s|^{M'-2M-N}\sum_{|\gamma|\le2M+N}|D^{\alpha+\gamma}\psi_k(s)|,
    \end{aligned}
    \end{equation}
    uniformly in $s,y\in\R^N$, whenever $M'>2M+N$. 
    
    For $k\ge1$, using the scaling property \eqref{Eqn::LPThm::HeiEst::TmpScal} and the fact that $\psi_1$ is Schwartz, we have, uniformly for every $s$ and $y$,
    \begin{equation}\label{Eqn::LPThm::HeiEst::PfTmpTmp2}
        \begin{aligned}
        &\textstyle(1+|s|^{2M+N})|s|^{M'-2M-N}\sum_{|\gamma|\le2M+N}|D^{\alpha+\gamma}\psi_k(s)|
        \\
        \lesssim&\textstyle(1+|s|^{2M+N})|s|^{M'-2M-N}\sum_{|\gamma|\le2M+N}2^{kN+k|\alpha+\gamma|}(1+|2^ks|)^{-M'}
        \\
        \lesssim&\textstyle\frac{1+|s|^{2M+N}}{1+|2^ks|^{2M+N}}\cdot\frac{|s|^{M'-2M-N}}{1+|2^ks|^{M'-2M-N}}2^{k(2M+2N)+k|\alpha|}\lesssim2^{-k(M'-2M-N)}2^{k(2M+2N+|\alpha|)}.
    \end{aligned}
    \end{equation}
    To summarize, by \eqref{Eqn::LPThm::HeiEst::PfTmp3}, \eqref{Eqn::LPThm::HeiEst::PfTmpTmp} and \eqref{Eqn::LPThm::HeiEst::PfTmpTmp2},
    \begin{equation}\label{Eqn::LPThm::HeiEst::PfTmp4}
        \sup_{y\in\R^N}\big\|\big(R_{M'}(y,\cdot)D^\alpha\psi_k\big)\ast \phi_j\big\|_{L^1(\R^N)}\lesssim2^{-jM}2^{-kM'}2^{k(4M+3N+|\alpha|)}.
    \end{equation}

    Therefore by \eqref{Eqn::LPThm::HeiEst::PfTmp2} and by taking $M'= 4M+3N+1$ in \eqref{Eqn::LPThm::HeiEst::PfTmp4} we have 
    \begin{align*}
        &\sup_{x\in\R^N}\int_{\R^N}\big( |K_{jk}(x,t)|+|K_{jk}(t,x)|\big)dt
        \\
        &\quad\lesssim\sum_{0<|\gamma|\le M'}\|\phi_j\ast(s^\gamma D^\alpha\psi_k) \|_{L^1(\R^N)}+\sup_y\|\phi_j\ast (R_{M'}(y,\cdot)D^\alpha\psi_k)\|_{L^1(\R^N)}
        \\
        &\quad\lesssim_M2^{k(|\alpha|-1)-M|j-k|}+2^{-jM+k(4M+3N+|\alpha|-M')}
        \\
        &\quad\lesssim_M2^{k(|\alpha|-1)-M|j-k|}+2^{k(|\alpha|-1)-M(j+k)}\lesssim 2^{k(|\alpha|-1)-M|j-k|}.
    \end{align*}
    This completes the proof.
\end{proof}

\begin{proof}[Proof of Proposition~\ref{Prop::LPThm::TangComm}]First we claim that $X\1_\omega=\sum_{\nu=1}^NX^\nu D_\nu\1_\omega=0$ holds in the sense of distributions. We use approximation. The assumption $X(x)\in \C T_x(b\omega)$ for almost every $x\in b\omega$ gives
\begin{equation*}
    \textstyle X^1(\sigma(x'),x')=\sum_{\nu=2}^NX^\nu(\sigma(x'),x')\cdot\frac{\partial\sigma}{\partial x_\nu}(x'),\quad\text{for almost every } x'\in\R^{N-1}.
\end{equation*}

For $\delta>0$ we define 
\begin{equation*}
    h_\delta(x):=0\text{ when }x_1\le\sigma(x')-\tfrac\delta2;\quad h_\delta(x):=\tfrac{x_1-\sigma(x')}\delta\text{ when }|x_1-\sigma(x')|\le\tfrac\delta2;\quad h_\delta(x):=1\text{ when }x_1\ge\sigma(x')+\tfrac\delta2.
\end{equation*}
Clearly $h_\delta\to \1_\omega$ as $\delta\to0$ in $L^p_\loc(\R^N)$ for all $1<p<\infty$. 

Clearly $Xh_\delta(x)=0$ for $|x_1-\sigma(x')|>\frac\delta2$. For $|x_1-\sigma(x')|<\frac\delta2$,
\begin{align*}
    Xh_\delta(x)=&\textstyle\sum_{\nu=1}^NX^\nu(x)D_\nu h_\delta(x)=\delta^{-1}X^1(x)-\delta^{-1}\sum_{\nu=2}^NX^\nu(x)D_\nu\sigma(x')
    \\
    =&\textstyle\frac{X^1(x)-X^1(\sigma(x'),x')}\delta-\sum_{\nu=2}^N\frac{X^\nu(x)-X^\nu(\sigma(x'),x')}\delta D_\nu\sigma(x')
    \\
    =&\textstyle(D_1X^1)(\sigma(x'),x')-\sum_{\nu=2}^N(D_1X^\nu)(\sigma(x'),x')D_\nu\sigma(x')+O(\delta).
\end{align*}

We conclude that $Xh_\delta\in L^\infty(\R^N)$ is uniformly bounded in $\delta$, and $Xh_\delta=0$ outside a $\delta$-neighborhood of $b\Omega $. Therefore $Xh_\delta\xrightarrow{L^p_\loc}0$ for all $p<\infty$, hence $X\1_\omega=\lim_{\delta\to0}Xh_\delta=0$ as distributions. 

\medskip Now we can rewrite $\langle X,[d,E]\rangle$ as 
\begin{align*}
    &\langle X,[d,E]\rangle f=\sum_{\nu=1}^N\sum_{k=0}^\infty  X^\nu\cdot(\psi_k\ast((D_\nu\1_\omega)\cdot(\phi_k\ast f)))
    \\
    =&\sum_{\nu=1}^N\sum_{k=0}^\infty X^\nu\cdot(\psi_k\ast((D_\nu\1_\omega)\cdot(\phi_k\ast f)))-\psi_k\ast((X^\nu D_\nu\1_\omega)\cdot(\phi_k\ast f))
    \\
    =&\sum_{\nu=1}^N\sum_{k=0}^\infty\Big( X^\nu\cdot(D_\nu\psi_k\ast(\1_\omega\cdot(\phi_k\ast f))-D_\nu\psi_k\ast((X^\nu\1_\omega)\cdot(\phi_k\ast f))
    \\
    &
    \quad\qquad-\big(X^\nu\cdot(\psi_k\ast(\1_\omega\cdot(D_\nu\phi_k\ast f)))-\psi_k\ast((X^\nu\1_\omega)\cdot(D_\nu\phi_k\ast f))\big)-\psi_k\ast((D_\nu X^\nu)\cdot\1_\omega\cdot(\phi_k\ast f))\Big)
    \\
    =&\sum_{\nu=1}^N\sum_{k=0}^\infty\Big([X^\nu,D_\nu\psi_k\ast(-)]\big\{\1_\omega(\phi_k\ast f)\big\}-[X^\nu,\psi_k\ast(-)]\big\{\1_\omega(D_\nu\phi_k\ast f)\big\}+\psi_k\ast((D_\nu X^\nu)\1_\omega(\phi_k\ast f))\Big).
\end{align*}

Note that  $(\phi_j)_{j=0}^\infty$ satisfies conditions \ref{Item::Space::PhiScal}, \ref{Item::Space::PhiMomt} and \ref{Item::Space::PhiApprox}. By \cite[Proposition~1.2]{RychkovExtension} we have 
\begin{equation}\label{Eqn::LPThm::TangCommPf::Tmp1}
    \|\langle X,[d,E]\rangle f\|_{\Fs_{pp}^s(\R^N)}\approx\big\|\big(2^{js}\|\phi_j\ast(\langle X,[d,E]\rangle f)\|_{L^p(\R^N)}\big)_{j=0}^\infty\big\|_{\ell^p(\N)}.
\end{equation}
On the other hand, applying Lemma~\ref{Lem::LPThm::HeiEst} we get
\begin{equation}\label{Eqn::LPThm::TangCommPf::Tmp2}
    \big\|\phi_j\ast\big([X^\nu\cdot(-),D^\alpha\psi_k\ast (-)]\big\{\1_\omega(D^\beta\phi_k\ast f)\big\}\big)\big\|_{L^p(\R^N)}\lesssim_M 2^{j|\alpha|-M|j-k|-k}\|D^\beta\phi_k\ast f\|_{L^p(\omega)}.
\end{equation}
And by \cite[Lemma~2.1]{Bui} (see also \cite[Corollary~3.6]{ShiYaoExt}), we have
\begin{equation}\label{Eqn::LPThm::TangCommPf::Tmp3}
    \|\phi_j\ast\psi_k((D_\nu X^\nu)\1_\omega(\phi_k\ast f))\|_{L^p(\R^N)}\le\|\phi_j\ast\psi_k\|_{L^1}\|D_\nu X^\nu\|_{L^\infty}\|\phi_k\ast f\|_{L^p(\omega)}\lesssim_M 2^{-M|j-k|}\|\phi_k\ast f\|_{L^p(\omega)}.
\end{equation}

Plugging \eqref{Eqn::LPThm::TangCommPf::Tmp2} and \eqref{Eqn::LPThm::TangCommPf::Tmp3} to \eqref{Eqn::LPThm::TangCommPf::Tmp1} we get, for every $M>0$,
\begin{align*}
    &\|\langle X,[d,E]\rangle f\|_{\Fs_{pp}^s(\R^N)}
    \\
    \lesssim&_M\Big\|\Big(2^{js}\sum_{k=0}^\infty\big( 2^{-M|j-k|+k-k}\|\phi_k\ast f\|_{L^p(\omega)}+2^{-M|j-k|-k}\|D\phi_k\ast f\|_{L^p(\omega)}+2^{-M|j-k|}\|\phi_k\ast f\|_{L^p(\omega)}\big)\Big)_{j=0}^\infty\Big\|_{\ell^p(\N)}
    \\
    \lesssim&_M\Big\|\Big(\sum_{k=0}^\infty 2^{-(M-|s|)|j-k|}2^{ks}\|\phi_k\ast f\|_{L^p(\omega)}\Big)_{j=0}^\infty\Big\|_{\ell^p(\N)}+\Big\|\Big(\sum_{k=0}^\infty 2^{-(M-|s|)|j-k|}2^{k(s-1)}\|D\phi_k\ast f\|_{L^p(\omega)}\Big)_{j=0}^\infty\Big\|_{\ell^p(\N)}
    \\
    \le&\|(2^{-(M-|s|)|j|})_{j=-\infty}^\infty\|_{\ell^1}\Big(\big\|\big(2^{ks}\|\phi_k\ast f\|_{L^p(\omega)}\big)_{k=0}^\infty\big\|_{\ell^p}+\big\|\big(2^{k(s-1)}\|\phi_k\ast Df\|_{L^p(\omega)}\big)_{k=0}^\infty\big\|_{\ell^p}\Big).
\end{align*}
Here the last inequality above is obtained by Young's inequality on $\Z$. Since $M$ is arbitrary, taking $M>|s|$ we have $\|(2^{-(M-|s|)|j|})_{j=-\infty}^\infty\|_{\ell^1}<\infty$.

By Proposition~\ref{Prop::Space::RychFact} \ref{Item::Space::RychFact::EqvNorm} we have $\big\|\big(2^{ks}\|\phi_k\ast f\|_{L^p(\omega)}\big)_{k=0}^\infty\big\|_{\ell^p}\approx\|f\|_{\Fs_{pp}^s(\omega)}$ and $\big\|\big(2^{k(s-1)}\|\phi_k\ast Df\|_{L^p(\omega)}\big)_{k=0}^\infty\big\|_{\ell^p}\approx \|Df\|_{\Fs_{pp}^{s-1}(\omega)}\lesssim\|f\|_{\Fs_{pp}^s(\omega)}$. Therefore $\|\langle X,[d,E]\rangle f\|_{\Fs_{pp}^s(\R^N)}\lesssim\|f\|_{\Fs_{pp}^s(\omega)}$ and we complete the proof.
\end{proof}

\section{Proof of the Theorems}\label{Section::PfThm}
We now go back to the complex domain $\C^n$ and we assume $\Omega\subset\C^n$ to be a bounded smooth domain of finite type. We let $U_1$ be a fixed neighborhood of $b\Omega$ obtained from Lemma~\ref{Lem::Goal::Glue}, let $m_q$ be the $q$-type of $\Omega$ and $r_q:=(n-q+1)\cdot m_q+2q$, for $1\le q\le n$.

Recall the space $\widetilde \Fs_{p\infty}^s(\overline \Uc)$ in Definition~\ref{Defn::Space::TLSpace}. 
We recall that the Bochner–Martinelli kernels always gain 1 derivative here. 
\begin{lem}\label{Lem::PfThm::BMKernel}
    Let $\Uc\subset\C^n$ be a bounded set. Then the Bochner–Martinelli integral $\Bc_qg(z)=\int_\Uc B_{q-1}(z,\cdot)\wedge g$ has boundedness $\Bc_q:\widetilde\Fs_{pr}^s(\overline \Uc;\wedge^{0,q})\to\Fs_{pr}^{s+1}(\Uc;\wedge^{0,q-1})$ for all $0<p,r\le\infty$ and $s\in\R$ such that $(p,r)\notin\{\infty\}\times(0,\infty)$. 
    
\end{lem}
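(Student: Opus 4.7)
The plan is to express $\Bc_q$ as a first-order derivative of a Newton-potential convolution, reducing the lemma to a standard Fourier-multiplier statement on Triebel--Lizorkin spaces.

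First, I would unpack the kernel. From \eqref{Eqn::Goal::DefB}, the coefficients of $B_{q-1}(z,\zeta)$ are constant-coefficient $\C$-linear combinations of the scalar Bochner--Martinelli components $b_j(z-\zeta):=(\bar\zeta_j-\bar z_j)/|z-\zeta|^{2n}$ wedged with fixed products of $d\zeta_k$, $d\bar\zeta_k$ and $d\bar z_k$. For $n\ge 2$ we have $b_j=\frac{-1}{2(n-1)}\partial_{\bar z_j}|z-\zeta|^{-(2n-2)}$ (the case $n=1$ uses $1/(\zeta-z)=\partial_{\bar z}\log|z-\zeta|^2$). Writing $g=\sum_{|J|=q}g_J\,d\bar\zeta^J$, the hypothesis $g\in\widetilde\Fs_{pr}^s(\overline\Uc;\wedge^{0,q})$ means each $g_J$, extended by zero, lies in $\Fs_{pr}^s(\R^{2n})$ and is supported in $\overline\Uc$. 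Hence on $\Uc$ one can write
\begin{equation*}
\Bc_qg(z)=\sum_{j,J,K}c_{j,J,K}\,\partial_{\bar z_j}(N\ast g_J)(z)\,d\bar z^K,
\end{equation*}
where $N(w):=|w|^{-(2n-2)}$ (or $\log|w|^2$ when $n=1$) is a constant multiple of the fundamental solution of $-\Delta$ on $\R^{2n}$ and the convolution is taken on $\R^{2n}$.

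Next, I would localize the kernel. Since $\Uc$ is bounded, the set $\{z-\zeta:z,\zeta\in\overline\Uc\}$ has compact closure $K\subset\R^{2n}$. Pick $\varphi\in C_c^\infty(\R^{2n})$ with $\varphi\equiv 1$ on a neighborhood of $K$. Because $g_J$ is supported in $\overline\Uc$, the convolutions $N\ast g_J$ and $(\varphi N)\ast g_J$ coincide on $\Uc$. Since $\partial_{\bar z_j}\colon\Fs_{pr}^{s+2}(\R^{2n})\to\Fs_{pr}^{s+1}(\R^{2n})$ and the restriction $\Fs_{pr}^{s+1}(\R^{2n})\to\Fs_{pr}^{s+1}(\Uc)$ are both bounded (in the admissible range of $(p,r,s)$), it suffices to show that convolution with the compactly supported distribution $\varphi N$ is bounded $\Fs_{pr}^s(\R^{2n})\to\Fs_{pr}^{s+2}(\R^{2n})$ for every such triple.

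Finally, I would apply a Fourier-multiplier theorem. Since $\varphi N$ is compactly supported, its Fourier transform $m(\xi):=\widehat{\varphi N}(\xi)$ is real-analytic on $\R^{2n}$. Writing $m=\hat\varphi\ast\hat N$ with $\hat N(\xi)=c|\xi|^{-2}$ and exploiting that $\hat\varphi$ is Schwartz, standard integration-by-parts estimates yield the Mikhlin-type bounds $|D^\alpha m(\xi)|\lesssim_\alpha(1+|\xi|)^{-2-|\alpha|}$ for every multi-index $\alpha$; equivalently, $(1+|\xi|^2)m(\xi)\in S^0_{1,0}(\R^{2n})$. The Fourier-multiplier theorem for Triebel--Lizorkin spaces (see \cite[Theorem~2.3.7 and Section~2.3.8]{TriebelTheoryOfFunctionSpacesI}, together with the $p=\infty$ cases in \cite[Chapter~1]{TriebelTheoryOfFunctionSpacesIV}) then establishes convolution with $\varphi N$ as a lifting of order $2$ on $\Fs_{pr}^s(\R^{2n})$ for all $0<p,r\le\infty$ with $(p,r)\notin\{\infty\}\times(0,\infty)$ and all $s\in\R$, completing the reduction. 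The main obstacle is the uniform Mikhlin bound at the origin, where $\hat N$ is singular; this is regularized by the convolution with $\hat\varphi$ while the $|\xi|^{-2}$ decay at infinity is preserved. The endpoint $(p,r)=(\infty,\infty)$ reduces to the H\"older--Zygmund case $\Co^s=\Bs_{\infty\infty}^s$ and is covered by the same multiplier theorem.
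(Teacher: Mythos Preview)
Your proposal is correct and follows essentially the same approach as the paper: reduce $\Bc_q$ to first derivatives of the Newton potential convolution, then invoke the H\"ormander--Mikhlin multiplier theorem on $\Fs_{pr}^s$ (Triebel's Theorems~2.3.7--2.3.8). The only technical difference is where the localization happens: the paper splits $G=G_0+G_\infty$ on the \emph{frequency} side (so that $\hat G_\infty(\xi)=(1-\hat\chi(\xi))|\xi|^{-2}$ is manifestly in $S^{-2}$ and $G_0\in C^\infty_\loc$ handles the rest trivially on compactly supported inputs), whereas you cut off $N$ on the \emph{spatial} side and then verify $\widehat{\varphi N}\in S^{-2}$ via integration by parts---both are standard and equivalent in effort.
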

The boundedness $\Bc_q:\widetilde\Fs_{\infty q}^s\to\Fs_{\infty q}^{s+1}$ is also true by using \cite[Theorem~1.22]{TriebelTheoryOfFunctionSpacesIV}.
\begin{proof}
The proof is standard. One can see that $B_{q-1}(z,\zeta)$ is simply the linear combinations of the derivatives of the Newtonian potential $G(z-\zeta):=-\frac{(n-2)!}{4\pi^n}|z-\zeta|^{2-2n}$. We need to prove $[f\mapsto G\ast f]:\widetilde\Fs_{pr}^s(\overline\Uc)\to\Fs_{pr}^{s+2}(\Uc)$ for all $0<p,r\le\infty$ and $s\in\R$ such that $(p,r)\notin\{\infty\}\times(0,\infty)$. 

Indeed let $\chi\in\Sc(\R^{2n})$ be such that its Fourier transform has compact support. We define $G_0:=\chi\ast G$ and $G_\infty=G-G_0$. We see that the Fourier transform $\hat G_\infty(\xi)=(1-\hat\chi(\xi))|\xi|^{-2}$ ($\xi\in\R^{2n}$) is a bounded smooth function. Therefore by H\"ormander-Mikhlin multiplier theorem (see \cite[Theorem~2.3.7]{TriebelTheoryOfFunctionSpacesI}), we have $[f\mapsto (I-\Delta)G_\infty\ast f]:\Fs_{pr}^s(\R^{2n})\to\Fs_{pr}^s(\R^{2n})$. Note that $(I-\Delta)^{-2}:\Fs_{pr}^s(\R^{2n})\to\Fs_{pr}^{s+2}(\R^{2n})$ is bounded (see \cite[Theorem~2.3.8]{TriebelTheoryOfFunctionSpacesI}). Therefore $[f\mapsto G_\infty\ast f]:\widetilde\Fs_{pr}^s(\overline\Uc)\to\Fs_{pr}^{s+2}(\Uc)$ is bounded as well. 

On the other hand the Fourier support $\supp\hat G_0\subseteq\supp\hat\chi$ is compact, we see that $G_0\in C^\infty_\loc(\R^{2n})$. Since $\Uc$ is a bounded set, we have $[f\mapsto G_0\ast f]:\{f\in\Ss'(\R^{2n}):\supp f\subseteq\overline{\Uc}\}\to C^\infty_\loc(\R^{2n})$, in particular $[f\mapsto G_0\ast f]:\widetilde\Fs_{pr}^s(\overline\Uc)\to\Fs_{pr}^{s+2}(\Uc)$.

Now $[f\mapsto G\ast f]:\widetilde\Fs_{pr}^s(\overline\Uc)\to\Fs_{pr}^{s+2}(\Uc)$ is bounded. The boundedness of $\Bc_q:\Fs_{pr}^s\to\Fs_{pr}^{s+1}$ follows from the fact that $\nabla:\Fs_{pr}^{s+2}(\R^{2n})\to\Fs_{pr}^{s+1}(\R^{2n};\C^{2n})$ (see \cite[Theorem~2.3.8]{TriebelTheoryOfFunctionSpacesI}).
%
\end{proof}

\begin{proof}[Proof of Theorems \ref{MainThm} and \ref{StrMainThm}]
We prove the definedness of $\Hc_q$ on $\Ss'(\Omega;\wedge^{0,q})$ and the homotopy formula $f=~\dbar\Hc_q f+\Hc_{q+1}\dbar f$ for $f\in\Ss'$ after the proof of boundedness of $\Hc_q$ on Triebel-Lizorkin spaces.

Recall the Rychkov's extension operator $\Ec=\Ec_\Omega$ in Definition~\ref{Defn::Space::ExtOp} and the anti-derivative operators $\Sc^{k,\alpha}=~\Sc^{k,\alpha}_\Omega$ in Proposition~\ref{Prop::Space::ATDOp} (see also Remark~\ref{Rmk::Space::ATDOp}). For $k\ge0$, we define $\Hc_q^k:=\Bc_q\circ\Ec +\Hc_q^{k,\top}+\Hc_q^{k,\bot}$ by the following, where $\Bc_q$ is in Lemma~\ref{Lem::PfThm::BMKernel} and $\Kc_{q,\alpha}^\top,\Kc_{q,\alpha}^\bot$ are in Corollary~\ref{Cor::PfThm::WeiSobEst},
\begin{align*}
    \Hc_q^{k,\top}f(z)&:=\sum_{|\alpha|\le k}(-1)^{|\alpha|}\Kc_{q,\alpha}^\top\circ\Sc^{k,\alpha} [\dbar,\Ec] f=\sum_{|\alpha|\le k}(-1)^{|\alpha|}\int_{U_1\backslash\overline\Omega}(D^\alpha_\zeta (K_{q-1}^\top))(z,\cdot)\wedge\Sc^{k,\alpha} [\dbar,\Ec] f;
    \\
    \Hc_q^{k,\bot}f(z)&:=\sum_{|\alpha|\le k}(-1)^{|\alpha|}\Kc_{q,\alpha}^\bot\circ\Sc^{k,\alpha} [\dbar,\Ec]^\top f=\sum_{|\alpha|\le k}(-1)^{|\alpha|}\int_{U_1\backslash\overline\Omega}(D^\alpha_\zeta (K_{q-1}^\bot))(z,\cdot)\wedge\Sc^{k,\alpha} [\dbar,\Ec]^\top f.
\end{align*}
Clearly $\Hc_q^0=\Hc_q$. We are going to prove $\Hc_q^{k,(\top,\bot)}=\Hc_q^{0,(\top,\bot)}$ for all $k\ge0$, in particular $\Hc_q^k=\Hc_q$ holds.

\medskip
Applying  Lemma~\ref{Lem::PfThm::BMKernel}, for every $1\le p,r\le\infty$ and $s\in\R$ such that $(p,r)\notin\{\infty\}\times[1,\infty)$, we have
\begin{equation}\label{Eqn::PfThm::BddB}
    \Bc_q\circ\Ec:\ \Fs_{pr}^s(\Omega;\wedge^{0,q})\xrightarrow{\Ec}\widetilde\Fs_{pr}^s(\overline{U_1\cup\Omega};\wedge^{0,q})\xrightarrow{\Bc_q}\Fs_{pr}^{s+1}(U_1\cup\Omega;\wedge^{0,q-1})\xrightarrow{(-)|_\Omega}\Fs_{pr}^{s+1}(\Omega;\wedge^{0,q-1}).
\end{equation}
When $q=n$ we have $\Hc_n=\Bc_n\circ\Ec$ since $K_n(z,\zeta)\equiv0$. Therefore by Remark~\ref{Rmk::Space::TLRmk} \ref{Item::Space::TLRmk::SobHold} we get the boundedness $\Hc_n:H^{s,p}(\Omega;\wedge^{0,n})\to H^{s+1,p}(\Omega;\wedge^{0,n-1})$ and $\Hc_n:\Co^s(\Omega;\wedge^{0,n})\to \Co^{s+1}(\Omega;\wedge^{0,n-1})$ for all $1< p<\infty$ and $s\in\R$, which is Theorem~\ref{MainThm} \ref{Item::MainThm::Sob} at $q=n$. 

We have Sobolev embedding $H^{s+1,p}(\C^n)\hookrightarrow H^{s,\frac{2np}{p-2n}}(\C^n)$ for $1< p< 2n$, see \cite[Corollary~2.7]{TriebelTheoryOfFunctionSpacesIV}. Taking restrictions to $\Omega$ we have $\Hc_n:H^{s,p}(\Omega;\wedge^{0,n})\to H^{s,\frac{2np}{p-2n}}(\Omega;\wedge^{0,n-1})$. Since $r_n=1+2n>2n$, the embedding $ H^{s,\frac{2np}{p-2n}}(\Omega;\wedge^{0,n-1})\hookrightarrow  H^{s,\frac{pr_n}{p-r_n}}(\Omega;\wedge^{0,n-1})$ is immediately. This proves  Theorem~\ref{MainThm} \ref{Item::MainThm::Lp} at $q=n$.

\medskip
Now we assume $q\le n-1$ below.

Applying Lemma~\ref{Lem::PfThm::BMKernel}, Remarks \ref{Rmk::Space::ExtBdd} and \ref{Rmk::Space::TLRmk} \ref{Item::Space::TLRmk::Embed} to \eqref{Eqn::PfThm::BddB} we see that
\begin{align*}
    \Bc_q\circ\Ec: \Fs_{p\infty}^s(\Omega;\wedge^{0,q})&\to \widetilde\Fs_{p\infty}^s(\overline{U_1\cup\Omega};\wedge^{0,q})\to \Fs_{p\infty}^{s+1}(\Omega;\wedge^{0,q-1})
    \\
    &\hookrightarrow\begin{cases}\Fs_{p\eps}^{s+\frac1{m_q}}(\Omega;\wedge^{0,q-1})&\text{for all }1\le p\le\infty
    \\
    \Fs_{\frac{pr_q}{r_q-p},\eps}^s(\Omega;\wedge^{0,q-1})&\text{when }1\le p\le r_q
    \end{cases}.
\end{align*}

Note that if we write $f=\sum_{|I|=q}f_Id\overline\zeta^I$, then $[\dbar,\Ec]^\top f$ is the linear combinations of $[\dbar,\Ec]^\top f_I$. Therefore by Corollary~\ref{Cor::LPThm::BddDom} \ref{Item::LPThm::BddDom::TangComm} we have $[\dbar,\Ec]^\top :\Fs_{p\infty}^s(\Omega;\wedge^{0,q})\to\widetilde\Fs_{p\infty}^{s-1/{m_q}}(\overline{U_1\backslash\Omega};\wedge^{0,q+1})$.

For every $s>1-k$ and integer $l>\max(0,s+1)$, applying Remarks \ref{Rmk::Space::ExtBdd} and \ref{Rmk::Space::ATDOp}, Corollaries \ref{Cor::LPThm::BddDom} and \ref{Cor::PfThm::WeiSobEst}, we have, for every $1\le p\le\infty$ and $\eps>0$,
\begin{align*}
    \Hc_q^{k,\top}: &\ \Fs_{p\infty}^s(\Omega;\wedge^{0,q})\xrightarrow{[\dbar,\Ec]}\widetilde\Fs_{p\infty}^{s-1}(\overline{U_1\backslash\Omega};\wedge^{0,q+1})
    \\
    &\qquad\xrightarrow{\Sc^{k,\alpha}}\widetilde\Fs_{p\infty}^{s-1+k}(\overline{U_1\backslash\Omega};\wedge^{0,q+1})\xhookrightarrow{s>1-k} L^p({U_1}\backslash\overline\Omega,\dist^{1-k-s};\wedge^{0,q+1})
    \\
    &\qquad\xrightarrow{\Kc_{q,\alpha}^\top}\begin{cases}
    W^{l,p}(\Omega,\dist^{l-\frac1{m_q}-s};\wedge^{0,q-1})\xhookrightarrow{l>s+\frac1{m_q}}\Fs_{p\eps}^{s+\frac1{m_q}}(\Omega,\wedge^{0,q-1})&\text{for all }1\le p\le\infty
    \\
    W^{l,\frac{pr_q}{r_q-p}}(\Omega,\dist^{l-s};\wedge^{0,q-1})\xhookrightarrow{l>s}\Fs_{\frac{pr_q}{r_q-p},\eps}^{s}(\Omega,\wedge^{0,q-1})&\text{when }1\le p\le r_q
    \end{cases};
    \\
    \Hc_q^{k,\bot}: &\ \Fs_{p\infty}^s(\Omega;\wedge^{0,q})\hookrightarrow\Fs_{pp}^{s-\frac1{m_q}}(\Omega;\wedge^{0,q})\xrightarrow{[\dbar,\Ec]^\top}\widetilde\Fs_{pp}^{s-\frac1{m_q}}(\overline{U_1\backslash\Omega};\wedge^{0,q+1})\hookrightarrow\widetilde\Fs_{p\infty}^{s-\frac1{m_q}}(\overline{U_1\backslash\Omega};\wedge^{0,q+1})
    \\
    &\qquad\xrightarrow{\Sc^{k,\alpha}}\widetilde\Fs_{p\infty}^{s+k-\frac1{m_q}}(\overline{U_1\backslash\Omega};\wedge^{0,q+1})\xhookrightarrow{s>\frac1{m_q}-k} L^p({U_1}\backslash\overline\Omega,\dist^{\frac1{m_q}-k-s};\wedge^{0,q+1})
    \\
    &
    \qquad\xrightarrow{\Kc_{q,\alpha}^\bot}\begin{cases}
    W^{l,p}(\Omega,\dist^{l-\frac1{m_q}-s};\wedge^{0,q-1})\xhookrightarrow{l>s+\frac1{m_q}}\Fs_{p\eps}^{s+\frac1{m_q}}(\Omega,\wedge^{0,q-1})&\text{for all }1\le p\le\infty
    \\
    W^{l,\frac{pr_q}{r_q-p}}(\Omega,\dist^{l-s};\wedge^{0,q-1})\xhookrightarrow{l>s}\Fs_{\frac{pr_q}{r_q-p},\eps}^{s}(\Omega,\wedge^{0,q-1})&\text{when }1\le p\le r_q
    \end{cases}.
\end{align*}
In particular we see that $\Hc_q^{k,\top},\Hc_q^{k,\bot}$ are both defined on $\bigcup_{s>1-k;1\le p\le\infty}\Fs_{p\infty}^s(\Omega;\wedge^{0,q})$.
This completes the proof of Theorem~\ref{StrMainThm} once we show $\Hc_q=\Hc_q^k$ for all $k$. 

Taking integration by parts, for $f\in\Co^\infty(\Omega;\wedge^{0,q})$,
\begin{equation}\label{Eqn::PfThm::Tk=T0}
    \Hc_q^{k,\top}f(z)=\sum_{|\alpha|\le k}\int_{{U_1}\backslash\overline\Omega} K_{q-1}^\top(z,\cdot)\wedge D^\alpha\Sc^{k,\alpha} [\dbar,\Ec] f=\Hc_q^{0,\top}f(z).
\end{equation}
There is no boundary term because of Proposition~\ref{Prop::Space::ATDOp} \ref{Item::Space::ATDOp::Supp}.

The same way we get $\Hc_q^{k,\bot}f=\Hc_q^{0,\bot}f$. Therefore $\Hc_q^kf=\Hc_q^0f(=\Hc_qf)$ for all $f\in\Co^\infty(\Omega;\wedge^{0,q})$. 

For $s>1-k$, $1\le p\le\infty$ and $f\in\Fs_{p\infty}^s(\Omega;\wedge^{0,q})$, we can find an smooth sequence $\{f_j\}_{j=0}^\infty\subset\Co^\infty(\Omega;\wedge^{0,q})$ such that $f_j\xrightarrow{\Fs_{p\infty}^{s'}}f$ for some $s'\in(1-k,s)$. Therefore $\lim_{j\to\infty}\Hc_qf_j=\lim_{j\to\infty}\Hc_q^kf_j=\Hc_q^kf$ give the definedness of $\Hc_qf$, and we see that $\Hc_q=\Hc_q^k$ for all $k\ge0$. This completes the proof of Theorem~\ref{StrMainThm}.

Moreover for such $(f_j)\subset\Co^\infty$, by Lemma~\ref{Lem::Goal::HomotopyFormula} we see that $f_j=\dbar\Hc_q^{k+1} f_j+\Hc_{q+1}^{k+1}\dbar f_j=\dbar\Hc_q f_j+\Hc_{q+1}\dbar f_j$. Therefore taking the limit we see that $f=\dbar\Hc_q f+\Hc_{q+1}\dbar f$ holds as well. 

By Lemma~\ref{Lem::Space::SsUnion}, $\Hc_q$ is defined on $\bigcup_{s}\Co^s(\Omega;\wedge^{0,q})=\Ss'(\Omega;\wedge^{0,q})$. Therefore $f=\dbar\Hc_q f+\Hc_{q+1}\dbar f$ holds for all $f\in\Ss'(\Omega;\wedge^{0,q})$. Now we prove Theorem~\ref{MainThm} \ref{Item::MainThm::Homo}.

Theorem~\ref{MainThm} \ref{Item::MainThm::Sob} and \ref{Item::MainThm::Lp} follow from the inclusions $\Fs_{p1}^s(\Omega)\hookrightarrow H^{s,p}(\Omega)=\Fs_{p2}^s(\Omega)\hookrightarrow\Fs_{p\infty}^s(\Omega)$ and $\Fs_{\infty1}^s(\Omega)\hookrightarrow \Co^s(\Omega)=\Fs_{\infty\infty}^s(\Omega)$, as discussed in Remark~\ref{Rmk::Space::TLRmk} \ref{Item::Space::TLRmk::Embed} and \ref{Item::Space::TLRmk::SobHold}. 
\end{proof}

\section{An Additional Result for Smooth Strongly Pseudoconvex Domains}\label{Section::SPsiCX}

To summarize the technics from Sections \ref{Section::Basis} and \ref{Section::Space}. We see that the correspondence results of Theorem~\ref{MainThm} for bounded smooth strongly pseudoconvex domains also hold.

\begin{thm}\label{Thm::SPsiCX}
    Let $\Omega\subset\C^n$ be a bounded smooth strongly pseudoconvex domain. There are operators $\Hc_q:\Ss'(\Omega;\wedge^{0,q})\to\Ss'(\Omega;\wedge^{0,q-1})$ for $1\le q\le n$, such that $f=\dbar\Hc_q f+\Hc_{q+1}\dbar f$ for all $f\in\Ss'(\Omega;\wedge^{0,q})$ (we set $\Hc_{n+1}=0$). 
    
    Moreover for every $1\le q\le n$, $s\in\R$ and $\eps>0$, $\Hc_q$ has the following boundedness:
    \begin{align}
    \label{Eqn::SPsiCX::Thm::T+}
        &\Hc_q:\Fs_{p,\infty}^s(\Omega;\wedge^{0,q})\to\Fs_{p,\eps}^{s+\frac12}(\Omega;\wedge^{0,q-1}),&\forall\  1\le p\le\infty; 
    \\
    \label{Eqn::SPsiCX::Thm::T0}
        &\Hc_q:\Fs_{p,\infty}^s(\Omega;\wedge^{0,q})\to\Fs_{\frac{(2n+2)p}{2n+2-p},\eps}^s(\Omega;\wedge^{0,q-1}),&\forall\  1\le p\le 2n+2.
    \end{align}

In particular for every $1\le q\le n$ and $s\in\R$, we have boundedness $\Hc_q:\Co^s(\Omega;\wedge^{0,q})\to \Co^{s+\frac12}(\Omega;\wedge^{0,q-1})$, $\Hc_q:H^{s,p}(\Omega;\wedge^{0,q})\to H^{s+\frac12,p}(\Omega;\wedge^{0,q-1})$ for all $1< p<\infty$, and $\Hc_q:H^{s,p}(\Omega;\wedge^{0,q})\to H^{s,\frac{(2n+2)p}{2n+2-p}}(\Omega;\wedge^{0,q-1})$ for all $1< p<2n+2$.
\end{thm}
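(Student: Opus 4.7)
The plan is to run verbatim the argument of Theorem~\ref{StrMainThm} after replacing the Diederich-Forn\ae ss support function with the classical Levi polynomial. Concretely, on the bounded smooth strongly pseudoconvex $\Omega$ I would start from the Levi polynomial
\[
S(z,\zeta) := 2\textstyle\sum_{j}\partial_{\zeta_j}\varrho(\zeta)(\zeta_j - z_j) - \sum_{j,k}\partial^2_{\zeta_j\zeta_k}\varrho(\zeta)(\zeta_j - z_j)(\zeta_k - z_k),
\]
apply the gluing procedure of Lemma~\ref{Lem::Goal::Glue} (which never uses finite type) to obtain a smooth $\widehat S(z,\zeta)$ on $\Omega\times U_1$ that is holomorphic in $z$ and bounded below away from the diagonal, and then build the associated Leray map $\widehat Q$ and kernels $B,K$ via \eqref{Eqn::Goal::HatQ}-\eqref{Eqn::Goal::DefK}. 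The operator $\Hc_q$ is defined by \eqref{Eqn::Intro::CommSolOp} using Rychkov's extension $\Ec=\Ec_\Omega$. As flagged in Remark~\ref{Rmk::Basis::EstQ2::SPsiCX}, there is no need to decompose $K_{q-1}$ into tangential and normal parts here; one works directly with the full kernel $K_{q-1}$.

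The key step is to prove the strongly pseudoconvex analogue of Theorem~\ref{Thm::WeiEst}: for $k\ge 2$, $0<s<k-\tfrac{3}{2}$, and $\gamma := \tfrac{2n+2}{2n+1}$,
\begin{align*}
\int_{U_1\backslash\overline\Omega}\dist(\zeta)^s|D^k_{z,\zeta}K_{q-1}(z,\zeta)|\,d\Vol(\zeta) &\lesssim \dist(z)^{s+\frac{3}{2}-k},\\
\int_{U_1\backslash\overline\Omega}|\dist(\zeta)^s D^k_{z,\zeta}K_{q-1}(z,\zeta)|^\gamma\,d\Vol(\zeta) &\lesssim \dist(z)^{(s+1-k)\gamma},
\end{align*}
together with the symmetric estimates obtained by swapping $z$ and $\zeta$. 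The proof follows Section~\ref{Section::Basis} almost verbatim, but is in fact simpler: on every $\eps$-minimal ellipsoid $P_\eps(\zeta)$ for a strongly pseudoconvex $\Omega$ one has $\tau_1(\zeta,\eps)\approx\eps$ and $\tau_j(\zeta,\eps)\approx\eps^{1/2}$ for $j\ge2$, corresponding to $m_q=2$ and $r_q=2n+2$. Feeding these values into Lemma~\ref{Lem::Basis::FinalBasisEst} (using only the weaker normal-direction bound \eqref{Eqn::Basis::FinalBasisEst::Bot}, which is enough), then running the shell computations \eqref{Eqn::Basis::PfIntEst::Pf+}-\eqref{Eqn::Basis::PfIntEst::Pf0} and the dyadic summation \eqref{Eqn::Basis::PfWeiEst::Pf+}-\eqref{Eqn::Basis::PfWeiEst::Pf0} with $m_q=2$ produces the displayed bounds.

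With the kernel estimate in hand, the remainder is a transcription of Section~\ref{Section::PfThm}. Schur's test (Lemma~\ref{Lem::PfThm::Schur}) gives the strongly pseudoconvex analogue of Corollary~\ref{Cor::PfThm::WeiSobEst}, namely the weighted Sobolev boundedness of $\Kc_{q,\alpha}g(z):=\int D_\zeta^\alpha K_{q-1}(z,\cdot)\wedge g$. For an integer $k$ chosen sufficiently large, $\Hc_q$ is rewritten via integration by parts as $\Bc_q\circ\Ec + \sum_{|\alpha|\le k}(-1)^{|\alpha|}\Kc_{q,\alpha}\circ\Sc^{k,\alpha}\circ[\dbar,\Ec]$, and each piece is controlled by the same chain as in the proof of Theorem~\ref{StrMainThm}: the commutator $[\dbar,\Ec]:\Fs_{p\infty}^s(\Omega)\to\widetilde\Fs_{p\infty}^{s-1}(\overline{U_1\backslash\Omega})$ follows directly from Remark~\ref{Rmk::Space::ExtBdd} (no tangential commutator estimate is needed); the anti-derivative $\Sc^{k,\alpha}$ from Proposition~\ref{Prop::Space::ATDOp} gains $k$ derivatives; Proposition~\ref{Prop::LPThm::HLLem}\ref{Item::LPThm::HLLem::FtoW} converts the Triebel-Lizorkin norm into a weighted $L^p$ norm; $\Kc_{q,\alpha}$ is applied; and Proposition~\ref{Prop::LPThm::HLLem}\ref{Item::LPThm::HLLem::WtoF} transports the weighted Sobolev output back to $\Fs^{s+1/2}_{p,\eps}(\Omega)$ or $\Fs^{s}_{(2n+2)p/(2n+2-p),\eps}(\Omega)$. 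The Bochner-Martinelli piece is handled by Lemma~\ref{Lem::PfThm::BMKernel}. For $q=n$ the kernel $K_{n-1}$ vanishes, so $\Hc_n=\Bc_n\circ\Ec$ gains a full derivative, which yields both \eqref{Eqn::SPsiCX::Thm::T+} and, via the usual Sobolev embedding $H^{1,p}\hookrightarrow L^{2np/(2n-p)}\hookrightarrow L^{(2n+2)p/(2n+2-p)}$ on the bounded $\Omega$, also \eqref{Eqn::SPsiCX::Thm::T0}. Definedness on $\Ss'(\Omega;\wedge^{0,q})$ and the distributional homotopy formula follow by the same approximation argument, using Lemma~\ref{Lem::Space::SsUnion}.

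The main obstacle is minor and purely bookkeeping: one must verify the strongly pseudoconvex versions of Lemma~\ref{Lem::Basis::EstSQ}, namely $|\widehat S(z,\zeta)|\gtrsim \eps$ off $P_\eps(\zeta)$, $|\widehat Q_j|\lesssim \eps/\tau_j$, and $|\partial_{\overline\zeta_k}\widehat Q_j|\lesssim\eps/(\tau_j\tau_k)$, relative to $\eps$-minimal bases rather than merely Euclidean balls. This is classical in the strongly pseudoconvex theory (see \cite{RangeSCVBook,LiebMichelBook}) and is underwritten by the strict positivity of the Levi form together with the elementary estimate $\re S(z,\zeta)\le \max(0,\varrho(z)-\varrho(\zeta))-c|z-\zeta|^2$, i.e.\ the $m=2$ instance of \eqref{Eqn::Goal::SBdd}. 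Once this is in place, everything else goes through without modification, and the absence of the $\top/\bot$ decomposition makes the strongly pseudoconvex case strictly easier than the finite type convex case.
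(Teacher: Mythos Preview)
Your proposal is correct and follows essentially the same route as the paper's Section~\ref{Section::SPsiCX}. The only notable difference is one of economy: you flag as the ``main obstacle'' the verification of the $\widehat Q$-estimates $|\widehat Q_j|\lesssim\eps/\tau_j$ and $|\partial_{\overline\zeta_k}\widehat Q_j|\lesssim\eps/(\tau_j\tau_k)$ relative to $\eps$-minimal bases, whereas the paper observes (as anticipated in Remark~\ref{Rmk::Basis::EstQ2::SPsiCX}) that these are unnecessary in the strongly pseudoconvex case. Since $\tau_j\approx\eps^{1/2}$ for $j\ge2$, the right-hand sides of \eqref{Eqn::Basis::EstQ2::Top} and \eqref{Eqn::Basis::EstQ2::Bot} are bounded below, so the trivial bound $|\widehat Q\wedge(\dbar\widehat Q)^k|\lesssim 1$ already suffices to obtain \eqref{Eqn::SPsiCX::EstDK}. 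The paper accordingly bypasses Lemma~\ref{Lem::Basis::EstSQ}~\ref{Item::Basis::EstSQ::Q} entirely, defines an ad hoc $P_\eps(\zeta)=\{z\in B(\zeta,\eps^{1/2}):|\varrho(z)-\varrho(\zeta)|<\tfrac2c\eps\}$ rather than invoking the $\eps$-minimal basis machinery, and quotes the Henkin--Ram\'irez support function (Proposition~\ref{Prop::SPsiCX::LeviMap}) directly instead of gluing the Levi polynomial via Lemma~\ref{Lem::Goal::Glue}. Your approach works too and is conceptually identical; it just carries slightly more baggage from Section~\ref{Section::Basis} than is needed.
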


\begin{rem}
    Theorem \ref{Thm::SPsiCX} improves the result in \cite[Theorem~1.2]{ShiYaoCk}, which proves
$\Hc_q:H^{s,p}(\Omega;\wedge^{0,q})\to H^{s+\frac12,p}(\Omega;\wedge^{0,q-1})$ for all $s\in\R$ and  $1< p<\infty$. For negative $s$ the boundedness $\Hc_q:H^{s,p}\to H^{s,\frac{(2n+2)p}{2n+2-p}}$ is new. Recall that these two results are not comparable since the Sobolev embedding $ H^{s+\frac12,p}\hookrightarrow H^{s,\frac{4np}{4n-p}}$ is not contained in $ H^{s,\frac{(2n+2)p}{2n+2-p}}$. 

By keeping check of the proof using regularized distance functions, one could show that the results for non-smooth domains are true: if $k\ge0$ is an integer and $b\Omega\in C^{k+2}$, then $\Hc_q:H^{s,p}\to H^{s,\frac{(2n+2)p}{2n+2-p}}$ is still true for all $1< p<2n+2$ and $s>\frac1p-k$. We refer the reader to \cite{ShiYaoC2,ShiYaoCk}.
\end{rem}

To prove Theorem~\ref{Thm::SPsiCX} we repeat the arguments in Section~\ref{Section::Basis}. Note that for $1\le q\le n-1$, the (D'Angelo or Catlin) $q$-type of $\Omega$ is always $2$. We shall see later in the proof that the $\top$ and $\bot$ projections are not needed for the estimates.

Recall that we can choose a smooth defining function for $\Omega$ such that it is plurisubharmonic in a neighborhood of $b\Omega$, see for example \cite[Theorem~3.4.4]{ChenShawBook}. In particular there is a $T_0>0$ such that $\Omega_t:=\{\varrho<t\}$ is smooth strongly pseudoconvex for all $-T_0<t<T_0$.

We recall the standard Henkin-Ram\'irez function for strongly pseudoconvex domains: 
\begin{prop}\label{Prop::SPsiCX::LeviMap}
Let $\Omega\subset\C^n$ be a smooth strongly pseudoconvex domain. There are a number $T_1\in (0,T_0]$ associated with a neighborhood $U_1:=\{|\varrho|<T_1\}$ of $b\Omega$, a $c\in(0,\frac12T_1)$, and a map $\widehat Q\in\Co^\infty(\Omega\times U_1;\C^n)$ which is holomorphic in $z$, such that the associated support function $\widehat S(z,\zeta):=\widehat Q(z,\zeta)\cdot(z-\zeta)$ satisfies:
\begin{align}
\label{Eqn::SPsiCX::SBdd}
    &|\widehat S(z,\zeta)|\ge\varrho(\zeta)-\varrho(z)+c|z-\zeta|^2,&\forall z\in\Omega,\ \zeta\in U_1\backslash\overline\Omega\text{ such that }|z-\zeta|\le c;
    \\
    &|\widehat S(z,\zeta)|\ge c^3,&\forall z\in\Omega,\ \zeta\in U_1\backslash\overline\Omega\text{ such that }|z-\zeta|\ge c.
\end{align}
\end{prop}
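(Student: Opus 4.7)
The plan is to follow the classical Henkin-Ram\'irez construction, paying attention to the fact that $\widehat{S}$ must be defined on $\Omega\times U_1$ where $U_1$ is a two-sided neighborhood of $b\Omega$. The starting point is the Levi polynomial
\[
F(z,\zeta):=-2\sum_{j=1}^n\frac{\partial\varrho}{\partial\zeta_j}(\zeta)(z_j-\zeta_j)-\sum_{j,k=1}^n\frac{\partial^2\varrho}{\partial\zeta_j\partial\zeta_k}(\zeta)(z_j-\zeta_j)(z_k-\zeta_k),
\]
which is a quadratic polynomial in $z$ (hence holomorphic in $z$) and smooth in $\zeta$. Since $\varrho$ is strictly plurisubharmonic in a neighborhood of $b\Omega$ (using the plurisubharmonic defining function), a second-order Taylor expansion of $\varrho$ at $\zeta$ yields constants $c_0,\delta_0>0$ such that
\[
\re F(z,\zeta)\ge \varrho(\zeta)-\varrho(z)+\tfrac1{c_0}|z-\zeta|^2,\qquad \zeta\in\{|\varrho|<\delta_0\},\ |z-\zeta|<\delta_0,
\]
so that $-F$ is a \emph{local} holomorphic support function near the diagonal.

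Next, I would globalize $F$ by cutting off and solving a $\dbar$-equation. Pick $\chi\in C_c^\infty([0,\delta_0^2/4))$ equal to $1$ on $[0,\delta_0^2/16]$ and set
\[
\tilde F(z,\zeta):=\chi(|z-\zeta|^2)F(z,\zeta)-C_0\bigl(1-\chi(|z-\zeta|^2)\bigr),
\]
where $C_0>0$ is large enough that $|\tilde F(z,\zeta)|$ is uniformly bounded away from $0$ on the set $\{\chi(|z-\zeta|^2)<1\}$. Then $\tilde F\in \Co^\infty(\C^n\times U_1;\C)$ agrees with $F$ near the diagonal and is uniformly nonvanishing. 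The form $\alpha(z,\zeta):=\dbar_z\log(-\tilde F(z,\zeta))$ (using the principal branch, after shrinking $\delta_0$ if needed) is then a smooth $(0,1)$-form on $\Omega_{\delta_0}\times U_1$ that is $\dbar_z$-closed in $z$ and has $z$-support in the transition annulus of $\chi$. Solving with parameters---for example, fixing a slightly larger smooth strongly pseudoconvex neighborhood $\tilde\Omega\Supset\overline\Omega$ and applying Kohn's regularity theorem for the $\dbar$-Neumann problem on $\tilde\Omega$---produces $u\in\Co^\infty(\tilde\Omega\times U_1;\C)$ with $\dbar_zu(\cdot,\zeta)=\alpha(\cdot,\zeta)$ for each $\zeta$. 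Setting
\[
\widehat S(z,\zeta):=-\tilde F(z,\zeta)\exp(-u(z,\zeta))
\]
yields a smooth function on $\Omega\times U_1$ that is holomorphic in $z$ and satisfies $|\widehat S(z,\zeta)|\approx|\tilde F(z,\zeta)|$ uniformly. Choosing $c\in(0,\tfrac12T_1)$ sufficiently small, the local estimate on $F$ transfers to $|\widehat S(z,\zeta)|\ge\varrho(\zeta)-\varrho(z)+\tfrac1c|z-\zeta|^2$ for $|z-\zeta|\le c$, while for $|z-\zeta|\ge c$ the uniform positivity of $|\tilde F|$ (combined with the bound on $e^{-u}$) gives $|\widehat S|\ge c$ after enlarging $c$ if necessary.

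Finally, to produce $\widehat Q$, I would apply Hefer's decomposition exactly as in \eqref{Eqn::Goal::HatQ}: since $\widehat S$ is holomorphic in $z$ with $\widehat S(\zeta,\zeta)=0$, define
\[
\widehat Q_j(z,\zeta):=\int_0^1\frac{\partial\widehat S}{\partial z_j}(\zeta+t(z-\zeta),\zeta)\,dt,\qquad 1\le j\le n,
\]
so that $\widehat Q\in\Co^\infty(\Omega\times U_1;\C^n)$ is holomorphic in $z$ and $\widehat S(z,\zeta)=\sum_j\widehat Q_j(z,\zeta)(z_j-\zeta_j)$ by the fundamental theorem of calculus. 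The main technical issue is the smooth parameter dependence of $u(z,\zeta)$ on $\zeta$ together with its uniform boundedness on $\Omega\times U_1$; this is standard via the subelliptic estimates for the $\dbar$-Neumann operator on the strongly pseudoconvex domain $\tilde\Omega$, but one can alternatively invoke \cite[Theorem~2.4.3]{HenkinLeitererBook} directly, where precisely this globalized support function is constructed.
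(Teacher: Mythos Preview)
The paper does not give its own proof of this proposition; it simply cites \cite[Theorem~III.7.15]{LiebMichelBook}, \cite[Theorem~2.4.3]{HenkinLeitererBook}, and \cite[Proposition~5.1]{GongHolderSPsiCXC2}. Your sketch is exactly the classical Henkin--Ram\'irez construction carried out in those references (and is in fact more detailed than what the paper provides), so there is nothing to compare at the level of strategy.

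One small point worth tightening: for the Hefer integral $\widehat Q_j(z,\zeta)=\int_0^1\partial_{z_j}\widehat S(\zeta+t(z-\zeta),\zeta)\,dt$ to be well-defined you need the segment from $\zeta$ to $z$ to stay inside the $z$-domain of $\widehat S$. Your $\tilde\Omega$ is only assumed strongly pseudoconvex, not convex, so this segment could a priori leave $\tilde\Omega$. The fix is trivial---take $\tilde\Omega$ to be a large ball containing $\overline\Omega\cup\overline{U_1}$, which is both strongly pseudoconvex and convex---but you should say so. With that adjustment your argument is complete and matches the cited literature.
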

See \cite[Theorem~III.7.15]{LiebMichelBook}, \cite[Theorem~2.4.3]{HenkinLeitererBook} or \cite[Proposition~5.1]{GongHolderSPsiCXC2}. 


For such $\widehat S$ and $\widehat Q$, we still use $K(z,\zeta)$ from \eqref{Eqn::Goal::DefK}. We define our solution operators $\Hc_q$ in Definition~\ref{Defn::Goal::DefT}, where the image function of the Rychkov's extension operator $\Ec$ is always supported in $U_1\cup\Omega$. In other words,
\begin{equation}\label{Eqn::SPsiCX::DefT}
    \Hc_qf(z)=\int_{U_1\cup\Omega}B_{q-1}(z,\cdot)\wedge\Ec f+\int_{U_1\backslash\overline\Omega}K_{q-1}(z,\cdot)\wedge[\dbar,\Ec]f.
\end{equation}

We define a naive version of $P_\eps(\zeta)$ adapted to $(\Omega,\varrho)$ (cf. Definition~\ref{Defn::Basis::Basis}):
\begin{defn}
    For $\zeta\in \C^n$ and $\eps>0$, we define
    \begin{equation*}
        P_\eps(\zeta):=\{z\in B(\zeta,\eps^\frac12):|\partial\varrho(\zeta)(\zeta-z)|<\eps\}.
    \end{equation*}
\end{defn}
Clearly when $\zeta\in U_1$ and $\eps<\frac12T_1$ the set $P_\eps(\zeta)$ is non-empty. Moreover from \eqref{Eqn::Basis::Prem::Set} there is a $C>0$ such that:
\begin{equation}\label{Eqn::SPsiCX::PSym}
    \zeta\in P_{C^{-1}\eps}(z)\quad\Longrightarrow\quad z\in P_\eps(\zeta)\quad \Longrightarrow\quad\zeta\in P_{C\eps}(z).
\end{equation}

Informally speaking we are setting $\tau_1(\zeta,\eps):=\frac c2\eps$ and $\tau_2(\zeta,\eps)=\dots=\tau_n(\zeta,\eps):=\eps^\frac12$. In the strongly pseudoconvex case there is no difference between using $\eps$-extremal basis and using $\eps$-minimal basis.

Now \eqref{Eqn::SPsiCX::SBdd} implies that there is an $\eps_0>0$ such that the corresponding result for Lemma~\ref{Lem::Basis::EstSQ} \ref{Item::Basis::EstSQ::S} holds:
\begin{equation*}
    |\widehat S(z,\zeta)|\gtrsim\eps,\quad\text{for all }0<\eps\le\eps_0,\quad\zeta\in U_1\backslash\overline\Omega\text{ and }z\in\Omega\backslash P_\eps(\zeta).
\end{equation*}

We do not need the corresponding estimates in Lemma~\ref{Lem::Basis::EstSQ} \ref{Item::Basis::EstSQ::Q}: we see that the trivial estimates for Corollary~\ref{Cor::Basis::EstQ2} hold (see also Remark~\ref{Rmk::Basis::EstQ2::SPsiCX}):
\begin{equation}\label{Eqn::SPsiCX::EstQ}
    |\widehat Q\wedge(\dbar\widehat Q)^k|+|(\dbar\widehat Q)^k|\lesssim1.
\end{equation}

That means a simpler version of Lemma~\ref{Lem::Basis::FinalBasisEst} and Corollary~\ref{Cor::Basis::FinalBasisEst} holds: for $\eps\in(0,\eps_0]$, $1\le k\le n$, $j\ge0$, and for all $(z,\zeta)\in\Omega\times (U_1\backslash\overline\Omega)$ such that $z\notin P_\eps(\zeta)$, we have $\big|D^j_{z,\zeta}\big(\frac{\widehat Q\wedge(\dbar\widehat Q)^{k-1}}{\widehat S^k}\big)(z,\zeta)\big|\lesssim_j\eps^{-j-k}$. Therefore
\begin{equation}\label{Eqn::SPsiCX::EstDK}
    |D^j_{z,\zeta}K_{q-1}(z,\zeta)|\lesssim\sum_{k=1}^{n-q}\frac{\eps^{-j-k}}{|z-\zeta|^{2n-2k-1}},\qquad\forall(z,\zeta)\in\Omega\times (U_1\backslash\overline\Omega)\text{ such that } z\notin P_\eps(\zeta).
\end{equation}
Recall \eqref{Eqn::SPsiCX::PSym}, the above is the same as saying $(z,\zeta)\in\Omega\times (U_1\backslash\overline\Omega)$ such that $\zeta\notin P_\eps(z)$.

Taking integrations on $P_\eps(\zeta)$ and $P_\eps(z)$ we have the correspondent estimate to \eqref{Eqn::Basis::IntEst::Top+} and \eqref{Eqn::Basis::IntEst::Bot+} (recall \eqref{Eqn::Basis::PfIntEst::Pf+} with $\tau_1\approx\eps$ and $\tau_2=\dots=\tau_n=\eps^\frac12$):
\begin{align*}
    &\int_{\Omega \cap P_\eps(\zeta)\backslash {P_{\frac\eps2}(\zeta)}}|D^jK_{q-1}(w,\zeta)|d\Vol_w+\int_{P_\eps(z)\backslash ({P_{\frac\eps2}(z)\cup \Omega})}|D^jK_{q-1}(z,w)|d\Vol_w
    \\
    \lesssim&_j\sum_{k=1}^{n-q}\int_{|w_1|<\eps,|w_2|<\eps^\frac12,\dots,|w_n|<\eps^\frac12}\frac{d\Vol(w_1,\dots,w_n)}{\eps^{j+k}\big(\sum_{l=k+1}^n|w_l|\big)^{2n-2k-1}}\lesssim\eps^{\frac32-j}.
\end{align*}

In our case the types for $\Omega$ are $m_1=\dots=m_{n-1}=2$. Therefore the numbers $r_q=(n-q+1)m_q+2q$ and $\gamma_q=\frac{r_q}{r_q-1}$ are indeed $r_q=2n+2$ and $\gamma_q=\frac{2n+2}{2n+1}$ for all $1\le q\le n-1$. Using \eqref{Eqn::Basis::PfIntEst::Pf0} and \eqref{Eqn::Basis::PfIntEst::Pf0Num} we have the correspondent estimate to \eqref{Eqn::Basis::IntEst::Top0} and \eqref{Eqn::Basis::IntEst::Bot0}:
\begin{align*}
    &\int_{\Omega \cap P_\eps(\zeta)\backslash P_{\frac\eps2}(\zeta)}|D^jK_{q-1}(w,\zeta)|^{\frac{2n+2}{2n+1}}d\Vol_w+\int_{P_\eps(z)\backslash (P_{\frac\eps2}(z)\cup \Omega)}|D^jK_{q-1}(z,w)|^{\frac{2n+2}{2n+1}}d\Vol_w
    \\
    \lesssim&_j\sum_{k=1}^{n-q}\int_{|w_1|<\eps,|w_2|<\eps^\frac12,\dots,|w_n|<\eps^\frac12}\frac{d\Vol(w_1,\dots,w_n)}{\big(\eps^{j+k}\sum_{l=k+1}^n|w_l|\big)^{(2n-2k-1)\gamma_q}}\lesssim\eps^{(1-j)\gamma_q}=\eps^{(1-j)\frac{2n+2}{2n+1}}.
\end{align*}

Therefore, by integrating on the dyadic shells $P_{2^{1-j}\eps}(\zeta)\backslash P_{2^{-j}\eps}(\zeta)$ or $P_{2^{1-j}\eps}(z)\backslash P_{2^{-j}\eps}(z)$, and using \eqref{Eqn::Basis::PfWeiEst::Pf+} and \eqref{Eqn::Basis::PfWeiEst::Pf0}, we obtain the weighted estimates (cf. Theorem~\ref{Thm::WeiEst}): for every $k\ge2$, $0<s<k-\frac32$ and $1\le q\le n-1$,
\begin{align*}
    \int_{U_1\backslash\overline\Omega}\dist(\zeta)^s|D^k_{z,\zeta}K_{q-1}(z,\zeta)|d\Vol(\zeta)&\lesssim_{k,s}\dist(z)^{s+\frac32-k},&&\forall z\in \Omega;
    \\
    \int_{\Omega}\dist(z)^s |D^k_{z,\zeta}K_{q-1}(z,\zeta)|d\Vol(z)&\lesssim_{k,s}\dist(\zeta)^{s+\frac32-k},&&\forall\zeta\in U_1\backslash\overline\Omega;
    \\
    \int_{U_1\backslash\overline\Omega}|\dist(\zeta)^s D^k_{z,\zeta}K_{q-1}(z,\zeta)|^\frac{2n+2}{2n+1}d\Vol(\zeta)&\lesssim_{k,s}\dist(z)^{(s+1-k)\frac{2n+2}{2n+1}},&&\forall z\in \Omega;
    \\
    \int_{\Omega}|\dist(z)^s D^k_{z,\zeta}K_{q-1}(z,\zeta)|^\frac{2n+2}{2n+1}d\Vol(z)&\lesssim_{k,s}\dist(\zeta)^{(s+1-k)\frac{2n+2}{2n+1}},&&\forall \zeta\in U_1\backslash\overline\Omega.
\end{align*}

By Schur's test (Lemma~\ref{Lem::PfThm::Schur}), for $\alpha\in\N^{2n}_\zeta$ and $1\le q\le n-1$, the integral operator 
\begin{equation*}
    \Kc_{q,\alpha}g(z):=\int_{U_1\backslash\overline\Omega}(D^\alpha_{z,\zeta}K_{q-1})(z,\cdot)\wedge g\quad\big(=(-1)^{|\alpha|}\Kc_{q,0}\circ D^\alpha g(z)\big)
\end{equation*}
has the boundedness (cf. Corollary~\ref{Cor::PfThm::WeiSobEst}): for every $k\ge0$ and $1<s<k+|\alpha|-\frac12$ (in particular $k+|\alpha|\ge2$),
\begin{align}\label{Eqn::SPsiCX::WBdd+}
        &\Kc_{q,\alpha}:L^p(U_1\backslash\overline\Omega,\dist^{1-s};\wedge^{0,q+1})\to W^{k,p}(\Omega,\dist^{k+|\alpha|-\frac12-s};\wedge^{0,q-1}),
    & \forall 1\le p\le \infty;
    \\\label{Eqn::SPsiCX::WBdd0}
    &\Kc_{q,\alpha}:L^p(U_1\backslash\overline\Omega,\dist^{1-s};\wedge^{0,q+1})\to W^{k,\frac{(2n+2)p}{p-2n-2}}(\Omega,\dist^{k+|\alpha|-s};\wedge^{0,q-1}),
    & \forall 1\le p\le 2n+2;
\end{align}

Recall the notations $\Bc_q$ in Lemma~\ref{Lem::PfThm::BMKernel} and $\Sc^{k,\alpha}$ in Proposition~\ref{Prop::Space::ATDOp}. Rewriting \eqref{Eqn::SPsiCX::DefT}, and by the same argument to \eqref{Eqn::PfThm::Tk=T0} we have
\begin{equation}\label{Eqn::SPsiCX::TDecomp}
    \Hc_qf=\Bc_q\circ\Ec f+\sum_{|\alpha|\le k}(-1)^{|\alpha|}\Kc_{q,\alpha}\circ\Sc^{k,\alpha}\circ[\dbar,\Ec]f,\qquad\forall k\ge0.
\end{equation}

By Remark~\ref{Rmk::Space::ExtBdd}, Lemma~\ref{Lem::PfThm::BMKernel} and Remark~\ref{Rmk::Space::TLRmk} \ref{Item::Space::TLRmk::Embed}, for every $1\le q\le n$, $s\in\R$, $1\le p\le\infty$ and $\eps>0$,
\begin{align*}
    \Bc_q\circ\Ec:\Fs_{p\infty}^s(\Omega;\wedge^{0,q})&\xrightarrow{\Ec}\widetilde\Fs_{p\infty}^s(\overline{U_1\cup\Omega};\wedge^{0,q})\xrightarrow{\Bc_q}\Fs_{p\infty}^{s+1}(\Omega;\wedge^{0,q-1})
    \\
    &\hookrightarrow
    \begin{cases}
    \Fs_{p\eps}^{s+\frac12}(\Omega;\wedge^{0,q-1})&\forall p\in[1,\infty]
    \\
    \Fs_{\frac{(2n+2)p}{2n+2-p},\eps}^{s}(\Omega;\wedge^{0,q-1})&\forall p\in[1,2n+2]
    \end{cases}.
\end{align*}
In particular we have \eqref{Eqn::SPsiCX::Thm::T+} and \eqref{Eqn::SPsiCX::Thm::T0} for $q=n$, since $\Kc_{n,\alpha}\equiv0$.

Choose integers $k,l\ge0$ such that $k>1-s$ and $l>s+\frac12$. Applying \eqref{Eqn::SPsiCX::WBdd+}, \eqref{Eqn::SPsiCX::WBdd0} and Corollary~\ref{Cor::LPThm::BddDom} to the summands in \eqref{Eqn::SPsiCX::TDecomp}, we have, for $1\le q\le n-1$, $1\le p\le\infty$ and $\eps>0$,
\begin{align*}
    \Fs_{p\infty}^s(\Omega;\wedge^{0,q})&\xrightarrow{[\dbar,\Ec]}\widetilde\Fs_{p\infty}^{s-1}(\overline{U_1\backslash\Omega};\wedge^{0,q+1})\xrightarrow{\Sc^{k,\alpha}}\widetilde\Fs_{p\infty}^{s-1+k}(\overline{U_1\backslash\Omega};\wedge^{0,q+1})\xhookrightarrow{s>1-k} L^p(U_1\backslash\overline\Omega,\dist^{k-1-s};\wedge^{0,q+1})
    \\
    &\xrightarrow{\Kc_{q,\alpha}}\begin{cases}
    W^{l,p}(\Omega,\dist^{l-\frac12-s};\wedge^{0,q-1})\xhookrightarrow{l>s+\frac12}\Fs_{p\eps}^{s+\frac12}(\Omega,\wedge^{0,q-1})&\text{for all }1\le p\le\infty
    \\
    W^{l,\frac{(2n+2)p}{2n+2-p}}(\Omega,\dist^{l-s};\wedge^{0,q-1})\xhookrightarrow{l>s}\Fs_{\frac{(2n+2)p}{2n+2-p},\eps}^{s}(\Omega,\wedge^{0,q-1})&\text{when }1\le p\le 2n+2
    \end{cases}.
\end{align*}

This completes the proof of \eqref{Eqn::SPsiCX::Thm::T+} and \eqref{Eqn::SPsiCX::Thm::T0}.

The H\"older bound $\Hc_q:\Co^s\to\Co^{s+\frac12}$ and Sobolev bounds $\Hc_q:H^{s,p}\to H^{s+\frac12,p}$, $\Hc_q:H^{s,p}\to H^{s,\frac{(2n+2)p}{2n+2-p}}$ follow from the inclusions $\Fs_{p1}^s(\Omega)\hookrightarrow H^{s,p}(\Omega)=\Fs_{p2}^s(\Omega)\hookrightarrow\Fs_{p\infty}^s(\Omega)$ and $\Fs_{\infty1}^s(\Omega)\hookrightarrow \Co^s(\Omega)=\Fs_{\infty\infty}^s(\Omega)$, as discussed in Remark~\ref{Rmk::Space::TLRmk} \ref{Item::Space::TLRmk::Embed} and \ref{Item::Space::TLRmk::SobHold}. \qed

\appendix
\section{Proof of Proposition~\ref{Prop::LPThm::HLLem} \ref{Item::LPThm::HLLem::WtoF} for $\eps<1$}

Let $\omega=\{x_1>\sigma(x')\}\subset\R^N$ be a special Lipschitz domain. To give a direct proof of Proposition~\ref{Prop::LPThm::HLLem} \ref{Item::LPThm::HLLem::WtoF}, we define the inner strips (cf. \eqref{Eqn::Space::HLLem::Sk}) of $\omega$: we define $P_0=P_0^\omega:=\{(x_1,x'):x_1-\sigma(x')>2^{-\frac12}\}$ and
\begin{gather}\label{Eqn::App::Pk}
    P_k=P_k^\omega:=\{(x_1,x'):2^{-\frac12-k}<x_1-\sigma(x')<2^{\frac12-k}\}\subset\omega^c\text{ for }k\in\Z_+;
    \\\label{Eqn::App::P<k}
    P_{<k}=P_{<k}^\omega:=\{(x_1,x'):x_1-\sigma(x')>2^{\frac12-k}\}\subset\omega^c\text{ for }k\in\Z_+.
\end{gather}
Now  $\{P_k\}_{k=0}^\infty$ is a partition of $\omega$ up to zero measured sets. Similar to \eqref{Eqn::Space::HLLem::Tmp1} (recall that $\delta(x)=\min(1,\dist(x,b\omega))$), 
\begin{equation}\label{Eqn::App::HLLem::PkTmp}
    2^{-1-k}\le\delta(x)\le2^{\frac12-k}\text{ for all }k\ge0,\quad x\in P_k.
\end{equation}Similar to \eqref{Eqn::Space::HLLem::Tmp2} (see \cite[Lemma~5.3]{ShiYaoExt}) we can also find a $R>0$ such that $\supp\phi_j+P_{k}\subseteq\omega^c$ whenever $k\ge j+R$. Thus
\begin{equation}\label{Eqn::App::HLLem::Tmp1}
    \1_{P_k}\cdot(\phi_j\ast f)=\1_\omega\cdot(\phi_j\ast(f\cdot\1_{P_{<\min(j+R,k)}})),\quad\forall j\ge0.
\end{equation}

For the case $\eps<1$ the duality argument does not work, since $\Fs_{p\eps}^s$ is no longer a locally convex space. In the proof of the case $1\le p<\infty$ we need a version of locally constant principle.
\begin{lem}[Locally constant]\label{Lem::App::LocConst}
    Let $\phi=(\phi_j)_{j=0}^\infty\subset\Ss(\R^N)$ satisfies the scaling condition \ref{Item::Space::PhiScal} (in Definition~\ref{Defn::Space::ExtOp}). Then for any $M>0$ there is a $C_{\phi,M}>0$ such that for every $1\le p\le\infty$ and $f\in L^p(\R^N)$,
    \begin{equation}\label{Eqn::App::LocConst::Infty}
        |\phi_j\ast f(x)|\le C_{\phi, M}2^\frac{Nj}p\sum_{v\in\Z^N}\frac1{(1+|v|)^M}\|f\|_{L^p(x+2^{-j}v+[0,2^{-j}]^N)},\quad\forall j\ge0,\quad x\in\R^N.
    \end{equation}
    In particular if $\phi$ also satisfies the support condition \ref{Item::Space::PhiSupp}, then there is a $C_{\phi}>0$, for every $1\le p\le\infty$, $j\ge0$ and $0\le k\le j+R$,
    \begin{equation}\label{Eqn::App::LocConst::P}
        \|\phi_j\ast f\|_{L^p(P_k)}\le C_\phi \min(1,2^\frac{j-k}p)\|f\|_{L^p(P_{<j+R})},\quad\forall f\in L^p(P_{<j+R}),
    \end{equation}
    where $R>0$ is given in \eqref{Eqn::App::HLLem::Tmp1} and $\dist(x)=\dist(x,b\omega)$.
    
    In particular $\|\phi_j\ast f\|_{L^\infty(P_k)}\lesssim \|f\|_{L^\infty(P_{<j+R})}$.
\end{lem}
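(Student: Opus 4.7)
The plan is to prove both assertions by exploiting the rapid decay (and dyadic scaling) of the family $\phi_j$, treating \eqref{Eqn::App::LocConst::Infty} via a cube-by-cube Hölder estimate and \eqref{Eqn::App::LocConst::P} via a Jensen-plus-Fubini argument that converts the support condition into a slab-thickness bound.

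For \eqref{Eqn::App::LocConst::Infty}, I first use the scaling condition \ref{Item::Space::PhiScal} together with $\phi_1\in\Ss(\R^N)$ to obtain, for every $M>0$, the pointwise decay $|\phi_j(u)|\le C_{\phi,M}\,2^{jN}(1+2^j|u|)^{-M-N}$ uniformly in $j\ge 0$ (the case $j=0$ is separate but identical). Then I decompose $\R^N=\bigsqcup_{v\in\Z^N}Q_v$ with $Q_v:=x+2^{-j}v+[0,2^{-j}]^N$, and observe that $\sup_{y\in Q_v}|\phi_j(x-y)|\le C_M\,2^{jN}(1+|v|)^{-M-N}$. Applying Hölder's inequality on $Q_v$ (with $|Q_v|=2^{-jN}$) gives $\int_{Q_v}|\phi_j(x-y)||f(y)|\,dy \le C_M\,2^{jN/p}(1+|v|)^{-M-N}\|f\|_{L^p(Q_v)}$; summing over $v$ and relabeling $M$ yields \eqref{Eqn::App::LocConst::Infty}.

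For \eqref{Eqn::App::LocConst::P}, the support condition \ref{Item::Space::PhiSupp} together with \eqref{Eqn::App::HLLem::Tmp1} lets me replace $f$ by $f\cdot\1_{P_{<k}}$ on $P_k$ (using $k\le j+R$). When $1\le p<\infty$, Jensen's inequality applied with the probability measure $|\phi_j|/\|\phi_j\|_{L^1}$ gives $|\phi_j\ast f|^p\le \|\phi_j\|_{L^1}^{p-1}\,(|\phi_j|\ast |f|^p)$, and Fubini yields
\[
\int_{P_k}|\phi_j\ast f(x)|^p\,dx\le C\int_{P_{<k}}|f(y)|^p\Big(\int_{P_k}|\phi_j(x-y)|\,dx\Big)dy.
\]
It therefore suffices to prove the uniform bound
\[
\int_{P_k}|\phi_j(x-y)|\,dx \le C\,\min(1,2^{j-k}),\qquad y\in\R^N.
\]
The upper bound by $C$ is trivial since $\|\phi_j\|_{L^1}$ is uniform in $j$. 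For the bound by $C\,2^{j-k}$ (relevant when $k>j$), I change variables $u=2^j(x-y)$ and parametrize $P_k$ by $(x_1-\sigma(x'),x')$: the set $2^j(P_k-y)$ then lies in the slab $\{u:a(u',y)<u_1<a(u',y)+c\,2^{j-k}\}$ for some function $a$, of thickness $c\,2^{j-k}\le c$ in the $u_1$-direction. Combined with the Schwartz decay $|\phi_j(u)|\le C_M\,2^{jN}(1+|u|)^{-M}$, integrating first in $u_1$ (gaining the slab thickness) and then in $u'$ (absorbed by the tail, for $M>N$) produces the required factor. The case $p=\infty$ of \eqref{Eqn::App::LocConst::P} follows directly from $\|\phi_j\|_{L^1}\le C$ and the support reduction.

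The only real technical step is the slab-thickness computation: one must verify that after rescaling $P_k$ becomes a Lipschitz slab of thickness exactly $O(2^{j-k})$, which is immediate from the definition \eqref{Eqn::App::Pk} of $P_k$ and the fact that parametrization by $(x_1-\sigma(x'),x')$ has unit Jacobian. Everything else reduces to standard Schwartz tail estimates and Jensen's inequality.
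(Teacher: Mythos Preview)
Your proof is correct. For \eqref{Eqn::App::LocConst::Infty} your argument and the paper's are essentially the same: decompose $\R^N$ into dyadic cubes centered at $x+2^{-j}v$, apply H\"older on each cube, and use the Schwartz decay of $\phi_j$ to produce the weight $(1+|v|)^{-M}$. The only cosmetic difference is that the paper bounds $\|\phi_j\|_{L^{p'}}$ on the complement of a ball while you bound $\sup|\phi_j|$ on the cube; these are equivalent up to constants.

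For \eqref{Eqn::App::LocConst::P} you take a genuinely different and more direct route. The paper proves \eqref{Eqn::App::LocConst::P} by bootstrapping from \eqref{Eqn::App::LocConst::Infty}: it parametrizes $P_k$ by $(t,x')\mapsto(t+\sigma(x'),x')$, extracts the strip thickness $2^{-k}$ via a $\sup_t$, discretizes the remaining $(N-1)$-dimensional integral over a lattice $2^{-j}\Z^{N-1}$, and then applies \eqref{Eqn::App::LocConst::Infty} pointwise before resumming via an $\ell^p$--$\ell^1$ argument. Your approach avoids all of this by using Jensen's inequality $|\phi_j\ast f|^p\le\|\phi_j\|_{L^1}^{p-1}(|\phi_j|\ast|f|^p)$ and Fubini, reducing the problem to the scalar estimate $\int_{P_k}|\phi_j(x-y)|\,dx\lesssim\min(1,2^{j-k})$ uniformly in $y$, which you verify by the slab-thickness computation after rescaling. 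This is cleaner: it does not need \eqref{Eqn::App::LocConst::Infty} at all, and the dependence on $p$ is transparent (since $\|\phi_j\|_{L^1}^{(p-1)/p}$ is uniformly bounded in $p\in[1,\infty]$). The paper's approach, on the other hand, illustrates how the pointwise locally-constant principle \eqref{Eqn::App::LocConst::Infty} can be fed back into $L^p$ estimates, which is a pattern that recurs elsewhere in the appendix.
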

\begin{proof}
We denote $Q_{j,v}:=2^{-j}v+[0,2^{-j}]^N$ for $j\ge0$ and $v\in\Z^N$. Therefore for every $x\in\R^N$,
\begin{equation*}
    \phi_j\ast f(x)=\sum_{v\in\Z^N}\phi_j\ast (f\cdot \1_{(x+Q_{j,v})})(x)=\sum_{v\in\Z^N}(\phi_j\cdot\1_{B(0,2^{-j}\max(0,|v|-\sqrt N))^c})\ast (f\cdot \1_{(x+Q_{j,v})})(x).
\end{equation*}
Therefore by H\"older's inequality $|\phi_j\ast f(x)|\le\sum_v\|\phi_j\|_{L^{p'}(B(0,2^{-j}\max(0,|v|-\sqrt N))^c)}\|f\|_{L^p(x+Q_{j,v})}$.

Note that $\phi_0,\phi_1$ are Schwartz, so for $j\in\{0,1\}$ and $l\in\Z$ we have $\int_{|x|>2^l}|\phi_j(x)|dx\lesssim_M2^{-M\max(0,l)}$, which means $\|\phi_j\|_{L^{p'}(B(0,2^l)^c)}\lesssim_M2^{-M\max(0,l)}$ for $j\in\{0,1\}$. Therefore by the scaling condition \ref{Item::Space::PhiScal} we have, for every $j\ge0$,
\begin{equation*}
    \|\phi_j\|_{L^{p'}(B(0,2^{-j}\max(0,|v|-\sqrt N))^c)}\le 2^{\frac {Nj}p}(\|\phi_0\|_{L^{p'}(B(0,\max(|v|-\sqrt N))^c)}+ \|\phi_0\|_{L^{p'}(B(0,\frac12\max(|v|-\sqrt N))^c)})\lesssim_M\tfrac{2^{ {Nj}/p}}{(1+|v|)^M}.
\end{equation*}
Therefore $|\phi_j\ast f(x)|\lesssim_M2^{ {Nj}/p}\sum_v(1+|v|)^{-M}\|f\|_{L^p(x+Q_{j,v})}$ which gives \eqref{Eqn::App::LocConst::Infty}.

\smallskip
To prove \eqref{Eqn::App::LocConst::P}, by Fubini's theorem, 
\begin{align*}
    \int_{P_k}|\phi_j\ast f|^p=&\int_{\R^{N-1}}\int_{2^{-\frac12-k}}^{2^{\frac12-k}}|\phi_j\ast f(t+\sigma(x'),x')|^pdtdx'
    \\
    \le&2^{-k}\sup_{2^{-1/2-k}<t<2^{1/2-k}}\int_{\R^{N-1}}\big|(\phi_j\ast f)(t+\sigma(x'),x')\big|^pdx'.
\end{align*} Therefore by taking $M>N$,
\begin{align*}
    &\|\phi_j\ast f\|_{L^p(P_k)}\le2^{-\frac kp}\sup_{2^{-1/2-k}<t<2^{1/2-k}}\big\|x'\mapsto \big|(\phi_j\ast f)(t+\sigma(x'),x')\big|\big\|_{L^p(\R^{N-1}_{x'})}
    \\
    \le&2^{-\frac kp}2^{\frac jp}\sup_{2^{-1/2-k}<t<2^{1/2-k}}\big\|u'\mapsto \big\|\phi_j\ast (f\1_{P_{<j+R}})\big\|_{L^p((t+\sigma(u'),u')+B(0,2^{-j}\sqrt N))}\big\|_{\ell^p(2^{-j}\Z^{N-1}_{u'})}
    \\
    \le&2^{\frac {j-k}p}\big\|u'\mapsto \big\|\phi_j\ast (f\1_{P_{<j+R}})\big\|_{L^p((\sigma(u'),u')+B(0,2^{1-j}\sqrt N))}\big\|_{\ell^p(2^{-j}\Z^{N-1}_{u'})}
    \\
    \lesssim&_{M}2^{\frac{j-k}p}2^\frac{Nj}p\sum_{v\in\Z^N}(1+|v|)^{-M}\big\|u'\mapsto \big\|f\1_{P_{<j+R}}\big\|_{L^p((\sigma(u'),u')+B(0,2^{1-j}\sqrt N)+Q_{j,v})}\big\|_{\ell^p(2^{-j}\Z^{N-1}_{u'})}
    \\
    \le&2^{\frac{j-k}p}2^\frac{Nj}p\sum_{v\in\Z^N}(1+|v|)^{-M}\big\|u\mapsto \big\|f\1_{P_{<j+R}}\big\|_{L^p(u+B(0,2^{2-j}\sqrt N)+Q_{j,v})}\big\|_{\ell^p(2^{-j}\Z^N_{u})}
    \\
    \lesssim&_{M}2^{\frac{j-k}p}2^\frac{Nj}p\big|B(0,2^{2-j}\sqrt N)\big|\big\|\big((1+|v|)^{-M}\big)_{v\in\Z^N}\big\|_{\ell^1(\Z^N)}\|f\1_{P_{<j+R}}\|_{L^p(\R^N)}\lesssim_{M}2^{\frac{j-k}p}\|f\|_{L^p(P_{<j+R})}.
\end{align*}
This completes the proof of \eqref{Eqn::App::LocConst::P}. 
\end{proof}

\begin{proof}[Proof of Proposition~\ref{Prop::LPThm::HLLem} \ref{Item::LPThm::HLLem::WtoF}]We only need to prove $m=0$. Indeed if the case $m=0$ is true, then by Proposition~\ref{Prop::Space::RychFact} \ref{Item::Space::RychFact::EqvNorm2}, we have for every $s<m$,
\begin{equation*}\textstyle
    \|f\|_{\Fs_{p\eps}^s(\omega)}\approx_{s,m,\eps}\sum_{|\alpha|\le m}\|D^\alpha f\|_{\Fs_{p\eps}^{s-m}(\omega)}\overset{\text{case }m=0}{\lesssim}\sum_{|\alpha|\le m}\|D^\alpha f\|_{L^p(\omega,\delta^{m-s})}\approx\|f\|_{W^{m,p}(\delta^{m-s})}.
\end{equation*}

Now assume $m=0$ and $0<\eps<1$. By Proposition~\ref{Prop::Space::RychFact} \ref{Item::Space::RychFact::EqvNorm} we have $\|f\|_{\Fs_{p\eps}^s(\omega)}\approx\|(2^{js}\phi_j\ast f)_{j=0}^\infty\|_{L^p(\omega;\ell^\eps)}$ for $1\le p<\infty$ and $\|f\|_{\Fs_{\infty\eps}^s(\omega)}\approx\sup_{x,J}2^{NJ\frac1\eps}\|(2^{js}\phi_j\ast f)_{j=\max(0,J)}^\infty\|_{L^\eps(\omega\cap B(x,2^{-J});\ell^\eps)}$. We need to show that for every $t>0$ and $\eps>0$,
\begin{gather}
\label{Eqn::App::PfHLLem::<infty}
    \big\|\big(2^{-jt}\phi_j\ast f\big)_{j=0}^\infty\big\|_{L^p(\omega;\ell^\eps(\N))}\lesssim_{\phi,p,\eps,t}\|\delta^tf\|_{L^p(\omega)},\quad 1<p<\infty;
    \\
\label{Eqn::App::PfHLLem::=infty}
    2^{NJ}\sup_{x\in\omega,J\in\Z}\big\|\big(2^{-jt}\phi_j\ast f\big)_{j=\max(0,J)}^\infty\big\|_{L^\eps(\omega\cap B(x,2^{-J});\ell^\eps(\N))}^\eps\lesssim_{\phi,\eps,t}\|\delta^t f\|_{L^\infty(\omega)}^\eps.
\end{gather}

We first prove \eqref{Eqn::App::PfHLLem::<infty} using Lemma~\ref{Lem::App::LocConst}. For $k\ge0$, recall $P_k,P_{<k}$ from \eqref{Eqn::App::Pk}:
\begin{equation*}
    \begin{aligned}
    &\big\|\big(2^{-jt}\phi_j\ast f\big)_{j=0}^\infty\big\|_{L^p(P_{k};\ell^\eps(\N))}=\big\|\big(2^{-jt}\phi_j\ast (f\1_{P_{<\min(j+R,k)}})\big)_{j=0}^\infty\big\|_{L^p(P_{k};\ell^\eps(\N))}
    \\
    \le&\big\|\big(2^{-jt}\phi_j\ast (f\1_{P_{<\min(j+R,k)}})\big)_{j=0}^\infty\big\|_{\ell^\eps(\N;L^p(P_{ k}))}&(\text{by }\eqref{Eqn::App::PfHLLem::Tmp1}\text{ below})
    \\
    \le&2^{\frac1\eps}\Big(\big\|\big(2^{-jt}\|\phi_j\ast (f\1_{P_{<j+R}})\|_{L^p(P_{k})}\big)_{j=0}^{k-R}\big\|_{\ell^\eps}+\big\|\big(2^{-jt}\|\phi_j\ast (f\1_{P_{<k}})\|_{L^p(P_{k})}\big)_{j=\max(0,k-R)}^\infty\big\|_{\ell^\eps}\Big)\hspace{-0.7in}
    \\
    \lesssim&_{\eps,\phi,M}\big\|\big(2^{-jt}2^\frac{j-k}p\|f\|_{L^p(P_{<j+R})}\big)_{j=0}^{k-R}\big\|_{\ell^\eps}+\big\|\big(2^{-jt}\|f\|_{L^p(P_{<k})}\big)_{j=\max(0,k-R)}^\infty\big\|_{\ell^\eps}\hspace{-0.2in}&(\text{by }\eqref{Eqn::App::LocConst::P})
    \\
    \lesssim&_{R}\big\|\big(2^\frac{j-k}p2^{-jt}\|f\|_{L^p(P_{<j})}\big)_{j=1}^k\big\|_{\ell^\eps}+\|(2^{ls})_{l=-R}^\infty\|_{\ell^\eps}2^{-tk}\|f\|_{L^p(P_{<k})}
    \\
    \lesssim&_{\eps,R,t}\big\|\big( 2^{-\frac{|j-k|}p}2^{-jt}\|f\|_{L^p(P_{<j})}\big)_{j=1}^\infty\big\|_{\ell^\eps}.
\end{aligned}
\end{equation*}

Here the first inequality is a variant of Minkowski's inequality: since we assume $0<\eps\le 1$,
\begin{equation}\label{Eqn::App::PfHLLem::Tmp1}
    \|(g_j)_j\|_{L^p(\ell^\eps)}=\|(|g_j|^\eps)_j\|_{L^{p/\eps}(\ell^1)}^\frac1\eps=\|(|g_j|^\eps)_j\|_{\ell^1(L^{p/\eps})}^\frac1\eps=\|(g_j)_j\|_{\ell^\eps(L^p)}.
\end{equation}
Therefore
\begin{align*}
    &\|f\|_{\Fs_{p\eps}^{-t}(\omega)}\approx\big\|\big(2^{-jt}\phi_j\ast f\big)_{j=0}^\infty\big\|_{L^p(\omega;\ell^\eps(\N))}=\big\|\big(\|(2^{-jt}\phi_j\ast f)_{j=0}^\infty\|_{L^p(P_k;\ell^\eps(\N)}\big)_{k=0}^\infty\big\|_{\ell^p(\N)}\hspace{-0.3in}
    \\
    \lesssim&_{\eps,p,t}\textstyle\big\|\big(\sum_{j=0}^\infty2^{-\frac\eps p|j-k|}2^{-jt\eps}\|f\|_{L^p(P_{<j})}^\eps\big)_{k=0}^\infty\big\|_{\ell^{p/\eps}(\N)}^{1/\eps}
    \\
    \le&\big\|(2^{-\frac\eps p|j|})_{j=-\infty}^\infty\big\|_{\ell^1}^{1/\eps}\big\|\big(2^{-jt\eps}\|f\|_{L^p(P_{<j})}^\eps\big)_{j=0}^\infty\big\|_{\ell^{p/\eps}(\N)}^{1/\eps}&(\text{by Young's inequality})
    \\
    \lesssim&_{\eps,p}\textstyle\big\|\big(2^{-jt}\|f\|_{L^p(P_{<j})}\big)_{j=0}^\infty\big\|_{\ell^{p}(\N)}=\big\|\big(\sum_{j=k}^\infty2^{-jt}\|f\|_{L^p(P_k)}\big)_{k=0}^\infty\big\|_{\ell^{p}(\N)}
    \\
    \approx&\big\|\big(2^{-kt}\|f\|_{L^p(P_k)}\big)_{k=0}^\infty\big\|_{\ell^{p}(\N)}\approx\big\|\big(\|\delta^tf\|_{L^p(P_k)}\big)_{k=0}^\infty\big\|_{\ell^{p}(\N)}=\|f\|_{L^p(\omega,\delta^t)}.&(\text{by }t>0\text{ and }\eqref{Eqn::App::HLLem::PkTmp})
\end{align*}
This completes the proof of \eqref{Eqn::App::PfHLLem::<infty}.

To prove \eqref{Eqn::App::PfHLLem::=infty}, let $x\in\omega$ and let $k_0\ge0$ be such that $x\in\overline {P_{k_0}}$. We separate the discussion of the norm $\|(2^{-jt}\phi_j\ast f)_{j\ge J}\|_{L^\eps(\omega\cap B(x,2^{-J});\ell^\eps)}$ between $J\le k_0+1$ and $J\ge k_0+2$.

When $J\le k_0+1$, we have $|P_k\cap B(x,2^{-J})|\lesssim 2^{-k}2^{-(N-1)J}$ if $k\ge J-2$ and $P_k\cap B(x,2^{-J})=\varnothing$ if $k\le J-3$. By \eqref{Eqn::App::HLLem::Tmp1} and \eqref{Eqn::App::LocConst::P}, $\|\phi_j\ast f\|_{L^\infty(P_k)}\lesssim_{\phi,R}2^{t\min(j+R,k)}\|f\|_{L^\infty(\omega,\delta^t)}$, therefore
\begin{align*}
    &\textstyle2^{NJ}\sum_{j=\max(0,J)}^\infty\int_{\omega\cap B(x,2^{-J})}2^{-jt\eps}|\phi_j\ast f|^\eps=2^{NJ}\sum_{j=\max(0,J)}^\infty\sum_{k=J-2}^\infty\int_{P_k\cap B(x,2^{-J})}2^{-jt\eps}|\phi_j\ast f|^\eps
    \\
    \lesssim&\textstyle2^{NJ}\|f\|_{L^\infty(\omega,\delta^t)}^\eps\sum_{k=J-2}^\infty\sum_{j=\max(0,J)}^\infty|P_k\cap B(x,2^{-J})|2^{-jt\eps}2^{t\eps\min(j+R,k)}
    \\
    =&\textstyle2^{NJ}\|f\|_{L^\infty(\omega,\delta^t)}^\eps\sum_{k=J-2}^\infty2^{-k}2^{(N-1)J}\big(k-(J-2)+\sum_{j=k}^\infty2^{t\eps(k-j)}\big)
    \\
    \approx&\|f\|_{L^\infty(\omega,\delta^t)}^\eps2^{tJ\eps}\lesssim\|f\|_{L^\infty(\omega,\delta^t)}^\eps2^{tk_0\eps}.
\end{align*}

When $J\ge k_0+2$, we have $B(x,2^{-J})\subset P_{>k_0+2}$, therefore
\begin{align*}
    &\textstyle2^{NJ}\sum_{j=\max(0,J)}^\infty\int_{\omega\cap B(x,2^{-J})}2^{-jt\eps}|\phi_j\ast f|^\eps=2^{NJ}\sum_{j=\max(0,J)}^\infty\int_{P_{>k_0+2}\cap B(x,2^{-J})}2^{-jt\eps}|\phi_j\ast f|^\eps
    \\
    \lesssim&\textstyle2^{NJ}|B(x,2^{-J})|\sum_{j=k_0+2}^\infty2^{-jt\eps}2^{k_0t\eps}\|f\|_{L^\infty(P_{>k_0+2})}^\eps\lesssim\|f\|_{L^\infty(\omega,\delta^t)}^\eps 2^{tk_0\eps}.
\end{align*}
This completes the proof of \eqref{Eqn::App::PfHLLem::=infty} and hence the whole proof.
\end{proof}

\section{Boundedness of a Skew Bergman Projection}

Note: the Sobolev estimate of the following result appears in the preprint \cite{YaoZhangProduct1}, but not in the journal version of the current paper. We attach the statement and proof here for completeness.
\begin{thm}\label{Thm::SkewBerg}
    Let $(\Hc_q)_{q=1}^n$ be the homotopy operators given in Theorem~\ref{MainThm}, the skew Bergman projection $\Pc f:=f-\Hc_1\dbar f$ for $f\in\Ss'(\Omega)$ shares the boundedness
    \begin{equation}\label{Eqn::SkewBerg::TLBdd}
        \Pc:\Fs_{p\infty}^s(\Omega)\to\Fs_{p\eps}^s(\Omega),\quad\forall\ \eps>0,\quad s\in\R,\quad 1\le p\le\infty.
    \end{equation}
    In particular $\Pc:H^{s,p}(\Omega)\to H^{s,p}(\Omega)$ and $\Pc:\Co^s(\Omega)\to\Co^s(\Omega)$ for all $s\in\R$ and $1<p<\infty$.
\end{thm}

The proof of Theorem~\ref{Thm::SkewBerg} follows from the estimate of the correspondent \textit{Cauchy-Fantappi\`e form}: we set
\begin{equation}\label{Eqn::SkewBerg::CF}
    F(z,\zeta):=B(z,\zeta)-\dbar_{z,\zeta}K(z,\zeta)\qquad\text{for }\quad z\in\Omega,\quad \zeta\in\Uc\backslash\overline\Omega, 
\end{equation}
   where $B(z,\zeta)$ and $K(z,\zeta)$ are from \eqref{Eqn::Goal::DefB} and \eqref{Eqn::Goal::DefK}. Recall from \cite[Lemma~11.1.1]{ChenShawBook} we have
\begin{equation}\label{Eqn::SkewBerg::DefF}
    F(z,\zeta)=\frac{\widehat Q\wedge(\dbar\widehat Q)^{n-1}}{(2\pi i)^n\widehat S(z,\zeta)^n}=\frac{\widehat Q(z,\zeta)\wedge (\dbar\widehat Q(z,\zeta))^{n-1}}{(2\pi i)^n(\widehat Q(z,\zeta)\cdot(\zeta-z))^n},\quad z\in\Omega,\quad\zeta\in\Uc\backslash\overline\Omega.
\end{equation}
Note that $F$ is a $(n,n-1)$ bi-degree form which has degree $(0,0)$ in $z$ and $(n,n-1)$ in $\zeta$. Using Notation~\ref{Note::Goal::KTopBot} we have decomposition $F=F^\top+F^\bot$ in $\zeta$-variable.

Theorem~\ref{Thm::SkewBerg} follows from the correspondent weighted estimates.
\begin{prop}
    Let $\delta(w):=\dist(w,b\Omega)$. Then for any $s\in\R$ and $k\in\Z_+$ such that $0<s<k-1$, there is a $C=C(\Omega,\Uc,\widehat Q,k,s)>0$ such that
\begin{align}
    \label{Eqn::SkewBerg::FTopz}
    \int_{\Uc\backslash\overline\Omega}\delta(\zeta)^s |D^k_{z,\zeta}(F^\top)(z,\zeta)|d\Vol(\zeta)&\le C\delta(z)^{s+1-k},&&\forall z\in \Omega;
    \\
    \label{Eqn::SkewBerg::FTopzeta}
    \int_{\Omega}\delta(z)^s |D^k_{z,\zeta}(F^\top)(z,\zeta)|d\Vol(z)&\le C\delta(\zeta)^{s+1-k},&&\forall \zeta\in \Uc\backslash\overline\Omega;
    \\
    \label{Eqn::SkewBerg::FBotz}
    \int_{\Uc\backslash\overline\Omega}\delta(\zeta)^s|D^k_{z,\zeta}(F^\bot)(z,\zeta)|d\Vol(\zeta)&\le C\delta(z)^{s+\frac1{m_1}-k},&&\forall z\in \Omega;
    \\
    \label{Eqn::SkewBerg::FBotzeta}
    \int_{\Omega}\delta(z)^s |D^k_{z,\zeta}(F^\bot)(z,\zeta)|d\Vol(z)&\le C\delta(\zeta)^{s+\frac1{m_1}-k},&&\forall\zeta\in \Uc\backslash\overline\Omega.
\end{align}
As a result if we define for every $\alpha\in\N^{2n}_{\zeta,\bar\zeta}$,
$$\Fc_\alpha^\top g(z):=\int_{\Uc\backslash\overline\Omega}D_\zeta^\alpha (F^\top)(z,\cdot)\wedge g,\quad \Fc_\alpha^\bot g(z):=\int_{\Uc\backslash\overline\Omega}D_\zeta^\alpha (F^\bot)(z,\cdot)\wedge g,\qquad g\in L^1(\Uc\backslash\overline\Omega; \wedge^{0,1}),$$
then in terms of Definition~\ref{Defn::Space::TLSpace}, for every $s>0$, $1\le p\le\infty$ and $\eps>0$, 
\begin{equation}\label{Eqn::SkewBerg::FOpBdd}
    \Fc_\alpha^{\top}:\widetilde \Fs_{p\infty}^s(\overline\Uc\backslash\Omega;\wedge^{0,1})\to \Fs_{p\eps}^{s+1-|\alpha|}(\Omega),\qquad \Fc_\alpha^\bot:\widetilde \Fs_{p\infty}^s(\overline\Uc\backslash\Omega;\wedge^{0,1})\to \Fs_{p\eps}^{s+\frac1{m_1}-|\alpha|}(\Omega).
\end{equation}
\end{prop}
\begin{proof}
    Notice that for $0<\eps<\eps_0$ and $\zeta\in\Uc\backslash\Omega$ we have $|P_\eps(\zeta)\backslash P_{\eps/2}(\zeta)|\le \prod_{l=1}^n\tau_l(\zeta,\eps)^2$. By \eqref{Eqn::Basis::Prem::Set} for $-\eps_0<\varrho(z)<0$ we have $|P_\eps(z)\backslash P_{\eps/2}(z)|\le \prod_{l=1}^n\tau_l(z,\eps)^2$ as well. Therefore by \eqref{Eqn::Basis::FinalBasisEst::Top}, \eqref{Eqn::Basis::FinalBasisEst::Bot} and \eqref{Eqn::SkewBerg::DefF}, for every $k\ge0$ there is a $C_k>0$, such that for every $0<\eps\le\eps_0$, $z\in\Omega$ and $\zeta\in\Uc\backslash\Omega$,
    \begin{gather}
        \label{Eqn::SkewBerg::BasisTop}
    \int_{\Omega \cap P_\eps(\zeta)\backslash P_{\frac\eps2}(\zeta)}|D^k(F^\top)(w,\zeta)|d\Vol(w)+\int_{P_\eps(z)\backslash (P_{\frac\eps2}(z)\cup \Omega)}|D^k(F^\top)(z,w)|d\Vol(w)\le C_k\eps^{1-k};
    \\
\label{Eqn::SkewBerg::BasisBot}
    \int_{\Omega \cap P_\eps(\zeta)\backslash P_{\frac\eps2}(\zeta)}|D^k(F^\bot)(w,\zeta)|d\Vol(w)+\int_{P_\eps(z)\backslash(P_{\frac\eps2}(z)\cup \Omega)}|D^k(F^\bot)(z,w)|d\Vol(w)\le C_k\eps^{\frac1{m_1}-k}.
    \end{gather}

    For $z\in\Omega$, since $F$ is bounded  and smooth uniformly for $\delta(z)\ge \eps_0$, it suffices to prove the estimates for $\delta(z)<\eps_0$. Let $J\in\Z$ be the unique number such that $2^{-J}\eps_0\le\varrho(z)<2^{1-J}\eps_0$. Therefore for all $0<\eps\le \eps_0$, one has $P_{\eps_0}(z)\backslash\overline\Omega \subset \cup_{j=1}^J  P_{2^{1-j}\eps_0}(z)\backslash(P_{2^{-j}\eps_0}(z)\cup\Omega)$.
    
     Clearly $\int_{\Uc\backslash(P_{\eps_0}(z)\cup\Omega)}\delta(\zeta)^s|D^kF^\top(z,\zeta)|d\Vol_\zeta\lesssim1\le\delta(z)^{s+1-k}$ since $F$ is bounded smooth when $\zeta\notin P_{\eps_0}(z)$. Applying \eqref{Eqn::SkewBerg::BasisTop} we get \eqref{Eqn::SkewBerg::FTopz}:
\begin{equation}\label{Eqn::SkewBerg::PfTop}
    \begin{aligned}
    &\int_{P_{\eps_0}(z)\backslash\overline\Omega}\delta(\zeta)^s|D^kF^\top(z,\zeta)|d\Vol_\zeta\lesssim_{k}\sum_{j=1}^J\int_{P_{2^{1-j}\eps_0}(z)\backslash(P_{2^{-j}\eps_0}(z)\cup\Omega)}(2^{-j}\eps_0)^s|D^kF^\top(z,\zeta)|d\Vol_\zeta
    \\
    &\qquad\lesssim_k\sum_{j=1}^J(2^{-j}\eps_0)^s(2^{-j}\eps_0)^{1-k}\lesssim_{\eps_0}2^{-J(s+1-k)}\approx\delta(z)^{s+1-k}.
\end{aligned}
\end{equation}
By swapping $z$ and $\zeta$, the same argument yields \eqref{Eqn::SkewBerg::FTopzeta}.
Replacing \eqref{Eqn::SkewBerg::BasisTop} by \eqref{Eqn::SkewBerg::BasisBot}, the same computation as in \eqref{Eqn::SkewBerg::PfTop} yields \eqref{Eqn::SkewBerg::FBotz} and \eqref{Eqn::SkewBerg::FBotzeta}.

Applying Schur's test, Lemma~\ref{Lem::PfThm::Schur} to \eqref{Eqn::SkewBerg::FTopz} - \eqref{Eqn::SkewBerg::FBotzeta}, the same argument to the proof of Corollary~\ref{Cor::PfThm::WeiSobEst} yields that for $k\ge0$ and $0<s<k+|\alpha|-1$,
\begin{align}\label{Eqn::SkewBerg::WeiBddTop}
    \Fc_\alpha^\top:&L^p(\Uc\backslash\overline\Omega,\dist^{-s};\wedge^{0,1})\to W^{k,p}(\Omega,\dist^{k-1+|\alpha|-s}),&\forall1\le p\le\infty;
    \\\label{Eqn::SkewBerg::WeiBddBot}
    \Fc_\alpha^\bot:&L^p(\Uc\backslash\overline\Omega,\dist^{-s};\wedge^{0,1})\to W^{k,p}(\Omega,\dist^{k-\frac1{m_1}+|\alpha|-s}),&\forall1\le p\le\infty.
\end{align}

Applying Corollary~\ref{Cor::LPThm::BddDom}~\ref{Item::LPThm::BddDom::FtoW} and \ref{Item::LPThm::HLLem::WtoF}, we have the following embeddings for $s>0$ and $k>s+1-|\alpha|$: 
\begin{itemize}
    \item $\widetilde \Fs_{p\infty}^{s}(\Uc\backslash\overline\Omega)\hookrightarrow L^p(\Uc\backslash\overline\Omega,\dist^{-s})$;
    \item $W^{k,p}(\Omega,\dist^{k-1+|\alpha|-s})\hookrightarrow\Fs_{p\eps}^{s+1-|\alpha|}(\Omega)$;
    \item $W^{k,p}(\Omega,\dist^{k-\frac1{m_1}+|\alpha|-s})\hookrightarrow\Fs_{p\eps}^{s+\frac1{m_1}-|\alpha|}(\Omega)$.
\end{itemize} 
Taking compositions with \eqref{Eqn::SkewBerg::WeiBddTop} and \eqref{Eqn::SkewBerg::WeiBddBot} we obtain \eqref{Eqn::SkewBerg::FOpBdd}.
\end{proof}

\begin{proof}[Proof of Theorem~\ref{Thm::SkewBerg}]
    Since $B-F=\dbar_{z,\zeta}K$ by \eqref{Eqn::SkewBerg::CF}, by separating the degrees we have $F=B_0-\dbar_\zeta K_0$. For the same extension operator $\Ec$ in Lemma~\ref{Lem::Goal::HomotopyFormula} (see Definition~\ref{Defn::Space::ExtOmega}) we have (see also \cite[Proposition~2.1]{GongHolderSPsiCXC2}) for every $f\in\Ss'(\Omega)$,
    \begin{align*}
        \Pc f(z)=&f(z)-\Hc_1\dbar f(z)=\Ec f(z)-\int_\Uc B_0(z,\cdot)\wedge\Ec\dbar f-\int_{\Uc\backslash\overline\Omega} K_0(z,\cdot)\wedge[\dbar,\Ec]\dbar f
        \\
        =&\int_\Uc \dbar_\zeta B_0(z,\cdot)\wedge \Ec f-\int_\Uc B_0(z,\cdot)\wedge\dbar\Ec f+\int_{\Uc\backslash\overline\Omega} B_0(z,\cdot)\wedge[\dbar,\Ec] f+\int_{\Uc\backslash\overline\Omega} K_0(z,\cdot)\wedge\dbar[\dbar,\Ec] f
        \\
        =&\int_{\Uc\backslash\overline\Omega} B_0(z,\cdot)\wedge[\dbar,\Ec]f-\int_{\Uc\backslash\overline\Omega}\dbar_\zeta K_0(z,\cdot)\wedge[\dbar,\Ec]f=\int_{\Uc\backslash\overline\Omega}F(z,\cdot)\wedge[\dbar,\Ec]f.
    \end{align*}
    
    Take $k\in\Z_+$ such that $k>1-s$. Using Proposition~\ref{Prop::Space::ATDOp}, Remark~\ref{Rmk::Goal::TopBotFacts}, and integration by parts,
    \begin{align*}
        \Pc f(z)=&\int_{\Uc\backslash\overline\Omega}F^\top(z,\cdot)\wedge[\dbar,\Ec]^\bot f+F^\bot(z,\cdot)\wedge[\dbar,\Ec]^\top f 
        \\
        =&\sum_{|\alpha|\le k}\int_{\Uc\backslash\overline\Omega}F^\top(z,\cdot)\wedge D^\alpha\Sc^{k,\alpha}[\dbar,\Ec]^\bot f+F^\bot(z,\cdot)\wedge D^\alpha\Sc^{k,\alpha}[\dbar,\Ec]^\top f          \\
        =&\sum_{|\alpha|\le k}(-1)^{|\alpha|}\int_{\Uc\backslash\overline\Omega}D_\zeta^\alpha( F^\top)(z,\cdot)\wedge\Sc^{k,\alpha}[\dbar,\Ec]^\bot f+D_\zeta^\alpha (F^\bot)(z,\cdot)\wedge\Sc^{k,\alpha}[\dbar,\Ec]^\top f
        \\
        =&\sum_{|\alpha|\le k}(-1)^{|\alpha|}\Big(\Fc_\alpha^\top\Sc^{k,\alpha}[\dbar,\Ec]^\bot+\Fc_\alpha^\top\Sc^{k,\alpha}[\dbar,\Ec]^\top\Big)[f].
    \end{align*}
    
    Recall from Corollary~\ref{Cor::LPThm::BddDom}~\ref{Item::LPThm::BddDom::TangComm} and Remark~\ref{Rmk::Space::TLRmk}~\ref{Item::Space::TLRmk::Embed} that $[\dbar,\Ec]^\top:\Fs_{p\infty}^s(\Omega)\to\Fs_{p\eps}^{s-1+\frac1{m_1}}(\Omega;\wedge^{0,1})$. Therefore, for every $|\alpha|\le k$
    \begin{align*}
        \Fc_\alpha^\top\Sc^{k,\alpha}[\dbar,\Ec]^\bot:&\Fs_{p\infty}^s(\Omega)\xrightarrow{[\dbar,\Ec]^\bot}\widetilde \Fs_{p\infty}^{s-1}(\overline{\Uc\backslash\Omega})\xrightarrow{\Sc^{k,\alpha}}\widetilde \Fs_{p\infty}^{s-1+k}(\overline{\Uc\backslash\Omega})\xrightarrow[\eqref{Eqn::SkewBerg::FOpBdd}]{\Fc_\alpha^\top}\Fs_{p\eps}^s(\Omega);
        \\
        \Fc_\alpha^\bot\Sc^{k,\alpha}[\dbar,\Ec]^\top:&\Fs_{p\infty}^s(\Omega)\xrightarrow{[\dbar,\Ec]^\top}\widetilde \Fs_{p\infty}^{s-\frac1{m_1}}(\overline{\Uc\backslash\Omega})\xrightarrow{\Sc^{k,\alpha}}\widetilde \Fs_{p\infty}^{s-\frac1{m_1}+k}(\overline{\Uc\backslash\Omega})\xrightarrow[\eqref{Eqn::SkewBerg::FOpBdd}]{\Fc_\alpha^\bot}\Fs_{p\eps}^s(\Omega).
    \end{align*}
    Taking sums over $\alpha$ we obtain \eqref{Eqn::SkewBerg::TLBdd}. 
    
    The $H^{s,p}$ and $\Co^s$ bounds follow from the inclusions $\Fs_{p1}^s(\Omega)\hookrightarrow H^{s,p}(\Omega)=\Fs_{p2}^s(\Omega)\hookrightarrow\Fs_{p\infty}^s(\Omega)$ and $\Fs_{\infty1}^s(\Omega)\hookrightarrow \Co^s(\Omega)=\Fs_{\infty\infty}^s(\Omega)$, as discussed in Remark~\ref{Rmk::Space::TLRmk} \ref{Item::Space::TLRmk::Embed} and \ref{Item::Space::TLRmk::SobHold}. 
\end{proof}

\bibliographystyle{amsalpha}
\bibliography{reference} 
\end{document}